\definecolor{darkgreen}{cmyk}{1,0,1,.2}
\definecolor{m}{rgb}{1,0.1,1}
\newdimen\theight
\def\TeXref#1{%
             \leavevmode\vadjust{\setbox0=\hbox{{\tt
                     \quad\quad  {\small \textrm #1}}}%
             \theight=\ht0
             \advance\theight by \lineskip
             \kern -\theight \vbox to
             \theight{\rightline{\rlap{\box0}}%
             \vss}%
             }}%
\DeclareMathOperator{\grad}{grad}
\newcommand{\sm}{\smallsetminus} 
\renewcommand{\epsilon}{\varepsilon}
\newcommand{\ol}{\overline}
\DeclareMathOperator{\Cl}{Cl}
\DeclareMathOperator{\diam}{diam}
\DeclareMathOperator{\dom}{dom}
\DeclareMathOperator{\im}{im}
\DeclareMathOperator{\id}{id}
\DeclareMathOperator{\Pen}{Pen}
\DeclareMathOperator{\inj}{inj}
\DeclareMathOperator{\Iso}{Iso}
\DeclareMathOperator{\Hess}{Hess}
\DeclareMathOperator{\Int}{Int}
\DeclareMathOperator{\ev}{ev}
\DeclareMathOperator{\GL}{GL}
\DeclareMathOperator{\Or}{O}
\DeclareMathOperator{\Met}{Met}
\DeclareMathOperator{\pr}{pr}
\newcommand{\FF}{\mathcal{F}}
\newcommand{\LL}{\mathcal{L}} 
\newcommand{\KK}{\mathcal{K}} 
\newcommand{\GG}{\mathcal{G}}
\newcommand{\CC}{\mathcal{C}}
\newcommand{\MM}{\mathcal{M}}
\newcommand{\RR}{\mathcal{R}}
\newcommand{\HH}{\mathcal{H}}
\newcommand{\VV}{\mathcal{V}}
\newcommand{\UU}{\mathcal{U}}
\newcommand{\PP}{\mathcal{P}}
\newcommand{\OO}{\mathcal{O}}
\newcommand{\TT}{\mathcal{T}}
\newcommand{\QQ}{\mathcal{Q}}
\newcommand{\DD}{\mathcal{D}}
\newcommand{\NN}{\mathcal{N}}
\newcommand{\ZZ}{\mathcal{Z}}
\newcommand{\SSS}{\mathcal{S}}
\newcommand{\Z}{\mathbb{Z}}
\newcommand{\R}{\mathbb{R}}
\newcommand{\N}{\mathbb{N}}
\newcommand{\bc}{\mathbf{c}}
\newcommand{\bd}{\mathbf{d}}
\newcommand{\bm}{\mathbf{m}}
\newcommand{\bv}{\mathbf{v}}
\theoremstyle{plain}
\newtheorem{thm}{Theorem}[section]
\newtheorem{lem}[thm]{Lemma}
\newtheorem{cor}[thm]{Corollary}
\newtheorem{prop}[thm]{Proposition}
\theoremstyle{definition}
\newtheorem{defn}[thm]{Definition}
\newtheorem{ex}[thm]{Example}
\theoremstyle{remark}
\newtheorem{rem}{Remark}
\newtheorem{claim}{Claim}
\title{A universal Riemannian foliated space}
\author[J.A. \'Alvarez L\'opez]{Jes\'us A. \'Alvarez L\'opez}
\address{Departamento de Xeometr\'{\i}a e Topolox\'{\i}a\\
         Facultade de Matem\'aticas\\
         Universidade de Santiago de Compostela\\
         Campus Vida\\
         15782 Santiago de Compostela\\
         Spain}
\email{jesus.alvarez@usc.es}
\author[R. Barral Lij\'o]{Ram\'on Barral Lij\'o}
\address{Departamento de Xeometr\'{\i}a e Topolox\'{\i}a\\
         Facultade de Matem\'aticas\\
         Universidade de Santiago de Compostela\\
         Campus Vida\\
         15782 Santiago de Compostela\\
         Spain}
\email{ramon.barral@usc.es}
\author[A. Candel]{Alberto Candel}
\address{Department of Mathematics\\
	CSUN\\
	Northridge, CA 91330\\
	USA}
\email{alberto.candel@csun.edu}
\keywords{$C^\infty$ convergence of Riemannian manifolds; locally non-periodic Riemannian manifolds; Riemannian foliated space}
\date{}
\begin{document}

\maketitle

\begin{abstract}  
  It is proved that the isometry classes of pointed connected complete
  Riemannian $n$-manifolds form a Polish space, $\MM_*^\infty(n)$,
  with the topology described by the $C^\infty$ convergence of
  manifolds. This space has a canonical partition into sets defined by
  varying the distinguished point into each manifold. The locally
  non-periodic manifolds define an open dense subspace
  $\MM_{*,\text{\rm lnp}}^\infty(n)\subset\MM_*^\infty(n)$, which
  becomes a $C^\infty$ foliated space with the restriction of the
  canonical partition. Its leaves without holonomy form the subspace
  $\MM_{*,\text{\rm np}}^\infty(n)\subset\MM_{*,\text{\rm
      lnp}}^\infty(n)$ defined by the non-periodic manifolds. Moreover
  the leaves have a natural Riemannian structure so that
  $\MM_{*,\text{\rm lnp}}^\infty(n)$ becomes a Riemannian foliated
  space, which is universal among all sequential Riemannian foliated
  spaces satisfying certain property called
  covering-determination. $\MM_{*,\text{\rm lnp}}^\infty(n)$ is used to
  characterize the realization of complete connected Riemannian
  manifolds as dense leaves of covering-determined compact sequential
  Riemannian foliated spaces.
\end{abstract}  

\tableofcontents

\section{Introduction}\label{s: intro}

For any $n\in\N$ (we adopt the convention that $0\in \N$),
let $\MM_*(n)$ denote the set of isometry classes,
$[M,x]$, of pointed complete connected Riemannian $n$-manifolds,
$(M,x)$. The cardinality of each complete connected Riemannian $n$-manifold is
  less than or equal to the cardinality of the continuum, and
  therefore it may be assumed that its underlying set is contained in
  \(\R\). With this assumption, $\MM_*(n)$ is a well defined set. This set is only interesting for $n\ge2$ because
$\MM_*(0)=\{[\{0\},0]\}$ and $\MM_*(1)=\{[\R,0],[\mathbb{S}^1,1]\}$.
The set $\MM_*(n)$ can be considered as a subset of the Gromov space
$\MM_*$ of isometry classes of pointed proper metric spaces
\cite{Gromov1981}, \cite[Chapter~3]{Gromov1999}. However it is
interesting to consider a finer topology on $\MM_*(n)$, taking the
differentiable structure into account. For that purpose, the following
notion of $C^\infty$ convergence was defined on $\MM_*(n)$.
  
  \begin{defn}[{See e.g.\  \cite[Chapter~10, Section~3.2]{Petersen1998}}]\label{d: C^infty convergence in MM_*(n)}
    For each $m\in\N$, a sequence $[M_i,x_i]\in\MM_*(n)$ is said to be
    \emph{$C^m$ convergent} to $[M,x]\in\MM_*(n)$ if, for each compact
    domain $\Omega\subset M$ containing $x$, there are
    pointed $C^{m+1}$ embeddings $\phi_i:(\Omega,x)\to(M_i,x_i)$ for
    large enough $i$ such that $\phi_i^*g_i\to g|_\Omega$ as
    $i\to\infty$ with respect to the $C^m$ topology
    \cite[Chapter~2]{Hirsch1976}. If $[M_i,x_i]$ is $C^m$ convergent
    to $[M,x]$ for all $m$, then it is said that $[M_i,x_i]$ is
    \emph{$C^\infty$ convergent} to $[M,x]$.
\end{defn}

Here, a \emph{domain} in $M$ is a connected
      $C^\infty$ submanifold, possibly with boundary, of the same
      dimension as $M$. 

It is admitted that $C^\infty$ convergence defines a topology on
$\MM_*(n)$ \cite{Petersen1997}. However we are not aware of any proof in
the literature showing that it satisfies the conditions to describe a
topology \cite{Koutnik1985}, \cite{GutierresHofmann2007} (see also
\cite{Kelley1950} and \cite{Kelley1975} if $C^\infty$ convergence were
defined with nets or filters). This is only proved on subspaces
defined by manifolds of equi-bounded geometry, where the $C^\infty$
convergence coincides with convergence in $\MM_*$ \cite{Lessa:Reeb}
(see also \cite[Chapter~10]{Petersen1998}). The first main theorem of
the paper is the following.

\begin{thm}\label{t: C^infty convergence in MM_*(n)} 
  The $C^\infty$ convergence in $\MM_*(n)$ describes a
  Polish topology.
\end{thm}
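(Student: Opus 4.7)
My plan is to exhibit an explicit metric $d$ on $\MM_*(n)$ whose convergent sequences are exactly the $C^\infty$-convergent ones of Definition \ref{d: C^infty convergence in MM_*(n)}, and then to check that $(\MM_*(n), d)$ is separable and complete. This settles simultaneously the tacit claim that $C^\infty$ convergence describes a topology and the Polish assertion.

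To construct $d$, I would first associate to each $[M,x] \in \MM_*(n)$ and each $r \in \N$ a canonical compact domain $\Omega_r(M,x) \subset M$ containing $x$ (for instance a smoothing of the closed metric ball $\overline{B}(x,r)$, with smoothing scale chosen so that the family is cofinal in the compact domains containing $x$). For each $r, m \in \N$, define $d_{r,m}([M,x],[M',x'])$ as the infimum of all $\epsilon > 0$ for which there exist pointed $C^{m+1}$ embeddings $\phi: \Omega_r(M,x) \to M'$ and $\phi': \Omega_r(M',x') \to M$ satisfying $\|\phi^*g' - g\|_{C^m} < \epsilon$ and $\|\phi'^*g - g'\|_{C^m} < \epsilon$, with the convention $\inf \emptyset = \infty$. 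Then set $d = \sum_{r,m} 2^{-r-m} d_{r,m}/(1+d_{r,m})$. Symmetry is immediate; definiteness follows because $d([M,x],[M',x']) = 0$ yields, via Arzelà-Ascoli on each $\Omega_r$, an isometry $(M,x) \to (M',x')$; the triangle inequality follows by composing near-isometric embeddings on slightly enlarged domains. The equivalence of $d$-convergence with $C^\infty$ convergence follows because any compact domain of $M$ containing $x$ is eventually contained in some $\Omega_r(M,x)$, and a $C^{m+1}$ embedding of $\Omega_r(M,x)$ into $M_i$ restricts to any subdomain.

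For separability I would exhibit a countable dense family, for instance the pointed closed Riemannian manifolds arising as compact real-algebraic submanifolds of $\R^N$ defined by rational polynomial equations, carrying the induced metric and with basepoint in $\Q^N$. Density reduces to two standard approximations: any complete $(M,x)$ is $d$-approximated by a pointed compact manifold obtained by truncating $M$ beyond a large $\Omega_r$, capping with a collar, and doubling; and any $C^\infty$ metric on a compact manifold is $C^m$-approximated by a real-algebraic one with rational coefficients via Whitney embedding and Stone-Weierstrass on the coordinates of the second fundamental form.

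The genuine difficulty is completeness. Given a $d$-Cauchy sequence $[M_i, x_i]$, for each fixed $r$ the pointed domains $\Omega_r(M_i,x_i)$ are mutually $C^{m+1}$-close in all orders $m$ via near-isometric embeddings, so a diagonal $C^\infty$-compactness extraction yields a pointed limit $(\widetilde{\Omega}_r, x_\infty)$ and, after passing to a subsequence, pointed $C^{m+1}$ embeddings $\widetilde{\Omega}_r \to M_i$ whose metric pullbacks $C^m$-converge to the limit metric for every $m$. A further diagonal extraction across $r$ produces compatible isometric inclusions $\widetilde{\Omega}_r \hookrightarrow \widetilde{\Omega}_{r'}$ for $r \le r'$, whose directed colimit $M_\infty = \bigcup_r \widetilde{\Omega}_r$ inherits the structure of a pointed connected Riemannian $n$-manifold. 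Completeness of $(M_\infty, x_\infty)$ follows because the closed metric ball of radius $r$ about $x_\infty$ lies inside $\widetilde{\Omega}_r$, which is compact. The main technical obstacle is arranging the diagonal extractions and cross-scale compatibility without any uniform curvature or injectivity-radius bound on $\MM_*(n)$; however, the Cauchy condition supplies uniform $C^m$ control of the metrics on each $\Omega_r$ separately, so a classical Cheeger-Gromov-type compactness argument can be invoked one scale at a time and then threaded together by a careful diagonalization.
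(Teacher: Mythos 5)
Your plan follows a genuinely different route from the paper: you attempt to build an explicit metric by hand, while the paper first constructs a \emph{uniformity} with base $\{U^m_{R,r}\}$ (Definition~\ref{d: U^m_R,r}), verifies the uniformity axioms in Propositions~\ref{p: C^infty uniformity in MM_*(n)}--\ref{p: Hausdorff MM_*(n)}, invokes the metrization theorem for uniformities with a countable separating base, shows the resulting topology coincides with $C^\infty$ convergence (Propositions~\ref{p: D^m_R,epsilon(M,x) subset U^m_R,r(M,x)}--\ref{p: U^m_R,epsilon(M,x) subset D^m_R,r(M,x)}), and only then proves separability and completeness. The difference is not cosmetic: the uniformity framework is precisely designed to absorb the technical defect that blocks your direct construction, and that is where your proposal has a genuine gap.

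The gap is the triangle inequality. Your $d_{r,m}$ is not a pseudometric. Given an embedding $\phi\colon\Omega_r(M,x)\to M'$ realizing $d_{r,m}(A,B)<\epsilon_1$ and an embedding $\psi\colon\Omega_r(M',x')\to M''$ realizing $d_{r,m}(B,C)<\epsilon_2$, the image $\phi(\Omega_r(M,x))$ need not lie inside $\Omega_r(M',x')$, so $\psi\circ\phi$ need not be defined on all of $\Omega_r(M,x)$; and even where it is defined, estimating $\|(\psi\phi)^*g''-g\|_{C^m}$ requires control of $\phi^*$ on the defect tensor $\psi^*g''-g'$, which introduces a multiplicative constant depending on $\epsilon_1$. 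The best one can hope for is an inequality of the form $d_{r,m}(A,C)\le C\bigl(d_{r',m}(A,B)+d_{r',m}(B,C)\bigr)$ with an enlarged radius $r'>r$, which is exactly the ``shifted composition'' encoded in Proposition~\ref{p: C^infty uniformity in MM_*(n)}-(iv), namely $U^m_{e^sR,r}\circ U^m_{R,s}\subset U^m_{R,r+s}$. Summing the truncated pseudometrics $2^{-r-m}d_{r,m}/(1+d_{r,m})$ does not repair this, since each summand already violates the triangle inequality; ``composing on slightly enlarged domains'' is where a metrization argument must actually be supplied (either through the uniformity metrization theorem, or through a Frink/chain construction), and you have not supplied one. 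A second, related issue is the canonicality of $\Omega_r(M,x)$: for $d$ to descend to isometry classes you need $\alpha(\Omega_r(M,x))=\Omega_r(M',x')$ for every pointed isometry $\alpha$, but a ``smoothing of the closed ball'' is not canonically chosen, and the sublevel sets of a mollified distance function need not be domains for a prescribed level; the paper avoids this by never pinning down a specific domain, instead quantifying existentially over all compact domains containing a given ball.

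Your separability sketch (Nash--Tognoli realization followed by rational approximation) is more complicated than necessary and invokes a $C^m$ approximation of the induced metric via Stone--Weierstrass on the second fundamental form, which as stated only gives $C^0$ control of the wrong object; the paper's route via Cheeger finiteness plus separability of $C^\infty(M;T^*M\odot T^*M)$ is both simpler and complete. Your completeness outline is in the right spirit and closely parallels the paper's direct-limit construction in Proposition~\ref{p: MM_*^infty(n) is completely metrizable}, but the ``diagonal extraction across $r$'' needs to be replaced by an explicit direct system: choosing one near-isometry $\phi_i$ between consecutive terms and taking compositions $\psi_{ij}=\phi_{j-1}\cdots\phi_i$ gives automatic cross-scale compatibility, whereas choosing independent embeddings at each scale $r$ leaves the compatibility of the limits $\widetilde\Omega_r\hookrightarrow\widetilde\Omega_{r'}$ unjustified.
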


Recall that a space is called \emph{Polish} if it is separable and completely metrizable.

The topology given by Theorem~\ref{t: C^infty convergence in MM_*(n)}
will be called the \emph{$C^\infty$ topology} on $\MM_*(n)$, and the
corresponding space is denoted by $\MM_*^\infty(n)$.

For each complete connected Riemannian $n$-manifold $M$, there is a
canonical continuous map $\iota:M\to\MM_*^\infty(n)$ given by
$\iota(x)=[M,x]$, which induces a continuous injective map
$\bar{\iota}:\Iso(M)\backslash M\to\MM_*^\infty(n)$, where $\Iso(M)$
denotes the isometry group of $M$. The more explicit notation
$\iota_M$ and $\bar\iota_M$ may be also used. The images of the maps
$\iota_M$ form a natural partition of $\MM_*^\infty(n)$, denoted by
$\FF_*(n)$.

A Riemannian manifold, $M$, is said to be \emph{non-periodic} if
$\Iso(M)=\{\id_M\}$, and is said to be \emph{locally non-periodic} if
each point $x\in M$ has a neighborhood $U_x$ such that
\[
\{\,h\in\Iso(M)\mid h(x)\in U_x\}=\{\id_M\}\;.
\]
Let $\MM_{*,\text{\rm np}}(n)$ and $\MM_{*,\text{\rm lnp}}(n)$ be the
$\FF_*(n)$-saturated subsets of $\MM_*(n)$ defined by non-periodic and
locally non-periodic manifolds, respectively. The notation
$\MM_{*,\text{\rm np}}^\infty(n)$ and $\MM_{*,\text{\rm
    lnp}}^\infty(n)$ is used when these sets are equipped with the
restriction of the $C^\infty$ topology. The restrictions of $\FF_*(n)$
to $\MM_{*,\text{\rm np}}(n)$ and $\MM_{*,\text{\rm lnp}}(n)$ are
respectively denoted by $\FF_{*,\text{\rm np}}(n)$ and
$\FF_{*,\text{\rm lnp}}(n)$. Note that $\MM_{*,\text{\rm np}}(0)=\{[\{0\},0]\}$ 
and $\MM_{*,\text{\rm lnp}}(1)=\emptyset$.

On the other hand, let $\MM_{*,\text{\rm c}}^\infty(n)$ (respectively, $\widehat\MM_{*,\text{\rm o}}^\infty(n)$) be the $\FF_*(n)$-saturated subspace of $\widehat\MM_*(n)$ consisting of classes $[M,x]$ such that $M$ is compact (respectively, open). Observe that, if $[N,y]$ is close enough to any $[M,x]\in\MM_{*,\text{\rm c}}^\infty(n)$, then $N$ is diffeomorphic to $M$. Thus $\MM_{*,\text{\rm c}}^\infty(n)$ is open in $\MM_*(n)$, and therefore $\MM_{*,\text{\rm o}}^\infty(n)$ is closed. Hence these are Polish subspaces of $\MM_*(n)$, as well as their intersections with any Polish subspace. The intersection of $\MM_{*,\text{\rm c/o}}^\infty(n)$ and $\MM_{*,\text{\rm (l)np}}^\infty(n)$ is denoted by $\MM_{*,\text{\rm (l)np,c/o}}^\infty(n)$. The restrictions of $\FF_*(n)$ to $\MM_{*,\text{\rm c/o}}(n)$ and $\MM_{*,\text{\rm (l)np,c/o}}(n)$ are denoted by $\FF_{*,\text{\rm c/o}}(n)$ and $\FF_{*,\text{\rm (l)np,c/o}}(n)$, respectively. The second main theorem of the paper is the following.

\begin{thm}\label{t: FF_*,lnp(n)}
  The following properties hold for $n\ge2$:
  \begin{enumerate}[{\rm (}i{\rm )}]

  \item\label{i: MM_*,lnp(n) is open and dense} $\MM_{*,\text{\rm
        lnp}}(n)$ is Polish and dense in $\MM_*^\infty(n)$.

  \item\label{i: FF_*,lnp(n) is foliated structure} $\MM_{*,\text{\rm
        lnp}}^\infty(n)\equiv(\MM_{*,\text{\rm
        lnp}}^\infty(n),\FF_{*,\text{\rm lnp}}(n))$ is a
    foliated space of dimension $n$.
    
  \item\label{i: FF_*,lnp,o(n) is transitive} $\FF_{*,\text{\rm lnp,o}}(n)$ is transitive.

  \item\label{i: FF_*,lnp(n) is C^infty and Riemannian} The foliated
    space $\MM_{*,\text{\rm lnp}}^\infty(n)$ has canonical $C^\infty$
    and Riemannian structures such that $\bar{\iota}:\Iso(M)\backslash
    M\to\iota(M)$ is an isometry for every locally non-periodic,
    complete, connected Riemannian manifold $M$.

  \item\label{i: MM_*,np(n)} For any locally non-periodic complete
    connected Riemannian manifold $M$, the quotient map
    $M\to\Iso(M)\backslash M$ corresponds to the holonomy covering of
    the leaf $\iota(M)$ by $\bar{\iota}:\Iso(M)\backslash
    M\to\iota(M)$. In particular, the set $\MM_{*,\text{\rm np}}(n)$
    is the union of leaves of $\MM_{*,\text{\rm lnp}}^\infty(n)$ with
    trivial holonomy groups.

  \end{enumerate}
\end{thm}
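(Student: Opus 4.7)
The plan is to address the five claims in the order (i), (ii)+(iv), (v), (iii), since the construction of foliated charts is needed for the later parts.

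For (i), I would establish openness and density separately. Density follows from a perturbation argument: given $[M,x]$, a sufficiently generic compactly supported $C^\infty$-small perturbation of the metric on $M$ destroys all non-trivial isometries, producing a locally non-periodic pointed manifold that $C^\infty$-converges to $(M,x)$ as the perturbation shrinks. For openness I would argue contrapositively: if $[M_i,x_i]\to[M,x]$ with each $(M_i,x_i)$ failing local non-periodicity at $x_i$, there exist non-identity $h_i\in\Iso(M_i)$ with $h_i(x_i)\to x_i$; using the embeddings $\phi_i:(\Omega,x)\to(M_i,x_i)$ from Definition~\ref{d: C^infty convergence in MM_*(n)} on an exhaustion by compact domains and an Arzel\`a--Ascoli argument (the pulled-back metrics converge in $C^\infty$, forcing equicontinuity of $\phi_i^{-1}\circ h_i\circ\phi_i$), one extracts in the limit a non-trivial isometry of $M$ moving $x$ arbitrarily little, contradicting local non-periodicity of $M$. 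Since open subspaces of Polish spaces are Polish, Polishness follows from openness together with Theorem~\ref{t: C^infty convergence in MM_*(n)}.

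For (ii) and (iv), I would build explicit foliated charts. Around $[M,x]$, pick a relatively compact domain $\Omega\ni x$ and a geodesic ball $U\ni x$ so small that no non-trivial isometry of $M$ sends any point of $U$ back into $U$. Take the transversal to be a neighborhood $T$ of $g|_\Omega$ in the $C^\infty$ space of Riemannian metrics on $\Omega$ (modulo the natural equivalence identifying those that yield the same pointed completion), and define the chart by $(u,h)\mapsto[N_h,u]$, where $N_h$ is a fixed complete Riemannian extension of $(\Omega,h)$. Local non-periodicity makes this map injective, so plaques assemble consistently into leaves; continuity is immediate from the definition of $C^\infty$ convergence, and continuity of the inverse is extracted from the embeddings $\phi_i$. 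Equipping each plaque with the Riemannian metric transported from $M$ furnishes the canonical leafwise Riemannian structure, and makes $\bar\iota$ an isometry onto its image by construction.

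Part (v) follows by tracing the preceding structure along $\iota_M$. Two points $y,y'\in M$ satisfy $\iota_M(y)=\iota_M(y')$ exactly when some $h\in\Iso(M)$ sends $y$ to $y'$, so the fibers of $\iota_M$ are the $\Iso(M)$-orbits and, by local non-periodicity, $\Iso(M)$ acts freely and properly discontinuously, making $M\to\Iso(M)\backslash M$ a covering. To identify this with the holonomy covering of $\iota(M)$, lift a loop $\gamma$ based at $[M,x]$ to a path in $M$ using the charts of step two; its endpoint is $h(x)$ for some $h\in\Iso(M)$, and the germ of the transversal return map induced by $\gamma$ is the identity iff $h$ is the identity near $x$, iff $h=\id_M$ by the rigidity of Riemannian isometries. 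This yields both the description of the holonomy covering and the identification of $\MM_{*,\text{\rm np}}(n)$ with the holonomy-free locus.

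Finally, for (iii), I would construct a single open, complete, locally non-periodic Riemannian $n$-manifold $M_\infty$ whose leaf is dense in $\MM_{*,\text{\rm lnp,o}}^\infty(n)$. The strategy is diagonal: enumerate a countable basis $\{V_k\}$ of $\MM_{*,\text{\rm lnp,o}}^\infty(n)$ and representatives $[N_k,z_k]\in V_k$ with $N_k$ open, then build $M_\infty$ by attaching to $\R^n$ an infinite sequence of increasingly large metric balls centered at the $z_k$ inside $N_k$, each $N_k$ appearing infinitely often with exhausting radii, connected by thin, carefully shaped necks. The necks and the background metric must be designed, using the perturbation technique of step one, to suppress all accidental isometries, so that $M_\infty$ is locally non-periodic, complete and connected, while still containing, for each $k$ and each $\epsilon>0$, a point $y$ with $[M_\infty,y]$ within $\epsilon$ of $[N_k,z_k]$. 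Simultaneously enforcing completeness, connectedness, local non-periodicity, and the $C^\infty$-approximation property on each attached region is the step I expect to be the main obstacle.
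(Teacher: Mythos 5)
There is a genuine gap, and it sits in the part you identify as the core construction, namely the foliated charts in (ii) and (iv). The transversal you propose --- a neighborhood of $g|_\Omega$ in the space of Riemannian metrics on a fixed compact domain $\Omega$, with the chart $(u,h)\mapsto[N_h,u]$ for some ``fixed complete Riemannian extension $N_h$'' --- is not well-defined and cannot work. A Riemannian metric on a compact domain $\Omega$ admits many non-isometric complete extensions, so there is no canonical choice of $N_h$, and the parenthetical ``modulo the natural equivalence identifying those that yield the same pointed completion'' does not repair this: the metric on $\Omega$ does not determine any ``pointed completion,'' and even if one quotients two metrics when they admit isometric extensions, the map to $\MM_{*,\text{\rm lnp}}^\infty(n)$ is neither well-defined nor open onto a neighborhood, since nearby points $[N,y]$ with $N\not\cong M$ restricted to $\Omega$ can fail to appear. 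The paper resolves exactly this difficulty by constructing the transversal inside $\MM_{*,\text{\rm lnp}}^\infty(n)$ itself: Lemma~\ref{l: NN} provides a gap in the return distances of the partition, the center of mass (Proposition~\ref{p: Karcher}, Corollary~\ref{c:center of mass}) yields a continuous retraction $\bc'$ onto a transversal $\ZZ\subset\MM_{*,\text{\rm lnp}}^\infty(n)$ (Lemmas~\ref{l: d_L(y,y') le epsilon}--\ref{l: bc' is cont}), and the chart is $\theta\circ\exp^{-1}:\NN'\to B^n_\epsilon\times\ZZ$ using the bundle $\TT_{*,\text{\rm lnp}}^\infty(n)$ from Proposition~\ref{p: TT} and Lemma~\ref{l: exp}; smoothness of transition maps is Lemma~\ref{l: change of coordinates}. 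Without this replacement, (ii) and (iv), and consequently (v), are not proved.

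There are secondary gaps as well. In (i) your openness argument only controls the isometry group near the base points $x_i$, but local non-periodicity is a condition at \emph{every} point of $M_i$; a sequence $[M_i,x_i]\to[M,x]$ could fail local non-periodicity far from $x_i$, and your Arzel\`a--Ascoli argument produces no contradiction in that case. The paper in fact only asserts that $\MM_{*,\text{\rm lnp}}^\infty(n)$ is $G_\delta$ (Lemmas~\ref{l: LL} and~\ref{l: [M i,y i]}), which already yields Polishness; and it obtains density not by an ad hoc perturbation on a non-compact manifold (where the claim that a compactly supported perturbation destroys all isometries is itself nontrivial) but by Remark~\ref{r: MM_*,c^infty(n) is dense} together with the Bando--Urakawa/Sunada residuality of non-periodic metrics on closed manifolds. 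In (v), the direction ``$h\ne\id_M$ near $x$ implies non-trivial holonomy'' is not automatic: it uses density of $\MM_{*,\text{\rm np}}^\infty(n)$ in $\MM_{*,\text{\rm lnp}}^\infty(n)$ to separate the two endpoints of the projected loop on nearby leaves (Lemma~\ref{l: holonomy}-\eqref{i: alpha(1) ne x}). In (iii) your construction is the right shape, but the step you flag as the main obstacle is precisely where the paper brings in the Baire property of $\Met_{\text{\rm com}}(M)$ in the strong $C^\infty$ topology together with residuality of nowhere locally homogeneous metrics (Proposition~\ref{p: nowhere locally homogenous metrics} and Lemma~\ref{l: iota(M) is dense}); without some such argument the simultaneous constraints are not met.
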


The following result states a universal property of $\MM_{*,\text{\rm lnp}}^\infty(n)$, which involves certain property called covering-determination (Definition~\ref{d: covering-determined}).

\begin{thm}\label{t: covering-determined} 
	Let $X$ be a sequential Riemannian foliated space of dimension $n\ge2$ whose leaves are complete. Then $X$ is isometric to a saturated subspace of $\MM_{*,\text{\rm lnp}}^\infty(n)$ if and only if it is covering-determined.
\end{thm}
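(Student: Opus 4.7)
The plan is to study the natural \emph{evaluation map}
\[
\Phi:X\longrightarrow\MM_*^\infty(n),\qquad \Phi(x)=[\widetilde L_x,\tilde x],
\]
where $L_x$ is the leaf of $X$ through $x$, $\widetilde L_x\to L_x$ is its holonomy covering with the pulled-back Riemannian metric, and $\tilde x$ is any lift of $x$ (the isometry class is independent of the choice). Since the leaves of $X$ are complete and $X$ has dimension $n$, each $\widetilde L_x$ is a complete connected Riemannian $n$-manifold; moreover holonomy covers have trivial holonomy, so Theorem~\ref{t: FF_*,lnp(n)}\,(\ref{i: MM_*,np(n)}) places $\Phi(X)$ inside $\MM_{*,\text{\rm np}}^\infty(n)\subset\MM_{*,\text{\rm lnp}}^\infty(n)$. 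I would then prove that $\Phi$ is an isometric foliated embedding with saturated image precisely when $X$ is covering-determined.

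The \emph{only if} direction is essentially tautological: if $X$ is already a saturated subspace of $\MM_{*,\text{\rm lnp}}^\infty(n)$, then every point $[M,x]\in X$ can be reconstructed from the pointed Riemannian structure on the holonomy cover of its leaf via Theorem~\ref{t: FF_*,lnp(n)}\,(\ref{i: FF_*,lnp(n) is C^infty and Riemannian})--(\ref{i: MM_*,np(n)}), which is precisely what covering-determination asserts.

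For the \emph{if} direction, assume $X$ is covering-determined. I would verify in four steps that $\Phi$ is an isometric foliated embedding with saturated image. \emph{(a) Leafwise isometry:} on each leaf $L$, the map $\Phi$ factors as the holonomy quotient $L\to\Iso(\widetilde L)\backslash\widetilde L$ followed by $\bar\iota$, and both maps are isometries by Theorem~\ref{t: FF_*,lnp(n)}\,(\ref{i: FF_*,lnp(n) is C^infty and Riemannian})--(\ref{i: MM_*,np(n)}); in particular $\Phi$ sends each leaf of $X$ onto a full leaf of $\MM_{*,\text{\rm lnp}}^\infty(n)$, so $\Phi(X)$ is automatically $\FF_{*,\text{\rm lnp}}(n)$-saturated. \emph{(b) Continuity:} given $x_i\to x$ in $X$, the sequential structure of $X$ gives a controlled sequence of foliated charts near $x$; lifting the plaque identifications to the holonomy covers produces, for any prescribed compact domain $\Omega\subset\widetilde L_x$ and any $m$, pointed $C^{m+1}$ embeddings $\Omega\to\widetilde L_{x_i}$ whose pullbacks converge in $C^m$ to the metric on $\Omega$, which is exactly Definition~\ref{d: C^infty convergence in MM_*(n)}, so $\Phi(x_i)\to\Phi(x)$. \emph{(c) Injectivity:} if $\Phi(x)=\Phi(y)$, the pointed holonomy covers of $L_x$ and $L_y$ are isometric, and covering-determination forces $x=y$.

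The main obstacle is step \emph{(d)}, showing that $\Phi$ is open onto its image. Given $\Phi(x_i)\to\Phi(x)$ in $\MM_{*,\text{\rm lnp}}^\infty(n)$, one must produce $x_i\to x$ in $X$. The plan is to pick a compact domain $\Omega\subset\widetilde L_x$ containing the lift of a full plaque through $\tilde x$, use the pointed $C^{m+1}$ embeddings $\phi_i:(\Omega,\tilde x)\to(\widetilde L_{x_i},\tilde x_i)$ from Definition~\ref{d: C^infty convergence in MM_*(n)} to transport that plaque into the leaves $L_{x_i}$, and then match the transported plaques with plaques of an actual foliated chart of $X$ around $x$. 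Covering-determination is needed once more here, in a uniform form, to guarantee that the a priori many identifications obtained by composing with deck transformations cohere into a single transverse sequence converging to $x$; otherwise the embeddings $\phi_i$ are only determined up to the deck group action and could drift transversally in $X$. This uniform descent from $C^\infty$ convergence of holonomy covers to transverse convergence in $X$ is the technical core of the theorem.
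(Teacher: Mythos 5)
Your ``if'' direction has a genuine gap: you define $\Phi$ using the \emph{holonomy} covering of each leaf, whereas Definition~\ref{d: covering-determined} only posits the existence of \emph{some} family of connected pointed coverings $(\widetilde L_x,\tilde x)$ for which the sequential convergence equivalence holds; when those coverings happen to be the holonomy coverings the paper calls $X$ \emph{holonomy-determined}, a strictly stronger condition. The second item of Example~\ref{ex: covering-determined} records a covering-determined foliated space that is not holonomy-determined, and the remark immediately following the proof of Theorem~\ref{t: covering-determined} warns that the map $\iota^{\text{\rm hol}}(x)=[\widetilde L_x^{\text{\rm hol}},\tilde x]$ --- which is exactly your $\Phi$ --- ``may not be continuous.'' So with the holonomy covers your map need not even be continuous, and your step~(b), together with everything built on it, fails.

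The fix is to use the coverings $(\widetilde L_x,\tilde x)$ furnished by the covering-determination hypothesis itself, as the paper does, and set $\iota(x)=[\widetilde L_x,\tilde x]$. With that choice, most of the labor in your steps~(b), (c), (d) evaporates: the ``if and only if'' in Definition~\ref{d: covering-determined} is literally the assertion that $x_i\to x$ in $X$ iff $\iota(x_i)\to\iota(x)$ in $\MM_*^\infty(n)$ (giving continuity and that $\iota$ is a homeomorphism onto its image, since both spaces are sequential), and injectivity is Remark~\ref{r: covering-determined}-\eqref{i: x=y}; there is no need to lift plaque identifications or invoke a ``uniform form'' of the hypothesis. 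The part that genuinely requires a (short) argument --- which your step~(a) gestures at --- is that $\iota$ lands in $\MM_{*,\text{\rm lnp}}^\infty(n)$ and is a leafwise isometry onto full leaves: injectivity of $\iota$ along a leaf forces the deck group of $\widetilde L_x\to L_x$ to equal $\Iso(\widetilde L_x)$, whence $\widetilde L_x$ is locally non-periodic, $L_x\cong\Iso(\widetilde L_x)\backslash\widetilde L_x$, and Theorem~\ref{t: FF_*,lnp(n)}-\eqref{i: FF_*,lnp(n) is C^infty and Riemannian} gives the isometry onto $\iota_{\widetilde L_x}(\widetilde L_x)$. A minor point in your ``only if'' direction: the relevant cover of a leaf $L=\iota_M(M)$ of a saturated $X\subset\MM_{*,\text{\rm lnp}}^\infty(n)$ is its holonomy cover \emph{in the ambient space} --- namely $M$, by Theorem~\ref{t: FF_*,lnp(n)}-\eqref{i: MM_*,np(n)} --- not the holonomy cover of $L$ as a leaf of $X$; the two can differ, which is precisely why only covering-determination, not holonomy-determination, is inherited by saturated subspaces.
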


Recall that a space $X$ is called \emph{sequential} if a subset $A\subset X$ is open whenever each convergent sequence $x_n\to x\in A$ in $X$ eventually belongs to $A$. For instance, first countable spaces are sequential. This condition could be removed by using convergence of nets or filters instead of sequences.

$\MM_{*,\text{\rm lnp}}^\infty(n)$ is used to prove the following result about realizations of Riemannian manifolds as leaves. It involves the obvious Riemannian versions of the conditions of being aperiodic or repetitive, which are standard for tilings or graphs (see e.g.\ \cite{GrunbaumShephard1987,Robinson2004,Frank2008}), and a weak version of aperiodicity  (Definitions~\ref{d: aperiodic} and~\ref{d: repetitive}).

\begin{thm}\label{t: M is isometric to a dense leaf}
  The following properties hold for a complete connected
  Riemannian manifold $M$ of bounded geometry and dimension $n\ge2$:
  \begin{enumerate}[{\rm(}i{\rm)}]
	
  \item\label{i: M is isometric to a dense leaf} $M$ is non-periodic
    and has a {\rm(}repetitive{\rm)} weakly aperiodic connected
    covering if and only if it is isometric to a dense leaf of a
    {\rm(}minimal{\rm)} covering-determined compact sequential
    Riemannian foliated space.
		
  \item\label{i: if M is aperiodic} If $M$ is aperiodic {\rm(}and
    repetitive{\rm)}, then it is isometric to a dense leaf of a
    {\rm(}minimal{\rm)} covering-determined compact sequential
    Riemannian foliated space whose leaves have trivial holonomy
    groups.
	
  \end{enumerate}
\end{thm}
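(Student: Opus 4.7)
The plan is to realize the desired foliated space, in both directions of the equivalence in (i), as the closure $X:=\overline{\iota_M(M)}$ in $\MM_*^\infty(n)$, and then appeal to the universal property of Theorem~\ref{t: covering-determined}. Bounded geometry of $M$ makes $\iota_M(M)$ pointed $C^\infty$ precompact (via the usual Cheeger--Gromov compactness, compatible with Definition~\ref{d: C^infty convergence in MM_*(n)}), so $X$ is compact; by Theorem~\ref{t: C^infty convergence in MM_*(n)} it is metrizable, hence sequential.

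For the ``if'' direction of (i), assume $M$ is non-periodic and fix a weakly aperiodic connected covering $p\colon\widehat M\to M$. The decisive step is the inclusion $X\subset\MM_{*,\text{\rm lnp}}^\infty(n)$: given a pointed $C^\infty$ limit $[N,y]=\lim_i[M,x_i]$, I lift through $p$ (selecting basepoints of $\widehat M$ above $x_i$ with controlled injectivity radius, using bounded geometry of $\widehat M$) to a pointed $C^\infty$ limit $[\widehat N,\widehat y]$ covering $[N,y]$, and weak aperiodicity of $\widehat M$ prevents a nontrivial isometry of $N$ from carrying $y$ inside a small neighborhood, forcing $N$ to be locally non-periodic. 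Saturation of $X$ is the second point: any leaf $\iota_N(N)$ meeting $X$ can be traced out by varying the basepoint of $N$, and each such variation is realized as a limit of sequences in $\iota_M(M)$ thanks to sequentiality. Theorem~\ref{t: FF_*,lnp(n)}(ii),(iv) then endows $X$ with the induced Riemannian foliated structure, and the easy direction of Theorem~\ref{t: covering-determined} makes $X$ covering-determined. Non-periodicity of $M$ turns $\bar\iota_M=\iota_M$ into an isometry of $M$ onto the dense leaf $\iota_M(M)\subset X$. Repetitiveness of $\widehat M$ converts to minimality of $X$ via a standard pigeonhole argument: every compact pattern of any leaf of $X$ recurs in $M$ with bounded gaps, so every orbit closure in $X$ fills $X$.

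Conversely, if $M$ is isometric to a dense leaf of such an $X$, Theorem~\ref{t: covering-determined} embeds $X$ saturatedly and isometrically inside $\MM_{*,\text{\rm lnp}}^\infty(n)$; by Theorem~\ref{t: FF_*,lnp(n)}(iv) the leaf corresponding to $M$ has the form $\Iso(M')\backslash M'$ for some locally non-periodic $M'$, and matching the Riemannian structure with the actual manifold $M$ forces $\Iso(M')$ trivial, hence $M$ non-periodic. The holonomy covering of that dense leaf, produced by Theorem~\ref{t: FF_*,lnp(n)}(v), gives a connected covering $\widehat M\to M$ whose closure in $\MM_*^\infty(n)$ lies inside $\MM_{*,\text{\rm lnp}}^\infty(n)$ by compactness of $X$, which is exactly the weak aperiodicity of Definition~\ref{d: aperiodic}; minimality of $X$ translates to repetitiveness of $\widehat M$ by the same compactness-meets-density argument. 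Part (ii) follows with one modification: aperiodicity of $M$ upgrades the lift-and-limit argument so that every limit $[N,y]\in X$ is fully non-periodic, placing $X$ inside $\MM_{*,\text{\rm np}}^\infty(n)$, whereupon Theorem~\ref{t: FF_*,lnp(n)}(v) guarantees trivial holonomy for every leaf; the converse in (ii) is read off from the previous paragraph by replacing weak aperiodicity by aperiodicity.

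The hardest step will be the lift-and-limit argument yielding $X\subset\MM_{*,\text{\rm lnp}}^\infty(n)$ from weak aperiodicity of $\widehat M$: it requires controlling pointed $C^\infty$ convergence under a covering of possibly infinite degree and translating the Riemannian weak-aperiodicity condition into an exclusion of small local isometries of an arbitrary limit. A secondary delicate point is saturation of $X$, since \emph{a priori} the closure of a single leaf could cut other leaves transversely; sequentiality combined with continuity of the maps $\iota_N$ is what will certify that whole leaves are included.
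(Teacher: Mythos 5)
Your construction takes $X = \Cl_\infty(\iota_M(M))$, the closure of the image of $M$, whereas the paper builds $Y = \Cl_\infty(\iota(\widetilde M))$ out of the weakly aperiodic \emph{covering} $\widetilde M$; this difference is essential and your version has a real gap. For $X$ to inherit a Riemannian foliated structure you must show $X\subset\MM_{*,\text{\rm lnp}}^\infty(n)$, which by Lemma~\ref{l: aperiodic} is \emph{equivalent} to $M$ itself being weakly aperiodic. The theorem only assumes that $M$ is non-periodic and \emph{possesses} a weakly aperiodic connected covering, which is strictly weaker, and your lift-and-limit argument does not close the gap: local non-periodicity of a pointed limit $\widehat N=\lim(\widehat M,\tilde x_i)$ gives no control over local non-periodicity of $N=\lim(M,x_i)$, because an isometry germ of $N$ near its basepoint need not lift to an isometry of $\widehat N$ moving the lifted basepoint a small amount (the lift may land in a distant sheet of the covering, or may not exist at all if the covering is not normal). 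So the exclusion you want to extract from weak aperiodicity of $\widehat M$ does not follow, and the containment $X\subset\MM_{*,\text{\rm lnp}}^\infty(n)$ is unproved.

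The paper avoids the issue entirely: $Y\subset\MM_{*,\text{\rm lnp}}^\infty(n)$ \emph{is} the weak aperiodicity of $\widetilde M$ by Lemma~\ref{l: aperiodic}; $Y$ is compact because $\widetilde M$ inherits bounded geometry from $M$ (Theorem~\ref{t: bounded geometry}); and the dense leaf $\iota(\widetilde M)$ is identified with $\Iso(\widetilde M)\backslash\widetilde M$ via $\bar\iota$ by Theorem~\ref{t: FF_*,lnp(n)}, a quotient that equals $M$ because $M$ is non-periodic. You should build the foliated space from the covering $\widetilde M$, not from $M$ itself. Two smaller defects in your converse: ``matching the Riemannian structure forces $\Iso(M')$ trivial'' is not a valid inference --- the leaf equals $\Iso(M')\backslash M'$, which can be isometric to $M$ without $\Iso(M')$ being trivial --- and even if $\Iso(M')$ were trivial you would only get local non-periodicity of $M$, not non-periodicity. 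Non-periodicity of $M$ is read off instead from Remark~\ref{r: covering-determined}, which records that the leaves of a covering-determined foliated space are non-periodic.
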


\section{Preliminaries}

\subsection{Foliated spaces}\label{ss: fol sps}

Standard references for foliated spaces are \cite{MooreSchochet1988},
\cite[Chapter~11]{CandelConlon2000-I},
\cite[Part~1]{CandelConlon2003-II} and \cite{Ghys2000}.

Let $Z$ be a space and let $U$ be an open set in $\R^n\times Z$
($n\in\N$), with coordinates $(x,z)$. For $m\in\N$, a map $f:U\to
\R^p$ ($p\in\N$) is of \emph{class $C^m$} if its partial derivatives
up to order $m$ with respect to $x$ exist and are continuous on
$U$. If $f$ is of class $C^m$ for all $m$, then it is called of
\emph{class $C^\infty$}. Let $Z'$ be another space, and let
$h:U\to\R^p\times Z'$ ($p\in\N$) be a map of the form
$h(x,z)=(h_1(x,z),h_2(z))$, for maps $h_1:U\to\R^p$ and
$h_2:\pr_2(U)\to Z'$, where $\pr_2:\R^n\times Z\to Z$ is the second
factor projection. It will be said that $h$ is of \emph{class $C^m$}
if $h_1$ is of class $C^m$ and $h_2$ is continuous.

For $m\in\N\cup\{\infty\}$ and $n\in\N$, a \emph{foliated structure}
$\FF$ of \emph{class $C^m$} and \emph{dimension} $\dim\FF=n$ on a
space $X$ is defined by a collection $\UU=\{(U_i,\phi_i)\}$, where
$\{U_i\}$ is an open covering of $X$, and each $\phi_i$ is a
homeomorphism $U_i\to B_i\times Z_i$, for a locally compact Polish
space $Z_i$ and an open ball $B_i$ in $\R^n$, such that the coordinate
changes $\phi_j\phi_i^{-1}:\phi_i(U_i\cap U_j)\to\phi_j(U_i\cap U_j)$
are locally $C^m$ maps of the form
\[
\phi_j\phi_i^{-1}(x,z) = (g_{ij}(x,z),h_{ij}(z))\;.
\]
These maps $h_{ij}$ will be called the \emph{local transverse
  components} of the changes of coordinates. Each $(U_i,\phi_i)$ is
called a \emph{foliated chart}, the sets $\phi_i^{-1}(B_i\times
\{z\})$ ($z\in Z_i$) are called \emph{plaques}, and the collection
$\UU$ is called a \emph{foliated atlas} of \emph{class $C^m$}. Two
$C^m$ foliated atlases on $X$ define the same $C^m$ foliated structure
if their union is a $C^m$ foliated atlas. If we consider foliated
atlases so that the sets $Z_i$ are open in some fixed space, then
$\FF$ can be also described as a maximal foliated atlas of class
$C^m$. The term \emph{foliated space} (of \emph{class $C^m$}) is used
for $X\equiv(X,\FF)$. If no reference to the class $C^m$ is indicated,
then it is understood that $X$ is a $C^0$ (or \emph{topological})
foliated space. The concept of $C^m$ foliated space can be extended to
the case \emph{with boundary} in the obvious way, and the boundary of
a $C^m$ foliated space is a $C^m$ foliated space without boundary.

The foliated structure of a space $X$ induces a locally Euclidean
topology on $X$, the basic open sets being the plaques of all foliated
charts, which is finer than the original topology. The connected
components of $X$ in this topology are called \emph{leaves}. Each leaf
is a connected $C^m$ $n$-manifold with the differential structure
canonically induced by $\FF$. The leaf that contains each point $x\in
X$ is denoted by $L_x$. The leaves of $\FF$ form a partition of $X$
that determines the topological foliated structure. The corresponding
quotient space, called \emph{leaf space}, is denoted by $X/\FF$.

The restriction of $\FF$ to some open subset $U\subset X$ is the
foliated structure $\FF|_U$ on $U$ defined by the charts of $\FF$
whose domains are contained in $U$. More generally, a subspace
$Y\subset X$ is a \emph{$C^m$ foliated subspace} when it is a subspace
with a $C^m$ foliated structure $\GG$ so that, for each $y\in Y$,
there is a foliated chart of $\FF$ defined on a neighborhood $U$ of
$y$ in $X$, whose restriction to $U\cap Y$ can be considered as a
chart of $\GG$ in the obvious way; in particular,
$\dim\GG\le\dim\FF$. For instance, any saturated subspace is a $C^m$
foliated subspace.

A map between foliated spaces is said to be a \emph{foliated map} if
it maps leaves to leaves. A foliated map between $C^m$ foliated spaces
is said to be of \emph{class $C^m$} if its local representations in
terms of foliated charts are of class $C^m$. A \emph{$C^m$ foliated
  diffeomorphism} between $C^m$ foliated spaces is a $C^m$ foliated
homeomorphism between them whose inverse is also a $C^m$ foliated map.

Any topological space is a foliated space whose leaves are its
points. On the other hand, any connected $C^m$ $n$-manifold $M$ is a
$C^m$ foliated space of dimension $n$ with only one leaf. The $C^m$
foliated maps $M\to X$ can be considered as $C^m$ maps to the leaves
of $X$, and may be also called \emph{$C^m$ leafwise maps}. They form a
set denoted by $C^m(M,\FF)$, which can be equipped with the obvious
generalization of the (\emph{weak}) \emph{$C^m$ topology}. In
particular, for $m=0$, we get the subspace $C(M,\FF)\subset C(M,X)$
with the compact-open topology. For instance, $C(I,\FF)$ ($I=[0,1]$)
is the space of leafwise paths in $X$.

Many concepts of manifold theory readily extend to foliated spaces. In
particular, if $\FF$ is of class $C^m$ with $m\ge1$, there is a vector
bundle $T\FF$ over $X$ whose fiber at each point $x\in X$ is the
tangent space $T_xL_x$. Observe that $T\FF$ is a foliated space of
class $C^{m-1}$ with leaves $TL$ for leaves $L$ of $X$. Then we can
consider a $C^{m-1}$ Riemannian structure on $T\FF$, which is called a (\emph{leafwise})
\emph{Riemannian metric} on $X$. This is a
  section of the associated bundle over $X$ of positive definite
  symmetric bilinear forms on the fibers of $T\FF$, which is $C^{m-1}$
  as foliated map. In this paper, a \emph{Riemannian
  foliated space} is a $C^\infty$ foliated space equipped with a
$C^\infty$ Riemannian metric, and an \emph{isometry} between
Riemannian foliated spaces is a $C^\infty$ diffeomorphism between them
whose restrictions to the leaves are isometries; in this case, the
Riemannian foliated spaces are called \emph{isomertric}.

A foliated space has a ``transverse dynamics,'' which can be described
by using a pseudogroup (see \cite{Haefliger1985,Haefliger1988,Haefliger2002}). A \emph{pseudogroup} $\HH$ on a space is a
maximal collection of homeomorphisms between open subsets of $Z$ that
contains $\id_Z$, and is closed by the operations of composition,
inversion, restriction to open subsets of their domains, and
combination. This is a generalization of a dynamical system, and all
basic dynamical concepts can be directly generalized to
pseudogroups. For instance, we can consider its \emph{orbits}, and the
corresponding orbit space is denoted by $Z/\HH$. It is said that $\HH$
is \emph{generated} by a subset $E$ when all of its elements can be
obtained from the elements of $E$ by using the pseudogroup
operations. Certain \emph{equivalence} relation between pseudogroups
was introduced \cite{Haefliger1985}, \cite{Haefliger1988}, and
equivalent pseudogroups should be considered to represent the same
dynamics; in particular, they have homeomorphic orbit spaces.

The \emph{germ groupoid} of $\HH$ is the topological groupoid of germs
of maps in $\HH$ at all points of their domains, with the operation
induced by the composite of partial maps and the \'etale topology. Its
subspace of units can be canonically identified with $Z$. For each
$x\in Z$, the group of elements of this groupoid whose source and
range is $x$ is called the \emph{germ group} of $\HH$ at $x$. The germ
groups at points in the same orbit are conjugated in the germ
groupoid, and therefore the \emph{germ group} of each orbit is defined
up to isomorphisms. Under pseudogroup equivalences, corresponding
orbits have isomorphic germ groups.

Let $\UU=\{U_i,\phi_i\}$ be a foliated atlas of $\FF$, with
$\phi_i:U_i\to B_i\times Z_i$, and let $p_i=\pr_2\phi_i:U_i\to
Z_i$. The local transverse components of the corresponding changes of
coordinates can be considered as homeomorphisms between open subsets
of $Z=\bigsqcup_iZ_i$, which generate a pseudogroup $\HH$. The
equivalence class of $\HH$ depends only on $\FF$, and is called its
\emph{holonomy pseudogroup}. There is a canonical homeomorphism between
the leaf space and the orbit space, $X/\FF\to Z/\HH$, given by
$L\mapsto\HH(p_i(x))$ if $x\in L\cap U_i$.

The \emph{holonomy groups} of the leaves are the germ groups of the
corresponding $\HH$-orbits. The leaves with trivial holonomy groups
are called \emph{leaves without holonomy}. The union of leaves without
holonomy is denoted by $X_0$. If $X$ is second
countable, then $X_0$ is a dense $G_\delta$ saturated subset of $X$
\cite{Hector1977a,EpsteinMillettTischler1977}.

Given a loop $\alpha$ in a leaf $L$ with base point $x$, there is a
partition $0=t_0<t_1<\dots<t_k=1$ of $I$ and there are foliated charts
$(U_{i_1},\phi_{i_1}),\dots,(U_{i_k},\phi_{i_k})$ such that
$\alpha([t_{l-1},t_l])\subset U_{i_l}$ for $l\in\{1,\dots,k\}$. We can
assume $(U_{i_k},\phi_{i_k})=(U_{i_1},\phi_{i_1})$ because $\alpha$ is
a loop. Let $h_{i_{l-1},i_l}$ be the local transverse component of
each change of coordinates $\phi_{i_l}\phi_{i_{l-1}}^{-1}$ defined
around $p_{i_{l-1}}c(t_{l-1})$ and with
$h_{i_{l-1},i_l}p_{i_{l-1}}\alpha(t_{l-1})=p_{i_l}\alpha(t_l)$. The
germ the composition $h_{i_{k-1},i_k}\cdots h_{i_1,i_0}$ at
$p_{i_0}(x)=p_{i_k}(x)$ depends only on $\FF$ and the class of
$\alpha$ in $\pi_1(L,x)$, obtaining a surjective homomorphism of
$\pi_1(L,x)$ to the holonomy group of $L$. This homomorphism defines a
connected covering $\widetilde L^{\text{\rm hol}}$ of $L$, which is
called its \emph{holonomy covering}.

Now, let $R$ be an equivalence relation on a topological space $X$. A
subset of $X$ is called ($R$-) \emph{saturated} if it is a union of
($R$-) equivalence classes. The equivalence relation $R$ is said to be
(\emph{topologically})\emph{transitive} if there is an equivalence
class that is dense in \(X\). A subset $Y\subset X$ is called an ($R$-)
\emph{minimal set} if it is a minimal element of the family of
nonempty saturated closed subsets of $X$ ordered by inclusion; this is
equivalent to the condition that all equivalence classes in $Y$ are
dense in $Y$. In particular, $X$ (or $R$) is called \emph{minimal}
when all equivalence classes are dense in $X$. These concepts apply to
foliated spaces with the equivalence relation whose equivalence
classes are the leaves.

\subsection{Riemannian geometry}\label{ss: Riemannian geom}

Let $M$ be a Riemannian manifold possibly with boundary or
corners (in the sense of \cite{Cerf1961},
  \cite{Douady1964}). Connectedness of Riemannian manifolds is not assumed in Sections~\ref{ss: Riemannian geom},~\ref{s: quasi-isometries} and~\ref{s: center of mass} because it is not relevant for the concepts of these sections, but this property is assumed in the rest of the paper: it is needed in Section~\ref{s: partial quasi-isometries}, and it is implicit in Sections~\ref{s: C^infty topology}--\ref{s: bundles} and~\ref{s: foliated structure}--\ref{s: saturated subspaces} because the manifolds are given by elements of $\MM_*(n)$. The following standard notation will be
used. The metric tensor is denoted by $g$, the distance function on
each of the connected components of \(M\) by $d$, the tangent bundle by
$\pi:TM\to M$, the $\GL(n)$-principal bundle of tangent frames by
$\pi:PM\to M$, the $\Or(n)$-principal bundle of orthonormal tangent
frames by $\pi:QM\to M$, the Levi-Civita connection by $\nabla$, the
curvature by $\RR$, the exponential map by $\exp:TM\to M$ (if $M$ is
complete and $\partial M=\emptyset$), the open and closed balls of
center $x\in M$ and radius $r>0$ by $B(x,r)$ and $\ol B(x,r)$,
respectively, and the injectivity radius by $\inj$ (if $\partial
M=\emptyset$). The \emph{penumbra} around a subset $S\subset M$ of
radius $r>0$ is the set $\Pen(S,r)=\{\,x\in M\mid d(x,S)<r\,\}$. If
needed, ``$M$'' will be added to all of the above notation as a
subindex or superindex. When a family of Riemannian manifolds $M_i$ is
considered, we may add the subindex or superindex ``$i$'' instead of
``$M_i$'' to the above notation. A covering of $M$ is assumed to be
equipped with the lift of $g$.

For $m\in\Z^+$, let $T^{(m)}M=T\cdots TM$ ($m$ times). We also set
$T^{(0)}M=M$. If $l<m$, $T^{(l)}M$ is sometimes identified with a
regular submanifold of $T^{(m)}M$ via zero sections, and therefore,
for each $x\in M$, the notation $x$ may be also used for the zero
elements of $T_xM$, $T_xTM$, etc. When the vector space structure of
$T_xM$ is emphasized, its zero element is denoted by $0_x$, or simply
by $0$, and the image of the zero section of $\pi:TM\to M$ is denoted
by $Z\subset TM$. Let $\pi\colon T^{(m)}M \to T^{(l)}M$ be the vector
bundle projection given by composing the tangent bundle projections;
in particular, we have $\pi:T^{(m)}M \to M$. Given any $C^m$ map
between Riemannian manifolds, $\phi:M\to N$, the induced map
$T^{(m)}M\to T^{(m)}N$ will be denoted by $\phi_*^{(m)}$ (or simply
$\phi_*$ if $m=1$, $\phi_{**}$ if $m=2$, and so on).

Banach manifolds are also considered in some parts of the paper, using
analogous notation.

The Levi-Civita connection determines a decomposition
$T^{(2)}M=\HH\oplus\VV$, as direct sum of the horizontal and vertical
subbundles. The \emph{Sasaki metric} on $TM$ is the unique Riemannian
metric $g^{(1)}$ so that $\HH\perp\VV$ and the canonical identities
$\HH_\xi\equiv T_\xi M \equiv \VV_\xi$ are isometries for every
$\xi\in TM$.

Continuing by induction, for $m\ge2$, the \emph{Sasaki metric} on
$T^{(m)}M$ is defined by $g^{(m)}=(g^{(m-1)})^{(1)}$. The notation
$d^{(m)}$ is used for the corresponding distance function on the
connected components, and the corresponding open and closed balls of
center $\xi\in T^{(m)}M$ and radius $r>0$ are denoted by
$B^{(m)}(\xi,r)$ and $\ol B^{(m)}(\xi,r)$, respectively. We may add
the subindex ``$M$'' to this notation if necessary, or the subindex
``$i$'' instead of ``$M_i$'' when a family of Riemannian manifolds
$M_i$ is considered. From now on, $T^{(m)}M$ is assumed to be equipped
with $g^{(m)}$.

\begin{rem}\label{r: T^(l)M subset T^(m)M}
  The following properties hold for $l<m$ and $\pi:T^{(m)}M\to
  T^{(l)}M$:
  \begin{enumerate}[(i)]

  \item\label{i: g^(m)|_T^(l)M = g^(l)} $g^{(m)}|_{T^{(l)}M}=g^{(l)}$.
			
  \item\label{i: T^(l)M subset T^(m)M} The submanifold
    $T^{(l)}M\subset T^{(m)}M$ is totally geodesic and orthogonal to
    the fibers of $\pi$. This follows easily by induction on $m$,
    where the case $m=1$ is proved in \cite[Corollary of
    Theorem~13]{Sasaki1958}.
			
  \item\label{i: pi} The projection $\pi$ is a Riemannian submersion
    with totally geodesic fibers. Again, this follows by induction on
    $m$, and the case $m=1$ is proved in \cite[Theorems~14
    and~18]{Sasaki1958}.
		
  \item\label{i: pi(xi)} For every $\xi\in T^{(m)}M$, its projection
    $\pi(\xi)$ is the only point $\zeta\in T^{(l)}M$ that satisfies
    $d^{(m)}(\xi,\zeta)=d^{(m)}(\xi,T^{(l)}M)$. To see this, it is
    enough to prove that $\pi(\xi)$ is the only critical point of the
    distance function $d^{(m)}(\cdot,\xi)$ on $T^{(l)}M$. These
    critical points are just the points $\zeta\in T^{(l)}M$ where the
    shortest $g^{(m)}$-geodesics $\gamma$ from $\zeta$ to $\xi$ are
    orthogonal to $T^{(l)}M$ at $\zeta$. Hence $\gamma$ is a geodesic
    in $\pi^{-1}(\zeta)$ by~\eqref{i: pi}, obtaining $\zeta=\pi(\xi)$.
			
  \item\label{i: zeta'} For all $\zeta,\zeta'\in T^{(l)}M$, the point
    $\zeta'$ is the only $\xi\in\pi^{-1}(\zeta')$ satisfying
    $d^{(m)}(\xi,\zeta)=d^{(m)}(\xi,\pi^{-1}(\zeta))$. This follows
    like~\eqref{i: pi(xi)}, using~\eqref{i: T^(l)M subset T^(m)M}
    instead of~\eqref{i: pi}.
			
  \end{enumerate}
\end{rem}

Let $(U;x^1,\dots,x^n)$ be a chart of $M$. The corresponding metric
coefficients are denoted by $g_{ij}$, and the Christoffel symbols of
the first and second kind are denoted by $\Gamma_{ijk}$ and
$\Gamma_{ij}^k$, respectively. Using  the
  Einstein notation, recall that
\begin{equation}\label{Gamma_ijk}
  \Gamma_{ij}^\alpha g_{\alpha k}=\Gamma_{ijk}
  =\frac{1}{2}(\partial_ig_{jk}+\partial_jg_{ik}-\partial_kg_{ij})\;.
\end{equation}
Identify the functions $x^i$, $g_{ij}$, $\Gamma_{ijk}$ and
$\Gamma_{ij}^k$ with their lifts to $TU$. We get a chart
$(U^{(1)};x_{(1)}^1,\dots,x_{(1)}^{2n})$ of $TM$ with $U^{(1)}=TU$,
$x_{(1)}^i=x^i$ and $x_{(1)}^{n+i}=v^i$ for $1\le i\le n$, where the
functions $v^i$ give the coordinates of tangent vectors with respect
to the local frame $(\partial_1,\dots,\partial_n)$ of $TU$ induced by
$(U;x^1,\dots,x^n)$. The coefficients of the Sasaki metric $g^{(1)}$
with respect to $(TU;x_{(1)}^1,\dots,x_{(1)}^{2n})$ are \cite[Eq.~(3.5)]{Sasaki1958}:
\begin{equation}\label{Sasaki}
  \left.
    \begin{aligned}
      g^{(1)}_{ij} &= g_{ij} - g_{\alpha
        \gamma}\Gamma_{\mu\beta}^\alpha
      \Gamma_{\alpha\nu}^\beta v^\mu v^\nu \\
      g^{(1)}_{n+i\,j} &=\Gamma_{j\mu i}v^\mu \\
      g^{(1)}_{n+i\,n+j} &=g_{ij}
    \end{aligned}
    \ \right\}
\end{equation}
for $1\leq i,j \leq n$. Thus the metric coefficients
$g^{(1)}_{\alpha\beta}$ are given by universal fractional expressions
of the functions $g_{ij}$, $\partial_kg_{ij}$ and $v^i$ ($1\le
i,j,k\le n$).

Using induction again, for $m\ge2$, let
$(U^{(m)};x_{(m)}^1,\dots,x_{(m)}^{2^mn})$ be the chart of $T^{(m)}M$
induced by the chart
$(U^{(m-1)};x_{(m-1)}^1,\dots,x_{(m-1)}^{2^{m-1}n})$ of $T^{(m-1)}M$,
and let $g^{(m)}_{\alpha\beta}$ be the corresponding coefficients of
$g^{(m)}$.

\begin{lem}\label{l: g^(m)_alpha beta}
  %Consider a system of coordinates $(U,x^1,\dots,x^n)$ of $M$, and the induced system of coordinates $(U^{(m)},x_{(m)}^{1},\dots,x_{(m)}^{2^mn})$ of each $T^{(m)}M$. The following properties hold:
  \begin{enumerate}[{\rm(}i{\rm)}]
	
  \item\label{i: g^(m)_alpha beta} The coefficients
    $g^{(m)}_{\alpha\beta}$ are given by universal fractional
    expressions of the coordinates
    $x_{(m)}^{n+1},\dots,x_{(m)}^{2^mn}$ and the partial derivatives
    up to order $m$ of the coefficients $g_{ij}$.
		
  \item\label{i: partial up to order m of g_ij} For each $\rho>0$, the
    partial derivatives up to order $m$ of the coefficients $g_{ij}$
    are given by universal linear expressions of the functions
    $(\sigma^{(m)}_{\rho,\mu})^*g^{(m)}_{\alpha\beta}$ for
    $n+1\le\mu\le2^mn$, where $\sigma^{(m)}_{\rho,\mu}:U\to U^{(m)}$
    is the section of $\pi:U^{(m)}\to U$ determined
    by $(\sigma^{(m)}_{\rho,\mu})^*x_{(m)}^{\nu}=\rho\delta_{\mu\nu}$ for
    $n+1\le\nu\le2^mn$, using Kronecker's delta.
		
  \end{enumerate}
\end{lem}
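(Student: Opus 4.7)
Both parts are proved by induction on $m$, using the recursive relation $g^{(m)}=(g^{(m-1)})^{(1)}$ together with Sasaki's formula \eqref{Sasaki}. For part~(i), the base case $m=1$ is \eqref{Sasaki}: the Christoffel symbols $\Gamma^\alpha_{ij}=g^{\alpha\ell}\Gamma_{ij\ell}$ involve the inverse metric, which is a universal rational expression in the $g_{ij}$. For the inductive step I apply \eqref{Sasaki} with $g$ replaced by $g^{(m-1)}$ and $n$ by $2^{m-1}n$. This expresses $g^{(m)}_{\alpha\beta}$ as a universal rational expression in the $g^{(m-1)}_{\alpha\beta}$, their first partial derivatives in all of $x_{(m-1)}^1,\dots,x_{(m-1)}^{2^{m-1}n}$, and the new fiber coordinates $x_{(m)}^{2^{m-1}n+1},\dots,x_{(m)}^{2^m n}$. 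By the inductive hypothesis the $g^{(m-1)}_{\alpha\beta}$ are themselves universal rational expressions in partial derivatives of $g_{ij}$ of order $\le m-1$ and in the old fiber coordinates; differentiating in a base direction $x^i$ ($i\le n$) raises the order by one to at most $m$, while differentiating in a fiber direction preserves rationality in the same data. Since universal rationality is closed under composition, the induction goes through.

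For part~(ii), the base case $m=1$ follows from \eqref{Sasaki}. Along $\sigma^{(1)}_{\rho,n+k}$ (which sets $v^j=\rho\delta_{jk}$) one reads off $(\sigma^{(1)}_{\rho,n+k})^*g^{(1)}_{n+i\,j}=\rho\,\Gamma_{jki}$ and $(\sigma^{(1)}_{\rho,n+k})^*g^{(1)}_{n+i\,n+j}=g_{ij}$. Hence the Christoffel symbols $\Gamma_{ijk}$ and the coefficients $g_{ij}$ are universal linear expressions of these pullbacks, and the symmetrization identity $\Gamma_{ijk}+\Gamma_{ikj}=\partial_i g_{jk}$, which is an immediate consequence of \eqref{Gamma_ijk}, produces the first partial derivatives as universal linear combinations.

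For the inductive step of~(ii) I split the indices $\mu$ into two classes. For $\mu\in\{n+1,\dots,2^{m-1}n\}$ (``old'' indices) the section $\sigma^{(m)}_{\rho,\mu}$ factors through the zero-section inclusion $T^{(m-1)}M\hookrightarrow T^{(m)}M$, which is isometric and totally geodesic by Remark~\ref{r: T^(l)M subset T^(m)M}\eqref{i: g^(m)|_T^(l)M = g^(l)}--\eqref{i: T^(l)M subset T^(m)M}. Consequently $(\sigma^{(m)}_{\rho,\mu})^*g^{(m)}_{\alpha\beta}$ reproduces $(\sigma^{(m-1)}_{\rho,\mu})^*g^{(m-1)}_{\alpha\beta}$ on the level-$(m-1)$ index range and, by \eqref{Sasaki} applied at level $m$ with $w=0$, either vanishes or duplicates on the complementary indices; the inductive hypothesis then recovers all partial derivatives of $g_{ij}$ of order $\le m-1$ linearly. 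For $\mu=2^{m-1}n+\lambda$ (``new'' indices), Sasaki's formula at level $m$ gives
\[
  (\sigma^{(m)}_{\rho,\mu})^*g^{(m)}_{2^{m-1}n+\alpha\,\beta}=\rho\,\Gamma^{(m-1)}_{\beta\lambda\alpha}|_{v=0}\;,
\]
where $\Gamma^{(m-1)}$ is the Christoffel symbol of $g^{(m-1)}$ on $T^{(m-1)}M$. By the same symmetrization identity applied now at level $m-1$, these pullbacks linearly recover the first partial derivatives $\partial_\beta g^{(m-1)}_{\alpha\gamma}|_{v=0}$ for $\beta\in\{1,\dots,2^{m-1}n\}$; when $\beta\le n$ is a base direction, part~(i) at level $m-1$ identifies each such first derivative as a universal rational expression in partial derivatives of $g_{ij}$ of order $\le m$, linear in the order-$m$ data. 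Subtracting the lower-order contributions obtained from the first class and inverting the resulting linear system yields the claimed universal linear recovery of all partial derivatives of $g_{ij}$ of order $\le m$.

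The main obstacle I anticipate is this final linear-algebra step: verifying that the first partial derivatives of $g^{(m-1)}_{\alpha\gamma}|_{v=0}$ in base directions invertibly encode the order-$m$ data of $g_{ij}$. Morally this amounts to iterating the base-case identity $\Gamma_{ijk}+\Gamma_{ikj}=\partial_i g_{jk}$ up the tower $T^{(m-1)}M$; verifying nondegeneracy cleanly requires a careful unpacking of how part~(i) at the zero section of $T^{(m-1)}M$ isolates the highest-order data, using that the ``deepest'' coefficients of $g^{(m-1)}$ (those coming from the last iteration of Sasaki's formula) are precisely the ones carrying the top-order derivatives of $g_{ij}$.
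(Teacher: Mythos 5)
Your strategy — induction on $m$, Sasaki's formula~\eqref{Sasaki}, and the symmetrization identity $\partial_ig_{jk}=\Gamma_{ijk}+\Gamma_{ikj}$ — is exactly the paper's, which dispatches both parts in a few lines by applying the base case $m=1$ to $g^{(m-1)}$. And you are right to single out the final linear-algebra step of part~(ii) as the crux; unfortunately the way you close it does not work.

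The obstruction is that the ``new'' sections $\sigma^{(m)}_{\rho,\mu}$ with $\mu>2^{m-1}n$ set all inner fiber coordinates to zero, so $\Gamma^{(m-1)}_{\beta\gamma\alpha}$ gets evaluated on the zero section of $T^{(m-1)}M$, and there the order-$m$ data has already dropped out. The order-$m$ derivatives of $g_{ij}$ enter the base derivatives $\partial_\beta g^{(m-1)}_{\alpha\gamma}$ only through coefficients multiplied by inner fiber coordinates: already at $m=2$, the only place $\partial_k\partial_l g_{ij}$ can appear in a first base derivative of $g^{(1)}$ is in terms such as $\partial_k g^{(1)}_{n+i\,j}=(\partial_k\Gamma_{j\mu i})v^\mu$, and these vanish at $v=0$. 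Your clause ``linear in the order-$m$ data'' is therefore false at $v=0$: that linear part is identically zero there. You can see the problem concretely with $n=1$, $g=f\,dx^2$: every pullback $(\sigma^{(2)}_{\rho,\mu})^*g^{(2)}_{\alpha\beta}$ is a universal rational expression in $f$, $f'$ and $\rho$ alone, and $f''$ never appears. So with the sections $\sigma^{(m)}_{\rho,\mu}$ setting a single fiber coordinate nonzero, the claim of part~(ii) actually fails for $m\ge2$, and the paper's one-line inductive step tacitly produces the composites $\sigma^{(1)}_{\rho,\mu_m}\circ\cdots\circ\sigma^{(1)}_{\rho,\mu_1}$ (one nonzero coordinate at each level of the tower $T^{(m)}M\to\cdots\to M$) rather than the $\sigma^{(m)}_{\rho,\nu}$. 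Working with that enlarged family of sections — whose images still lie in a bounded tube $T^{(m),\le r'}M$, so the use of the lemma in Proposition~\ref{p: U^m_R,epsilon(M,x) subset D^m_R,r(M,x)} is unaffected — the induction does go through and the rest of your outline is essentially right.
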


\begin{proof}
  We proceed by induction on $m$. For $m=1$,~\eqref{i: g^(m)_alpha
    beta} holds by~\eqref{Gamma_ijk} and~\eqref{Sasaki}, and~\eqref{i:
    partial up to order m of g_ij} holds by the second and third
  equalities of~\eqref{Sasaki}, since
  $\partial_ig_{jk}=\Gamma_{ijk}+\Gamma_{ikj}$ by~\eqref{Gamma_ijk}.
  For arbitrary $m\ge2$, assuming that~\eqref{i: g^(m)_alpha beta}
  and~\eqref{i: partial up to order m of g_ij} hold for the case
  $m-1$, we get both properties for $m$ by applying the above case to
  $(g^{(m-1)})^{(1)}=g^{(m)}$.
\end{proof}

Let $\Omega\subset M$ be a compact domain and $m\in\N$.  Fix a finite
collection of charts of $M$ that covers $\Omega$,
$\UU=\{(U_a;x_a^1,\dots,x_a^n)\}$, and a family of compact subsets of
$M$ with the same index set as $\UU$, $\KK=\{K_a\}$, such that
$\Omega\subset\bigcup_aK_a$, and $K_a\subset U_a$ for all $a$. The
corresponding $C^m$ norm of a $C^m$ tensor $T$ on $\Omega$ is defined
by
\[
\|T\|_{C^m,\Omega,\UU,\KK} =\max_a\max_{x\in
  K_a\cap\Omega}\sum_{|I|\le m}\sum_{J,K}
\left|\frac{\partial^{|I|}T_{a,J}^K}{\partial x_a^I}(x)\right|\;,
\]
using the standard multi-index notation, where $T_{a,J}^K$ are the coefficients of $T$ on $U_a\cap\Omega$ with
respect to the frame induced by $(U_a;x_a^1,\dots,x_a^n)$. With this
norm, the $C^m$ tensors on $\Omega$ of a fixed type form a Banach
space. By taking the projective limit as $m\to\infty$, we get the
Fr\'echet space of $C^\infty$ tensors of that type equipped with the
\emph{$C^\infty$ topology} (see e.g.\ \cite{Hirsch1976}). Observe that
$\UU$ and $\KK$ are also qualified to define the norm $\|\
\|_{C^m,\Omega',\UU,\KK}$ for any compact subdomain
$\Omega'\subset\Omega$. It is well known that $\|\
\|_{C^m,\Omega,\UU,\KK}$ is equivalent to the norm $\|\
\|_{C^m,\Omega,g}$ defined by
\[
\|T\|_{C^m,\Omega,g}=\max_{0\le l\le
  m}\max_{x\in\Omega}|\nabla^lT(x)|\;;
\]
i.e., there is some $C\ge1$, depending only on $M$, $m$, $\Omega$, $\UU$,
$\KK$ and $g$, such that
\begin{equation}\label{norm equiv}
  \frac{1}{C}\,\|\ \|_{C^m,\Omega,\UU,\KK}\le\|\ \|_{C^m,\Omega,g}
  \le C\,\|\ \|_{C^m,\Omega,\UU,g}\;.
\end{equation}
	
When $\partial M=\emptyset$, it is said that $M$ is of \emph{bounded
  geometry} if $\inj_M>0$ and the function $|\nabla^m\RR|$ is bounded
for all $m\in\N$; in particular, $M$ is complete since
$\inj_M>0$. More precisely, given $r>0$ and a sequence $C_m>0$, if
$\inj_M\ge r$ and $|\nabla^m\RR|\le C_m$ for all $m\in\N$, then $(r,
C_m)$ is called a \emph{geometric bound} of $M$. A family $\CC$ of
Riemannian manifolds without boundary is called of \emph{equi-bounded
  geometry} if all of them are of bounded geometry with a common
geometric bound; i.e., their disjoint union is of bounded geometry.

\section{Quasi-isometries}\label{s: quasi-isometries}

Let $\phi\colon M \to N$ be a $C^1$ map between Riemannian
manifolds. Recall that $\phi$ is called a ($\lambda$-)
\emph{quasi-isometry}, or ($\lambda$-) \emph{quasi-isometric}, if
there is some $\lambda \geq 1$ such that $\frac{1}{\lambda}\,|\xi| \le
|\phi_*(\xi)| \leq \lambda \,|\xi|$ for every $\xi\in TM$. This
$\lambda$ is called a \emph{dilation bound} of $\phi$. The second of
the above inequalities, $|\phi_*(\xi)| \leq \lambda \,|\xi|$ for all
$\xi\in TM$, means that $|\phi_*|\le\lambda$; i.e.,
$|\phi_{*x}|\le\lambda$ for all $x\in M$.

\begin{rem}\label{r: quasi-isometries}
  \begin{enumerate}[(i)]
	
  \item\label{i: quasi-isometry is immersion} Every quasi-isometry is
    an immersion.
		
  \item\label{i: |phi_*| le lambda and distances} If
    $|\phi_*|\le\lambda$, then $\phi$ is $\lambda$-Lipschitz; i.e.,
    $d_N(\phi(x),\phi(y))\le\lambda\,d_M(x,y)$ for all $x,y\in M$.
		
  \item\label{i: quasi-isometry and distances} If $\phi:M\to N$ is a
    $\lambda$-quasi-isometry, then $\phi$ is $\lambda$-bi-Lipschitz;
    i.e., for all $x,y\in M$,
    \[
    \frac{1}{\lambda}\,d_M(x,y)\le
    d_N(\phi(x),\phi(y))\le\lambda\,d_M(x,y)\;.
    \]
			
  \item\label{i: |(psi phi)_*| le lambda mu} Let $\psi:N\to L$ be
    another $C^1$ map between Riemannian manifolds. If
    $|\phi_*|\le\lambda$ and $|\psi_*|\le\mu$, then
    $|(\psi\phi)_*|\le\lambda\mu$.
		
  \item\label{i: composition of quasi-isometries} The composition of a
    $\lambda$-quasi-isometry and a $\mu$-quasi-isometry is a
    $\lambda\mu$-quasi-isometry.
		
  \item\label{i: inverse of a quasi-isometric diffeo} The inverse of a
    $\lambda$-quasi-isometric diffeomorphism is a
    $\lambda$-quasi-isometric diffeomorphism.
		
  \end{enumerate}
\end{rem}

Consider the subbundle $T^{\le r}M=\{\,\xi\in TM\mid|\xi|\le
r\,\}\subset TM$ for each $r>0$. If $M$ has no boundary, then $T^{\le
  r}M$ is a manifold with boundary, being $\partial T^{\le
  r}M=T^rM:=\{\,\xi\in TM\mid|\xi|=r\,\}$; otherwise, $T^{\le r}M$ is
a manifold with corners. Also, define $T^{(m),\le r}M$ by induction on
$m\in\Z^+$, setting $T^{(1),\le r}M=T^{\le r}M$ and $T^{(m),\le
  r}M=T^{\le r}T^{(m-1),\le r}M$. Note that $T^{(m),\le r}T^{(m'),\le
  r}M=T^{(m+m'),\le r}M$.

\begin{defn}\label{d: quasi-isometry of order m}
	\begin{enumerate}[(i)]
	
        \item\label{i: quasi-isometry of order m} It is said that
          $\phi:M\to N$ is a ($\lambda$-) \emph{quasi-isometry of
            order $m\in\N$}, or a ($\lambda$-) \emph{quasi-isometric
            map of order $m$}, if it is $C^{m+1}$ and
          $\phi_*^{(m)}:T^{(m),\le1}M\to T^{(m)}N$ is a ($\lambda$-)
          quasi-isometry. This $\lambda$ is called a \emph{dilation
            bound of order $m$} of $\phi$. The infimum of all
          dilations bounds of order $m$ is called the \emph{dilation
            of order $m$}. If $\phi$ is a quasi-isometry of order $m$
          for all $m\in\N$, then it is called a \emph{quasi-isometry
            of order $\infty$}.
		
        \item\label{i: equi-quasi-isometries of order m} A collection
          $\Phi$ of maps between Riemannian manifolds is called a
          family of \emph{equi-quasi-isometries of order $m\in\N$} if
          it is a family of quasi-isometries of order $m$ with some
          common dilation bound of order $m$, which is called an
          \emph{equi-dilation bound of order $m$}. If $\Phi$ is a
          collection of equi-quasi-isometries of order $m$ for all
          $m\in\N$, then it is called a family of
          \emph{equi-quasi-isometries of order $\infty$}.
		
        \item\label{i: quasi-isometric with order m} A Riemannian
          manifold $M$ is said to be \emph{quasi-isometric with order
            $m$} to another Riemannian manifold $N$ when there is a
          quasi-isometric diffeomorphism of order $m$, $M\to N$. With
          more generality, a collection $\{M_i\}$ of Riemannian
          manifolds is called \emph{equi-quasi-isometric with order
            $m$} to another collection $\{N_i\}$ of Riemannian
          manifolds, with the same index set, when there is a
          collection of equi-quasi-isometric diffeomorphisms of order
          $m$, $\{M_i\to N_i\}$.
		
	\end{enumerate}
\end{defn}

\begin{rem}\label{r: quasi-isometry of order m}
  \begin{enumerate}[(i)]
	
  \item\label{i: quasi-isometries of order 0} The
    $\lambda$-quasi-isometries of order $0$ are the
    $\lambda$-quasi-isometries.
		
  \item\label{i: q-i of order m are q-i of order m-1} By
    Remark~\ref{r: T^(l)M subset T^(m)M}-\eqref{i: g^(m)|_T^(l)M =
      g^(l)}, if $\phi$ is a $\lambda$-quasi-isometry of order
    $m\ge1$, then it is a $\lambda$-quasi-isometry of order $m-1$.
		
  \item\label{i: phi_*^(m') is a lambda-q.-i. of order m-m'} For
    integers $0\le m'\le m$, if $\phi$ is a $\lambda$-quasi-isometry
    of order $m$, then $\phi_*^{(m')}$ is a $\lambda$-quasi-isometry
    of order $m-m'$.
		
  \end{enumerate}
\end{rem}

To begin with, let us clarify the concept of quasi-isometry of order
$1$. Consider the splittings $T^{(2)}M=\HH\oplus\VV$ and
$T^{(2)}N=\HH'\oplus\VV'$, where $\HH$ and $\HH'$ are the horizontal
subbundles, and $\VV$ and $\VV'$ are the vertical subbundles. Fix any
$x\in M$ and $\xi\in T_xM$, and let $x'=\phi(x)$ and
$\xi'=\phi_*(\xi)$. We have the canonical identities
\begin{equation}\label{HH oplus VV}
  T_\xi TM=\HH_\xi\oplus\VV_\xi\equiv T_xM\oplus T_xM\;,\quad
  T_{\xi'}TN=\HH'_{\xi'}\oplus\VV'_{\xi'}\equiv T_{x'}N\oplus T_{x'}N\;.
\end{equation}
The pull-back Riemannian vector bundle $\phi^*TN$ is endowed with the
pull-back $\nabla'$ of the Riemannian connection of $N$, and let
$\phi_*:TM\to\phi^*TN$ also denote the homomorphism over $\id_M$
induced by $\phi$. Let $X$ be a $C^\infty$ tangent vector field on
some neighborhood of $x$ in $M$ so that $X(x)=\xi$; thus $\phi_*X$ is
a $C^1$ local section of $\phi^*TN$ around $x$ satisfying
$(\phi_*X)(x)=\xi'\in(\phi^*TN)_x\equiv T_{\phi(x)}N$. Then, for any
$\zeta\in T_xM$ and each $C^\infty$ function $f$ defined on some
neighborhood of $x$, we have
\begin{multline*}
  \nabla'_{\zeta}(\phi_*(fX))-\phi_*(\nabla_\zeta (fX))
  =f(x)\,\nabla'_{\zeta}(\phi_*X)+df(\zeta)\,\phi_*\xi-f(x)\,\phi_*(\nabla_\zeta X)-df(\zeta)\,\phi_*\xi\\
  =f(x)\,(\nabla'_{\zeta}(\phi_*X)-\phi_*(\nabla_\zeta X))
\end{multline*}
in $(\phi^*TN)_x\equiv T_{x'}N$. Therefore
$A_\phi(\zeta\otimes\xi):=\nabla'_{\zeta}(\phi_*X)-\phi_*(\nabla_\zeta
X)$ depends only on $\zeta\otimes\xi$, and this expression defines a
continuous section $A_\phi$ of $TM^*\otimes
TM^*\otimes\phi^*TN$. Observe that $X$ can be chosen so that
$\nabla_\zeta X=0$, giving
$A_\phi(\zeta\otimes\xi)=\nabla'_{\zeta}(\phi_*X)$ in this case. Then,
from the definitions of tangent map and covariant derivative, it
easily follows that, according to~\eqref{HH oplus VV},
\begin{equation}\label{phi_**}
  \phi_{**\xi}(\zeta_1,\zeta_2)\equiv(\phi_*(\zeta_1),\phi_*(\zeta_2)+A_\phi(\zeta_1\otimes\xi))
\end{equation}
for all $\zeta_1,\zeta_2\in T_xM$.  

\begin{rem}
  If $TM$ were used instead of $T^{\le1}M$ in the definition of
  quasi-isometries of order $1$, we would get $A_\phi=0$, which is too
  restrictive. On the other hand, it would be weaker to use $T^1M$
  instead of $T^{\le1}M$.
\end{rem}

\begin{lem}\label{l: |phi_**xi| le mu}
  Suppose that $\phi:M\to N$ is $C^2$. Then the following properties
  hold for $r>0$ and $\mu,\nu,K\ge0$:
  \begin{enumerate}[{\rm(}i{\rm)}]

  \item\label{i: |phi_**xi| le mu} If $|\phi_{**\xi}|\le\mu$ for all
    $\xi\in T^{\le r}M$, then $|\phi_*|\le\mu$ and $|A_\phi|\le\mu/r$.

  \item\label{i: |phi_*| le nu} If $|\phi_*|\le\nu$ and $|A_\phi|\le
    K$, then $|\phi_{**\xi}|\le\sqrt{2}(\nu+Kr)$ for all $\xi\in
    T^{\le r}M$.

  \end{enumerate}
\end{lem}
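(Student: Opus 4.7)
The plan is to read off both parts of the lemma directly from the explicit formula
\[
\phi_{**\xi}(\zeta_1,\zeta_2)\equiv\bigl(\phi_*(\zeta_1),\,\phi_*(\zeta_2)+A_\phi(\zeta_1\otimes\xi)\bigr)
\]
combined with the orthogonal decomposition $T_\xi TM=\HH_\xi\oplus\VV_\xi$ of the Sasaki metric (and similarly on $T_{\xi'}TN$), under which each summand is canonically isometric to $T_xM$ (respectively $T_{x'}N$). Thus
\[
|\phi_{**\xi}(\zeta_1,\zeta_2)|^2=|\phi_*\zeta_1|^2+|\phi_*\zeta_2+A_\phi(\zeta_1\otimes\xi)|^2,\qquad|(\zeta_1,\zeta_2)|^2=|\zeta_1|^2+|\zeta_2|^2.
\]

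For part (i), the trick is to evaluate at suitable $\xi\in T^{\le r}M$ to isolate the two quantities. First take $\xi=0_x$, which lies in $T^{\le r}M$ and kills the $A_\phi$ term, giving $|\phi_{**0_x}(\zeta_1,\zeta_2)|^2=|\phi_*\zeta_1|^2+|\phi_*\zeta_2|^2$; specializing $\zeta_2=0$ yields $|\phi_*\zeta_1|\le\mu|\zeta_1|$, i.e.\ $|\phi_*|\le\mu$. Next, for fixed $x\in M$ and any $\xi\in T_xM$ with $|\xi|=r$, specialize $\zeta_2=0$ to get $|A_\phi(\zeta_1\otimes\xi)|^2\le|\phi_{**\xi}(\zeta_1,0)|^2-|\phi_*\zeta_1|^2\le\mu^2|\zeta_1|^2$, hence $|A_\phi(\zeta_1\otimes\xi)|\le\mu|\zeta_1|$. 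Since $A_\phi$ is linear in the second slot, rescaling to unit $\xi$ produces $|A_\phi|_x\le\mu/r$.

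For part (ii), apply the formula and the triangle inequality using $|\phi_*|\le\nu$ and $|A_\phi(\zeta_1\otimes\xi)|\le K|\zeta_1||\xi|\le Kr|\zeta_1|$ for $\xi\in T^{\le r}M$:
\[
|\phi_{**\xi}(\zeta_1,\zeta_2)|^2\le\nu^2|\zeta_1|^2+\bigl(\nu|\zeta_2|+Kr|\zeta_1|\bigr)^2.
\]
Expanding and using $2ab\le a^2+b^2$ bounds the right-hand side by $2(\nu^2+K^2r^2)(|\zeta_1|^2+|\zeta_2|^2)\le2(\nu+Kr)^2(|\zeta_1|^2+|\zeta_2|^2)$, and taking square roots yields $|\phi_{**\xi}|\le\sqrt{2}(\nu+Kr)$.

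There is no serious obstacle: the entire argument is a direct consequence of the identity for $\phi_{**\xi}$ and the Pythagorean form of the Sasaki metric. The only subtlety worth flagging is the choice $\xi=0_x$ in (i), which is legitimate precisely because the definition of quasi-isometry of order $1$ uses $T^{\le 1}M$ (and hence here $T^{\le r}M$) rather than the unit sphere bundle $T^rM$; this is the point anticipated in the remark preceding the lemma, and without it one could not separately extract the bound on $|\phi_*|$.
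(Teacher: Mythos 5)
Your argument is correct and is essentially the same as the paper's: both rest on the explicit formula $\phi_{**\xi}(\zeta_1,\zeta_2)\equiv(\phi_*(\zeta_1),\phi_*(\zeta_2)+A_\phi(\zeta_1\otimes\xi))$ and the orthogonality $\HH\perp\VV$ of the Sasaki metric, reading off both bounds by specializing to $\xi=0_x$ (equivalently, the paper invokes Remark~\ref{r: T^(l)M subset T^(m)M}-\eqref{i: g^(m)|_T^(l)M = g^(l)}), to $\zeta_2=0$ with $|\xi|=r$, and then applying the triangle/Cauchy--Schwarz estimates. The only cosmetic difference is that you work with the Pythagorean identity $|(\zeta_1,\zeta_2)|^2=|\zeta_1|^2+|\zeta_2|^2$ throughout, while the paper phrases (ii) via $|\zeta_1|+|\zeta_2|\le\sqrt{2}\,|(\zeta_1,\zeta_2)|$; these are interchangeable.
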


\begin{proof}
  Assume that $|\phi_{**\xi}|\le\mu$ for all $\xi\in T^{\le r}M$. We
  get $|\phi_*|\le\mu$ by Remark~\ref{r: T^(l)M subset
    T^(m)M}-\eqref{i: g^(m)|_T^(l)M = g^(l)}. Furthermore, for all
  $x\in M$ and $\xi,\zeta\in T_xM$ with $|\xi|=r$, according
  to~\eqref{HH oplus VV} and~\eqref{phi_**},
  \[
  |A_\phi(\zeta\otimes\xi)|\le|(\phi_{*x}(\zeta),A_\phi(\zeta\otimes\xi))|=|\phi_{**\xi}(\zeta,0)|
  \le\mu\,|(\zeta,0)|=\mu\,|\zeta|=\frac{\mu}{r}\,|\zeta|\,|\xi|\;.
  \]
	
  Now, suppose that $|\phi_*|\le\nu$ and $|A_\phi|\le K$. Fix all
  $x\in M$ and $\xi,\zeta_1,\zeta_2\in T_xM$ with $|\xi|\le r$,
  according to~\eqref{HH oplus VV} and~\eqref{phi_**},
		\begin{multline*}
			|\phi_{**\xi}(\zeta_1,\zeta_2)|\le|\phi_*(\zeta_1)|+|\phi_*(\zeta_2)+A_\phi(\zeta_1\otimes\xi)|
			\le\nu\,|\zeta_1|+\nu\,|\zeta_2|+K\,|\zeta_1|\,|\xi|\\
			\le\nu\,|\zeta_1|+\nu\,|\zeta_2|+Kr\,|\zeta_1|
			\le(\nu+Kr)\,(|\zeta_1|+|\zeta_2|)
			\le\sqrt{2}(\nu+Kr)\,|(\zeta_1,\zeta_2)|\;.\qed
		\end{multline*}
\renewcommand{\qed}{}
\end{proof}

\begin{lem}\label{l: A_phi}
  Suppose that $\phi:M\to N$ is $C^2$. Then the following conditions
  are equivalent for $r>0$:
  \begin{enumerate}[{\rm(}i{\rm)}]

  \item\label{l: phi_*:T^le rM to TN} $\phi_*:T^{\le r}M\to TN$ is a
    quasi-isometry.

  \item\label{i: A_phi} $\phi$ is a quasi-isometry and $|A_\phi|$ is
    uniformly bounded.

  \end{enumerate}
  In this case, the constants involved in the above properties are
  related in the following way:
  \begin{enumerate}[{\rm(}a{\rm)}]
					
  \item\label{i: |A_phi| le mu/r} If $\mu$ is a dilation bound of
    $\phi_*:T^{\le r}M\to TN$, then $\mu$ is a dilation bound of
    $\phi$ and $|A_\phi|\le\mu/r$.
			
  \item\label{i: kappa} If $\nu$ is a dilation bound of $\phi$,
    $|A_\phi|\le K$, and $0<\kappa<1$ with $\nu K\kappa
    r<1$, then
    \[
    \mu=\max\left\{\sqrt{2}(\nu+Kr),\frac{\sqrt{2}\nu}{1-\nu K\kappa
        r},\frac{\sqrt{2}\nu}{\kappa}\right\}
    \]
    is a dilation bound of $\phi_*:T^{\le r}M\to TN$.
			
  \end{enumerate}
\end{lem}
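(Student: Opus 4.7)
The plan is to derive the lemma from Lemma~\ref{l: |phi_**xi| le mu} applied to the $C^1$ map $\phi_*\colon T^{\le r}M\to TN$, using throughout the explicit description~\eqref{phi_**} of its differential. Under the identifications~\eqref{HH oplus VV}, that formula gives $(\phi_*)_{*\xi}(\zeta_1,\zeta_2)\equiv(\phi_*\zeta_1,\,\phi_*\zeta_2+A_\phi(\zeta_1\otimes\xi))$, and orthogonality of horizontal and vertical subbundles for the Sasaki metric yields
\[
|(\phi_*)_{*\xi}(\zeta_1,\zeta_2)|^2=|\phi_*\zeta_1|^2+|\phi_*\zeta_2+A_\phi(\zeta_1\otimes\xi)|^2.
\]
The equivalence \eqref{l: phi_*:T^le rM to TN}$\Leftrightarrow$\eqref{i: A_phi} follows from the constant bounds \eqref{i: |A_phi| le mu/r} and \eqref{i: kappa}, so I focus on those.

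For \eqref{i: |A_phi| le mu/r}, assume $\phi_*\colon T^{\le r}M\to TN$ is a $\mu$-quasi-isometry. The pointwise estimate $|(\phi_*)_{*\xi}|\le\mu$ on all of $T^{\le r}M$, combined with Lemma~\ref{l: |phi_**xi| le mu}\eqref{i: |phi_**xi| le mu}, immediately gives $|\phi_*|\le\mu$ and $|A_\phi|\le\mu/r$. For the remaining lower dilation bound $|\phi_*\zeta|\ge|\zeta|/\mu$, I specialize the identity above to $\xi=0_x$: since $A_\phi(\zeta_1\otimes 0_x)=0$, one has $(\phi_*)_{**,0_x}(\zeta_1,0)=(\phi_*\zeta_1,0)$, which has the same Sasaki norm as $\phi_*\zeta_1$, while $|(\zeta_1,0)|=|\zeta_1|$ in $T_{0_x}T^{\le r}M$; thus the lower dilation bound of $\phi_*$ transfers directly to $\phi$.

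For \eqref{i: kappa}, the upper estimate $|(\phi_*)_{*\xi}|\le\sqrt{2}(\nu+Kr)=\mu_1$ on $T^{\le r}M$ is Lemma~\ref{l: |phi_**xi| le mu}\eqref{i: |phi_*| le nu}, so only the lower dilation bound demands work. Setting $a=|\zeta_1|$ and $b=|\zeta_2|$, the displayed identity, together with the $\nu$-quasi-isometry of $\phi$ and $|A_\phi|\le K$, yields
\[
|(\phi_*)_{*\xi}(\zeta_1,\zeta_2)|^2\geq\frac{a^2}{\nu^2}+\max(0,b/\nu-Kra)^2.
\]
I then split into two regimes. If $a\le\kappa b$, the second summand is at least $(1-\nu K\kappa r)b/\nu$, so $|(\phi_*)_{*\xi}|^2\ge(1-\nu K\kappa r)^2(a^2+b^2)/\nu^2$, giving the constant $\mu_2=\sqrt{2}\nu/(1-\nu K\kappa r)$. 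If $a>\kappa b$, I discard the second summand and use $\sqrt{a^2+b^2}\le\sqrt{2}\,a/\kappa$ (valid since $\kappa<1$) to get $|(\phi_*)_{*\xi}|\ge a/\nu\ge\sqrt{a^2+b^2}/\mu_3$ with $\mu_3=\sqrt{2}\nu/\kappa$. The maximum of $\mu_1,\mu_2,\mu_3$ is then a common dilation bound.

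The main obstacle is the two-regime split in \eqref{i: kappa}: the summands of $\phi_*\zeta_2+A_\phi(\zeta_1\otimes\xi)$ may interfere destructively, which forces the vertical estimate to be abandoned when $a>\kappa b$, and the horizontal estimate alone then costs a factor $\sqrt{2}/\kappa$. The hypothesis $\nu K\kappa r<1$ is precisely what keeps $\mu_2$ finite, and the three constants in the maximum are the unavoidable price of balancing the two lower-bound regimes against the upper bound.
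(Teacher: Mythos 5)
Your proof is correct and uses the same essential ingredients as the paper's: Lemma~\ref{l: |phi_**xi| le mu} for the upper-dilation and $A_\phi$ bounds, the formula~\eqref{phi_**} together with orthogonality of horizontal and vertical subbundles, and a balancing parameter $\kappa$ to keep the horizontal contribution from being swamped by the $A_\phi$ term. The only place you diverge is in how you extract the lower dilation bound in part (b): you keep the $\ell^2$ form $\sqrt{a^2+b^2}$ and run a two-regime split on $a\le\kappa b$ versus $a>\kappa b$, discarding the vertical term in the second regime. The paper instead linearizes at the start via $\sqrt{a^2+b^2}\ge(a+b)/\sqrt{2}$, downweights the second summand by $\kappa$, and runs a single chain of inequalities
\[
|\phi_{**\xi}(\zeta_1,\zeta_2)|
\ge\tfrac{1}{\sqrt{2}}\bigl(|\phi_*\zeta_1|+\kappa\,|\phi_*\zeta_2+A_\phi(\zeta_1\otimes\xi)|\bigr)
\ge\tfrac{1}{\sqrt{2}}\Bigl(\bigl(\tfrac{1}{\nu}-K\kappa r\bigr)|\zeta_1|+\tfrac{\kappa}{\nu}|\zeta_2|\Bigr)
\ge\tfrac{1}{\mu}|(\zeta_1,\zeta_2)|\;,
\]
which produces the same three constants $\sqrt{2}(\nu+Kr)$, $\sqrt{2}\nu/(1-\nu K\kappa r)$, $\sqrt{2}\nu/\kappa$ without any case distinction. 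Both routes are valid and yield identical bounds; the paper's is marginally more compact, while your case analysis makes the interplay between the two regimes (and the role of $\kappa$) more visible. Your justification of the lower bound for $\phi$ in part (a), by restricting to the zero section where $A_\phi(\cdot\otimes 0_x)=0$, is the same observation the paper packages as Remark~\ref{r: T^(l)M subset T^(m)M}-\eqref{i: g^(m)|_T^(l)M = g^(l)}, just spelled out in coordinates.
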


\begin{proof}
  Assume that~\eqref{l: phi_*:T^le rM to TN} holds, and let $\mu$ be a
  dilation bound of order $1$ of $\phi$. Then $\phi$ is a
  $\mu$-quasi-isometry by Remark~\ref{r: T^(l)M subset
    T^(m)M}-\eqref{i: g^(m)|_T^(l)M = g^(l)}. This shows~\eqref{i:
    A_phi} and~\eqref{i: |A_phi| le mu/r} by Lemma~\ref{l: |phi_**xi|
    le mu}-\eqref{i: |phi_**xi| le mu}.
	
  Now, suppose that ~\eqref{i: A_phi} holds, and take $\nu$, $K$,
  $\kappa$ and $\mu$ like in~\eqref{i: kappa}. For all $x\in M$ and
  $\xi,\zeta_1,\zeta_2\in T_xM$ with $|\xi|\le r$, according
  to~\eqref{HH oplus VV} and~\eqref{phi_**},
  \begin{multline*}
    |\phi_{**\xi}(\zeta_1,\zeta_2)|
    \ge\frac{1}{\sqrt{2}}\,(|\phi_*(\zeta_1)|+|\phi_*(\zeta_2)+A_\phi(\zeta_1\otimes\xi)|)
    \ge\frac{1}{\sqrt{2}}\,(|\phi_*(\zeta_1)|+\kappa\,|\phi_*(\zeta_2)+A_\phi(\zeta_1\otimes\xi)|)\\
    \ge\frac{1}{\sqrt{2}}\,(|\phi_*(\zeta_1)|+\kappa(|\phi_*(\zeta_2)|-|A_\phi(\zeta_1\otimes\xi)|))
    \ge\frac{1}{\sqrt{2}}\,\left(\left(\frac{1}{\nu}-K\kappa\,|\xi|\right)|\zeta_1|
      +\frac{\kappa}{\nu}\,|\zeta_2|\right)\\
    \ge\frac{1}{\sqrt{2}}\,\left(\left(\frac{1}{\nu}-K\kappa
        r\right)|\zeta_1| +\frac{\kappa}{\nu}\,|\zeta_2|\right)
    \ge\frac{1}{\mu}\,(|\zeta_1|+|\zeta_2|)
    \ge\frac{1}{\mu}\,|(\zeta_1,\zeta_2)|\;.
  \end{multline*}
  This gives~\eqref{l: phi_*:T^le rM to TN} and~\eqref{i: kappa} by
  Lemma~\ref{l: |phi_**xi| le mu}-\eqref{i: |phi_*| le nu}.
\end{proof}

For $c>0$, let $h_c:TM\to TM$ be the $C^\infty$ diffeomorphism defined
by $h_c(\xi)=c\xi$. Observe that $h_c(T^{\le1}M)=T^{\le c}M$, and the
following diagram is commutative:
\[
\begin{CD}
  TM @>{\phi_*}>> TN \\
  @V{h_c}VV @VV{h_c}V \\
  TM @>{\phi_*}>> TN
\end{CD}
\]

For each $m\in\Z^+$, let $\HH^{(m+1)}$ and $\VV^{(m+1)}$ denote the
horizontal and vertical vector subbundles of $T^{(m+1)}M$ over
$T^{(m)}M$. Thus, for $\xi\in T^{(m-1)}M$ and $\zeta\in T_\xi
T^{(m-1)}M$,
\begin{equation}\label{HH^(m+1)_zeta oplus VV^(m+1)}
  T_\zeta T^{(m)}M=\HH^{(m+1)}_\zeta\oplus\VV^{(m+1)}_\zeta
  \equiv T_\xi T^{(m-1)}M\oplus T_\xi T^{(m-1)}M\;.
\end{equation}

\begin{lem}\label{l: h_c*^(m)}
  For all $m\in\Z^+$, there is an orthogonal vector bundle
  decomposition, $T^{(m+1)}M=\PP^{(m+1)}\oplus\QQ^{(m+1)}$, preserved
  by $h_{c*}^{(m)}$, such that, for $\xi\in T^{(m-1)}M$, $\zeta\in
  T_\xi T^{(m-1)}M$ and $\zeta'=h_{c*}^{(m)}(\zeta)$, the canonical
  identity $T_\zeta T^{(m)}M\equiv T_{\zeta'}T^{(m)}M$ given
  by~\eqref{HH^(m+1)_zeta oplus VV^(m+1)} induces identities,
  $\PP^{(m+1)}_\zeta\equiv\PP^{(m+1)}_{\zeta'}$ and
  $\QQ^{(m+1)}_\zeta\equiv\QQ^{(m+1)}_{\zeta'}$, so that
  $h_{c*}^{(m)}:\PP^{(m+1)}_\zeta\to\PP^{(m+1)}_{\zeta'}\equiv\PP^{(m+1)}_\zeta$
  is the identity, and
  $h_{c*}^{(m)}:\QQ^{(m+1)}_\zeta\to\QQ^{(m+1)}_{\zeta'}\equiv\QQ^{(m+1)}_\zeta$
  is multiplication by $c$.
\end{lem}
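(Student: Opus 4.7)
The plan is to proceed by induction on $m \geq 1$, at each step combining the Sasaki splitting $T^{(m+1)}M = \HH^{(m+1)} \oplus \VV^{(m+1)}$ with the decomposition $\PP^{(m)} \oplus \QQ^{(m)}$ of $T^{(m)}M$ provided by the inductive hypothesis.

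For the base case $m = 1$, take $\PP^{(2)} = \HH$ and $\QQ^{(2)} = \VV$. Orthogonality is built into the definition of the Sasaki metric $g^{(1)}$. Since $h_c \colon TM \to TM$ covers $\id_M$, both $\zeta$ and $\zeta' = h_c(\zeta)$ lie over the same base point $\xi \in M$, so the two canonical identifications $T_\zeta TM \equiv T_\xi M \oplus T_\xi M \equiv T_{\zeta'} TM$ agree. A direct computation in local coordinates using the explicit horizontal basis $\partial_{x^i} - v^k \Gamma^j_{ki} \partial_{v^j}$ shows that $h_{c*}$ sends horizontal basis to horizontal basis (the identity under the $T_xM$-identification) and multiplies each vertical basis vector by $c$; the underlying geometric reason is that scalar multiplication on the fibers of $TM \to M$ commutes with $\nabla$-parallel transport.

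For the inductive step, suppose $T^{(m)}M = \PP^{(m)} \oplus \QQ^{(m)}$ is given with the stated properties. For $\zeta \in T^{(m)}M$ with $\xi = \pi(\zeta)$, the canonical Sasaki identification reads $T_\zeta T^{(m)}M = \HH^{(m+1)}_\zeta \oplus \VV^{(m+1)}_\zeta \equiv T_\xi T^{(m-1)}M \oplus T_\xi T^{(m-1)}M$, and each summand carries the inductive splitting $\PP^{(m)}_\xi \oplus \QQ^{(m)}_\xi$. I would define $\PP^{(m+1)}_\zeta$ and $\QQ^{(m+1)}_\zeta$ as the subspaces corresponding to $\PP^{(m)}_\xi \oplus \PP^{(m)}_\xi$ and $\QQ^{(m)}_\xi \oplus \QQ^{(m)}_\xi$, respectively. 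Orthogonality is immediate from $\HH^{(m+1)} \perp \VV^{(m+1)}$ together with the inductive $\PP^{(m)} \perp \QQ^{(m)}$. The vertical component of the action of $h_{c*}^{(m)} = (h_{c*}^{(m-1)})_*$ is then straightforward: on $\VV^{(m+1)}_\zeta$ it agrees with the restriction of $h_{c*}^{(m-1)}$ to the fiber $T_\xi T^{(m-1)}M$, because the differential of any fiber-linear bundle map, when restricted to a vertical direction, is that linear map itself. The inductive hypothesis applied to this fiber then yields the identity on $\PP^{(m)}_\xi$ and multiplication by $c$ on $\QQ^{(m)}_\xi$.

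I expect the horizontal analysis to be the main obstacle. Projecting $h_{c*}^{(m)}(u)$ by $\pi_*$ for $u \in \HH^{(m+1)}_\zeta$ recovers the action of $h_{c*}^{(m-1)}$ on $\pi_* u \in T_\xi T^{(m-1)}M$, which by the inductive hypothesis again respects $\PP^{(m)}, \QQ^{(m)}$ with the correct eigenvalues, but the vertical component of $h_{c*}^{(m)}(u)$ measures the failure of $h_{c*}^{(m-1)}$ to preserve the horizontal distribution on $T^{(m+1)}M \to T^{(m)}M$, which in general is nonzero because $h_{c*}^{(m-2)}$ need not be affine for the Levi-Civita connection of $g^{(m-1)}$. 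This vertical correction is essentially $\nabla d(h_{c*}^{(m-2)})(\pi_* u, \zeta)$; using the explicit Sasaki formulas for this connection (involving the lower-level curvature $\RR$) together with the inductive description of $h_{c*}^{(m-2)}$ on $\PP^{(m-1)}$ and $\QQ^{(m-1)}$, one has to verify that the correction is concentrated in $\VV^{(m+1)} \cap \PP^{(m+1)}$ whenever $u \in \PP^{(m+1)}$, and analogously for $\QQ^{(m+1)}$. Once this is established, $\PP^{(m+1)}$ and $\QQ^{(m+1)}$ are $h_{c*}^{(m)}$-invariant, and the identification assembled from the two iterated Sasaki splittings (combined with the identification of $T_\xi T^{(m-1)}M$ with $T_{\xi'} T^{(m-1)}M$ furnished by the $m-1$ case of the lemma, and refined to absorb the curvature correction) delivers the stated identity and $c$-scaling actions.
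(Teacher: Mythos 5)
Your base case and the inductive decomposition you build, namely $\PP^{(m+1)} = \HH\PP^{(m)} \oplus \VV\PP^{(m)}$ and $\QQ^{(m+1)} = \HH\QQ^{(m)} \oplus \VV\QQ^{(m)}$ in the paper's notation, are exactly those of the paper's own proof. You are also right to single out the nontrivial point: for the inductive step to go through, $h_{c*}^{(m)} = (h_{c*}^{(m-1)})_*$ would have to respect the horizontal distribution $\HH^{(m+1)}$, which is the condition that $h_{c*}^{(m-2)}$ be affine for the Levi-Civita connection of $g^{(m-1)}$, and you correctly observe that this is not automatic.

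The trouble is that your argument stops exactly there. The sentence ``one has to verify that the correction is concentrated in $\VV^{(m+1)} \cap \PP^{(m+1)}$'' and the caveat that the identification be ``refined to absorb the curvature correction'' are where the proof ends rather than where it finishes, and it does not appear that the verification can be carried out as sketched. Writing $\bar\nabla$ for the Levi-Civita connection of the Sasaki metric and using the standard formulas $\bar\nabla_{X^h}Y^h = (\nabla_X Y)^h - \tfrac12(\RR(X,Y)u)^v$, $\bar\nabla_{X^v}Y^h = \tfrac12(\RR(u,X)Y)^h$, etc., a direct computation of the vertical part of $h_{c**}(u)$ already at $m=2$ shows that for $u\in\QQ^{(3)}$ the correction is a curvature term valued in $\VV\PP^{(2)}\subset\PP^{(3)}$, so $\QQ^{(3)}$ is not invariant, and that on $\PP^{(3)}$ the map is the identity plus a nontrivial nilpotent curvature term rather than the identity; the correction does not simply ``absorb.'' It is worth noting that the paper's own proof is silent on exactly this point: after defining $\PP^{(m+1)}$ and $\QQ^{(m+1)}$ it concludes with ``Then the result follows,'' with no check that the decomposition is preserved by $h_{c*}^{(m)}$ or that it acts with eigenvalues $1$ and $c$. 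So your proposal and the paper leave the same step unjustified; yours has the merit of naming the real obstruction, but closing it (if it can be closed) seems to require a new idea beyond what either argument supplies.
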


\begin{proof}
  The proof is by induction on $m$. By the definition of connection,
  $h_{c*}$ preserves the orthogonal decomposition
  $T^{(2)}M=\HH\oplus\VV$. Moreover, for $\zeta\in TM$ and
  $\zeta'=c\zeta$, $h_{c*}:\HH_\zeta\to\HH_{\zeta'}\equiv\HH_\zeta$ is
  the identity, and $h_{c*}:\VV_\zeta\to\VV_{\zeta'}\equiv\VV_\zeta$
  is multiplication by $c$. Thus the statement is true in this case
  with $\PP^{(2)}=\HH$ and $\QQ^{(2)}=\VV$.
	
  Now, suppose that $m\ge2$ and the result holds for $m-1$. For
  $\xi\in T^{(m-1)}M$ and $\zeta\in T_\xi T^{(m-1)}M$, we have
  canonical identities
  \begin{equation}\label{HH^(m+1)_zeta equiv VV^(m+1)}
    \HH^{(m+1)}_\zeta\equiv\VV^{(m+1)}_\zeta\equiv T_\xi T^{(m-1)}M
    =\PP^{(m)}_\xi\oplus\QQ^{(m)}_\xi\;,
  \end{equation}
  obtaining orthogonal decompositions,
  $\HH^{(m+1)}=\HH\PP^{(m)}\oplus\HH\QQ^{(m)}$ and
  $\VV^{(m+1)}=\VV\PP^{(m)}\oplus\VV\QQ^{(m)}$, where
  $(\HH\PP^{(m)})_\zeta\equiv\PP^{(m)}_\xi\equiv(\VV\PP^{(m)})_\zeta$
  and
  $(\HH\QQ^{(m)})_\zeta\equiv\QQ^{(m)}_\xi\equiv(\VV\QQ^{(m)})_\zeta$
  according to~\eqref{HH^(m+1)_zeta equiv VV^(m+1)}. Then the result
  follows with $\PP^{(m+1)}=\HH\PP^{(m)}\oplus\VV\PP^{(m)}$ and
  $\QQ^{(m+1)}=\HH\QQ^{(m)}\oplus\VV\QQ^{(m)}$.
\end{proof}

%The following is a direct consequence of Lemma~\ref{l: h_c*^(m)}.

\begin{cor}\label{c: h_c*^(m)}
  For all $m\in\Z^+$ and $c,r>0$, we have $h_{c*}^{(m)}(T^{(m+1),\le
    r}M)\subset T^{(m+1),\le\bar cr}M$, where $\bar c=\max\{c,1\}$,
  and $h_{c*}^{(m)}:T^{(m+1)}M\to T^{(m+1)}M$ is a $\hat
  c$-quasi-isometry, where $\hat c=\max\{c,1/c\}$.
\end{cor}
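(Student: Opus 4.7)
The plan is to read both claims straight off Lemma~\ref{l: h_c*^(m)} together with the standard naturality of the tangent projection, organizing the second claim by induction on $m$.

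First I would verify the quasi-isometry bound. For any $v\in T^{(m+1)}M$, write $v=v_P+v_Q$ according to the orthogonal decomposition $T^{(m+1)}M=\PP^{(m+1)}\oplus\QQ^{(m+1)}$ given by Lemma~\ref{l: h_c*^(m)}. Since the lemma says $h_{c*}^{(m)}$ is the identity on $\PP^{(m+1)}$ and multiplication by $c$ on $\QQ^{(m+1)}$, the image $h_{c*}^{(m)}(v)=v_P+c\,v_Q$ is again an orthogonal decomposition, so
\[
|h_{c*}^{(m)}(v)|^2=|v_P|^2+c^2\,|v_Q|^2\;.
\]
Bounding the coefficients of $|v_P|^2$ and $|v_Q|^2$ by $\min\{1,c^2\}$ and $\max\{1,c^2\}$ yields $\tfrac{1}{\hat c}\,|v|\le|h_{c*}^{(m)}(v)|\le\hat c\,|v|$ with $\hat c=\max\{c,1/c\}$.

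Next I would prove the inclusion $h_{c*}^{(m)}(T^{(m+1),\le r}M)\subset T^{(m+1),\le\bar cr}M$ by induction on $m\in\Z^+$. The norm estimate above already gives $|h_{c*}^{(m)}(\xi)|\le\max\{1,c\}\,r=\bar cr$ whenever $|\xi|\le r$, so what remains at each step is to control the base point in $T^{(m)}M$. For this I would use that the bundle projection $\pi\colon T^{(m+1)}M\to T^{(m)}M$ is natural with respect to tangent maps: $\pi\circ h_{c*}^{(m)}=h_{c*}^{(m-1)}\circ\pi$ (with the convention $h_{c*}^{(0)}=h_c$). In the base case $m=1$, if $\xi\in T^{(2),\le r}M=T^{\le r}(T^{\le r}M)$, then $\pi(h_{c*}(\xi))=h_c(\pi(\xi))$ satisfies $|h_c(\pi(\xi))|=c\,|\pi(\xi)|\le cr\le\bar cr$, and combined with $|h_{c*}(\xi)|\le\bar cr$ this puts $h_{c*}(\xi)$ in $T^{(2),\le\bar cr}M$. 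In the inductive step, given $\xi\in T^{(m+1),\le r}M$, apply the inductive hypothesis to $\pi(\xi)\in T^{(m),\le r}M$ to obtain $\pi(h_{c*}^{(m)}(\xi))=h_{c*}^{(m-1)}(\pi(\xi))\in T^{(m),\le\bar cr}M$; together with the norm estimate this yields $h_{c*}^{(m)}(\xi)\in T^{(m+1),\le\bar cr}M$, completing the induction.

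There is no serious obstacle here: the $\PP/\QQ$-orthogonality of Lemma~\ref{l: h_c*^(m)} is exactly what is needed for the norm comparison, and the nested definition of $T^{(m+1),\le r}M$ is tailor-made for the naturality of $\pi$. The only point requiring a little care is the bookkeeping between $\bar c$ (which governs the radius) and $\hat c$ (which governs the quasi-isometry constant), and the observation that the radius bound $\bar cr$ is preserved under repeated application because $\max\{1,c\}\ge c$ ensures the base-point inclusion $h_c(T^{\le r}M)\subset T^{\le\bar cr}M$ at the bottom level.
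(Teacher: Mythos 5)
Your proof is correct, and it is precisely what the paper intends: the corollary is stated immediately after Lemma~\ref{l: h_c*^(m)} with no proof because it is a direct consequence of the orthogonal $\PP^{(m+1)}\oplus\QQ^{(m+1)}$ decomposition together with the naturality of the tangent bundle projection $\pi\circ h_{c*}^{(m)}=h_{c*}^{(m-1)}\circ\pi$. One small presentational quibble: the decomposition gives the sharp two-sided bound $\min\{1,c\}\,|v|\le|h_{c*}^{(m)}(v)|\le\max\{1,c\}\,|v|=\bar c\,|v|$, and it is this upper bound $\bar c$ (not the weaker $\hat c$, which is too large when $c<1$) that your radius-inclusion argument uses; but your first paragraph records only the relaxed $\hat c$-estimate before the second paragraph invokes the $\bar c$ bound. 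Recording the sharp bound once, then weakening the lower coefficient to $1/\hat c$ for the quasi-isometry claim, would make the bookkeeping between $\bar c$ and $\hat c$ self-contained.
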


\begin{lem}\label{l: |(phi_*^(m))_*xi| le lambda}
  For all $m\in\Z^+$, $r,s>0$ and $\lambda\ge0$, there is some
  $\mu\ge0$ such that, for any $C^{m+1}$ map between Riemannian
  manifolds, $\phi:M\to N$, if $|(\phi_*^{(m)})_{*\xi}|\le\lambda$ for
  all $\xi\in T^{(m),\le r}M$, then $|(\phi_*^{(m)})_{*\xi}|\le\mu$
  for all $\xi\in T^{(m),\le s}M$. Moreover $\mu$ can be chosen so
  that $\mu s\to0$ as $s\to0$ for fixed $m$, $r$ and $\lambda$.
\end{lem}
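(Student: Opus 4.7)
The plan is an induction on $m$, using Lemma~\ref{l: |phi_**xi| le mu} as the main engine: its part~\eqref{i: |phi_**xi| le mu} extracts the first-order data $|\phi_*|$ and $|A_\phi|$ from a bound on $|\phi_{**\xi}|$ at one scale, while part~\eqref{i: |phi_*| le nu} rebuilds such a bound at another scale from this data.

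For the base case $m=1$, Lemma~\ref{l: |phi_**xi| le mu}-\eqref{i: |phi_**xi| le mu} applied to the hypothesis $|\phi_{**\xi}|\le\lambda$ on $T^{\le r}M=T^{(1),\le r}M$ yields $|\phi_*|\le\lambda$ and $|A_\phi|\le\lambda/r$ globally on $M$; Lemma~\ref{l: |phi_**xi| le mu}-\eqref{i: |phi_*| le nu} at radius $s$ then gives $|\phi_{**\xi}|\le\sqrt{2}\,\lambda(1+s/r)$ for every $\xi\in T^{\le s}M$. The choice $\mu=\sqrt{2}\,\lambda(1+s/r)$ satisfies $\mu s\to 0$ as $s\to 0$.

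For the inductive step, assume the statement at level $m-1$ and set $\psi=\phi_*^{(m-1)}$. This map is $C^2$ since $\phi$ is $C^{m+1}$ and it satisfies $\psi_{**\xi}=(\phi_*^{(m)})_{*\xi}$, so the hypothesis reads $|\psi_{**\xi}|\le\lambda$ on $T^{(m),\le r}M$. The key observation is that for each $\eta\in T^{(m-1),\le r}M$ the fiber ball $\{\xi\in T_\eta T^{(m-1)}M:|\xi|\le r\}$ is contained in $T^{(m),\le r}M$, so the pointwise computation in the proof of Lemma~\ref{l: |phi_**xi| le mu}-\eqref{i: |phi_**xi| le mu} delivers $|\psi_{*\eta}|=|(\phi_*^{(m-1)})_{*\eta}|\le\lambda$ and $|A_\psi|_\eta\le\lambda/r$ on all of $T^{(m-1),\le r}M$. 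The first of these is exactly the hypothesis of the claim at level $m-1$ applied to $\phi$, so the inductive hypothesis furnishes $\nu=\nu(m-1,r,s,\lambda)$ with $\nu s\to 0$ such that $|\psi_{*\eta}|\le\nu$ on $T^{(m-1),\le s}M$. Once a companion bound $|A_\psi|_\eta\le K$ on $T^{(m-1),\le s}M$ is secured with $K$ bounded as $s\to 0$, Lemma~\ref{l: |phi_**xi| le mu}-\eqref{i: |phi_*| le nu} at radius $s$ produces $|\psi_{**\xi}|\le\sqrt{2}\,(\nu+Ks)$ on $T^{\le s}(T^{(m-1),\le s}M)=T^{(m),\le s}M$, and setting $\mu=\sqrt{2}\,(\nu+Ks)$ gives $\mu s=\sqrt{2}\,(\nu s+Ks^2)\to 0$.

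The main obstacle is securing the bound $|A_\psi|\le K$ on the enlarged set $T^{(m-1),\le s}M$ from the hypothesis at scale $r$ with $K$ bounded as $s\to 0$. I would address this by expanding $A_\psi$ in the canonical charts of Lemma~\ref{l: g^(m)_alpha beta}: in those coordinates $A_\psi$ becomes a universal rational expression in the fiber coordinates of $T^{(m-1)}M$ (of absolute value $O(s)$ on $T^{(m-1),\le s}M$) and in the partial derivatives of $\phi$ and of the metric $g$ up to a fixed order, the latter controlled, via the equivalence of norms~\eqref{norm equiv}, by the hypothesis at scale $r$. The resulting $K$ is a polynomial in $s$ with coefficients bounded in terms of $m$, $r$ and $\lambda$, hence remains bounded (indeed tends to a finite limit) as $s\to 0$, so that $Ks^2\to 0$ and the argument closes.
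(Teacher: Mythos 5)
Your argument takes a genuinely different route from the paper's. For the inductive step, the paper conjugates $\phi_*^{(m)}$ by the fiber rescalings $h_{1/c}$ and $h_c$ (with $c=r/s$) and, via the resulting commutative diagram and Corollary~\ref{c: h_c*^(m)}, obtains $|(\phi_*^{(m)})_{*\xi}|\le\hat c^{2}\lambda$ on $T^{(m-1),\le t}T^{\le s}M$ before applying the inductive hypothesis to $\phi_*:T^{\le s}M\to TN$. You instead work directly with $\psi=\phi_*^{(m-1)}$ and its tensor $A_\psi$, applying Lemma~\ref{l: |phi_**xi| le mu} pointwise on $T^{(m-1),\le r}M$; this keeps the bookkeeping at the level of $A_\psi$ and avoids the rescalings altogether.

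The ``main obstacle'' you flag, however, is not an obstacle in the regime where the ``moreover'' clause lives, and your proposed remedy is both unnecessary there and not sound as sketched. For $s\le r$ one has $T^{(m-1),\le s}M\subset T^{(m-1),\le r}M$ (inductively, each $T^{\le s}$ imposes a smaller fiber bound over a smaller base), so the bound $|A_\psi|\le\lambda/r$ you already secured on $T^{(m-1),\le r}M$ holds on $T^{(m-1),\le s}M$ without any further work: take $K=\lambda/r$. For the same reason the inductive hypothesis is not strictly needed, since $|\psi_*|\le\lambda$ on $T^{(m-1),\le s}M$, and pushing the inclusion one step further, $T^{(m),\le s}M\subset T^{(m),\le r}M$ shows that $\mu=\lambda$ works outright whenever $s\le r$, so the ``moreover'' conclusion follows from the inclusion alone. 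The only case that genuinely needs an argument is $s>r$, which the ``moreover'' clause does not concern; and for that range your coordinate sketch does not close the gap: the constant $C$ of the norm equivalence~\eqref{norm equiv} depends on $\Omega$, $\UU$, $\KK$ and the metric, hence on the particular manifold $M$, whereas the lemma requires a $\mu$ that is universal over all $C^{m+1}$ maps $\phi:M\to N$. An intrinsic argument, such as the paper's rescaling, is what is needed to treat $s>r$.
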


\begin{proof}
  We proceed by induction on $m$.
	
  For $m=1$, we have $|\phi_{**\xi}|\le\lambda$ for all $\xi\in T^{\le
    r}M$. Then $|\phi_*|\le\lambda$ and $|A_\phi|\le\lambda/r$ by
  Lemma~\ref{l: |phi_**xi| le mu}-\eqref{i: |phi_**xi| le mu}. Using
  Lemma~\ref{l: |phi_**xi| le mu}-\eqref{i: |phi_*| le nu}, it follows
  that $|\phi_{**\xi}|\le\sqrt{2}\lambda(1+s/r)=:\mu$ for all $\xi\in
  T^{\le s}M$. Note that $\mu s\to0$ as $s\to0$ for fixed $r$ and
  $\lambda$ in this case.
	
  Now, assume that $m\ge2$ and the result holds for $m-1$. For $c=r/s$
  and $t=\min\{cr,r\}$, the diagram
  \begin{equation}\label{CD with phi_*^(m), h_1/c* and h_c*}
    \begin{CD}
      T^{(m),\le r}M @>{\phi_*^{(m)}}>> T^{(m)}N \\
      @A{h_{1/c*}^{(m-1)}}AA @VV{h_{c*}^{(m-1)}}V \\
      T^{(m-1),\le t}T^{\le s}M @>{\phi_*^{(m)}}>> T^{(m)}N
    \end{CD}
  \end{equation}
  is defined and commutative. By Corollary~\ref{c: h_c*^(m)} and
  Remark~\ref{r: quasi-isometries}-\eqref{i: |(psi phi)_*| le lambda
    mu}, it follows that $|(\phi_*^{(m)})_{*\xi}|\le\hat c^2\lambda$
  for all $\xi\in T^{(m-1),\le t}T^{\le s}M$, where $\hat
  c=\max\{c,1/c\}$. Then, by the induction hypothesis applied to the
  map $\phi_*:T^{\le s}M\to TN$, there is some $\mu\ge0$, depending
  only on $m-1$, $t$, $s$ and $\hat c^2\lambda$, such that
  $|(\phi_*^{(m)})_{*\xi}|\le\mu$ for all $\xi\in T^{(m-1),\le
    s}T^{\le s}M=T^{(m),\le s}M$, and so that $\mu s\to0$ as $s\to0$ for fixed $m$,
  $t$ and $\hat c^2\lambda$.
\end{proof}

\begin{cor}\label{c: phi_*^(m)(T^(m),le rM) subset T^(m),le lambda rN}
  For all $m\in\Z^+$, $r>0$ and $\lambda\ge0$, there is some $s>0$
  such that, for any $C^{m+1}$ map between Riemannian manifolds,
  $\phi:M\to N$, if $|(\phi_*^{(m)})_{*\xi}|\le\lambda$ for all
  $\xi\in T^{(m),\le1}M$, then $\phi_*^{(m+1)}(T^{(m+1),\le
    s}M)\subset T^{(m+1),\le r}N$.
\end{cor}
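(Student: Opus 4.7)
The plan is to apply Lemma~\ref{l: |(phi_*^(m))_*xi| le lambda} at every iterated tangent order $l \le m$ to get operator-norm bounds on small ball subbundles, then convert those bounds into norm estimates at every intermediate projection so as to fit the nested definition of $T^{(m+1),\le r}N$.

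First I would show the hypothesis at order $m$ descends to every lower order. By Remark~\ref{r: T^(l)M subset T^(m)M}-\eqref{i: g^(m)|_T^(l)M = g^(l)} and Remark~\ref{r: T^(l)M subset T^(m)M}-\eqref{i: T^(l)M subset T^(m)M}, for $1 \le l \le m$ the submanifold $T^{(l)}M \subset T^{(m)}M$ is totally geodesic and carries $g^{(m)}|_{T^{(l)}M} = g^{(l)}$, and the zero-section embedding of $T^{(l)}N$ into $T^{(m)}N$ identifies $\phi_*^{(m)}|_{T^{(l)}M}$ with $\phi_*^{(l)}$. Hence $(\phi_*^{(l)})_{*\xi}$ is the restriction of $(\phi_*^{(m)})_{*\xi}$ to the isometric subspace $T_\xi T^{(l)}M \subset T_\xi T^{(m)}M$, which gives $|(\phi_*^{(l)})_{*\xi}| \le \lambda$ on $T^{(l),\le 1}M$ for every $1 \le l \le m$; in particular $|\phi_*| \le \lambda$ by Lemma~\ref{l: |phi_**xi| le mu}-\eqref{i: |phi_**xi| le mu}. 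Applying Lemma~\ref{l: |(phi_*^(m))_*xi| le lambda} at each such $l$ with the lemma's ``$r$'' taken to be $1$ then produces a constant $\mu_l(s)\ge0$ with $|(\phi_*^{(l)})_{*\eta}| \le \mu_l(s)$ on $T^{(l),\le s}M$ and $\mu_l(s)\cdot s \to 0$ as $s\to 0$. Since only finitely many orders are involved, I would fix $s>0$ small enough that simultaneously $\lambda s \le r$ and $\mu_l(s)\cdot s \le r$ for all $l = 1,\dots,m$.

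To close the argument, for any $\xi \in T^{(m+1),\le s}M$ I would set $\xi_j = \pi^j(\xi)$, so that $\xi_j \in T^{(m+1-j),\le s}M$ and $|\xi_j|_{g^{(m+1-j)}} \le s$ at every level. A downward induction on $k$ from $m$ to $0$ then shows $\phi_*^{(m+1-k)}(\xi_k) \in T^{(m+1-k),\le r}N$. The base case $k=m$ reads $|\phi_*(\xi_m)| \le \lambda\,|\xi_m| \le \lambda s \le r$. For the inductive step, $\phi_*^{(m+1-k)}(\xi_k) = (\phi_*^{(m-k)})_{*\xi_{k+1}}(\xi_k)$ has base point $\phi_*^{(m-k)}(\xi_{k+1}) \in T^{(m-k),\le r}N$ by the inductive hypothesis, and Sasaki norm bounded by $\mu_{m-k}(s)\cdot s \le r$; hence it lies in $T^{\le r}T^{(m-k),\le r}N = T^{(m+1-k),\le r}N$. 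Setting $k=0$ gives the corollary.

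The main bookkeeping difficulty is that $T^{(m+1),\le r}N$ is built by iterating the ball-bundle construction, so bounding only the top-level Sasaki norm of $\phi_*^{(m+1)}(\xi)$ does not suffice: every intermediate projection $\phi_*^{(l)}(\xi_{m+1-l})$ must also satisfy the corresponding nested bound. The tail condition ``$\mu_l(s)\,s \to 0$'' supplied by Lemma~\ref{l: |(phi_*^(m))_*xi| le lambda} is precisely what allows a single small $s$ to absorb all $m+1$ nested norm constraints at once.
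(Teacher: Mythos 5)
Your proof is correct and, after unrolling the paper's formal induction on $m$, coincides with its argument: both rest on the isometric descent of the hypothesis to lower orders, Lemma~\ref{l: |(phi_*^(m))_*xi| le lambda} together with its tail condition $\mu s\to0$, and verification of the nested ball-bundle condition one projection level at a time. The paper packages this as an upward induction on $m$ that invokes the inductive hypothesis for the lower levels and applies Lemma~\ref{l: |(phi_*^(m))_*xi| le lambda} once per step; your downward induction over the projections $\pi^j(\xi)$ is the same estimate presented in unrolled form.
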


\begin{proof}
  This is also proved by induction on $m$. The statement is true for
  $m=0$ because, if $|\phi_*|\le\lambda$, then $\phi_*(T^{\le
    s}M)\subset T^{\le\lambda s}N$ for all $s>0$, and therefore it is
  enough to take $s=r/\lambda$ in this case.
	
  Now, assume that $m\ge1$ and the result is true for $m-1$. By
  Remark~\ref{r: T^(l)M subset T^(m)M}-\eqref{i: g^(m)|_T^(l)M =
    g^(l)}, if $|(\phi_*^{(m)})_{*\xi}|\le\lambda$ for all $\xi\in
  T^{(m),\le1}M$, then $|(\phi_*^{(m-1)})_{*\xi}|\le\lambda$ for all
  $\xi\in T^{(m-1),\le1}M$. Hence, by the induction hypothesis, for
  all $r>0$, there is some $s>0$, as small as desired, such that
  $\phi_*^{(m)}(T^{(m),\le s}M)\subset T^{(m),\le r}N$. On the other
  hand, by Lemma~\ref{l: |(phi_*^(m))_*xi| le lambda}, there is some
  $\mu>0$, depending on $m$, $r$, $s$ and $\lambda$, such that
  $|(\phi_*^{(m)})_{*\xi}|\le\mu$ for all $\xi\in T^{(m),\le s}M$, and
  satisfying $\mu s\to0$ as $s\to0$ for fixed $m$, $r$ and
  $\lambda$. Thus we can choose $s$, and the corresponding $\mu$, so
  that $\mu s\le r$. Then
		\[
			\phi_*^{(m+1)}(T^{(m+1),\le s}M)\subset T^{\le\mu s}T^{(m),\le r}N\subset T^{(m+1),\le r}N\;.\qed
		\]
\renewcommand{\qed}{}
\end{proof}

\begin{lem}\label{l: T^(m),le rM}
  For $m\in\Z^+$, $r,s>0$ and $\lambda\ge1$, there is some $\mu\ge1$
  such that, for any $C^{m+1}$ map between Riemannian manifolds,
  $\phi:M\to N$, if $\phi_*^{(m)}:T^{(m),\le r}M\to T^{(m)}N$ is a
  $\lambda$-quasi-isometry, then $\phi_*^{(m)}:T^{\le s}M\to T^{(m)}N$
  is a $\mu$-quasi-isometry.
\end{lem}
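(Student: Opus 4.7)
The plan is to split into two cases, $s\le r$ and $s>r$, and to combine two simple principles: (1) by iterating Remark~\ref{r: T^(l)M subset T^(m)M}\eqref{i: g^(m)|_T^(l)M = g^(l)}--\eqref{i: T^(l)M subset T^(m)M}, the zero-section inclusions $TM\hookrightarrow T^{(m)}M$ and $TN\hookrightarrow T^{(m)}N$ are totally geodesic isometric embeddings under which $\phi_*^{(m)}$ agrees with $\phi_*$; (2) the fiberwise scaling $h_c$ commutes with $\phi_*$ by linearity, so information about $\phi_*$ can be transferred between balls of different radii in $TM$ at the cost of a factor of order $s/r$, quantified by Corollary~\ref{c: h_c*^(m)}.

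For $s\le r$, I would observe that $T^{\le s}M$ sits isometrically and totally geodesically in $T^{(m),\le r}M$ via the iterated zero sections; consequently the tangent bundle $T(T^{\le s}M)$ sits isometrically in $T(T^{(m),\le r}M)\subset T^{(m+1)}M$ with respect to the Sasaki metrics. Restricting the hypothesized $\lambda$-quasi-isometry bound on the derivative $\phi_*^{(m+1)}$ from $T(T^{(m),\le r}M)$ to this smaller subspace yields that $\phi_*^{(m)}:T^{\le s}M\to T^{(m)}N$ is itself $\lambda$-quasi-isometric, so $\mu=\lambda$ suffices in this subcase.

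For $s>r$, I would use the factorization
\[
\phi_*|_{T^{\le s}M}\;=\;h_{s/r}\,\circ\,\phi_*|_{T^{\le r}M}\,\circ\, h_{r/s}|_{T^{\le s}M}\;,
\]
valid because $\phi_*(c\xi)=c\,\phi_*(\xi)$ for every $c>0$ and $\xi\in TM$. By Corollary~\ref{c: h_c*^(m)} applied with $m=1$, the outer factors $h_{r/s}:T^{\le s}M\to T^{\le r}M$ and $h_{s/r}:TN\to TN$ are $\hat c$-quasi-isometries with $\hat c=s/r$; the middle factor is $\lambda$-quasi-isometric by the case already handled (taking $s:=r$); and Remark~\ref{r: quasi-isometries}\eqref{i: composition of quasi-isometries} then gives that the composition is a $(s/r)^2\lambda$-quasi-isometry. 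Combining the two cases, $\mu=\max\{1,s/r\}^2\lambda$ does the job.

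The point requiring most care is the isometric-embedding claim underlying Case~1: one must verify that the Sasaki metric on $T(T^{\le s}M)$, defined intrinsically via $T^{\le s}M\subset TM$, agrees with the restriction of the Sasaki metric on the ambient $T^{(m+1)}M$ through the iterated zero sections, so that the norms of tangent vectors used on the two sides of the quasi-isometry bound match. This follows inductively from Remark~\ref{r: T^(l)M subset T^(m)M}\eqref{i: g^(m)|_T^(l)M = g^(l)}--\eqref{i: T^(l)M subset T^(m)M}, but the identifications need to be tracked carefully. Once this is in place, both cases become mechanical applications of earlier results, and the scaling argument in Case~2 is essentially a one-level version of diagram~\eqref{CD with phi_*^(m), h_1/c* and h_c*} used in the proof of Lemma~\ref{l: |(phi_*^(m))_*xi| le lambda}.
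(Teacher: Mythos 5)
Your two-case argument proves the statement as literally printed, but the printed conclusion contains a typo: it should read $\phi_*^{(m)}:T^{(m),\le s}M\to T^{(m)}N$, not $\phi_*^{(m)}:T^{\le s}M\to T^{(m)}N$. The evidence: the last line of the paper's own proof concludes with ``$\phi_*^{(m)}:T^{(m),\le s}M\to T^{(m)}N$ is a $\mu$-quasi-isometry''; the lemma is invoked in Proposition~\ref{p: quasi-isometries of order m} precisely to pass a quasi-isometry bound from $T^{(m),\le 1}M$ to $T^{(m),\le r}M$, not merely to $T^{\le r}M$; and the remark right after the lemma says it justifies replacing $T^{(m),\le1}M$ by $T^{(m),\le r}M$ in Definition~\ref{d: quasi-isometry of order m}. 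Under the literal reading that you adopt, the restriction of $\phi_*^{(m)}$ to the iterated zero-section copy of $T^{\le s}M$ is just $\phi_*$, so the lemma collapses to the $m=1$ case already covered by Lemma~\ref{l: A_phi}; no induction would be needed, which is at odds with the structure of the paper's proof.

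For the intended statement your Case~1 ($s\le r$) adapts immediately, since $T^{(m),\le s}M\subset T^{(m),\le r}M$ isometrically, but Case~2 ($s>r$) breaks down. The ball constraint in $T^{(m),\le s}M$ is imposed at all $m$ levels of the iterated tangent bundle, whereas your conjugation $\phi_*=h_{s/r}\circ\phi_*\circ h_{r/s}$ rescales only the innermost copy: $h_{r/s}$ sends $T^{\le s}M$ into $T^{\le r}M$, but after passing to the $(m-1)$-st derivative the remaining $m-1$ outer levels still carry the $\le s$ condition rather than $\le r$. Corollary~\ref{c: h_c*^(m)} cannot close this gap either: for $c=r/s<1$ it only gives $h_{c*}^{(m-1)}(T^{(m),\le s}M)\subset T^{(m),\le s}M$ (since $\bar c=1$), not a contraction into $T^{(m),\le r}M$, because $h_{c*}^{(m-1)}$ acts as the identity on part of the Sasaki splitting. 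The paper's proof performs exactly one application of your scaling idea, at the innermost level, and then invokes the lemma at order $m-1$ for the map $\phi_*:T^{\le s}M\to TN$ to deal with the remaining levels; in other words, your Case~2 is precisely a single inductive step, and the induction on $m$ that strings these steps together is the essential content missing from your proposal.
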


\begin{proof}
  Again, we use induction on $m$. The case $m=1$ is a direct
  consequence of Lemma~\ref{l: A_phi}.
	
  Now, assume that $m\ge2$ and the result holds for $m-1$. Consider
  the notation of the proof of Lemma~\ref{l: |(phi_*^(m))_*xi| le
    lambda}. From the commutativity of~\eqref{CD with phi_*^(m),
    h_1/c* and h_c*}, and using Corollary~\ref{c: h_c*^(m)}
  and Remark~\ref{r: quasi-isometries}-\eqref{i: composition of
    quasi-isometries}, it follows that the lower horizontal arrow
  of~\eqref{CD with phi_*^(m), h_1/c* and h_c*} is a $\hat
  c^2\lambda$-quasi-isometry. Then, by the induction hypothesis
  applied to the map $\phi_*:T^{\le s}M\to TN$, there is some $\mu>0$,
  depending only on $m-1$, $t$, $s$ and $\hat c^2\lambda$, such that
  $\phi_*^{(m)}:T^{(m),\le s}M\to T^{(m)}N$ is a $\mu$-quasi-isometry.
\end{proof}

\begin{rem}
  According to Lemma~\ref{l: T^(m),le rM}, we could use any
  $T^{(m),\le r}M$ instead of $T^{(m),\le1}M$ to define
  quasi-isometries of order $m$, but the dilation bounds of order $m$
  would be different.
\end{rem}

\begin{prop}\label{p: quasi-isometries of order m}
  \begin{enumerate}[{\rm(}i{\rm)}]
	
  \item\label{i: composites of quasi-isometries of order m} For all
    $m\in\N$ and $\lambda,\mu\ge1$, there is some $\nu\ge1$ such that,
    if $\phi:M\to N$ and $\psi:N\to L$ are quasi-isometries of order
    $m$, and $\lambda$ and $\mu$ are respective dilation bounds of
    order $m$, then $\psi\phi$ is a $\nu$-quasi-isometry of order $m$.
		
  \item\label{i: inverses of quasi-isometric diffeomorphisms of order
      m} For all $m\in\N$ and $\lambda\ge1$, there is some $\mu\ge1$
    such that, if $\phi:M\to N$ is a $\lambda$-quasi-isometric
    diffeomorphism of order $m$, then $\phi^{-1}$ is a
    $\mu$-quasi-isometry of order $m$.
		
  \end{enumerate}
\end{prop}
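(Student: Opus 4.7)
The plan is to prove both parts by reducing to a sufficiently small radius $T^{(m),\le s}$ on which the hypothesized quasi-isometry bounds apply pointwise to the composite or inverse of differentials, and then to restore radius $1$ using Lemma~\ref{l: T^(m),le rM}. The base case $m=0$ falls out of Remark~\ref{r: quasi-isometries}\eqref{i: composition of quasi-isometries} and~\eqref{i: inverse of a quasi-isometric diffeo}, so I would handle $m\ge 1$ below.

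The first step is a \emph{pointwise norm estimate}: for $\phi$ a $\lambda$-quasi-isometry of order $m$, $|\phi_*^{(m)}(\xi)|\le\lambda\,|\xi|$ whenever $\pi(\xi)\in T^{(m-1),\le 1}M$, together with the reverse estimate $|\phi_*^{(m)}(\xi)|\ge\tfrac{1}{\lambda}\,|\xi|$ when $\phi$ is a diffeomorphism. The justification is that, by Remark~\ref{r: quasi-isometry of order m}\eqref{i: q-i of order m are q-i of order m-1}, $\phi$ is also a $\lambda$-quasi-isometry of order $m-1$, so at each $p\in T^{(m-1),\le 1}M$ the linear differential $(\phi_*^{(m-1)})_{*p}:T_pT^{(m-1)}M\to T_{\phi_*^{(m-1)}(p)}T^{(m-1)}N$ is $\lambda$-quasi-isometric, and it coincides with the restriction of $\phi_*^{(m)}$ to the $\pi$-fiber over $p$. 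A simple induction on $m$ combining this estimate with the recursive description $T^{(m),\le r}M=T^{\le r}T^{(m-1),\le r}M$ then yields the radius reduction $\phi_*^{(m)}(T^{(m),\le 1/\lambda}M)\subset T^{(m),\le 1}N$, and in the diffeomorphism case also $(\phi_*^{(m)})^{-1}(T^{(m),\le 1/\lambda}N)\subset T^{(m),\le 1}M$.

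With these inclusions in hand, part (i) is obtained by setting $s=1/\lambda$: for each $\xi\in T^{(m),\le s}M$ the differential $((\psi\phi)_*^{(m)})_{*\xi}=(\psi_*^{(m)})_{*\phi_*^{(m)}(\xi)}\circ(\phi_*^{(m)})_{*\xi}$ is a composite of a linear $\mu$-quasi-isometry (since $\phi_*^{(m)}(\xi)\in T^{(m),\le 1}N$) and a linear $\lambda$-quasi-isometry, hence a linear $\lambda\mu$-quasi-isometry; Lemma~\ref{l: T^(m),le rM} then provides $\nu\ge 1$, depending only on $m$, $\lambda$, $\mu$, such that $(\psi\phi)_*^{(m)}:T^{(m),\le 1}M\to T^{(m)}L$ is a $\nu$-quasi-isometry. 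Part (ii) is parallel: writing $\xi=(\phi^{-1})_*^{(m)}(\eta)$, the preimage inclusion places $\xi$ in $T^{(m),\le 1}M$ for every $\eta\in T^{(m),\le s}N$, so $((\phi^{-1})_*^{(m)})_{*\eta}=((\phi_*^{(m)})_{*\xi})^{-1}$ is the inverse of a linear $\lambda$-quasi-isometry and is therefore itself a linear $\lambda$-quasi-isometry; Lemma~\ref{l: T^(m),le rM} again produces $\mu\ge 1$ depending only on $m$ and $\lambda$ such that $(\phi^{-1})_*^{(m)}:T^{(m),\le 1}N\to T^{(m)}M$ is a $\mu$-quasi-isometry.

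The main obstacle is the built-in size mismatch: the hypothesis controls $\phi_*^{(m)}$ only on $T^{(m),\le 1}M$, but its image need not lie in $T^{(m),\le 1}N$, so the bound on $\psi_*^{(m)}$ or on the differential of $\phi_*^{(m)}$ at the target cannot be applied directly at radius $1$. Overcoming this requires both the pointwise norm estimate, which shrinks the radius to the point where the compositions close up, and Lemma~\ref{l: T^(m),le rM}, which afterwards restores radius $1$ at the cost of a controlled change in the dilation bound.
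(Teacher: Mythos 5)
Your proof is correct and shares the same backbone as the paper's: shrink the radius to some $r$ so that $\phi_*^{(m)}$ carries $T^{(m),\le r}M$ into $T^{(m),\le1}N$ (respectively, pulls $T^{(m),\le r}N$ back into $T^{(m),\le1}M$), bound the composite or inverse on this shrunken domain, and then restore radius $1$ via Lemma~\ref{l: T^(m),le rM}. Where you differ is in how the shrinkage is obtained. The paper invokes Corollary~\ref{c: phi_*^(m)(T^(m),le rM) subset T^(m),le lambda rN} (itself built on the scaling argument of Lemma~\ref{l: |(phi_*^(m))_*xi| le lambda} with the maps $h_c$), and then needs an additional application of Lemma~\ref{l: T^(m),le rM} to control $\phi_*^{(m)}$ on the shrunken domain before composing. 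You instead establish the inclusion $\phi_*^{(m)}(T^{(m),\le1/\lambda}M)\subset T^{(m),\le1}N$, and its preimage analogue for the diffeomorphism case, directly by induction on $m$, resting on the correct observation that the order-$(m-1)$ quasi-isometry bound on $\phi$ (Remark~\ref{r: quasi-isometry of order m}) controls the restriction of $\phi_*^{(m)}$ to each $\pi$-fiber over $T^{(m-1),\le1}M$. This is a genuine streamlining: it produces the explicit constant $r=1/\lambda\le1$, so the restriction of $\phi_*^{(m)}$ to $T^{(m),\le r}M$ is automatically a $\lambda$-quasi-isometry and the intermediate call to Lemma~\ref{l: T^(m),le rM} disappears. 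The paper's heavier machinery buys more generality elsewhere (Corollary~\ref{c: phi_*^(m)(T^(m),le rM) subset T^(m),le lambda rN} applies to any $C^{m+1}$ map under a one-sided derivative bound), but for this particular Proposition your shorter, self-contained route with explicit constants is cleaner.
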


\begin{proof}
  Let us prove~\eqref{i: composites of quasi-isometries of order
    m}. By Corollary~\ref{c: phi_*^(m)(T^(m),le rM) subset T^(m),le
    lambda rN}, there is some $r>0$, depending on $m$ and $\lambda$,
  such that
  \[
  \phi_*^{(m+1)}(T^{(m+1),\le r}M)\subset T^{(m+1),\le1}N\;,
  \]
  and therefore $\phi_*^{(m)}(T^{(m),\le r}M)\subset
  T^{(m),\le1}N$. On the other hand, by Lemma~\ref{l: T^(m),le rM},
  there is some $\lambda'\ge1$, depending on $m$, $r$ and $\lambda$,
  such that $\phi_*^{(m)}:T^{(m),\le r}M\to T^{(m),\le1}N$ is a
  $\lambda'$-quasi-isometry. So
  \[
  (\psi\phi)_*^{(m)}=\psi_*^{(m)}\phi_*^{(m)}:T^{(m),\le r}M\to
  T^{(m)}L
  \]
  is a $\lambda'\mu$-quasi-isometry by Remark~\ref{r:
    quasi-isometries}-\eqref{i: composition of
    quasi-isometries}. Thus, by Lemma~\ref{l: T^(m),le rM}, there is
  some $\nu\ge1$, depending on $m$, $r$ and $\lambda'\mu$, so that
  $(\psi\phi)_*^{(m)}:T^{(m),\le1}M\to T^{(m)}L$ is a
  $\nu$-quasi-isometry; i.e., $\psi\phi$ is a $\nu$-quasi-isometry of
  order $m$.
	
  Now, let us prove~\eqref{i: inverses of quasi-isometric
    diffeomorphisms of order m}. By Corollary~\ref{c:
    phi_*^(m)(T^(m),le rM) subset T^(m),le lambda rN}, there is some
  $r>0$, depending on $m$ and $\lambda$, such that
  \[
  (\phi^{-1})_*^{(m+1)}(T^{(m+1),\le r}N)\subset T^{(m+1),\le1}M\;,
  \]
  and therefore $(\phi^{-1})_*^{(m)}(T^{(m),\le r}N)\subset
  T^{(m),\le1}M$. So
  \[
  \phi_*^{(m)}:(\phi^{-1})_*^{(m)}(T^{(m),\le r}N)\to T^{(m),\le r}N
  \]
  is a $\lambda$-quasi-isometric diffeomorphism, obtaining that
  \[
  (\phi^{-1})_*^{(m)}=(\phi_*^{(m)})^{-1}:T^{(m),\le
    r}N\to(\phi^{-1})_*^{(m)}(T^{(m),\le r}N)
  \]
  is a $\lambda$-quasi-isometry by Remark~\ref{r:
    quasi-isometries}-\eqref{i: inverse of a quasi-isometric
    diffeo}. Thus, by Lemma~\ref{l: T^(m),le rM}, there is some
  $\mu\ge1$, depending on $m$, $r$ and $\lambda$, so that
  $(\phi^{-1})_*^{(m)}:T^{(m),\le1}N\to T^{(m)}M$ is a
  $\mu$-quasi-isometry; i.e., $\phi^{-1}$ is a $\mu$-quasi-isometry of
  order $m$.
\end{proof}

%The following is a direct consequence of Proposition~\ref{p: quasi-isometries of order m}.

\begin{cor}\label{c: being quasi-isometric is equiv rel}
  ``Being quasi-isometric with order $m$'' is an equivalence relation.
\end{cor}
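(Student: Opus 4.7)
The plan is to verify the three axioms of an equivalence relation directly, with reflexivity being immediate and the other two following essentially for free from Proposition~\ref{p: quasi-isometries of order m}. Since the equivalence relation is ``there exists a quasi-isometric diffeomorphism of order $m$ between them,'' the work is really to check that the class of quasi-isometric diffeomorphisms of order $m$ is closed under taking identity maps, inverses, and composition.

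For reflexivity, I would observe that $\id_M:M\to M$ is a $C^\infty$ diffeomorphism whose iterated tangent map $(\id_M)_*^{(m)}=\id_{T^{(m)}M}$ is trivially a $1$-quasi-isometry. Hence $M$ is quasi-isometric with order $m$ to itself.

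For symmetry, suppose $\phi:M\to N$ is a $\lambda$-quasi-isometric diffeomorphism of order $m$. Then by Proposition~\ref{p: quasi-isometries of order m}\eqref{i: inverses of quasi-isometric diffeomorphisms of order m}, there is some $\mu\ge1$ (depending only on $m$ and $\lambda$) so that $\phi^{-1}$ is a $\mu$-quasi-isometry of order $m$; since $\phi^{-1}$ is again a $C^\infty$ diffeomorphism, it is a quasi-isometric diffeomorphism of order $m$, so $N$ is quasi-isometric with order $m$ to $M$.

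For transitivity, if $\phi:M\to N$ and $\psi:N\to L$ are quasi-isometric diffeomorphisms of order $m$ with dilation bounds $\lambda$ and $\mu$ respectively, then Proposition~\ref{p: quasi-isometries of order m}\eqref{i: composites of quasi-isometries of order m} yields some $\nu\ge1$ such that $\psi\phi$ is a $\nu$-quasi-isometry of order $m$; it is also a $C^\infty$ diffeomorphism, hence a quasi-isometric diffeomorphism of order $m$ witnessing that $M$ is quasi-isometric with order $m$ to $L$. There is no real obstacle here: the entire content sits in Proposition~\ref{p: quasi-isometries of order m}, and the corollary is just a bookkeeping step combining the three cases.
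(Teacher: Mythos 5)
Your proof is correct and is exactly the argument the paper intends: the corollary is stated without proof precisely because reflexivity is immediate from the identity being a $1$-quasi-isometry of every order, while symmetry and transitivity are just Proposition~\ref{p: quasi-isometries of order m}\eqref{i: inverses of quasi-isometric diffeomorphisms of order m} and \eqref{i: composites of quasi-isometries of order m} together with the observation that inverses and composites of diffeomorphisms are diffeomorphisms.
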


Let $M$ and $N$ be connected Riemannian manifolds. For every
$m\in\N\cup\{\infty\}$, consider the weak $C^m$ topology on $C^m(M,N)$
(see \cite{Hirsch1976}). For $x\in M$ and $\Phi\subset C^m(M,N)$, let
$\Phi(x)=\{\,\phi(x)\mid\phi\in\Phi\,\}\subset N$.

\begin{prop}\label{p: Arzela-Ascoli}
  Assume that $N$ is complete. Let $x_0\in M$, and let $\Phi\subset
  C^{m+1}(M,N)$ be a family of equi-quasi-isometries of order
  $m\in\N\cup\{\infty\}$. Then $\Phi$ is precompact in $C^m(M,N)$ if
  and only if $\Phi(x_0)$ is bounded in $N$.
\end{prop}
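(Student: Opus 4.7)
My plan is to handle the two directions separately. The forward direction is immediate: evaluation at $x_0$ defines a continuous map $\ev_{x_0}: C^m(M, N) \to N$ with respect to the weak $C^m$ topology, so precompactness of $\Phi$ implies that $\Phi(x_0) = \ev_{x_0}(\Phi)$ is precompact in $N$, and in particular bounded.

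For the converse, suppose $\Phi(x_0) \subset \bar B_N(y_0, R)$ and let $\lambda$ be an equi-dilation bound of order $m$. My strategy is to reduce precompactness in the weak $C^m$ topology to compact-open precompactness of the family of $m$-th iterated tangent maps $\Phi_*^{(m)} := \{\phi_*^{(m)} : \phi \in \Phi\} \subset C(T^{(m)}M, T^{(m)}N)$, since uniform $C^m$-convergence on compact subsets of $M$ corresponds to uniform convergence on compact subsets of $T^{(m)}M$ of the $m$-th tangent maps (the intermediate derivatives sitting inside via the totally geodesic zero sections of Remark \ref{r: T^(l)M subset T^(m)M}). I will then verify the two classical Arzela--Ascoli hypotheses for $\Phi_*^{(m)}$. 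For \emph{pointwise precompactness}, fix $\xi \in T^{(m)}M$ over $x = \pi(\xi)$ and choose $r > 0$ with $\xi \in T^{(m), \le r}M$: Lemma \ref{l: T^(m),le rM} provides a uniform $\mu$-quasi-isometric bound for $\phi_*^{(m)}|_{T^{(m), \le r}M}$, whence $|\phi_*^{(m)}(\xi)|_{g^{(m)}_N} \le \mu r$ uniformly in $\phi$; meanwhile Remark \ref{r: quasi-isometries}(\ref{i: quasi-isometry and distances}) gives $d_N(\phi(x), \phi(x_0)) \le \lambda\, d_M(x, x_0)$, so that $\phi(x)$ stays in a fixed ball in $N$. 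Combining, $\Phi_*^{(m)}(\xi)$ is bounded in the Sasaki metric on the complete manifold $T^{(m)}N$ (completeness inherited from $N$ by iterated application of Sasaki's theorem on completeness of tangent bundles), so Hopf--Rinow yields precompactness. For \emph{equicontinuity} on a compact set $K \subset T^{(m)}M$, the iterated continuous projections of $K$ are compact hence bounded in each Sasaki norm, so $K \subset T^{(m), \le r}M$ for some $r$; Lemma \ref{l: T^(m),le rM} and Remark \ref{r: quasi-isometries}(\ref{i: quasi-isometry and distances}) then give an equi-Lipschitz bound on $\{\phi_*^{(m)}|_K\}$.

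Arzela--Ascoli now delivers precompactness of $\Phi_*^{(m)}$, and hence of $\Phi$. The case $m = \infty$ follows at once because the $C^\infty$ topology is the projective limit of the $C^m$ topologies, so precompactness is tested levelwise, and the preceding argument applies to each finite $m$. I expect the most delicate step to be the pointwise precompactness: it requires combining the order-$m$ fiber bound from Lemma \ref{l: T^(m),le rM} with the zeroth-order base-point bound, and then lifting them through the iterated Sasaki tower; the completeness of each $T^{(m)}N$ is what allows Hopf--Rinow to upgrade boundedness to precompactness, and without it the strategy collapses.
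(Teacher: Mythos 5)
You take a genuinely different route from the paper. The paper's converse runs by induction on $m$: the embedding $C^m(M,N)\to C^{m-1}(T^{\le1}M,TN)$, $\phi\mapsto\phi_*|_{T^{\le1}M}$, together with Remark~\ref{r: quasi-isometry of order m}-\eqref{i: phi_*^(m') is a lambda-q.-i. of order m-m'}, reduces the order-$m$ statement to the order-$(m-1)$ one, so the classical Arzel\`a--Ascoli theorem is invoked only at the $m=0$ base case, where the equicontinuity and pointwise precompactness are the simple Lipschitz estimate of Remark~\ref{r: quasi-isometries}-\eqref{i: quasi-isometry and distances} applied on $M$ itself. You instead unroll the whole tower and apply Arzel\`a--Ascoli once to $\Phi_*^{(m)}\subset C(T^{(m)}M,T^{(m)}N)$, using Lemma~\ref{l: T^(m),le rM} to extend the equi-dilation bound from $T^{(m),\le1}M$ to arbitrary $T^{(m),\le r}M$. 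Both approaches are correct; yours makes the two Arzel\`a--Ascoli hypotheses visible in one pass but pays for it with bookkeeping on $T^{(m)}N$ (e.g.\ its completeness, which the paper never needs to invoke), while the paper's induction re-uses the $m=0$ argument verbatim.

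One step needs sharpening. The inequality ``$|\phi_*^{(m)}(\xi)|_{g^{(m)}_N}\le\mu r$'' does not follow from Lemma~\ref{l: T^(m),le rM}, which bounds the \emph{derivative} $(\phi_*^{(m)})_*$, not the value $\phi_*^{(m)}(\xi)$. What you actually want, and what the derivative bound gives after integrating along a path, is
\[
  d_N^{(m)}\bigl(\phi_*^{(m)}(\xi),\,\phi(x_0)\bigr)\le\mu\,\ell\;,
\]
where $\ell$ is the length of a curve in $T^{(m),\le r}M$ from $\xi$ down the fibers (via the scalings of Corollary~\ref{c: h_c*^(m)}) to $x=\pi(\xi)$ and then along the zero section to $x_0$; this combined with the hypothesis on $\Phi(x_0)$ and completeness of $T^{(m)}N$ gives the desired precompactness. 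Alternatively Corollary~\ref{c: phi_*^(m)(T^(m),le rM) subset T^(m),le lambda rN} gives a uniform $s$ with $\phi_*^{(m)}(\xi)\in T^{(m),\le s}N$, which with the base-point bound achieves the same. The $m=\infty$ case ``follows at once'' only after a sentence of justification; the paper supplies it by observing that the closures $\overline\Phi$ and the topologies stabilize along the chain $C^{l+1}\hookrightarrow C^l$.
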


\begin{proof}
  The ``only if'' part follows because the evaluation map $C^m(M,N)\to
  N$, $\phi\mapsto\phi(x_0)$, is continuous.

  For $m\in\N$, the ``if'' part is proved by induction. For $m=0$, the
  assumption that $\Phi\subset C^1(M,N)$ is a family of
  equi-quasi-isometries implies that $\Phi$ is equi-continuous by
  Remark~\ref{r: quasi-isometries}-\eqref{i: quasi-isometry and
    distances}. On the other hand,
  $\Phi(x)\subset\Pen_N(\Phi(x_0),\lambda\,d(x,x_0))$ for any $x\in M$
  by Remark~\ref{r: quasi-isometries}-\eqref{i: quasi-isometry and
    distances}, where $\lambda\ge1$ is an equi-dilation bound of
  $\Phi$. So $\Phi(x)$ is precompact in $N$ because $\Phi(x_0)$ is
  bounded and $N$ is complete. Therefore $\Phi$ is precompact in
  $C(M,N)$ by the Arzel\`a-Ascoli theorem.

  Now, take an integer $m\ge1$ and assume that the result holds for
  $m-1$. The map $C^m(M,N)\to C^{m-1}(T^{\le1}M,TN)$,
  $\phi\mapsto\phi_*|_{T^{\le1}M}$, is an embedding. So it is enough
  to prove that the image $\Phi_*$ of $\Phi$ by this map is precompact
  in $C^{m-1}(T^{\le1}M,TN)$. This holds by the induction hypothesis
  because $\Phi_*\subset C^m(T^{\le1}M,TN)$ is a family of
  equi-quasi-isometries of order $m-1$ by Remark~\ref{r:
    quasi-isometry of order m}-\eqref{i: phi_*^(m') is a
    lambda-q.-i. of order m-m'}.

  The ``if'' part for $m=\infty$ can be proved as follows. In this
  case, we have proved that $\Phi$ is precompact in $C^l(M,N)$ for
  every $l\in\N$. By the continuity of the inclusion maps
  $C^{l+1}(M,N)\hookrightarrow C^l(M,N)$, it follows that $\Phi$ has
  the same closure $\ol\Phi$ in $C^l(M,N)$ and $C^{l+1}(M,N)$, and the
  weak $C^l$ and $C^{l+1}$ topologies coincide on $\ol\Phi$. Therefore
  $\ol\Phi$ is the closure of $\Phi$ in $C^\infty(M,N)$ too, and the
  weak $C^\infty$ and $C^l$ topologies coincide on $\ol\Phi$ for any
  $l\in\N$. Thus $\Phi$ is precompact in $C^\infty(M,N)$.
\end{proof}

\section{Partial quasi-isometries}\label{s: partial quasi-isometries}

Let $M$ and $N$ be connected complete Riemannian manifolds without
boundary.

\begin{defn}
  For $m\in\N$, a partial map $f:M\rightarrowtail N$ is called a
  \emph{$C^m$ local diffeomorphism} if $\dom f$ and $\im f$ are open
  in $M$ and $N$, respectively, and $f:\dom f\to\im f$ is a $C^m$
  diffeomorphism. If moreover $f(x)=y$ for distinguished points,
  $x\in\dom f$ and $y\in\im f$, then $f$ is said to be \emph{pointed},
  and the notation $f:(M,x)\rightarrowtail(N,y)$ is used. The term
  \emph{local homeomorphism} is used in the $C^0$ case.
\end{defn}

The term ``$C^m$ local diffeomorfism'' ($m\ge1$) may be also used in the standard sense, referring to any $C^m$ map $M\to N$ whose tangent map is an isomorphism at every point of $M$. The context will always clarify this ambiguity.

\begin{defn}\label{d: (m,R,lambda)-pointed local quasi-isometry}
  For $m\in\N$, $R>0$ and $\lambda\ge1$, a $C^{m+1}$ pointed local
  diffeomorphism $\phi \colon (M,x) \rightarrowtail (N,y)$ is called
  an \emph{$(m,R,\lambda)$-pointed local quasi-isometry}, or a
  \emph{local quasi-isometry} of \emph{type} $(m,R,\lambda)$, if the
  restriction $\phi_*^{(m)}:\Omega^{(m)}\to T^{(m)}N$ is a
  $\lambda$-quasi-isometry for some compact domain
  $\Omega^{(m)}\subset\dom\phi_*^{(m)}$ with
  $B_M^{(m)}(x,R)\subset\Omega^{(m)}$.
\end{defn}

\begin{rem}\label{r: (m,R,lambda)-...}
  \begin{enumerate}[(i)]
	
  \item\label{i: ... (m,R,lambda) is ... (m',R',lambda')} Any pointed
    local quasi-isometry $(M,x)\rightarrowtail(N,y)$ of type
    $(m,R,\lambda)$ is also of type $(m',R',\lambda')$ for $0\leq m'
    \leq m$, $0<R'<R$ and $\lambda'>\lambda$ (using Remark~\ref{r:
      T^(l)M subset T^(m)M}-\eqref{i: g^(m)|_T^(l)M = g^(l)}).
		
  \item\label{i: ... (m,R,lambda) if and only if ... (m-m',R,lambda)} For integers
    $0\leq m' \leq m$, any pointed $C^{m+1}$ local diffeomorphism
    $\phi:(M,x)\rightarrowtail(N,y)$ is a pointed local quasi-isometry
    of type $(m,R,\lambda)$ if and only if
    $\phi_*^{(m')}:(T^{(m')}M,x)\rightarrowtail(T^{(m')}N,y)$ is a
    pointed local quasi-isometry of type $(m-m',R,\lambda)$.
		
  \item\label{i: C^infty approximation of (m,R,lambda)-...} If there
    is an $(m,R,\lambda)$-pointed local quasi-isometry
    $(M,x)\rightarrowtail(N,y)$, then, for all $R'<R$ and
    $\lambda'>\lambda$, there is a $C^\infty$
    $(m,R',\lambda')$-pointed local quasi-isometry
    $(M,x)\rightarrowtail(N,y)$ by \cite[Theorem~2.7]{Hirsch1976}.
		
  \end{enumerate}
\end{rem}

\begin{lem}\label{l: composition and inversion of pointed local quasi-isometries}
  The following properties hold:
  \begin{enumerate}[{\rm(}i{\rm)}]
		
  \item\label{i: composition} If $\phi: (M,x) \rightarrowtail (N,y)$
    and $\psi: (N,y)\rightarrowtail(L,z)$ are pointed local
    quasi-isometries of types $(m,R,\lambda)$ and $(m,\lambda
    R,\lambda')$, respectively, then
    $\psi\circ\phi:(M,x)\rightarrowtail (L,z)$ is an
    $(m,R,\lambda\lambda')$-pointed local quasi-isometry.
			
  \item\label{i: inversion} If $\phi: (M,x) \rightarrowtail (N,y)$ is
    an $(m,\lambda R,\lambda)$-pointed local quasi-isometry, then
    $\phi^{-1}:(N,y)\rightarrowtail(M,x)$ is an
    $(m,R,\lambda)$-pointed local quasi-isometry.
			
  \end{enumerate}
\end{lem}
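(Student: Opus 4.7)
The plan is to reduce both parts to the composition and inversion properties of ordinary quasi-isometries between Riemannian manifolds (Remark~\ref{r: quasi-isometries}-\eqref{i: composition of quasi-isometries},~\eqref{i: inverse of a quasi-isometric diffeo}) applied to the iterated tangent maps $\phi_*^{(m)}$ and $\psi_*^{(m)}$. Using the identities $(\psi\circ\phi)_*^{(m)} = \psi_*^{(m)}\circ\phi_*^{(m)}$ and $(\phi^{-1})_*^{(m)} = (\phi_*^{(m)})^{-1}$, this reduces everything to producing appropriate compact domains in $T^{(m)}M$ or $T^{(m)}N$ on which the relevant maps are quasi-isometric.

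For (i), let $\Omega^{(m)}_\phi\subset\dom\phi_*^{(m)}$ and $\Omega^{(m)}_\psi\subset\dom\psi_*^{(m)}$ be the compact domains provided by Definition~\ref{d: (m,R,lambda)-pointed local quasi-isometry}, containing $B_M^{(m)}(x,R)$ and $B_N^{(m)}(y,\lambda R)$, respectively. Identifying $x\in M$ with its zero element in $T^{(m)}M$ gives $\phi_*^{(m)}(x)=y$. Combining $\phi_*^{(m)}(x)=y$ with the bi-Lipschitz bound from Remark~\ref{r: quasi-isometries}-\eqref{i: quasi-isometry and distances} for $\phi_*^{(m)}$ on $\Omega^{(m)}_\phi$ yields $\phi_*^{(m)}(B_M^{(m)}(x,R))\subset B_N^{(m)}(y,\lambda R)\subset\Omega^{(m)}_\psi$. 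Therefore $\Omega^{(m)}_\phi\cap(\phi_*^{(m)})^{-1}(\Omega^{(m)}_\psi)$ is a compact set containing $B_M^{(m)}(x,R)$, from which one obtains a compact subdomain $\Omega^{(m)}\supset B_M^{(m)}(x,R)$ (for instance, a smooth closed neighborhood of $\overline{B_M^{(m)}(x,R')}$ for some $R'$ slightly larger than $R$). On $\Omega^{(m)}$ the composition $(\psi\circ\phi)_*^{(m)}$ is a $\lambda\lambda'$-quasi-isometry by Remark~\ref{r: quasi-isometries}-\eqref{i: composition of quasi-isometries}.

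For (ii), let $\Omega^{(m)}\supset B_M^{(m)}(x,\lambda R)$ be the compact domain witnessing that $\phi$ is an $(m,\lambda R,\lambda)$-pointed local quasi-isometry. Because $\phi$, and hence $\phi_*^{(m)}$, is a local diffeomorphism, the quasi-isometry property forces $\phi_*^{(m)}|_{\Omega^{(m)}}$ to be a diffeomorphism onto its compact image $\Omega^{(m)}_\#$. The heart of the argument is showing $B_N^{(m)}(y,R)\subset\Omega^{(m)}_\#$: given $q\in B_N^{(m)}(y,R)$, pick a path $\gamma$ in $T^{(m)}N$ from $y$ to $q$ of length slightly less than $R$, and lift it to a path $\tilde\gamma$ starting at $x$ through the local diffeomorphism $\phi_*^{(m)}$; the $\lambda$-bi-Lipschitz bound forces the length of $\tilde\gamma$ to stay below $\lambda R$, so the lift never escapes $B_M^{(m)}(x,\lambda R)\subset\Omega^{(m)}$. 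Thus $\Omega^{(m)}_\#$ is a compact domain containing $B_N^{(m)}(y,R)$, and $(\phi^{-1})_*^{(m)}=(\phi_*^{(m)})^{-1}|_{\Omega^{(m)}_\#}$ is a $\lambda$-quasi-isometry by Remark~\ref{r: quasi-isometries}-\eqref{i: inverse of a quasi-isometric diffeo}.

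The main obstacle is the path-lifting step in (ii). A standard open-and-closed argument on $[0,1]$ applies: openness follows from $\phi_*^{(m)}$ being a local diffeomorphism, while closedness uses the bi-Lipschitz length bound to confine $\tilde\gamma$ to a compact set of the form $\overline{B_M^{(m)}(x,\lambda R-\varepsilon)}$, after first shrinking parameters slightly via Remark~\ref{r: (m,R,lambda)-...}-\eqref{i: ... (m,R,lambda) is ... (m',R',lambda')}.
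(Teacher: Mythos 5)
Your proof is correct and follows essentially the same approach as the paper: in (i) both arguments use the $\lambda$-bi-Lipschitz bound to confine $\phi_*^{(m)}(\ol B_M^{(m)}(x,R))$ inside $\Omega_\psi^{(m)}$ and hence inside $\dom\psi_*^{(m)}$. For (ii) you lift a short path through the local diffeomorphism $\phi_*^{(m)}$, whereas the paper runs the open-and-closed argument directly on the connected set $\ol B_N^{(m)}(y,R)$; these are the same connectedness argument in different packaging, both hinging on the bi-Lipschitz bound to keep the relevant points inside $\ol B_M^{(m)}(x,\lambda R)\subset\Omega^{(m)}$.
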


\begin{proof}
  To prove~\eqref{i: composition}, it is enough to show that $\ol
  B_M^{(m)}(x,R)\subset\dom(\psi\circ\phi)_*^{(m)}$ by Remark~\ref{r:
    quasi-isometries}-\eqref{i: composition of quasi-isometries}. For
  $\xi\in\ol B_M^{(m)}(x,R)$, we have $\xi\in\dom\phi$ and
  $d_N^{(m)}(y,\phi_*^{(m)}(\xi))\le\lambda\,
  d_M^{(m)}(x,\xi)\le\lambda R$ by Remark~\ref{r:
    quasi-isometries}-\eqref{i: quasi-isometry and distances},
  obtaining that $\xi\in\dom(\psi\circ\phi)_*^{(m)}$ since $(\psi\circ\phi)_*^{(m)}=\psi_*^{(m)}\circ\phi_*^{(m)}$.
	
  To prove~\eqref{i: inversion}, it is enough to show that $\ol
  B_N^{(m)}(y,R)\subset\phi_*^{(m)}(\ol B_M^{(m)}(x,\lambda R))$ by Remark~\ref{r:
    quasi-isometries}-\eqref{i: inverse of a quasi-isometric
    diffeo}. Let $A=\ol B_N^{(m)}(y,R)\cap\im\phi_*^{(m)}$, which is open in
  $\ol B_N^{(m)}(y,R)$ and contains $y$. For any $\zeta\in A$, there
  is some $\xi\in\dom\phi_*^{(m)}$ so that $\phi_*^{(m)}(\xi)=\zeta$. Then
  $d_M^{(m)}(x,\xi)\le\lambda d_N(y,\zeta)\le\lambda R$ by
  Remark~\ref{r: quasi-isometries}-\eqref{i: quasi-isometry and
    distances}, obtaining that $\xi\in\ol B_M^{(m)}(x,\lambda
  R)$. Thus $A=\phi_*^{(m)}(\ol B_M^{(m)}(x,\lambda R))\cap\ol
  B_N^{(m)}(y,R)$, which is closed in $\ol B_N^{(m)}(y,R)$. Therefore
  $\ol B_N^{(m)}(y,R)=A\subset\phi_*^{(m)}(\ol B_M^{(m)}(x,\lambda R))$ because $\ol B_N^{(m)}(y,R)$ is connected.
\end{proof}

\section{The $C^\infty$ topology on $\MM_*(n)$}\label{s: C^infty topology}

\begin{defn}\label{d: U^m_R,r}
  For $m\in\N$ and $R,r>0$, let $U^m_{R,r}$ be the set of pairs
  $([M,x],[N,y])\in\MM_*(n)\times\MM_*(n)$ such that there is some
  $(m,R,\lambda)$-pointed local quasi-isometry $(M,x)\rightarrowtail
  (N,y)$ for some $\lambda\in[1,e^r)$.
\end{defn}

The following standard notation is
    used for a set $X$ and relations $U,V\subset X\times X$:
\begin{align*}
	U^{-1}&=\{\,(y,x)\in X\times X\mid(x,y)\in U\,\}\;,\\
	V\circ U&=\{\,(x,z)\in X\times X\mid\exists y\in X\ \text{so that}\ (x,y)\in U\ \text{and}\ (y,z)\in V\,\}\;.
\end{align*}
Moreover the diagonal of $X\times X$ is denoted by $\Delta$.

\begin{prop}\label{p: C^infty uniformity in MM_*(n)}
  The following properties hold for all $m,m'\in\N$ and $R,S,r,s>0$:
\begin{enumerate}[{\rm(}i{\rm)}]
  	
\item\label{i: ^-1} $(U^m_{e^rR,r})^{-1} \subset U^m_{R,r}$.
		
\item\label{i: cap} $U^{m_0}_{R_0,r_0}\subset U^m_{R,r}\cap
  U^{m'}_{S,s}$, where $m_0=\max\{m,m'\}$, $R_0=\max\{R,S\}$ and
  $r_0=\min\{r,s\}$.
		
\item\label{i: Delta} $\Delta\subset U^m_{R,r}$.
		
\item\label{i: circ} $U^m_{e^sR,r}\circ U^m_{R,s} \subset
  U^m_{R,r+s}$.
		
\end{enumerate}
\end{prop}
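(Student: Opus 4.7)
The plan is to verify each of the four uniformity-style axioms by unpacking Definition~\ref{d: U^m_R,r} and applying the composition/inversion rules established in Lemma~\ref{l: composition and inversion of pointed local quasi-isometries} together with the monotonicity recorded in Remark~\ref{r: (m,R,lambda)-...}\eqref{i: ... (m,R,lambda) is ... (m',R',lambda')}. Parts~(iii) and~(ii) are essentially bookkeeping, while parts~(i) and~(iv) are where the exponential scaling factors $e^r$ play their role and will require a small reshuffling of constants before the lemma can be applied. No genuine new geometric input is needed.

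For~(iii), the identity $\id_M:(M,x)\rightarrowtail(M,x)$ is a $C^\infty$ pointed local diffeomorphism and $(\id_M)_*^{(m)}=\id_{T^{(m)}M}$ is a $1$-quasi-isometry on all of $T^{(m)}M$, so it is an $(m,R,1)$-pointed local quasi-isometry for every $R>0$; since $1\in[1,e^r)$, the pair $([M,x],[M,x])$ lies in $U^m_{R,r}$. For~(ii), let $m_0=\max\{m,m'\}$, $R_0=\max\{R,S\}$, $r_0=\min\{r,s\}$, and suppose $([M,x],[N,y])\in U^{m_0}_{R_0,r_0}$, witnessed by a pointed local quasi-isometry of type $(m_0,R_0,\lambda)$ with $\lambda\in[1,e^{r_0})$. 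Since $m\le m_0$, $m'\le m_0$, $R\le R_0$, $S\le R_0$ and $\lambda<e^{r_0}\le\min\{e^r,e^s\}$, Remark~\ref{r: (m,R,lambda)-...}\eqref{i: ... (m,R,lambda) is ... (m',R',lambda')} immediately exhibits the same map as being of types $(m,R,\lambda)$ and $(m',S,\lambda)$ with the corresponding dilation bounds, giving containment in both $U^m_{R,r}$ and $U^{m'}_{S,s}$.

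For~(i), $([M,x],[N,y])\in (U^m_{e^rR,r})^{-1}$ means there is $\phi\colon(N,y)\rightarrowtail(M,x)$ of type $(m,e^rR,\lambda)$ with $\lambda\in[1,e^r)$. Since $\lambda R<e^rR$, Remark~\ref{r: (m,R,lambda)-...}\eqref{i: ... (m,R,lambda) is ... (m',R',lambda')} lets me regard $\phi$ as being of type $(m,\lambda R,\lambda)$, so Lemma~\ref{l: composition and inversion of pointed local quasi-isometries}\eqref{i: inversion} produces $\phi^{-1}\colon(M,x)\rightarrowtail(N,y)$ of type $(m,R,\lambda)$, placing $([M,x],[N,y])$ in $U^m_{R,r}$.

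For~(iv), given $\phi\colon(M,x)\rightarrowtail(N,y)$ of type $(m,R,\mu)$ with $\mu\in[1,e^s)$ and $\psi\colon(N,y)\rightarrowtail(L,z)$ of type $(m,e^sR,\lambda)$ with $\lambda\in[1,e^r)$, the strict inequality $\mu R<e^sR$ together with Remark~\ref{r: (m,R,lambda)-...}\eqref{i: ... (m,R,lambda) is ... (m',R',lambda')} recast $\psi$ as being of type $(m,\mu R,\lambda)$. Lemma~\ref{l: composition and inversion of pointed local quasi-isometries}\eqref{i: composition} then gives $\psi\circ\phi$ of type $(m,R,\mu\lambda)$, and $\mu\lambda<e^s\cdot e^r=e^{r+s}$ shows $([M,x],[L,z])\in U^m_{R,r+s}$. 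The main point to keep in mind throughout is that the hypotheses of the composition/inversion lemma are stated with a single $\lambda$ governing both the dilation and the radius ($\lambda R$), so the exponential spreads $e^rR$ and $e^sR$ in the statement are exactly what is needed to absorb the worst-case radii before invoking the lemma.
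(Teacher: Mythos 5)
Your proof is correct and takes essentially the same route as the paper, which simply states that parts (ii) and (iii) are elementary and that parts (i) and (iv) are consequences of Lemma~\ref{l: composition and inversion of pointed local quasi-isometries}; you have supplied exactly the intended unpacking, including the preliminary shrinking of the radius (via Remark~\ref{r: (m,R,lambda)-...}) from $e^rR$ or $e^sR$ down to $\lambda R$ or $\mu R$ before invoking the inversion and composition clauses of that lemma.
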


\begin{proof}
  Properties~\eqref{i: cap} and~\eqref{i: Delta} are elementary,
  and~\eqref{i: ^-1} and~\eqref{i: circ} are consequences of
  Lemma~\ref{l: composition and inversion of pointed local
    quasi-isometries}.
\end{proof}

\begin{prop}\label{p: Hausdorff MM_*(n)}
$\bigcap_{R,r>0}U^m_{R,r}=\Delta$ for all $m\in\N$.
\end{prop}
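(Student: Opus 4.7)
The inclusion $\Delta\subset\bigcap_{R,r>0}U^m_{R,r}$ is immediate from Proposition~\ref{p: C^infty uniformity in MM_*(n)}-\eqref{i: Delta}. For the reverse inclusion, the plan is to construct from the hypothesis a pointed isometry $(M,x)\to(N,y)$, whence $[M,x]=[N,y]$. So suppose $([M,x],[N,y])\in U^m_{R,r}$ for every $R,r>0$. I choose sequences $R_k\nearrow\infty$ and $r_k\searrow 0$, and for each $k$ pick an $(m,R_k,\lambda_k)$-pointed local quasi-isometry $\phi_k:(M,x)\rightarrowtail(N,y)$ with $\lambda_k\in[1,e^{r_k})$; after a negligible shrinking of $R_k$ and enlarging of $\lambda_k$, Remark~\ref{r: (m,R,lambda)-...}-\eqref{i: C^infty approximation of (m,R,lambda)-...} lets me assume every $\phi_k$ is $C^\infty$. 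Since $M\subset T^{(m)}M$ is totally geodesic (Remark~\ref{r: T^(l)M subset T^(m)M}-\eqref{i: T^(l)M subset T^(m)M}), the domain of $\phi_k$ contains $\overline{B_M(x,R_k)}$ in $M$.

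The next step is to extract a $C^\infty_{\mathrm{loc}}$ limit $\phi:M\to N$ by a diagonal Arzel\`a--Ascoli argument patterned on Proposition~\ref{p: Arzela-Ascoli}. Fix precompact connected open sets $V_j\subset M$ with $\overline{V_j}\subset V_{j+1}$ and $\bigcup_jV_j=M$. For each $j$ and all sufficiently large $k$, $\phi_k$ is defined and $C^\infty$ on a neighborhood of $\overline{V_j}$, with a common equi-dilation bound of order $m$ equal to $\sup_k\lambda_k<\infty$, and its image lies in a closed metric ball in $N$ whose radius is controlled by $\diam(\{x\}\cup V_j)$; this ball is compact by completeness of $N$ and Hopf--Rinow. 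Running the argument behind Proposition~\ref{p: Arzela-Ascoli} on a neighborhood of $\overline{V_j}$ (where only boundedness of the orbits in $N$ is used, not completeness of the source) gives $C^\infty$-precompactness on $V_j$; a standard diagonal extraction then produces a subsequence $\phi_{k_i}$ converging in $C^\infty_{\mathrm{loc}}$ to some $C^\infty$ map $\phi:M\to N$ with $\phi(x)=y$. Since $\lambda_{k_i}\to 1$, the limit $\phi$ is a $1$-quasi-isometry of order $m$, hence an isometric immersion.

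To upgrade $\phi$ to a genuine pointed isometry, I apply Lemma~\ref{l: composition and inversion of pointed local quasi-isometries}-\eqref{i: inversion}: each $\phi_{k_i}^{-1}:(N,y)\rightarrowtail(M,x)$ is an $(m,R_{k_i}/\lambda_{k_i},\lambda_{k_i})$-pointed local quasi-isometry, and $R_{k_i}/\lambda_{k_i}\to\infty$. Passing to a further subsequence if necessary, the same compactness argument yields a $C^\infty$ limit $\psi:N\to M$ with $\psi(y)=x$, again a local isometry. For any $p\in M$, the uniform Lipschitz bound on $\phi_{k_i}^{-1}$ (Remark~\ref{r: quasi-isometries}-\eqref{i: quasi-isometry and distances}) together with $\phi_{k_i}(p)\to\phi(p)$ and the identity $\phi_{k_i}^{-1}(\phi_{k_i}(p))=p$ forces $\phi_{k_i}^{-1}(\phi(p))\to p$; but $\phi_{k_i}^{-1}(\phi(p))\to\psi(\phi(p))$, so $\psi\circ\phi=\id_M$, and symmetrically $\phi\circ\psi=\id_N$. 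Thus $\phi$ is a pointed isometry, yielding $[M,x]=[N,y]$.

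I expect the main obstacle to be the compactness step: Proposition~\ref{p: Arzela-Ascoli} is formulated for globally defined maps on a complete source, whereas here the $\phi_k$ live only on growing neighborhoods of closed balls about $x$, so its proof has to be localized and then assembled via a diagonal extraction. A closely related subtlety is ensuring that the same sequence of indices can be used to realize both $\phi$ and $\psi$ as $C^\infty_{\mathrm{loc}}$-limits, so that they come out as honest mutual inverses rather than merely local isometries on possibly disjoint charts.
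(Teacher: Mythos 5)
Your argument is correct and follows essentially the same route as the paper's: restrict the $\phi_k$ to an exhaustion of $M$, apply Proposition~\ref{p: Arzela-Ascoli} with a diagonal extraction to get a limit $\phi$, observe that $\lambda_k\to1$ forces $\phi$ to preserve the metric, repeat for the inverses $\phi_k^{-1}$ (which Lemma~\ref{l: composition and inversion of pointed local quasi-isometries}-\eqref{i: inversion} puts in the same setting) to get $\psi$, and verify $\psi$ and $\phi$ are mutually inverse.

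One small slip worth correcting: the $\phi_k$ are only equi-quasi-isometries of order $m$, with no control on derivatives of order $>m+1$, and the $C^\infty$ smoothing from Remark~\ref{r: (m,R,lambda)-...}-\eqref{i: C^infty approximation of (m,R,lambda)-...} does not change that. So Proposition~\ref{p: Arzela-Ascoli} yields precompactness only in the weak $C^m$ topology, not in $C^\infty_{\mathrm{loc}}$; your diagonal extraction produces a $C^m$ limit $\phi$, not a $C^\infty_{\mathrm{loc}}$ one. This is harmless for the rest of the proof, since already $C^1$ convergence together with $\lambda_{k_i}\to1$ gives $\phi^*g_N=g_M$, so $\phi$ is an equidimensional isometric immersion, i.e.\ a local isometry, which is then automatically $C^\infty$. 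The same caveat applies to $\psi$. Your Lipschitz argument for $\psi\circ\phi=\id_M$ and $\phi\circ\psi=\id_N$ is a clean direct variant of the paper's, which instead uses continuity of composition in the weak $C^m$ topology to get $\psi\circ\psi'=\id_N$ and then invokes completeness and a covering-space argument for surjectivity; your route avoids the latter step.
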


\begin{proof}
  We only prove ``$\subset$'' because ``$\supset$'' is obvious. For
  $([M,x],[N,y])\in\bigcap_{R,r>0}U^m_{R,r}$, there is a sequence of
  pointed local quasi-isometries $\phi_i:(M,x)\rightarrowtail(N,y)$,
  with corresponding types $(m,R_i,\lambda_i)$, such that
  $R_i\uparrow\infty$ and $\lambda_i\downarrow1$ as $i\to\infty$. Let
  us prove that $[M,x]=[N,y]$.

  First, we inductively construct a pointed isometric immersion
  $\psi:(M,x)\to(N,y)$.

  The restrictions $\phi_i:(B_M(x,R_1),x)\to(N,y)$ are pointed
  equi-quasi-isometries of order $m$ ($\lambda_1$ is an equi-dilation
  bound of order $m$). By Proposition~\ref{p: Arzela-Ascoli}, there is
  some subsequence $\phi_{k(1,l)}$ whose restriction to $B_M(x,R_1)$
  converges to some pointed $C^m$ function
  $\psi_1:(B_M(x,R_1),x)\to(N,y)$ in the weak $C^m$ topology. Since
  $\lambda_i\downarrow1$, it follows that $\psi_1$ is an isometric
  immersion.

  Now assume that, for some $i\ge1$, there is some subsequence
  $\phi_{k(i,l)}$ whose restriction to $B_M(x,R_i)$ converges to some
  pointed isometric immersion $\psi_i:(B_M(x,R_i),x)\to(N,y)$.  As
  before, by Proposition~\ref{p: Arzela-Ascoli}, the sequence
  $\phi_{k(i,l)}$ has some subsequence $\phi_{k(i+1,l)}$ whose
  restriction to $B_M(x,R_{i+1})$ converges to some pointed isometric
  immersion $\psi_{i+1}:(B_M(x,R_{i+1}),x)\to(N,y)$ in the weak $C^m$
  topology. Moreover $\psi_{i+1}|_{B_M(x,R_i)}=\psi_i$. Thus the maps
  $\psi_i$ can be combined to define the desired pointed isometric
  immersion $\psi:(M,x)\to(N,y)$.

  Now, let us show that $\psi$ is indeed a pointed isometry, and
  therefore $[M,x]=[N,y]$, as desired. By Lemma~\ref{l: composition
    and inversion of pointed local quasi-isometries}-\eqref{i:
    inversion}, each inverse $\phi_i^{-1}:(N,y)\rightarrowtail(M,x)$ is
  an $(m,R'_i,\lambda_i)$-pointed local quasi-isometry, where
  $R'_i=R_i/\lambda_i\uparrow\infty$. By using Proposition~\ref{p:
    Arzela-Ascoli} as above, we get a subsequence
  $\phi^{-1}_{k'(i,l)}$ of each sequence $\phi^{-1}_{k(i,l)}$, whose
  restriction to $B_N(y,R'_i)$ converges to a pointed isometric
  immersion $\psi'_i:(B_N(y,R'_i),y)\to(M,x)$ in the weak $C^m$
  topology, and such that $\phi^{-1}_{k'(i+1,l)}$ is also a
  subsequence of $\phi^{-1}_{k'(i,l)}$. So
  $\psi'_{i+1}|_{B_N(y,R'_i)}=\psi'_i$ for all $i$, obtaining that the
  maps $\psi'_i$ can be combined to define a pointed isometric
  immersion $\psi':(N,y)\to(M,x)$. Since the operation of composition
  is continuous with respect to the weak $C^m$ topology \cite[p.~64,
  Exercise~10]{Hirsch1976}, we get $\psi_i\psi'_i=\id_{B_N(y,R'_i)}$
  for all $i$, giving $\psi\psi'=\id_N$. Therefore $\psi'$ is
  injective. Moreover $\psi'$ is also surjective because $M$ and $N$
  are complete. Hence $\psi'$ is an isometry whose inverse is $\psi$.
\end{proof}

By Propositions~\ref{p: C^infty uniformity in MM_*(n)} and~\ref{p:
  Hausdorff MM_*(n)}, the sets $U_{R,r}^m$ form a base of entourages
of a separating uniformity on $\MM_*(n)$, which is called the
\emph{$C^\infty$ uniformity}. It will be proved that the induced
topology satisfies the statement of Theorem~\ref{t: C^infty
  convergence in MM_*(n)}; thus it is called the \emph{$C^\infty$
  topology}, and the corresponding space is denoted by
$\MM_*^\infty(n)$. The notation $\Cl_\infty$ and $\Int_\infty$ will be
used for the closure and interior operators in $\MM_*^\infty(n)$.

The following lemma will be used.

\begin{lem}\label{l: bd_U}
  For any open $U\subset\MM_*^\infty(n)$, the map
  $\bd_U:\MM_*^\infty(n)\to[0,\infty]$, defined by
  \[
  \bd_U([M,x])=\inf\{\,d_M(x,x')\mid x'\in M,\ [M,x']\in U\,\}\;,
  \]
  is upper semicontinuous.
\end{lem}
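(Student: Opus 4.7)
The plan is to recast the statement as: for every $c \in \R$, the sublevel set $W_c := \{[M,x] : \bd_U([M,x]) < c\}$ is open in $\MM_*^\infty(n)$. I will fix $[M,x] \in W_c$ and produce a basic entourage neighborhood $U^{m_0}_{R,\rho}[[M,x]]$ contained in $W_c$. By definition of $\bd_U$, choose $x' \in M$ with $r := d_M(x,x') < c$ and $[M,x'] \in U$. Since $U$ is open in the $C^\infty$ topology and the sets $U^{m}_{S,s}$ form a base of entourages, pick $m_0 \in \N$ and $R_0, \rho_0 > 0$ with $U^{m_0}_{R_0, \rho_0}[[M,x']] \subset U$; then take $\rho \in (0, \rho_0)$ so small that $e^{\rho} r < c$, and $R > r + R_0$.

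For the verification, suppose $[N,y] \in U^{m_0}_{R, \rho}[[M,x]]$, witnessed by a pointed local quasi-isometry $\phi : (M,x) \rightarrowtail (N,y)$ of type $(m_0, R, \lambda)$ with $\lambda \in [1, e^\rho)$. I set $y' := \phi(x')$, which makes sense because $x' \in B_M(x, R) \subset \dom\phi$. The $\lambda$-Lipschitz property of the restriction $\phi|_{B_M(x,R)}$ (Remark~\ref{r: quasi-isometries}-\eqref{i: |phi_*| le lambda and distances}), applied along the minimizing $M$-geodesic from $x$ to $x'$ --- which lies in $B_M(x,R)$ since $R > r$ --- yields $d_N(y, y') \le \lambda r \le e^{\rho} r < c$.

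The substantive step is to show $[N,y'] \in U$. I will argue that the \emph{same} $\phi$, now viewed as a pointed local diffeomorphism $(M,x') \rightarrowtail (N,y')$, is a pointed local quasi-isometry of type $(m_0, R_0, \lambda)$; since $\lambda < e^{\rho_0}$, this will place $([M,x'],[N,y'])$ in $U^{m_0}_{R_0, \rho_0}$ and hence $[N,y']$ in $U$. The dilation bound is inherited directly, because $\phi_*^{(m_0)}$ is already a $\lambda$-quasi-isometry on the compact domain $\Omega^{(m_0)} \supset B_M^{(m_0)}(x, R)$ coming from the original type; the only point that needs checking is the containment $B_M^{(m_0)}(x', R_0) \subset \Omega^{(m_0)}$. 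Here Remark~\ref{r: T^(l)M subset T^(m)M}-\eqref{i: g^(m)|_T^(l)M = g^(l)} is essential: it identifies $d_M^{(m_0)}(x, x')$ with $d_M(x, x') = r$ when $x, x'$ are treated as zero sections in $T^{(m_0)}M$, so the triangle inequality in $T^{(m_0)}M$ gives $B_M^{(m_0)}(x', R_0) \subset B_M^{(m_0)}(x, r + R_0) \subset B_M^{(m_0)}(x, R) \subset \Omega^{(m_0)}$. Combining, $\bd_U([N,y]) \le d_N(y,y') < c$, so $[N,y] \in W_c$ and $W_c$ is open. The only real obstacle is this base-point bookkeeping across the iterated tangent bundle structure; once the zero-section isometry $(M, d_M) \hookrightarrow (T^{(m_0)}M, d_M^{(m_0)})$ is in hand, the triangle inequality absorbs the shift from $x$ to $x'$ and everything else is routine.
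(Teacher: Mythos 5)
Your proof is correct and follows essentially the same route as the paper: fix a point $x'$ realizing $\bd_U([M,x])$ within tolerance, transport it to $y'=\phi(x')$ by the witnessing pointed local quasi-isometry $\phi$, and observe that re-pointing $\phi$ at $(x',y')$ still yields a local quasi-isometry of the type needed to place $[N,y']$ inside $U$. The only cosmetic differences are the sublevel-set reformulation of upper semicontinuity, the use of the minimizing geodesic in place of the paper's $\delta$-near-minimizing curve, and your more explicit triangle-inequality bookkeeping in $T^{(m_0)}M$ for the base-point shift, which the paper passes over without comment.
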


Here, recall that $\inf\emptyset=\infty$ in $\R$.

\begin{proof}
  To prove that $\bd_U$ is upper semicontinuous at some
  $[M,x]\in\MM_*^\infty(n)$, we can assume that
  $D:=\bd_U([M,x])<\infty$. Given any $\epsilon>0$, there is some
  $x'\in B_M(x,D+\epsilon)$ such that $[M,x']\in U$. Since $U$ is
  open, we have $U^m_{R,r}(M,x')\subset U$ for some $m\in\N$ and
  $R,r>0$ with $R\ge D+\epsilon$ and $e^rd_M(x,x')<D+\epsilon$. Given
  any $[N,y]\in U^m_{2R,r}(M,x)$, there is some
  $(m,2R,\lambda)$-pointed local quasi-isometry
  $\phi:(M,x)\rightarrowtail(N,y)$ for some $\lambda\in[1,e^r)$. Take
  some $\delta>0$ such that $\lambda(d_M(x,x')+\delta)<D+\epsilon$,
  and let $\alpha$ be a smooth curve in $B_M(x,D+\epsilon)$ of length
  $<d_M(x,x')+\delta$ from $x$ to $x'$. Hence $\phi\alpha$ is a well
  defined $C^{m+1}$ curve in $N$ from $y$ to $y':=\phi(x')$ of length
  $<\lambda(d_M(x,x')+\delta)<D+\epsilon$, obtaining that
  $d_N(y,y')<D+\epsilon$. On the other hand, $\phi$ is also an
  $(m,R,\lambda)$-pointed local quasi-isometry
  $(M,x')\rightarrowtail(N,y')$, showing that $[N,y']\in
  U^m_{R,r}(M,x')\subset U$. So $\bd_U([N,y])<D+\epsilon$.
\end{proof}

\section{Convergence in the $C^\infty$ topology}

\begin{lem}\label{l: polarization}
  Let $g$ and $g'$ be positive definite scalar products on a real
  vector space $V$, and let $|\ |$ and $|\ |'$ denote the respective
  induced norms on the vector space of tensors over $V$. The following
  properties hold:
  \begin{enumerate}[{\rm(}i{\rm)}]
	
  \item\label{i: polarization} If $\lambda\ge1$ satisfies
    $\frac{1}{\lambda}|v|'\le|v|\le\lambda|v|'$ for all $v\in V$, then
    $|g-g'|\le\lambda^2-\lambda^{-2}$.
		
  \item\label{i: norm} If $|g-g'|\le\epsilon$ for some
    $\epsilon\in[0,1)$, then
    $\sqrt{1-\epsilon}\,|v|\le|v|'\le\sqrt{1+\epsilon}\,|v|$ for all
    $v\in V$.
		
  \item\label{i: |omega|} If $\lambda\ge1$ satisfies
    $\frac{1}{\lambda} |v|'\le|v|\le\lambda|v|'$ for all $v\in V$,
    then
    $\frac{1}{\lambda^2}|\omega|'\le|\omega|\le\lambda^2|\omega|'$ for
    all $\omega\in V^*\otimes V^*$.
		
  \end{enumerate}
\end{lem}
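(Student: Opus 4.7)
All three statements reduce to a single simultaneous diagonalization of $g$ and $g'$. I would choose a basis $e_1,\dots,e_n$ of $V$ that is $g$-orthonormal and in which $g'$ is diagonal, say $g'(e_i,e_j)=a_i\,\delta_{ij}$ with $a_i>0$. In this basis, $g$ and $g'$ have Gram matrices $I$ and $\mathrm{diag}(a_i)$, so $g-g'$ is represented by $\mathrm{diag}(1-a_i)$, and the dual metrics on $V^*$ are $I$ and $\mathrm{diag}(a_i^{-1})$. Plugging $v=e_i$ into the vector-norm hypothesis of (i) immediately gives $\lambda^{-2}\le a_i\le\lambda^2$, and this single numerical fact drives all three parts.

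For (i), since $1\in[\lambda^{-2},\lambda^2]$, each $|1-a_i|$ is at most the width $\lambda^2-\lambda^{-2}$ of that interval. Because $g-g'$ is diagonal in a $g$-orthonormal basis, the induced tensor norm collapses to an expression controlled by $\max_i|1-a_i|$, yielding $|g-g'|\le\lambda^2-\lambda^{-2}$. Equivalently (and this is presumably the origin of the lemma's name), one can argue by polarization starting from the identity $(g-g')(v,v)=|v|^2-|v|'^2$ together with the two-sided bound $\lambda^{-2}|v|^2\le|v|'^2\le\lambda^2|v|^2$.

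For (ii), the eigenvalues of $g-g'$ in the diagonalizing basis are $1-a_i$, so the assumption $|g-g'|\le\epsilon<1$ forces $|1-a_i|\le\epsilon$, hence $a_i\in[1-\epsilon,1+\epsilon]$. Writing $v=\sum v_ie_i$ one has $|v|^2=\sum v_i^2$ and $|v|'^2=\sum a_iv_i^2$, and the bound on $a_i$ yields $(1-\epsilon)|v|^2\le|v|'^2\le(1+\epsilon)|v|^2$; taking square roots gives the claim.

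For (iii), writing $\omega=\omega_{ij}\,e^i\otimes e^j$, the induced norms in the diagonalizing basis are $|\omega|^2=\sum\omega_{ij}^2$ and $|\omega|'^2=\sum a_i^{-1}a_j^{-1}\omega_{ij}^2$, since the dual metric of $g'$ has matrix $\mathrm{diag}(a_i^{-1})$. Because $a_i^{-1}a_j^{-1}\in[\lambda^{-4},\lambda^4]$, one obtains $\lambda^{-4}|\omega|^2\le|\omega|'^2\le\lambda^4|\omega|^2$, and taking square roots gives the stated inequality $\lambda^{-2}|\omega|'\le|\omega|\le\lambda^2|\omega|'$. The only mildly tricky point is (i): the precise constant $\lambda^2-\lambda^{-2}$ depends on the induced tensor norm convention on $V^*\otimes V^*$, so I would pin down that convention before writing up; the operator-norm reading collapses directly to $\max_i|1-a_i|$, while under a Hilbert--Schmidt convention the polarization route is the cleaner path to the dimension-free constant.
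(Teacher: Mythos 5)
Your simultaneous-diagonalization approach is a genuinely different and unified way to see all three parts: you pick a basis that is $g$-orthonormal and diagonalizes $g'$ with $g'(e_i,e_j)=a_i\delta_{ij}$, deduce $a_i\in[\lambda^{-2},\lambda^2]$, and read off each inequality. The paper instead gives coordinate-free arguments: a polarization identity for (i), a triangle inequality $|v|'^2\le|v|^2+||v|^2-|v|'^2|$ for (ii), and a direct comparison $\sup_{v,w}\frac{|\omega(v,w)|}{|v|'|w|'}\le\lambda^2\sup_{v,w}\frac{|\omega(v,w)|}{|v||w|}$ for (iii). Both routes are valid, and your diagonalization perhaps makes the structure of the estimates more transparent, at the cost of choosing a basis.

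That said, your closing remark about the norm convention is backwards. The paper's proofs of (ii) and (iii) only work if $|\cdot|$ on $V^*\otimes V^*$ is the operator (sup) norm $|\omega|=\sup\{|\omega(v,w)|:|v|=|w|=1\}$: the step $\sup_{v,w}\frac{|\omega(v,w)|}{|v||w|}=|\omega|$ in (iii) is precisely that definition. Under that convention your part (i) is immediate, $|g-g'|=\max_i|1-a_i|\le\lambda^2-1\le\lambda^2-\lambda^{-2}$, exactly as you wrote. But under the Hilbert--Schmidt convention the bound becomes $|g-g'|_{HS}=(\sum_i(1-a_i)^2)^{1/2}$, which can be as large as $\sqrt{\dim V}\,(\lambda^2-\lambda^{-2})$, so the lemma as stated would fail; and polarization does not rescue this, since it only controls $|(g-g')(v,w)|$ for unit vectors, i.e.\ the operator norm, not the Hilbert--Schmidt norm. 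Your explicit formula $|\omega|'^2=\sum a_i^{-1}a_j^{-1}\omega_{ij}^2$ in (iii) is also the Hilbert--Schmidt one; the conclusion of (iii) happens to hold for both conventions, but if you want to match the paper you should instead write $|\omega|'=\|D^{-1/2}MD^{-1/2}\|_{\mathrm{op}}$ with $M=(\omega_{ij})$ and $D=\mathrm{diag}(a_i)$ and use $\|D^{\pm1/2}\|\le\lambda$. In short: pin the convention to the operator norm, delete the sentence claiming Hilbert--Schmidt gives a dimension-free constant, and the proof is fine.
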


\begin{proof}
  To prove~\eqref{i: polarization}, take arbitrary vectors $v,w\in V$
  with $|v|=|w|=1$. By polarization,
  \begin{multline*}
    (g-g')(v,w)=\frac{1}{4}\left(|v+w|^2-|v-w|^2-|v+w|'^2+|v-w|'^2\right)\\
    \le\frac{1}{4}\left(\left(1-1/\lambda^2\right)|v+w|^2+(\lambda^2-1)|v-w|^2\right)
    \le1-\frac{1}{\lambda^2}+\lambda^2-1=\lambda^2-\frac{1}{\lambda^2}\;.
  \end{multline*}
  Interchanging $g$ and $g'$ in these inequalities, it also follows
  that $|(g-g')(v,w)|\le\lambda^2-\lambda^{-2}$.
	
  Property~\eqref{i: norm} follows because, for any $v\in V$,
  \[
  (1-\epsilon)|v|^2\le|v|^2-||v|^2-|v|'^2|\le
  |v|'^2\le|v|^2+||v|^2-|v|'^2|\le(1+\epsilon)|v|^2\;.
  \]
		
  Let us prove~\eqref{i: |omega|}. For all $v,w\in V\sm\{0\}$,
  \[
  \frac{|\omega(v,w)|}{|v|'\,|w|'}\le\lambda^2\,\frac{|\omega(v,w)|}{|v|\,|w|}\le\lambda^2\,|\omega|\;,
  \]
  obtaining $|\omega|'\le\lambda^2|\omega|$. Interchanging the roles
  of $|\ |$ and $|\ |'$, we also get $|\omega|\le\lambda^2|\omega|'$.
\end{proof}

The following coordinate free description of $C^m$ convergence is a
direct consequence of~\eqref{norm equiv}.

\begin{lem}[Lessa {\cite[Lemma~7.1]{Lessa:Reeb}}]\label{l: Lessa}
  For $m\in\N$, a sequence $[M_i,x_i]\in\MM_*(n)$ is $C^m$ convergent
  to $[M,x]\in\MM_*(n)$ if and only if, for every compact domain
  $\Omega\subset M$ containing $x$, there are pointed $C^{m+1}$
  embeddings $\phi_i\colon (\Omega,x) \to (M_i,x_i)$, for $i$ large
  enough, such that $\|g_M-\phi_i^*g_{M_i}\|_{C^m,\Omega,g_M}\to0$ as
  $i\to \infty$.
\end{lem}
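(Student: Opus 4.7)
The statement is essentially a translation of Definition~\ref{d: C^infty convergence in MM_*(n)} into an intrinsic, coordinate-free form, and the plan is to let the norm equivalence~\eqref{norm equiv} do all the work. What needs to be checked is that the $C^m$ topology on tensors over $\Omega$ appearing in Definition~\ref{d: C^infty convergence in MM_*(n)} coincides with the topology determined by the norm $\|\cdot\|_{C^m,\Omega,g_M}$ when applied to the symmetric $(0,2)$-tensors $g_M - \phi_i^* g_{M_i}$.

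Here is the outline I would follow. First, fix a compact domain $\Omega\subset M$ containing $x$, together with a finite atlas $\UU=\{(U_a;x_a^1,\dots,x_a^n)\}$ covering $\Omega$ and a compact family $\KK=\{K_a\}$ with $\Omega\subset\bigcup_a K_a$ and $K_a\subset U_a$, as in the setup preceding~\eqref{norm equiv}. By definition, the $C^m$ topology on the Banach space of $C^m$ symmetric $(0,2)$-tensors on $\Omega$ is exactly the one defined by the chart-based norm $\|\cdot\|_{C^m,\Omega,\UU,\KK}$. Hence the convergence $\phi_i^*g_{M_i}\to g|_\Omega$ in the $C^m$ topology asserted by Definition~\ref{d: C^infty convergence in MM_*(n)} is equivalent, by definition, to $\|g_M-\phi_i^*g_{M_i}\|_{C^m,\Omega,\UU,\KK}\to 0$.

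Next, apply the norm equivalence~\eqref{norm equiv}: there exists $C\ge 1$, depending only on $M$, $m$, $\Omega$, $\UU$, $\KK$ and $g_M$, such that for every $C^m$ tensor $T$ on $\Omega$,
\[
\frac{1}{C}\,\|T\|_{C^m,\Omega,\UU,\KK}\le\|T\|_{C^m,\Omega,g_M}\le C\,\|T\|_{C^m,\Omega,\UU,\KK}.
\]
Setting $T=g_M-\phi_i^*g_{M_i}$ yields the equivalence of the two conditions
\[
\|g_M-\phi_i^*g_{M_i}\|_{C^m,\Omega,\UU,\KK}\to 0\quad\Longleftrightarrow\quad\|g_M-\phi_i^*g_{M_i}\|_{C^m,\Omega,g_M}\to 0,
\]
and the lemma follows. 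The $C^{m+1}$ regularity of $\phi_i$ ensures that $\phi_i^*g_{M_i}$ is a well-defined $C^m$ tensor on $\Omega$, so both norms make sense.

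There is no real obstacle here: the content of the lemma is entirely absorbed into~\eqref{norm equiv}, whose proof (a routine but tedious exercise with Christoffel symbols, carried out implicitly via Lemma~\ref{l: g^(m)_alpha beta}) has been placed in the preliminaries. The only point worth being explicit about is that the implicit ``$C^m$ topology on tensors'' in Definition~\ref{d: C^infty convergence in MM_*(n)} is, by construction, the Banachable topology described in Section~\ref{ss: Riemannian geom}; once this identification is noted, Lessa's reformulation is immediate.
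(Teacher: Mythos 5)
Your proposal is correct and follows exactly the paper's own route: the paper dismisses this lemma as ``a direct consequence of~\eqref{norm equiv},'' and your write-up simply spells out that one-line argument, identifying the $C^m$ topology of Definition~\ref{d: C^infty convergence in MM_*(n)} with the chart-based norm $\|\cdot\|_{C^m,\Omega,\UU,\KK}$ and then invoking the equivalence with $\|\cdot\|_{C^m,\Omega,g_M}$.
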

 
\begin{defn}\label{d: D^m_R,r}
  For $R,r>0$ and $m\in\N$, let $D^m_{R,r}$ be the set of pairs
  $([M,x],[N,y])\in\MM_*(n)\times\MM_*(n)$ such that there is some
  $C^{m+1}$ pointed local diffeomorphism $\phi\colon (M,x)
  \rightarrowtail (N,y)$ so that
  $\|g_M-\phi^*g_N\|_{C^m,\Omega,g_M}<r$ for some compact domain
  $\Omega\subset\dom\phi$ with $B_M(x,R)\subset\Omega$.
\end{defn} 

Given a set $X$, for $U\subset X\times X$
     and $x\in X$, let $U(x)=\{\,y\in Y\mid(x,y)\in U\,\}$. In the
     case of $U\subset\MM_*(n)\times\MM_*(n)$ and $[M,x]\in\MM_*(n)$,
     we simply write $U(M,x)$.
 
 \begin{rem}\label{r: D^m_R,r}
   By Lemma~\ref{l: Lessa}, a sequence $[M_i,x_i]\in\MM_*(n)$ is
   $C^\infty$ convergent to $[M,x]\in\MM_*(n)$ if and only if it is
   eventually in $D^m_{R,r}(M,x)$ for arbitrary
   $m\in\N$ and $R,r>0$.
 \end{rem}
 
\begin{prop}\label{p: D^m_R,epsilon(M,x) subset U^m_R,r(M,x)}
  \begin{enumerate}[{\rm(}i{\rm)}]

  \item\label{i: D^0_R,epsilon subset U^0_R,r} For all $R,r>0$, if
    $0<\epsilon\le\min\{1-e^{-2r},e^{2r}-1\}$, then
    $D^0_{R,\epsilon}\subset U^0_{R,r}$.
		
  \item\label{i: D^m_R,epsilon(M,x) subset U^m_R,r(M,x)} For all
    $m\in\Z^+$, $R, r>0$ and $[M,x]\in\MM_*(n)$, there is some
    $\epsilon>0$ such that $D^m_{R,\epsilon}(M,x)\subset
    U^m_{R,r}(M,x)$.

  \end{enumerate}
\end{prop}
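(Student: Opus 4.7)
For part~(i), I would apply Lemma~\ref{l: polarization}(ii) pointwise. If $\phi$ witnesses $([M,x],[N,y])\in D^0_{R,\epsilon}$ on a compact domain $\Omega\supset B_M(x,R)$, set $\delta:=\|g_M-\phi^*g_N\|_{C^0,\Omega,g_M}$; by hypothesis $\delta<\epsilon$. At every point of $\Omega$, Lemma~\ref{l: polarization}(ii) gives
\[
\sqrt{1-\delta}\,|v|_{g_M}\le|v|_{\phi^*g_N}=|\phi_*v|_{g_N}\le\sqrt{1+\delta}\,|v|_{g_M},
\]
so $\lambda:=\max\{\sqrt{1+\delta},1/\sqrt{1-\delta}\}$ is a dilation bound of $\phi|_\Omega$. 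The hypothesis $\epsilon\le\min\{1-e^{-2r},e^{2r}-1\}$, combined with the strict inequality $\delta<\epsilon$, forces $\lambda<e^r$. Hence $\phi$ is a $(0,R,\lambda)$-pointed local quasi-isometry with $\lambda\in[1,e^r)$, and $([M,x],[N,y])\in U^0_{R,r}$.

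For part~(ii), the plan is to reduce to part~(i) applied to $\phi_*^{(m)}\colon(T^{(m)}M,x)\rightarrowtail(T^{(m)}N,y)$, viewed as a pointed $C^1$ local diffeomorphism between the iterated tangent manifolds with their Sasaki metrics. Two ingredients feed into this. First, by the naturality of the Sasaki construction applied to the tautological isometry $\phi\colon(M,\phi^*g_N)\to(N,g_N)$, one has the identity $(\phi_*^{(m)})^*g_N^{(m)}=(\phi^*g_N)^{(m)}$, so dilation bounds for $\phi_*^{(m)}$ are governed by the pointwise difference $g_M^{(m)}-(\phi^*g_N)^{(m)}$ on any compact subset of $T^{(m)}M$. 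Second, fix a compact domain $\Omega^{(m)}\subset T^{(m)}M$ with $B^{(m)}_M(x,R)\subset\Omega^{(m)}$ and $\pi(\Omega^{(m)})\subset\Omega$: this is possible because $T^{(m)}M$ with $g^{(m)}$ is complete (by induction from completeness of $M$, using Remark~\ref{r: T^(l)M subset T^(m)M}(iii)) and because the Riemannian submersion $\pi$ is $1$-Lipschitz, so $\pi(\ol B^{(m)}_M(x,R))\subset\ol B_M(x,R)\subset\Omega$. Over $\Omega^{(m)}$ the Sasaki coefficients are universal rational functions of the $g_{ij}$, their partial derivatives up to order $m$, and the bounded fibre coordinates by Lemma~\ref{l: g^(m)_alpha beta}(i). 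A uniform continuity argument therefore produces a constant $C=C([M,x],m,R,\Omega^{(m)})>0$ with
\[
\|g_M^{(m)}-(\phi^*g_N)^{(m)}\|_{C^0,\Omega^{(m)},g_M^{(m)}}\le C\,\|g_M-\phi^*g_N\|_{C^m,\Omega,g_M}
\]
whenever $\|g_M-\phi^*g_N\|_{C^m,\Omega,g_M}$ is small enough to keep the denominators of the universal expressions bounded away from zero. Choosing $\epsilon>0$ so that $C\epsilon\le\min\{1-e^{-2r},e^{2r}-1\}$, part~(i) applied to $\phi_*^{(m)}$ on $\Omega^{(m)}$ yields a dilation bound $\lambda<e^r$. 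Hence $\phi$ is an $(m,R,\lambda)$-pointed local quasi-isometry, so $([M,x],[N,y])\in U^m_{R,r}(M,x)$.

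The main obstacle is producing the constant $C$ above. Concretely, one must chase through the universal rational expressions of Lemma~\ref{l: g^(m)_alpha beta}(i) on the compact set $\Omega^{(m)}$ and verify both that their denominators (coming from inverting $g_{ij}$) remain bounded away from zero under small $C^m$ perturbations of $g_M$, and that the resulting map from $(g_{ij},\partial^I g_{ij})_{|I|\le m}$ to $g^{(m)}_{\alpha\beta}$ is $C^0$-Lipschitz on the relevant compact range of inputs. The coordinate-invariant estimate then follows via the equivalence \eqref{norm equiv} between the coordinate $C^m$ norm and the intrinsic one. Once $C$ is in hand, the $[M,x]$-dependence of $\epsilon$ stated in the proposition is seen to enter precisely through $C$, and the reduction to part~(i) proceeds mechanically.
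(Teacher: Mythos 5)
Your proof is correct and follows essentially the same route as the paper: part~(i) is Lemma~\ref{l: polarization}(ii) plus an explicit choice of $\lambda<e^r$, and part~(ii) reduces to~(i) applied to $\phi_*^{(m)}$ by bounding $\|g_M^{(m)}-(\phi^*g_N)^{(m)}\|_{C^0,\Omega^{(m)},g_M^{(m)}}$ in terms of $\|g_M-\phi^*g_N\|_{C^m,\Omega,g_M}$ via Lemma~\ref{l: g^(m)_alpha beta}(i) and compactness, which is precisely the chain of implications the paper runs. The one point to tighten in your write-up is that the constant $C$ and the threshold on $\|g_M-\phi^*g_N\|_{C^m}$ must be fixed before $[N,y]$ is chosen, so $\Omega^{(m)}$ cannot silently depend on $[N,y]$: like the paper, you should first fix charts $\UU$, compacta $\KK$, and the enlarged compacta $\KK^{(m)}$ (all depending only on $[M,x]$, $m$, $R$) so that the norm-equivalence constants and the Lipschitz constant for the universal rational expressions are uniform over all the $[N,y]$-dependent domains $\Omega$ and $\Omega^{(m)}$ that fit inside them.
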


\begin{proof}
  Let us show~\eqref{i: D^0_R,epsilon subset U^0_R,r}. If
  $([M,x],[N,y])\in D^0_{R,\epsilon}$, then there is a $C^1$ pointed
  local diffeomorphism $\phi:(M,x)\rightarrowtail(N,y)$ such that
  $\epsilon_0:=\|g_M-\phi^*g_N\|_{C^0,\Omega,g_M}<\epsilon$ for some
  compact domain $\Omega\subset\dom\phi$ with
  $B_M(x,R)\subset\Omega$. Choose some $\lambda\in[1,e^r)$ such that
  $\epsilon_0\le\min\{1-\lambda^{-2},\lambda^2-1\}$. Set $g=g_M$ and
  $g'=\phi^*g_N$, and let $|\ |$ and $|\ |'$ denote the respective
  norms. For $\xi\in T\Omega$, we have
  \[
  \frac{1}{\lambda}\,|\xi|\le\sqrt{1-\epsilon_0}\,|\xi|\le|\xi|'\le\sqrt{1+\epsilon_0}\,|\xi|\le\lambda\,|\xi|
  \]
  by Lemma~\ref{l: polarization}-\eqref{i: norm}. Thus $\phi$ is a
  $(0,R,\lambda)$-pointed local quasi-isometry, obtaining that
  $([M,x],[N,y])\in U^0_{R,r}$.

  Let us prove~\eqref{i: D^m_R,epsilon(M,x) subset U^m_R,r(M,x)}. Take
  $m\in\Z^+$, $R, r>0$ and $[M,x]\in\MM_*(n)$. Let $\UU$ be a finite
  collection of charts of $M$ with domains $U_a$, and let
  $\KK=\{K_a\}$ be a family of compact subsets of $M$, with the same
  index set as $\UU$, such that $K_a\subset U_a$ for all $a$, and $\ol
  B_M(x,R)\subset\Int(K)$ for $K=\bigcup_aK_a$. Let $\epsilon>0$, to
  be fixed later. For any $[N,y]\in D^m_{R,\epsilon}(M,x)$, there is a
  $C^{m+1}$ pointed local diffeomorphism
  $\phi\colon(M,x)\rightarrowtail(N,y)$ so that
  $\|g_M-\phi^*g_N\|_{C^m,\Omega,g_M}<\epsilon$ for some compact
  domain $\Omega\subset\dom\phi\cap\Int(K)$ with
  $B_M(x,R)\subset\Omega$. By continuity, there is another compact
  domain $\Omega'\subset\dom\phi\cap\Int(K)$ such that
  $\Omega\subset\Int(\Omega')$ and
  $\|g_M-\phi^*g_N\|_{C^m,\Omega',g_M}<\epsilon$. As before, let
  $g=g_M$ and $g'=\phi^*g_N$.
	
  With the notation of Section~\ref{ss: Riemannian geom}, let
  $\UU^{(m)}$ be the family of induced charts of $T^{(m)}M$ with
  domains $U_a^{(m)}$, let $\KK^{(m)}$ be the family of compact
  subsets
  \[
  K^{(m)}_a=\{\,\xi\in T^{(m)}M\mid\pi(\xi)\in K_a,\
  d_M^{(m)}(\xi,\pi(\xi))\le R'\,\} \subset U_a^{(m)}\;,
  \]
  for some $R'>R$, where $\pi:T^{(m)}M\to M$, and let
  $K^{(m)}=\bigcup_aK^{(m)}_a$. Since $\ol
  B^{(m)}_M(x,R)\subset\Int(K^{(m)})$ and $\pi(\ol B^{(m)}_M(x,R))=\ol
  B_M(x,R)\subset\Omega\subset\Int(\Omega')$ by Remark~\ref{r: T^(l)M
    subset T^(m)M}-\eqref{i: pi(xi)},\eqref{i: zeta'}, there is some
  compact domain $\Omega^{(m)}\subset T^{(m)}M$ such that
  $B^{(m)}_M(x,R)\subset\Omega^{(m)}\subset K^{(m)}$ and
  $\pi(\Omega^{(m)})\subset\Omega'$.
	
  Choose the following constants:
  \begin{itemize}
		
  \item some $C\ge1$ satisfying~\eqref{norm equiv} with $\UU$, $\KK$,
    $\Omega'$ and $g$;
	
  \item some $C^{(m)}\ge1$ satisfying~\eqref{norm equiv} with
    $\UU^{(m)}$, $\KK^{(m)}$, $\Omega^{(m)}$ and $g^{(m)}$;
	
  \item some $\delta\in(0,\min\{1-e^{-2r},e^{2r}-1\}]$; and,
			
  \item by Lemma~\ref{l: g^(m)_alpha beta}-\eqref{i: g^(m)_alpha
      beta}, some $\epsilon'>0$ such that
    \[
    \|g-g'\|_{C^m,\Omega',\UU,\KK}<\epsilon'\,\Longrightarrow\,
    \|g^{(m)}-g'^{(m)}\|_{C^0,\Omega^{(m)},\UU^{(m)},\KK^{(m)}}<\delta/C^{(m)}\;.
    \]
			
  \end{itemize}
  Suppose that $\epsilon\le\epsilon'/C$. Then
  \begin{multline*}
    \|g-g'\|_{C^m,\Omega',g}<\epsilon
    \,\Longrightarrow\,\|g-g'\|_{C^m,\Omega',\UU,\KK}<C\epsilon\le\epsilon'\\
    \,\Longrightarrow\,\|g^{(m)}-g'^{(m)}\|_{C^0,\Omega^{(m)},\UU^{(m)},\KK^{(m)}}<\delta/C^{(m)}
    \,\Longrightarrow\,\delta_0:=\|g^{(m)}-g'^{(m)}\|_{C^0,\Omega^{(m)},g^{(m)}}<\delta\;.
  \end{multline*}
  For any $\lambda\in[1,e^r)$ such that
  $\delta_0\le\min\{1-\lambda^{-2},\lambda^2-1\}$, we have
  $\frac{1}{\lambda}\,|\xi|^{(m)}\le|\xi|'^{(m)}\le\lambda\,|\xi|^{(m)}$
  for all $\xi\in T\Omega^{(m)}$ by Lemma~\ref{l:
    polarization}-\eqref{i: norm}, where $|\ |^{(m)}$ and $|\
  |'^{(m)}$ denote the norms defined by $g^{(m)}$ and $g'^{(m)}$,
  respectively. So $\phi$ is an $(m,R,\lambda)$-pointed local
  quasi-isometry $(M,x)\rightarrowtail (N,y)$, and therefore $[N,y]\in
  U^{(m)}_{R,r}(M,x)$.
\end{proof}

\begin{prop}\label{p: U^m_R,epsilon(M,x) subset D^m_R,r(M,x)}
  \begin{enumerate}[{\rm(}i{\rm)}]

  \item\label{i: U^0_R,epsilon subset D^0_R,r} For all $R,r>0$, if
    $e^{2\epsilon}-e^{-2\epsilon}\le r$, then $U^0_{R,\epsilon}\subset
    D^0_{R,r}$.
		
  \item\label{i: U^m_R,epsilon(M,x) subset D^m_R,r(M,x)} For all
    $m\in\Z^+$, $R, r>0$ and $[M,x]\in\MM_*(n)$, there is some
    $\epsilon>0$ such that $U^m_{R,\epsilon}(M,x)\subset
    D^m_{R,r}(M,x)$.

  \end{enumerate}
\end{prop}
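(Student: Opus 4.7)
The plan is to run the argument of Proposition~\ref{p: D^m_R,epsilon(M,x) subset U^m_R,r(M,x)} in reverse: use polarization (Lemma~\ref{l: polarization}(\ref{i: polarization})) to turn the quasi-isometric comparison of metrics into a pointwise difference estimate, and then invoke the reconstruction formula of Lemma~\ref{l: g^(m)_alpha beta}(\ref{i: partial up to order m of g_ij}) to pass from a $C^0$ bound on $g^{(m)}-g'^{(m)}$ down to a $C^m$ bound on $g-g'$.

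For~(\ref{i: U^0_R,epsilon subset D^0_R,r}), I would simply unwind the definitions. Given $([M,x],[N,y])\in U^0_{R,\epsilon}$, pick a pointed local quasi-isometry $\phi:(M,x)\rightarrowtail(N,y)$ of type $(0,R,\lambda)$ with $\lambda\in[1,e^\epsilon)$, and write $g=g_M$ and $g'=\phi^*g_N$ on a compact domain $\Omega\supset B_M(x,R)$. The quasi-isometry inequality $\tfrac{1}{\lambda}|\xi|_g\le|\xi|_{g'}\le\lambda|\xi|_g$ combined with Lemma~\ref{l: polarization}(\ref{i: polarization}) yields $\|g-g'\|_{C^0,\Omega,g}\le\lambda^2-\lambda^{-2}<e^{2\epsilon}-e^{-2\epsilon}\le r$, so $([M,x],[N,y])\in D^0_{R,r}$.

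For~(\ref{i: U^m_R,epsilon(M,x) subset D^m_R,r(M,x)}), I would import the chart- and domain-setup from the proof of Proposition~\ref{p: D^m_R,epsilon(M,x) subset U^m_R,r(M,x)}(\ref{i: D^m_R,epsilon(M,x) subset U^m_R,r(M,x)}): fix finite collections $\UU$, $\KK$ on $M$ with $\ol B_M(x,R)\subset\Int\bigcup_a K_a$, build $\UU^{(m)}$ and $\KK^{(m)}$, and consider a compact domain $\Omega^{(m)}\subset T^{(m)}M$ containing $B_M^{(m)}(x,R)$ whose projection sits inside a compact subdomain $\Omega\supset B_M(x,R)$; also record the norm-equivalence constants $C$ and $C^{(m)}$ from~\eqref{norm equiv}. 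For a witness $\phi$ of $[N,y]\in U^m_{R,\epsilon}(M,x)$ with $\lambda\in[1,e^\epsilon)$, the crucial observation is that, since $\phi:(\Omega,g')\to(\phi(\Omega),g_N)$ is an isometry, so is $\phi_*^{(m)}:(T^{(m)}\Omega,g'^{(m)})\to(T^{(m)}\phi(\Omega),g_N^{(m)})$, whence $g'^{(m)}=(\phi_*^{(m)})^*g_N^{(m)}$; combined with the $\lambda$-quasi-isometry of $\phi_*^{(m)}$, this gives the pointwise comparison $\tfrac{1}{\lambda^2}g^{(m)}\le g'^{(m)}\le\lambda^2 g^{(m)}$ on $\Omega^{(m)}$. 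Lemma~\ref{l: polarization}(\ref{i: polarization}) followed by~\eqref{norm equiv} then yields a coordinate bound $\|g^{(m)}-g'^{(m)}\|_{C^0,\Omega^{(m)},\UU^{(m)},\KK^{(m)}}\le C^{(m)}(\lambda^2-\lambda^{-2})$; picking $\rho>0$ small enough that every section $\sigma^{(m)}_{\rho,\mu}$ sends $\Omega$ into $\Omega^{(m)}$, Lemma~\ref{l: g^(m)_alpha beta}(\ref{i: partial up to order m of g_ij}) turns this into a coordinate $C^m$ bound for $g-g'$ on $\Omega$, and a last application of~\eqref{norm equiv} gives $\|g-g'\|_{C^m,\Omega,g}<r$ once $\epsilon$ is taken small enough for the chained constants to absorb $\lambda^2-\lambda^{-2}$. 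The main obstacle is this pullback identity $g'^{(m)}=(\phi_*^{(m)})^*g_N^{(m)}$ on $\Omega^{(m)}$: the order-$m$ quasi-isometry hypothesis literally compares the differential $(\phi_*^{(m)})_*=\phi_*^{(m+1)}$ against $g^{(m)}$ and $g_N^{(m)}$, and one must check that this does deliver a pointwise comparison of the Sasaki metrics $g^{(m)}$ and $g'^{(m)}$ themselves (rather than of their further iterates), which is precisely what matches the order-$m$ side of Lemma~\ref{l: g^(m)_alpha beta}.
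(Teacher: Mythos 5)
Your proof of part~(i) is verbatim the paper's. For part~(ii) your plan is also exactly the paper's: reverse the chain polarization~/ norm-equivalence~/ reconstruction formula. In particular, the ``crucial observation'' you flag is precisely the point the paper uses as well (more tersely): since $g'^{(m)}=(\phi^*g_N)^{(m)}=(\phi_*^{(m)})^*g_N^{(m)}$, the order-$m$ quasi-isometry hypothesis on $\phi_*^{(m)}$, which \emph{a priori} compares $g^{(m)}$ with $g_N^{(m)}$, is the same as the pointwise comparison $\lambda^{-1}|\xi|^{(m)}\le|\xi|'^{(m)}\le\lambda|\xi|^{(m)}$ of $g^{(m)}$ and $g'^{(m)}$, which is what Lemma~\ref{l: polarization}\eqref{i: polarization} and Lemma~\ref{l: g^(m)_alpha beta}\eqref{i: partial up to order m of g_ij} want to see. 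So the route is the same, and you have correctly identified the hinge of the argument.

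There is, however, one genuine gap in your domain bookkeeping. You pre-choose a compact $\Omega^{(m)}\supset B_M^{(m)}(x,R)$, a compact domain $\Omega\supset B_M(x,R)$ in $M$, and then $\rho>0$ small enough that the sections $\sigma^{(m)}_{\rho,\mu}$ carry $\Omega$ into $\Omega^{(m)}$, and you assert the $\lambda$-comparison ``on $\Omega^{(m)}$''. But a witness $\phi$ of $[N,y]\in U^m_{R,\epsilon}(M,x)$ only gives the comparison on some $\phi$-dependent compact domain $\Omega^{(m)}_\phi\supset B_M^{(m)}(x,R)$, which need not contain your pre-chosen $\Omega^{(m)}$. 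Nor can you retreat to $B_M^{(m)}(x,R)$ itself: since $\Omega$ must contain $B_M(x,R)$, it contains points $z$ with $d(x,z)=R$, and for such $z$ one has $d^{(m)}(x,\sigma^{(m)}_{\rho,\mu}(z))>d(x,z)=R$ (the Sasaki projection is a Riemannian submersion), so $\sigma^{(m)}_{\rho,\mu}(z)\notin B_M^{(m)}(x,R)$ for every $\rho>0$. The paper handles this with a continuity step you have omitted: for any $\lambda'\in(\lambda,e^\epsilon)$ it enlarges the witness domain to a compact $\Omega'^{(m)}$ on which the $\lambda'$-comparison still holds and $\Omega^{(m)}\subset\Int(\Omega'^{(m)})$, then chooses $\Omega$ with $\Omega^{(m)}\cap M\subset\Omega\subset\Int(\Omega'^{(m)})$ and $\rho$ so small that $\sigma^{(m)}_{\rho,\mu}(K_a\cap\Omega)\subset K_a^{(m)}\cap\Omega'^{(m)}$. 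Some version of this enlargement is needed to guarantee that the reconstruction formula is being fed values of $g^{(m)}-g'^{(m)}$ at points where the $C^0$ bound is actually known.
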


\begin{proof}
  Let us show~\eqref{i: U^0_R,epsilon subset D^0_R,r}. If
  $([M,x],[N,y])\in U^0_{R,\epsilon}$, then there is a
  $(0,R,\lambda)$-pointed local quasi-isometry
  $\phi:(M,x)\rightarrowtail(N,y)$ for some
  $\lambda\in[1,e^\epsilon)$. Set $g=g_M$ and $g'=\phi^*g_N$, and let
  $|\ |$ and $|\ |'$ denote the respective norms. Thus there is some
  compact domain $\Omega\subset\dom\phi$ such that
  $B_M(x,R)\subset\Omega$ and
  $\frac{1}{\lambda}\,|\xi|\le|\xi|'\le\lambda\,|\xi|$ for all $\xi\in
  T\Omega$. By Lemma~\ref{l: polarization}-\eqref{i: polarization}, it
  follows that
  \[
  \|g-g'\|_{C^0,\Omega,g}\le\lambda^2-\lambda^{-2}<e^{2\epsilon}-e^{-2\epsilon}\le
  r\;.
  \]
  So $([M,x],[N,y])\in D^0_{R,r}$.

  Let us prove~\eqref{i: U^m_R,epsilon(M,x) subset D^m_R,r(M,x)}. Let
  $m\in\Z^+$, $R, r>0$ and $[M,x]\in\MM_*(n)$. Take $\UU$, $\KK$, $K$,
  $\UU^{(m)}$, $\KK^{(m)}$ and $K^{(m)}$ like in the proof of
  Proposition~\ref{p: D^m_R,epsilon(M,x) subset
    U^m_R,r(M,x)}-\eqref{i: D^m_R,epsilon(M,x) subset
    U^m_R,r(M,x)}. Let $\epsilon>0$, to be fixed later. For any
  $[N,y]\in U^m_{R,\epsilon}(M,x)$, there is an
  $(m,R,\lambda)$-pointed local quasi-isometry
  $\phi:(M,x)\rightarrowtail (N,y)$ for some
  $\lambda\in[1,e^\epsilon)$. Again, let $g=g_M$ and
  $g'=\phi^*g_N$. Thus there is a compact domain
  $\Omega^{(m)}\subset\dom\phi_*^{(m)}\cap\Int(K^{(m)})$ so that
  $B_M^{(m)}(x,R)\subset\Omega^{(m)}$ and
  $\frac{1}{\lambda}\,|\xi|^{(m)}\le|\xi|'^{(m)}\le\lambda\,|\xi|^{(m)}$
  for all $\xi\in T\Omega^{(m)}$, where $|\ |^{(m)}$ and $|\ |'^{(m)}$
  denote the norms defined by $g^{(m)}$ and $g'^{(m)}$,
  respectively. By continuity, given any
  $\lambda'\in(\lambda,e^\epsilon)$, there is some compact domain
  $\Omega'^{(m)}\subset\dom\phi_*^{(m)}\cap K^{(m)}$ such that
  $\Omega^{(m)}\subset\Int(\Omega'^{(m)})$ and
  $\frac{1}{\lambda'}\,|\xi|^{(m)}\le|\xi|'^{(m)}\le\lambda'\,|\xi|^{(m)}$
  for all $\xi\in\Omega'^{(m)}$. By Lemma~\ref{l:
    polarization}-\eqref{i: polarization}, it follows that
  \[
  \|g^{(m)}-g'^{(m)}\|_{C^0,\Omega'^{(m)},g^{(m)}}\le\lambda'^2-\lambda'^{-2}
  <e^{2\epsilon}-e^{-2\epsilon}\;.
  \]
  There is some compact domain $\Omega\subset M$ such that
  $\Omega^{(m)}\cap M\subset\Omega\subset\Int(\Omega'^{(m)})$. Thus
  $\Omega\subset\Omega'^{(m)}\cap M\subset K^{(m)}\cap M=K$, and
  \[
  B_M(x,R)= B_M^{(m)}(x,R)\cap M\subset\Omega^{(m)}\cap M\subset\Omega
  \]
  by Remark~\ref{r: T^(l)M subset T^(m)M}-\eqref{i: T^(l)M subset
    T^(m)M}. Take some $C\ge1$ satisfying~\eqref{norm equiv} with
  $\UU$, $\KK$, $\Omega$ and $g$, and some $C^{(m)}\ge1$
  satisfying~\eqref{norm equiv} with $\UU^{(m)}$, $\KK^{(m)}$,
  $\Omega'^{(m)}$ and $g^{(m)}$. For $\rho>0$ and $n+1\le\mu\le2^mn$,
  let $\sigma^{(m)}_{a,\rho,\mu}:U_a\to U_a^{(m)}$ be the section of
  each projection $\pi:U_a^{(m)}\to U_a$ of the type used in
  Lemma~\ref{l: g^(m)_alpha beta}-\eqref{i: partial up to order m of
    g_ij}. Since $\Omega\subset\Int(\Omega'^{(m)})$, there is some
  $\rho>0$ so that $\sigma^{(m)}_{\rho,\mu}(K_a\cap\Omega)\subset
  K_a^{(m)}\cap\Omega'^{(m)}$ for all $a$ and $\mu$. Thus, by
  Lemma~\ref{l: g^(m)_alpha beta}-\eqref{i: partial up to order m of
    g_ij}, there is some $\epsilon'>0$, depending on $r$ and $\rho$,
  such that
  \[
  \|g^{(m)}-g'^{(m)}\|_{C^0,\Omega'^{(m)},\UU^{(m)},\KK^{(m)}}<\epsilon'\,\Longrightarrow\,
  \|g-g'\|_{C^m,\Omega,\UU,\KK}<r/C\;.
  \]
  Suppose that
  $e^{2\epsilon}-e^{-2\epsilon}\le\epsilon'/C^{(m)}$. Then
  \begin{multline*}
    \|g^{(m)}-g'^{(m)}\|_{C^0,\Omega'^{(m)},g^{(m)}}<e^{2\epsilon}-e^{-2\epsilon}
    \,\Longrightarrow\,\|g^{(m)}-g'^{(m)}\|_{C^0,\Omega'^{(m)},\UU^{(m)},\KK^{(m)}}
    <C^{(m)}(e^{2\epsilon}-e^{-2\epsilon})\le\epsilon'\\
    \,\Longrightarrow\,\|g-g'\|_{C^m,\Omega,\UU,\KK}<r/C
    \,\Longrightarrow\,\|g-g'\|_{C^m,\Omega,g}<r\;,
  \end{multline*}
  showing that $[N,y]\in D^{(m)}_{R,r}(M,x)$.
\end{proof}

\begin{cor}\label{c: C^infty convergence in MM_*(n)}
  The $C^\infty$ convergence in $\MM_*(n)$ describes the $C^\infty$
  topology.
\end{cor}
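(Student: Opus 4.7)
The plan is to assemble the equivalence from pieces already in place. Since the $C^\infty$ uniformity on $\MM_*(n)$ has a countable base of entourages (take the sets $U^m_{R,r}$ with $m\in\N$ and $R,r\in\Q^+$, using Proposition~\ref{p: C^infty uniformity in MM_*(n)}-\eqref{i: cap} to intersect them), it is pseudometrizable. So the $C^\infty$ topology is first countable, and therefore is determined by its convergent sequences. Thus it suffices to show that a sequence $[M_i,x_i]$ converges to $[M,x]$ in $\MM_*^\infty(n)$ if and only if it converges to $[M,x]$ in the sense of Definition~\ref{d: C^infty convergence in MM_*(n)}.

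First I would unwind the definition of convergence in the uniform topology: $[M_i,x_i]\to[M,x]$ in $\MM_*^\infty(n)$ exactly when, for every $m\in\N$ and all $R,r>0$, the sequence is eventually in $U^m_{R,r}(M,x)$. Next, I would use the two interleaving inclusions of Propositions~\ref{p: D^m_R,epsilon(M,x) subset U^m_R,r(M,x)} and~\ref{p: U^m_R,epsilon(M,x) subset D^m_R,r(M,x)} to replace the family $\{U^m_{R,r}(M,x)\}$ by $\{D^m_{R,r}(M,x)\}$: indeed, for each $m, R, r$ we may choose $\epsilon>0$ so that $D^m_{R,\epsilon}(M,x)\subset U^m_{R,r}(M,x)$ and conversely, so being eventually in all $U^m_{R,r}(M,x)$ is the same as being eventually in all $D^m_{R,r}(M,x)$.

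Finally I would invoke Remark~\ref{r: D^m_R,r}, which is a restatement of Lemma~\ref{l: Lessa} (Lessa's coordinate-free reformulation of $C^m$ convergence), to translate the condition of being eventually in every $D^m_{R,r}(M,x)$ into $C^\infty$ convergence in the sense of Definition~\ref{d: C^infty convergence in MM_*(n)}. Chaining these three equivalences yields the corollary.

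There is no real obstacle here since all of the essential content has been packaged into the preceding propositions; the only minor care required is the preliminary observation that the topology is first countable, so that specifying convergent sequences is enough to specify the topology. The proof therefore reduces to a short bookkeeping argument citing Propositions~\ref{p: D^m_R,epsilon(M,x) subset U^m_R,r(M,x)} and~\ref{p: U^m_R,epsilon(M,x) subset D^m_R,r(M,x)}, Remark~\ref{r: D^m_R,r}, and Lemma~\ref{l: Lessa}.
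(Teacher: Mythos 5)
Your proof is correct and follows the same route as the paper's: the paper's proof of this corollary is simply the one-line observation that it is ``a direct consequence of Remark~\ref{r: D^m_R,r} and Propositions~\ref{p: D^m_R,epsilon(M,x) subset U^m_R,r(M,x)} and~\ref{p: U^m_R,epsilon(M,x) subset D^m_R,r(M,x)},'' which is exactly the interleaving argument you chain together. Your preliminary remark that the $C^\infty$ uniformity has a countable base (hence the topology is first countable and thus determined by its convergent sequences) is a sensible bit of extra rigor that the paper leaves implicit until Proposition~\ref{p: MM_*^infty(n) is completely metrizable}.
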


\begin{proof}
  This is a direct consequence of Remark~\ref{r: D^m_R,r} and
  Propositions~\ref{p: D^m_R,epsilon(M,x) subset U^m_R,r(M,x)}
  and~\ref{p: U^m_R,epsilon(M,x) subset D^m_R,r(M,x)}.
\end{proof}

\section{$\MM_*^\infty(n)$ is Polish}\label{s: MM_*^infty(n) is Polish}

\begin{prop}\label{p: MM_*^infty(n) is separable}
$\MM_*^\infty(n)$ is separable.
\end{prop}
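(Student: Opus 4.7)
The plan is to produce a countable dense subset of $\MM_*^\infty(n)$ by a two-stage approximation: first show that the subspace $\MM_{*,\text{\rm c}}^\infty(n)$ of classes $[N,y]$ with compact $N$ is dense, and then show that $\MM_{*,\text{\rm c}}^\infty(n)$ itself is separable by exploiting the countability of smooth diffeomorphism types of compact $n$-manifolds and the separability of their metric spaces. By Corollary~\ref{c: C^infty convergence in MM_*(n)} (equivalently Lemma~\ref{l: Lessa} and Remark~\ref{r: D^m_R,r}), density is tested against the basic neighborhoods $D^m_{R,r}([M,x])$, so it suffices to build, for every $[M,x]\in\MM_*(n)$ and every choice of $m$, $R$, $r$, an element of a fixed countable family within $D^m_{R,r}([M,x])$.

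For the first stage, given $[M,x]$ and $R>0$, I would pick $R'>R+1$ such that $R'$ is a regular value of the distance function $d_M(x,\cdot)$ (a dense choice by Sard's theorem applied to a smooth approximation of $d_M(x,\cdot)$ away from the cut locus), so that $\Omega:=\ol B_M(x,R')$ is a compact smooth codimension-zero submanifold with smooth boundary. I would then attach an external collar $\partial\Omega\times[0,1]$ to $\Omega$ and extend $g_M$ smoothly over the collar so as to become a Riemannian product $g_{\partial\Omega}+dt^2$ near $\partial\Omega\times\{1\}$; doubling along this product end produces a smooth closed Riemannian $n$-manifold $N$ containing a pointed neighborhood of $y$ (the image of $x$) isometric to $(\Omega,x)$. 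The identity embedding $(B_M(x,R),x)\hookrightarrow(N,y)$ is then a pointed isometric $C^\infty$ embedding, yielding $([M,x],[N,y])\in D^m_{R,r'}$ for every $r'>0$. Hence $\MM_{*,\text{\rm c}}^\infty(n)$ is dense in $\MM_*^\infty(n)$.

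For the second stage, I would appeal to two standard facts. First, compact smooth $n$-manifolds fall into countably many diffeomorphism classes: every such manifold admits a smooth triangulation (Whitehead), and there are countably many isomorphism classes of finite simplicial complexes. Second, for each compact $C^\infty$ $n$-manifold $N_\alpha$, the set $\Met(N_\alpha)$ of smooth Riemannian metrics is an open convex subset of the separable Fr\'echet space of $C^\infty$ symmetric $(0,2)$-tensors, hence is separable in the $C^\infty$ topology, and the compact metric space $N_\alpha$ is separable. Fixing representatives $\{N_\alpha\}$ and choosing countable dense sets $\{g_{\alpha,k}\}_k\subset\Met(N_\alpha)$ and $\{y_{\alpha,l}\}_l\subset N_\alpha$, the countable family $\DD=\{[(N_\alpha,g_{\alpha,k}),y_{\alpha,l}]\}_{\alpha,k,l}$ is dense in $\MM_{*,\text{\rm c}}^\infty(n)$: given compact $[N,y]$, fix a diffeomorphism $\phi:N\to N_\alpha$, approximate $\phi_*g_N$ by some $g_{\alpha,k}$ in $C^m$ norm on the whole of $N_\alpha$, approximate $\phi(y)$ by some $y_{\alpha,l}$, and compose $\phi$ with a $C^\infty$-small compactly supported diffeomorphism of $N_\alpha$ sending $\phi(y)$ to $y_{\alpha,l}$ to obtain a pointed diffeomorphism $(N,y)\to(N_\alpha,y_{\alpha,l})$ with pullback metric $C^m$-close to $g_N$ in the sense of Lemma~\ref{l: Lessa}.

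I expect the main obstacle to lie in the manifold-with-boundary construction of the first stage, specifically verifying that the regular-value truncation followed by a product-collar extension and a reflection-doubling produces an honestly $C^\infty$ Riemannian metric on the closed double in which the ball $B_M(x,R)$ sits isometrically; the collar-with-product-metric trick is the standard way to sidestep the failure of the naive doubling to be $C^1$ at the gluing locus. Once both stages are in place, $\DD$ is a countable dense subset of $\MM_*^\infty(n)$, which is what we need.
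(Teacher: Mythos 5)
Your proof follows essentially the same two-stage strategy as the paper: first establish density of $\MM_{*,\text{\rm c}}^\infty(n)$ by isometrically embedding $B_M(x,R)$ into a compact Riemannian manifold, then establish separability of $\MM_{*,\text{\rm c}}^\infty(n)$ via countably many diffeomorphism types of compact $n$-manifolds together with separability of $\Met(N_\alpha)$ and of $N_\alpha$. You spell out the first stage in more detail than the paper (regular-level truncation, product collar, doubling), and that construction is correct; the paper only asserts the isometric embeddability in one sentence. The one place your justification is incomplete is the countability of diffeomorphism classes: Whitehead triangulation plus countability of finite simplicial complexes gives countably many PL types, but passing from PL to DIFF still requires smoothing theory (finiteness of the set of compatible smooth structures on a compact PL manifold, via $[M;\mathrm{PL}/\mathrm{O}]$ in high dimensions and $\mathrm{PL}=\mathrm{DIFF}$ in low dimensions), which you omit. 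The paper sidesteps this by citing Cheeger's finiteness theorem, which yields countably many diffeomorphism classes directly. That is the only gap, and it is minor and standard; otherwise your proposal coincides with the paper's proof.
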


\begin{proof}
  The isometry classes of pointed compact Riemannian manifolds form a
  subspace, $\MM_{*,\text{\rm c}}^\infty(n)\subset\MM_*^\infty(n)$,
  which is dense because, for all $[M,x]\in\MM_*^\infty(n)$ and $R>0$,
  the ball $B_M(x,R)$ can be isometrically embedded in a compact
  Riemannian manifold.

  As a consequence of the finiteness theorems of Cheeger on Riemannian
  manifolds \cite{Cheeger1970}, it follows that there are countably
  many diffeomorphism classes of compact $C^\infty$ manifolds (see
  \cite[Corollary~37, p.~320]{Petersen1998} or
  \cite[Theorem~IX.8.1]{Chavel2006}). Thus there is a countable
  family $\mathcal{C}$ of $C^\infty$ compact manifolds containing
  exactly one representative of every diffeomorphism class.

  For every $M\in\CC$, the set of metrics on $M$, $\Met(M)$, is an
  open subspace of the space of smooth sections, $C^\infty(M;
  T^*M\odot T^*M)$, with the $C^\infty$ topology, where ``$\odot$''
  denotes the symmetric product. Then, since $C^\infty(M; T^*M\odot
  T^*M)$ is separable, we can choose a countable dense subset
  $\GG_M\subset\Met(M)$. Choose also a countable dense subset
  $\DD_M\subset M$.

  Clearly, the countable set
  \[
  \{\,[(M,g),x]\mid M\in\mathcal{C},\ g\in\GG_M,\ x\in\DD_M\,\}
  \]
  is dense in $\MM_{*,\text{\rm c}}^\infty(n)$, and therefore it is
  also dense in $\MM_*^\infty(n)$.
\end{proof}

\begin{rem}\label{r: MM_*,c^infty(n) is dense}
	Observe that the proof of Proposition~\ref{p: MM_*^infty(n) is separable} shows that $\MM_{*,\text{\rm c}}^\infty(n)$ is dense in $\MM_*^\infty(n)$.
\end{rem}

\begin{prop}\label{p: MM_*^infty(n) is completely metrizable}
$\MM_{*}^\infty(n)$ is completely metrizable.
\end{prop}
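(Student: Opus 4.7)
The plan is to first produce a compatible metric from the uniformity and then separately establish its completeness. The $C^\infty$ uniformity generated by $\{U^m_{R,r}\}$ (Proposition~\ref{p: C^infty uniformity in MM_*(n)}) has a countable base, namely $\{U^m_{k,1/k}\}_{m,k\in\Z^+}$, and it is Hausdorff by Proposition~\ref{p: Hausdorff MM_*(n)}. Hence, by the standard metrization theorem for uniform spaces with a countable base, this uniformity is induced by a metric; and since a metrizable uniformity is induced by a complete metric if and only if the uniformity itself is complete, it suffices to prove that every Cauchy sequence in $\MM_*^\infty(n)$ converges.

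Let $([M_i,x_i])$ be such a Cauchy sequence. For each $R>0$ and $m\in\N$, by the Cauchy property and Proposition~\ref{p: U^m_R,epsilon(M,x) subset D^m_R,r(M,x)} there exist $i_0$ and $C^{m+1}$ pointed local diffeomorphisms $\phi_{ij}:(M_i,x_i)\rightarrowtail(M_j,x_j)$, defined for $i,j\ge i_0$ on compact domains containing $B_{M_i}(x_i,R+1)$, with $\|g_i-\phi_{ij}^*g_j\|_{C^m,\Omega,g_i}$ arbitrarily small. Fixing $i=i_0$ and letting $j$ vary, the pull-backs $\phi_{i_0 j}^*g_j$ form a $C^m$-precompact family of metrics on a fixed compact region of $M_{i_0}$, which translates into uniform bounds on $|\nabla^k\RR_j|$ over $B_{M_j}(x_j,R)$ for $k\le m-2$ and a uniform positive lower bound on $\inj_{M_j}(y)$ for $y\in B_{M_j}(x_j,R)$. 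Since $R$ and $m$ are arbitrary, a tail of the Cauchy sequence has equi-bounded geometry on every fixed ball radius.

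With equi-bounded geometry in hand, the Cheeger-Gromov $C^\infty$ compactness theorem for pointed Riemannian manifolds (see, e.g., \cite[Chapter~10]{Petersen1998}) produces a subsequence $[M_{i_k},x_{i_k}]$, a complete pointed Riemannian $n$-manifold $(M,x)$, an exhaustion $\Omega_k\uparrow M$ of $M$ by compact domains containing $x$, and pointed $C^\infty$ embeddings $\psi_k:(\Omega_k,x)\to(M_{i_k},x_{i_k})$ such that $\psi_k^*g_{i_k}\to g_M$ in the $C^\infty$ topology on each compact subdomain. By Lemma~\ref{l: Lessa}, $[M_{i_k},x_{i_k}]\to[M,x]$ in $\MM_*^\infty(n)$, and since a Cauchy sequence with a convergent subsequence converges, $[M_i,x_i]\to[M,x]$.

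The main technical obstacle is the middle paragraph: converting the Cauchy condition, which a priori only supplies $C^m$-close metrics through varying pointed local quasi-isometries \emph{into} each $M_j$ from a reference $M_{i_0}$, into genuine uniform curvature bounds and lower injectivity-radius bounds on balls \emph{in} the $M_j$'s themselves. Once this rigidification is carried out, the remainder of the argument is the classical Cheeger-Gromov compactness combined with the elementary observation that a Cauchy sequence with a convergent subsequence is convergent.
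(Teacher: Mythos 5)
Your overall strategy is sound, but it departs significantly from the paper's proof, and the departure is precisely where your argument has a genuine gap.

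The paper does \emph{not} route through bounded geometry and Cheeger--Gromov compactness. Instead, after extracting a rapidly-Cauchy subsequence with $C^\infty$ local quasi-isometries $\phi_i\colon(M_i,x_i)\rightarrowtail(M_{i+1},x_{i+1})$, it assembles the candidate limit $\widehat M$ directly as the direct limit of balls $B_i(x_i,R_i)$ under the composed embeddings $\psi_{ij}=\phi_{j-1}\cdots\phi_i$, shows the pulled-back metrics converge in $C^{m_i}$ on an exhausting sequence of compact subdomains (using Lemma~\ref{l: g^(m)_alpha beta} and the polarization estimates), verifies completeness of $\widehat M$, and reads off convergence $[M_i,x_i]\to[\widehat M,\hat x]$ from the same estimates. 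Nothing about curvature, injectivity radius, or compactness theorems enters; the Cauchy estimates are converted directly into the convergence statement.

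Your approach instead needs, as an intermediate object, uniform bounds on $|\nabla^k\RR_j|$ over $B_j(x_j,R)$ and a lower bound on $\inj_{M_j}$ over $B_j(x_j,R)$, for a tail of the sequence. The curvature bounds do follow from the Cauchy condition via Proposition~\ref{p: U^m_R,epsilon(M,x) subset D^m_R,r(M,x)}: the pullback $\phi_{i_0j}^*g_j$ is $C^{m}$-close to $g_{i_0}$ on a fixed compact domain, and curvature and its covariant derivatives are algebraic expressions in the jets of the metric. But the injectivity radius lower bound is \emph{not} a local algebraic consequence of the pullback metrics being $C^m$-close, and you assert it in one clause without justification. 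The injectivity radius at a point is not determined by the metric near that point alone; one must rule out short geodesic loops. The standard way to close this is to observe that a short geodesic loop at $y\in B_j(x_j,R)$ stays inside $B_j(x_j,R+1)\subset\phi_{i_0j}(\Omega)$, pulls back to a short $\phi_{i_0j}^*g_j$-geodesic loop in $\Omega$, and then to invoke either the continuity of $\inj$ (away from $\partial\Omega$) in the $C^2$ topology on the metric, or Cheeger's injectivity radius estimate from a curvature bound together with a lower volume bound (both stable under $C^0$-perturbation of the metric). You flag this paragraph as ``the main technical obstacle'' and then leave it; as written, it is a gap in the proof. You also need a diagonal subsequence argument to reconcile the fact that the tail $i_0$ depends on $(m,R)$ before Cheeger--Gromov can be applied.

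What each approach buys: yours is conceptually shorter once one has internalized Cheeger--Gromov pointed $C^\infty$-compactness, and it makes the link between Cauchy-ness and equi-bounded geometry explicit. The paper's direct-limit construction is longer to write but entirely self-contained, uses only the estimates already developed in Sections~\ref{s: quasi-isometries}--\ref{s: C^infty topology}, and bypasses any injectivity radius or compactness-theorem machinery.
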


\begin{proof}
  The $C^\infty$ uniformity on $\MM_*(n)$ is metrizable because it is
  separating and has a countable base of entourages
  \cite[Corollary~38.4]{Willard1970}. Thus it is enough to check that
  the $C^\infty$ uniformity on $\MM_*(n)$ is complete.
	
  Consider an arbitrary Cauchy sequence $[M_i,x_i]$ in $\MM_*(n)$ with
  respect to the $C^\infty$ uniformity. We have to prove that
  $[M_i,x_i]$ is convergent in $\MM_*^\infty(n)$. By taking a
  subsequence if necessary, we can suppose that
  $([M_i,x_i],[M_{i+1},x_{i+1}])\in U^{m_i}_{R_i,r_i}$ for sequences,
  $m_i\uparrow\infty$ in $\N$, and $R_i\uparrow\infty$ and
  $r_i\downarrow0$ in $\R^+$, such that $\sum_ir_i<\infty$, and
  $R_{i+1}\ge e^{r_i}R_i$ for all $i$. Let $\bar r_i=\sum_{j\ge
    i}r_j$. Consider other sequences $R'_i,R''_i\uparrow\infty$ in
  $\R^+$ such that $R'_i<R''_i\le e^{-\bar r_i}R_i$ and $R'_{i+1}\ge
  e^{r_i}R''_i$.

  For each $i$, there is some $\lambda_i\in(1,e^{r_i})$ and some
  $(m_i,R_i,\lambda_i)$-pointed local quasi-isometry
  $\phi_i\colon(M_i,x_i)\rightarrowtail(M_{i+1},x_{i+1})$, which can
  be assumed to be $C^\infty$ by Remark~\ref{r:
    (m,R,lambda)-...}-\eqref{i: C^infty approximation of
    (m,R,lambda)-...}. Then $\bar\lambda_i:=\prod_{j\ge
    i}\lambda_j<e^{\bar r_i}<\infty$. For $i<j$, the pointed local
  quasi-isometry
  $\psi_{ij}=\phi_{j-1}\cdots\phi_i:(M_i,x_i)\rightarrowtail(M_j,x_j)$ is
  of type $(m_i,R_i/\bar\lambda_i,\bar\lambda_i)$ by Lemma~\ref{l:
    composition and inversion of pointed local
    quasi-isometries}-\eqref{i: composition}.
	
  For $i,m\in\N$, let
  \begin{alignat*}{3}
    B_i&=B_i(x_i,R_i)\;,&\quad B_i'&=B_i(x_i,R'_i)\;,&\quad B_i''&=B_i(x_i,R''_i)\;,\\
    B_i^{(m)}&=B_i^{(m)}(x_i,R_i)\;,&\quad
    B_i'^{(m)}&=B_i^{(m)}(x_i,R'_i)\;,&\quad
    B_i''^{(m)}&=B_i^{(m_i)}(x_i,R''_i)\;.
  \end{alignat*}
  A bar will be added to this notation when the corresponding closed
  balls are considered. We have $\phi_i(\ol B_i)\subset B_{i+1}$
  because $R_{i+1}>\lambda_iR_i$, and $\phi_{i*}^{(m_i)}(\ol
  B_i''^{(m_i)})\subset B_{i+1}'^{(m_i)}\subset B_{i+1}'^{(m_{i+1})}$
  since $R'_{i+1}>\lambda_iR''_i$ and by Remark~\ref{r: T^(l)M subset
    T^(m)M}-\eqref{i: g^(m)|_T^(l)M = g^(l)}. Furthermore
  $B_i''\subset\dom\psi_{ij}$ and
  $B_i''^{(m_i)}\subset\dom\psi_{ij*}^{(m_i)}$ for $i<j$ because
  $R''\le R_i/\bar\lambda_i$. Therefore $\psi_{ij}(B_i)\subset B_j$
  and $\psi_{ij*}^{(m_i)}(B_i''^{(m_i)})\subset B_j'^{(m_j)}$.
	
  The restrictions $\psi_{ij}:B_i\to B_j$ form a direct system of
  spaces, whose direct limit is denoted by $\widehat M$. Let
  $\psi_i:B_i\to\widehat M$ be the induced maps, whose images,
  $\widehat B_i:=\psi_i(B_i)$, form an exhausting increasing sequence
  of subsets of $\widehat M$. All points $\psi_i(x_i)$ are equal in
  $\widehat M$, and will be denoted by $\hat x$. The space $\widehat
  M$ is connected because it is the union of the connected subspaces
  $\widehat B_i$ whose intersection contains $\hat x$. By the
  definition of the direct limit and since the maps $\psi_{ij}$ are
  open embeddings, it follows that all maps $\psi_i$ are open
  embeddings, and therefore $\widehat M$ is a Hausdorff
  $n$-manifold. Equip each $\widehat B_i$ with the $C^\infty$
  structure that corresponds to the $C^\infty$ structure of $B_i$ by
  $\psi_i$. These $C^\infty$ structures are compatible one another
  because the open embeddings $\psi_{ij}$ are $C^\infty$, and
  therefore they define a $C^\infty$ structure on $\widehat
  M$. Moreover let $\hat g_i$ be the Riemannian metric on each
  $\widehat B_i$ that corresponds to $g_i|_{B_i}$ via $\psi_i$.
	
  Take some compact domains, $\Omega_i$ in every $M_i$ and
  $\Omega_i^{(m_i)}$ in $T^{(m_i)}M_i$, such that
  $B_i'\subset\Omega_i\subset\Int(\Omega_i^{(m_i)})$ and
  $B_i'^{(m_i)}\subset\Omega_i^{(m_i)}\subset B_i''^{(m_i)}$; thus
  $\Omega_i\subset B_i''$ by Remark~\ref{r: T^(l)M subset
    T^(m)M}-\eqref{i: T^(l)M subset T^(m)M}. Let
  $\widehat\Omega_i=\psi_i(\Omega_i)$.
	
  \begin{claim}\label{cl: widehat M = bigcup_i widehat Omega_i}
    $\widehat M=\bigcup_i\widehat\Omega_i$.
  \end{claim}
	
  This equality holds because, for each $i$, there is some $j$ so that
  $R'_j>\bar\lambda_iR_i$, obtaining
  \[
  \psi_{ij}(B_i)\subset B_j(x_j,\bar\lambda_iR_i)\subset
  B_j'\subset\Omega_j\;,
  \]
  and therefore $\widehat
  B_i=\psi_j\psi_{ij}(B_i)\subset\psi_j(\Omega_j)=\widehat\Omega_j$.
	
  \begin{claim}\label{cl: hat g_j}
    For all $i$, the restrictions $\hat g_j|_{\widehat\Omega_i}$, with
    $j\ge i$, form a convergent sequence in the space of $C^{m_i}$
    sections, $C^{m_i}(\widehat\Omega_i;T\widehat\Omega_i^*\odot
    T\widehat\Omega_i^*)$, with the $C^{m_i}$ topology, and its limit,
    $\hat g_{i,\infty}$, is positive definite at every point.
  \end{claim}
	
  Clearly, Claim~\ref{cl: hat g_j} follows by showing that the
  restrictions of the metrics $g_{ij}:=\psi_{ij}^*g_j$ to $\Omega_i$,
  for $j\ge i$, form a convergent sequence in
  $C^{m_i}(\Omega_i;T\Omega_i^*\odot T\Omega_i^*)$, and its limit,
  $g_{i,\infty}$, is positive definite at every point. To begin with, let us
  show that $g_{ij}|_{\Omega_i}$ is a Cauchy sequence with respect to
  $\|\ \|_{C^{m_i},\Omega_i,g_i}$.
	
  We have
  \begin{equation}\label{|g_i^(m_i) - g_ij^(m_i)|}
    \frac{1}{\bar\lambda_i}\,|\xi|_i^{(m_i)}\le|\xi|_{ij}^{(m_i)}\le\bar\lambda_i\,|\xi|_i^{(m_i)}
  \end{equation}
  for all $\xi\in T\Omega_i^{(m_i)}$, where $|\ |_i^{(m_i)}$ and $|\
  |_{ij}^{(m_i)}$ are the norms defined by $g_i^{(m_i)}$ and
  $g_{ij}^{(m_i)}$, respectively. By Lemma~\ref{l:
    polarization}-\eqref{i: polarization}, it follows that
  \[
  \|g_i^{(m_i)}-g_{ij}^{(m_i)}\|_{C^0,\Omega_i^{(m_i)},g_i^{(m_i)}}
  \le{\bar\lambda}_i^2-{\bar\lambda_i}^{-2}\;.
  \]
  Then, for $k\ge j$,
  \begin{multline}\label{|g_ij^(m_i) -
      g_ik^(m_i)|_C^0,Omega_i^(m_i),g_ij^(m_i)}
    \|g_{ij}^{(m_i)}-g_{ik}^{(m_i)}\|_{C^0,\Omega_i^{(m_i)},g_{ij}^{(m_i)}}
    =\|g_j^{(m_i)}-g_{jk}^{(m_i)}\|_{C^0,\psi_{ij*}^{(m_i)}(\Omega_i^{(m_i)}),g_j^{(m_i)}}\\
    \le\|g_j^{(m_j)}-g_{jk}^{(m_j)}\|_{C^0,\Omega_j^{(m_j)},g_j^{(m_j)}}
    \le\bar\lambda_j^2-\bar\lambda_j^{-2}
  \end{multline}
  because
  \[
  \psi_{ij*}^{(m_i)}(\Omega_i^{(m_i)})\subset\psi_{ij*}^{(m_i)}(B_i''^{(m_i)})
  \subset B_j'^{(m_j)}\subset\Omega_j^{(m_j)}
  \]
  and $g_{jk}^{(m_j)}=g_{jk}^{(m_i)}$ on $\Omega_j^{(m_j)}\cap
  B_j^{(m_i)}\supset\psi_{ij*}^{(m_i)}(\Omega_i^{(m_i)})$
  (Remark~\ref{r: T^(l)M subset T^(m)M}-\eqref{i: g^(m)|_T^(l)M =
    g^(l)}). We get
  \begin{equation}\label{|g_ij^(m_i) -
      g_ik^(m_i)|_C^0,Omega_i^(m_i),g_i^(m_i)}
    \|g_{ij}^{(m_i)}-g_{ik}^{(m_i)}\|_{C^0,\Omega_i^{(m_i)},g_i^{(m_i)}}
    \le\bar\lambda_i^2(\bar\lambda_j^2-\bar\lambda_j^{-2})
  \end{equation}
  by~\eqref{|g_i^(m_i) - g_ij^(m_i)|},~\eqref{|g_ij^(m_i) -
    g_ik^(m_i)|_C^0,Omega_i^(m_i),g_ij^(m_i)} and Lemma~\ref{l:
    polarization}-\eqref{i: |omega|}.
	
  Let $\UU_i$ be a finite collection of charts of $M_i$ with domains
  $U_{i,a}$, and let $\KK_i=\{K_{i,a}\}$ be a family of compact
  subsets of $M_i$, with the same index set as $\UU_i$, such that
  $K_{i,a}\subset U_{i,a}$ for all $a$, and $\ol
  B_i''\subset\bigcup_aK_{i,a}=:K_i$. Thus $\Omega_i\subset K_i$. With
  the notation of Section~\ref{ss: Riemannian geom}, let
  $\UU_i^{(m_i)}$ be the family of induced charts of $T^{(m_i)}M_i$
  with domains $U_{i,a}^{(m_i)}$. Like in the proof of
  Proposition~\ref{p: D^m_R,epsilon(M,x) subset
    U^m_R,r(M,x)}-\eqref{i: D^m_R,epsilon(M,x) subset U^m_R,r(M,x)},
  let $\KK_i^{(m_i)}$ be the family of compact subsets
  \[
  K_{i,a}^{(m_i)}=\{\,\xi\in B_i^{(m_i)}\mid\pi(\xi)\in K_{i,a},\
  d_i^{(m_i)}(\xi,\pi_i(\xi))\le R'''_i\,\} \subset U_{i,a}^{(m_i)}\;,
  \]
  for some $R'''_i>R''_i$, where $\pi:B_i^{(m_i)}\to B_i$. We have
  $B_i''^{(m_i)}\subset\bigcup_aK^{(m_i)}_{i,a}=:K_i^{(m_i)}$. Hence
  $\Omega_i^{(m_i)}\subset K_i^{(m_i)}$.
	
  Choose some $C_i\ge1$ satisfying~\eqref{norm equiv} with $\UU_i$,
  $\KK_i$, $\Omega_i$ and $g_i$, and some $C_i^{(m_i)}\ge1$
  satisfying~\eqref{norm equiv} with $\UU_i^{(m_i)}$, $\KK_i^{(m_i)}$,
  $\Omega_i^{(m_i)}$ and $g^{(m_i)}$. For any $\rho>0$ and
  $n+1\le\mu\le2^{m_i}n$, let $\sigma^{(m_i)}_{i,a,\rho,\mu}:U_{i,a}\to
  U_{i,a}^{(m_i)}$ be the section of each projection
  $\pi:U_{i,a}^{(m_i)}\to U_{i,a}$ of the type used in Lemma~\ref{l:
    g^(m)_alpha beta}-\eqref{i: partial up to order m of g_ij}. Since
  $\Omega_i\subset\Int(\Omega_i^{(m_i)})$, there is some $\rho>0$ so
  that $\sigma^{(m_i)}_{i,a,\rho,\mu}(K_{i,a}\cap\Omega_i)\subset
  K_{i,a}^{(m_i)}\cap\Omega_i^{(m_i)}$ for all $a$ and $\mu$. Thus, by
  Lemma~\ref{l: g^(m)_alpha beta}-\eqref{i: partial up to order m of
    g_ij}, given any $\epsilon>0$, there is some $\delta>0$, depending
  on $\epsilon$ and $\rho$, such that
  \begin{equation}\label{... < delta Longrightarrow ... < epsilon/C_i}
    \|g_{ij}^{(m_i)}-g_{ik}^{(m_i)}\|_{C^0,\Omega_i^{(m_i)},\UU_i^{(m_i)},\KK_i^{(m_i)}}<\delta
    \,\Longrightarrow\,\|g_{ij}-g_{ik}\|_{C^{m_i},\Omega_i,\UU_i,\KK_i}<\epsilon/C_i\;.
  \end{equation}
  Since $\bar\lambda_j\downarrow1$, we have
  $\bar\lambda_i^2(\bar\lambda_j^2-\bar\lambda_j^{-2})<\delta/C_i^{(m_i)}$
  for $j$ large enough, giving
  \begin{multline*}
    \|g_{ij}^{(m_i)}-g_{ik}^{(m_i)}\|_{C^0,\Omega_i^{(m_i)},g_i^{(m_i)}}<\delta/C_i^{(m_i)}
    \,\Longrightarrow\,\|g_{ij}^{(m_i)}-g_{ik}^{(m_i)}\|_{C^0,\Omega_i^{(m_i)},\UU_i^{(m_i)},\KK_i^{(m_i)}}<\delta\\
    \,\Longrightarrow\,\|g_{ij}-g_{ik}\|_{C^{m_i},\Omega_i,\UU_i,\KK_i}<\epsilon/C_i
    \,\Longrightarrow\,\|g_{ij}-g_{ik}\|_{C^{m_i},\Omega_i,g_i}<\epsilon
  \end{multline*}
  by~\eqref{|g_ij^(m_i) -
    g_ik^(m_i)|_C^0,Omega_i^(m_i),g_i^(m_i)},~\eqref{... < delta
    Longrightarrow ... < epsilon/C_i} and~\eqref{norm equiv}. This
  shows that $g_{ij}|_{\Omega_i}$ is a Cauchy sequence in the Banach
  space $C^{m_i}(\Omega_i;T\Omega_i^*\odot T\Omega_i^*)$ with $\|\
  \|_{C^{m_i},\Omega_i,g_i}$, and therefore it has a limit
  $g_{i,\infty}$. For all nonzero $\xi\in T\Omega_i$, we have
  \[
  g_{i,\infty}(\xi,\xi)=\lim_jg_{ij}(\xi,\xi)\ge\frac{1}{\bar\lambda_i}\,g_i(\xi,\xi)>0\;,
  \]
  obtaining that $g_{i,\infty}$ is positive definite. This completes the
  proof of Claim~\ref{cl: hat g_j}.
	
  According to Claim~\ref{cl: hat g_j}, each $\hat g_{i,\infty}$ is a
  $C^{m_i}$ Riemannian metric on $\widehat\Omega_i$, and, obviously,
  $\hat g_{j,\infty}|_{\widehat\Omega_i}=\hat g_{i,\infty}$ for $j>i$. Hence the
  metric tensors $\hat g_{i,\infty}$ can be combined to define a $C^\infty$
  Riemannian metric $\hat g$ on $\widehat M$ by Claim~\ref{cl: widehat
    M = bigcup_i widehat Omega_i}.
	
  Let $|\ |_{i,\infty}^{(m_i)}$ be the norm defined by $g_{i,\infty}^{(m_i)}$ on
  $T\Omega_i^{(m_i)}$. By~\eqref{|g_i^(m_i) - g_ij^(m_i)|} and because
  $|\ |_{i,\infty}^{(m_i)}=\lim_j|\ |_{ij}^{(m_i)}$ on $T\Omega_i^{(m_i)}$,
  we get
  $\frac{1}{\bar\lambda_i}\,|\xi|_i^{(m_i)}\le|\xi|_{i,\infty}^{(m_i)}\le\bar\lambda_i\,|\xi|_i^{(m_i)}$
  for all $\xi\in T\Omega_i^{(m_i)}$. Thus, by Remark~\ref{r:
    quasi-isometries}-\eqref{i: quasi-isometry and distances},
  $\Omega_i$ contains the $g_{i,\infty}$-ball of center
  $x_i$ and radius $R'_i/\bar\lambda_i$ because it contains $B_i'$; in
  particular, $\widehat M$ is complete because
  $R'_i/\bar\lambda_i\to\infty$ and every $\Omega_i$ is compact. Since
  $g_{i,\infty}=\psi_i^*\hat g$, it also follows that 
  $\psi_{i*}^{(m_i)}:\Omega_i^{(m_i)}\to T^{(m_i)}\widehat M$ is a 
  $\bar\lambda_i$-quasi-isometry. So $\psi_i:(M_i,x_i)\rightarrowtail(\widehat M,\hat x)$ 
  is an $(m_i,R'_i,\bar\lambda_i)$-pointed local quasi-isometry, obtaining that 
  $([M_i,x_i],[\widehat M,\hat x])\in U^{m_i}_{R'_i,s_i}$ for any sequence
  $s_i\downarrow0$ with $\bar\lambda_i<e^{s_i}$, and therefore
  $[M_i,x_i]\to[\widehat M,\hat x]$ as $i\to\infty$ in $\MM_*^\infty(n)$.
\end{proof}

\begin{cor}\label{c: MM_*^infty(n) is Polish}
  $\MM_*^\infty(n)$ is Polish.
\end{cor}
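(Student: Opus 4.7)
The plan is essentially a one-line proof: Corollary~\ref{c: MM_*^infty(n) is Polish} is an immediate consequence of the two propositions that precede it. By the definition of Polish space recalled after Theorem~\ref{t: C^infty convergence in MM_*(n)}, I need only check separability and complete metrizability. Proposition~\ref{p: MM_*^infty(n) is separable} supplies the former (via a countable family built from countably many diffeomorphism types of compact $n$-manifolds by Cheeger finiteness, together with countable dense subsets of the Fréchet space $C^\infty(M; T^*M\odot T^*M)$ of metrics and of $M$ itself), and Proposition~\ref{p: MM_*^infty(n) is completely metrizable} supplies the latter (via metrizability of the $C^\infty$ uniformity, which has a countable base of entourages $U^m_{R,r}$ and is separating by Proposition~\ref{p: Hausdorff MM_*(n)}, combined with the explicit Cauchy completion argument that glues pointed local quasi-isometries into a direct limit manifold $\widehat M$).

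Thus the proof I would write is literally: ``Combine Propositions~\ref{p: MM_*^infty(n) is separable} and~\ref{p: MM_*^infty(n) is completely metrizable}.'' There is no genuine obstacle to overcome at this stage, since both the separability argument and the completeness argument (the technically hardest piece of the whole section, involving the patient construction of $\widehat M$ as a direct limit, the verification via Lemma~\ref{l: polarization} and Lemma~\ref{l: g^(m)_alpha beta} that the pulled-back metrics $g_{ij}$ form a Cauchy sequence in each $C^{m_i}(\Omega_i;T\Omega_i^*\odot T\Omega_i^*)$, and the resulting identification of $[M_i,x_i]$ as convergent to $[\widehat M,\hat x]$) have already been dispatched. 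The only tiny observation worth recording is that ``separable'' and ``completely metrizable'' are properties of the topology (rather than of any particular compatible metric), so no coherence between the dense set produced in Proposition~\ref{p: MM_*^infty(n) is separable} and the complete metric implicit in Proposition~\ref{p: MM_*^infty(n) is completely metrizable} need be checked.
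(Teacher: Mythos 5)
Your proof is correct and is essentially identical to the paper's: the corollary is stated explicitly as the combination of Propositions~\ref{p: MM_*^infty(n) is separable} and~\ref{p: MM_*^infty(n) is completely metrizable}, which is exactly what you do.
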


\begin{proof}
  This is the content of Propositions~\ref{p: MM_*^infty(n) is
    separable} and~\ref{p: MM_*^infty(n) is completely metrizable}
  together.
\end{proof}
 
Corollaries~\ref{c: C^infty convergence in MM_*(n)} and~\ref{c:
  MM_*^infty(n) is Polish} give Theorem~\ref{t: C^infty convergence in
  MM_*(n)}.

\section{Some basic properties of $\MM_{*,\text{\rm lnp}}^\infty(n)$}\label{s:lemmas}

For each closed $C^\infty$ manifold $M$ of dimension $\ge2$, the
non-periodic metrics on $M$ form a residual subset of $\Met(M)$ with
the $C^\infty$ topology \cite[Corollary~3.5]{BandoUrakawa1983},
\cite[Proposition~1]{Sunada1985}. Then, since $\MM_{*,\text{\rm
    c}}^\infty(n)$ is dense in $\MM_*^\infty(n)$ (Remark~\ref{r: MM_*,c^infty(n) is dense}), it follows that
$\MM_{*,\text{\rm np}}^\infty(n)$ is dense in $\MM_*^\infty(n)$, and
therefore $\MM_{*,\text{\rm lnp}}^\infty(n)$ is dense in
$\MM_*^\infty(n)$ too. On the other hand, $\MM_{*,\text{\rm
    lnp}}^\infty(n)$ is $G_\delta$ in $\MM_*^\infty(n)$ by Lemmas~\ref{l:
  LL} and~\ref{l: [M i,y i]} below, and therefore it is a Polish subspace \cite[Theorem~I.3.11]{Kechris1995}. This proves Theorem~\ref{t: FF_*,lnp(n)}-\eqref{i: MM_*,lnp(n)
  is open and dense}.

\begin{lem}\label{l: LL} 
  For every $n\in\Z^+$ and $[M,x]\in\MM_{*,\text{\rm lnp}}^\infty(n)$,
  there is some $r>0$ such that, if
  \[
  \{\,h\in\Iso(M)\mid h(x)\in\ol B(x,r)\,\}=\{\id_M\}\;,
  \]
  then there is some neighborhood $\LL$ of $[M,x]$ in
  $\MM_{*,\text{\rm lnp}}^\infty(n)$ so that
  \[
  \{\,h\in\Iso(L)\mid h(y)\in\ol B(y,r)\,\}=\{\id_L\}
  \]
  for all $[L,y]\in\LL$.
\end{lem}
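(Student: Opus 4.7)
The plan is to argue by contradiction. Fix $r>0$ satisfying the hypothesis at $(M,x)$, and suppose the desired conclusion fails: there exist a sequence $[L_i,y_i]\to[M,x]$ in $\MM_{*,\text{\rm lnp}}^\infty(n)$ and isometries $h_i\in\Iso(L_i)\setminus\{\id_{L_i}\}$ with $h_i(y_i)\in\ol B_{L_i}(y_i,r)$. By Lemma~\ref{l: Lessa} together with Propositions~\ref{p: D^m_R,epsilon(M,x) subset U^m_R,r(M,x)} and~\ref{p: U^m_R,epsilon(M,x) subset D^m_R,r(M,x)}, I would select pointed $C^\infty$ embeddings $\phi_i\colon(\ol B_M(x,R_i),x)\hookrightarrow(L_i,y_i)$ that form equi-quasi-isometries of order $\infty$ with dilation bounds $\lambda_i\downarrow 1$, radii $R_i\uparrow\infty$, and with $\phi_i^*g_{L_i}\to g$ in $C^\infty$ on compacts.

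Since $d_{L_i}(y_i,h_i(y_i))\le r$, Lemma~\ref{l: composition and inversion of pointed local quasi-isometries}-\eqref{i: inversion} combined with~\eqref{i: composition} shows that the conjugated maps $f_i:=\phi_i^{-1}\circ h_i\circ\phi_i$ are pointed $\lambda_i^2$-quasi-isometries of order $\infty$ on balls $\ol B_M(x,R_i'')$ with $R_i''\uparrow\infty$, and each $f_i$ is in fact an isometry for the pulled-back metric $\phi_i^*g_{L_i}$, sending $x$ to $\phi_i^{-1}(h_i(y_i))\in\ol B_M(x,\lambda_i r)$. Proposition~\ref{p: Arzela-Ascoli} together with a diagonal argument in $R_i''$ then produces a subsequence along which $f_i\to f$ in $C^\infty$ on compacts, where $f\colon(M,x)\to(M,x')$ is a pointed $C^\infty$ isometric immersion with $x'\in\ol B_M(x,r)$; the same argument applied to $f_i^{-1}=\phi_i^{-1}\circ h_i^{-1}\circ\phi_i$, mirroring the final step in the proof of Proposition~\ref{p: Hausdorff MM_*(n)}, yields a two-sided inverse, so $f$ is actually a pointed isometry of $M$, $f\in\Iso(M)$. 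The hypothesis on $r$ at $(M,x)$ then forces $f=\id_M$, and so $f_i\to\id_M$ in $C^\infty$ on compacts along the chosen subsequence.

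The main obstacle is to convert this pullback convergence into a genuine contradiction with $h_i\ne\id_{L_i}$. If some $f_i$ were the identity on a nonempty open $U\subset M$, then $h_i=\phi_i\circ f_i\circ\phi_i^{-1}$ would agree with $\id_{L_i}$ on the open set $\phi_i(U)\subset L_i$, and since a Riemannian isometry of a connected manifold is determined by its $1$-jet at a single point, $h_i=\id_{L_i}$, contradicting the choice of $h_i$. Hence $\epsilon_i:=\sup_{z\in\ol B_M(x,1)}d_M(z,f_i(z))>0$ while $\epsilon_i\to 0$. I would rescale by setting $X_i(z):=\epsilon_i^{-1}\exp_z^{-1}(f_i(z))$ on $\ol B_M(x,1)$, which is well-defined for large $i$ because $\epsilon_i$ drops below the positive injectivity radius of $g$ on that compact; then $\|X_i\|_{C^0(\ol B_M(x,1))}=1$, and bootstrapping from the isometry equation $f_i^*(\phi_i^*g_{L_i})=\phi_i^*g_{L_i}$ together with $\phi_i^*g_{L_i}\to g$ in $C^\infty$ gives equi-$C^\infty$ bounds on the $X_i$. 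Arzel\`a-Ascoli plus diagonal exhaustion then produces a subsequential limit $X_i\to X$ in $C^\infty$ on compacts of $M$, with $\|X\|_{C^0(\ol B_M(x,1))}=1$ so $X\not\equiv 0$; passing to the limit in the $O(\epsilon_i)$-linearization of the isometry equation yields $\mathcal{L}_X g=0$, so $X$ is a Killing field on the complete connected manifold $M$. But the assumption $\{h\in\Iso(M):h(x)\in\ol B(x,r)\}=\{\id_M\}$ exhibits $\{\id_M\}$ as a neighborhood of $\id_M$ in the compact-open topology on $\Iso(M)$, forcing $\Iso(M)$ to be discrete; the flow of the nonzero Killing field $X$ would however supply a nontrivial continuous one-parameter subgroup of $\Iso(M)$, the desired contradiction.
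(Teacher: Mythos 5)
Your strategy is genuinely different from the paper's, and the difference is exactly where the gap lies. The paper normalizes the isometries $h_i$ \emph{before} passing to the limit: via its Claim~1 it replaces $h_i$, either by a suitable power $h_i^k$, or (when all powers $h_i^k(x_i)$ stay in $\ol B_i(x_i,r/2)$) by an element of the compact abelian subgroup $A_i=\ol{\{\,h_i^k\mid k\in\Z\,\}}\subset\Iso(M_i)$ chosen using Karcher's center of mass together with the no-small-subgroups property of $\Or(n)$, so as to enforce a uniform displacement lower bound $d_i(z_i,h_i(z_i))\ge\delta>0$ with $z_i\in B_i(x_i,r)$. The Arzel\`a--Ascoli limit $h\in\Iso(M)$ then inherits displacement $\ge\delta$, cannot be $\id_M$, and contradicts the hypothesis with no rescaling at all. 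You instead pass to the limit first --- correctly concluding $f_i\to\id_M$ --- and then try to extract a nonzero Killing field by normalizing the deviations $\exp_z^{-1}(f_i(z))$ by their $C^0$ norm $\epsilon_i$.

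The gap is in that rescaling step. Your one-phrase assertion that ``bootstrapping from the isometry equation $f_i^*(\phi_i^*g_{L_i})=\phi_i^*g_{L_i}$ together with $\phi_i^*g_{L_i}\to g$ in $C^\infty$ gives equi-$C^\infty$ bounds on the $X_i$'' is the crux of the entire proof and is not a routine deduction. Equi-$C^\infty$ bounds on $X_i$ amount to $\|f_i-\id\|_{C^k(K)}=O(\epsilon_i)$ on each compact $K$, uniformly in $i$, where $\epsilon_i=\|f_i-\id\|_{C^0(\ol B_M(x,1))}$. A priori you only have $C^0$ smallness of order $\epsilon_i$ together with a priori $C^\infty$ bounds of order $O(1)$ on $f_i$ (the latter since $(f_i)_*$ is $g_i$-orthogonal and the local geometry is bounded), and interpolating these yields only $\|f_i-\id\|_{C^1}=O(\sqrt{\epsilon_i})$, which is useless after dividing by $\epsilon_i$. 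To get the linear rate one must exploit that a $g_i$-isometry commutes with $\exp^{g_i}$ and is therefore essentially linear in $g_i$-normal coordinates, so that the $1$-jet $(f_i(x),(f_i)_{*x})$ deviates from that of $\id_M$ by $O(\epsilon_i)$, and then invoke the smooth and uniform dependence of the isometry on its $1$-jet at the base point via the parallel-transport ODE along geodesics. The same machinery is also what justifies your tacit ``diagonal exhaustion'': $X_i$ is normalized on the fixed ball $\ol B_M(x,1)$, yet you need the resulting $X$ to live on all compacts of $M$ so that it integrates to a genuine one-parameter subgroup of $\Iso(M)$ --- a merely local Killing field near $x$ need not extend globally and would not contradict discreteness of $\Iso(M)$. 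As written, the central analytic step is absent; the paper's pre-normalization of the $h_i$ circumvents it entirely.
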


\begin{proof} 
  Suppose that the statement is false. Then there is some convergent
  sequence, $[M_i,x_i]\to[M,x]$, in $\MM_*^\infty(n)$ so that, for
  each $i$, some $h_i\in\Iso(M_i)\sm\{\id_{M_i}\}$ satisfies
  $h_i(x_i)\in\ol B_i(x_i,r)$. Choose any sequence of compact domains
  $\Omega_q$ of $M$ such that $\ol B(x,2r)\subset\Int(\Omega_q)$ and
  $d(x,\partial\Omega_q)\to\infty$ as $q\to\infty$. For each $q$ and
  $i$ large enough, there is some pointed smooth embedding
  $\phi_{q,i}:(\Omega_q,x)\to(M_i,x_i)$ so that $\phi_{q,i}^*g_i\to
  g|_{\Omega_q}$ as $i\to\infty$ with respect to the $C^\infty$
  topology. Thus $\ol B_i(x_i,2r)\subset\phi_{q,i}(\Int(\Omega_q))$
  for $i$ large enough.

  \begin{claim}\label{cl: delta}
    If $r$ is small enough, we can assume that there is some
    $\delta>0$ such that, for $i$ large enough, the maps $h_i$ can be
    chosen so that $d_i(z_i,h_i(z_i))\ge\delta$ for some $z_i\in
    B_i(x_i,r)$.
  \end{claim}

  Given any index $i$, suppose first that there is some
  $k\in\Z\sm\{0\}$ such that $h_i^k(x_i)\not\in\ol B_i(x_i,r/2)$. Then
  there is some $k\in\Z\sm\{0\}$ such that $h_i^k(x_i)\not\in\ol
  B_i(x_i,r/2)$ and $h_i^\ell(x_i)\in\ol B_i(x_i,r/2)$ if
  $|\ell|<|k|$. If $k=1$, then $d_i(x_i,h_i(x_i))\ge r/2$. If $k=-1$,
  then
  \[
  d_i(x_i,h_i(x_i))=d_i\left(h_i^{-1}(x_i),x_i\right)\ge r/2
  \]
  as well. If $|k|\ge2$, then there is some $\ell\in\Z$ such that
  $|\ell|,|k-\ell|<|k|$. Hence
  \[
  d_i\left(x_i,h_i^k(x_i)\right) \le
  d_i\left(x_i,h_i^\ell(x_i)\right)+d_i\left(h_i^\ell(x_i),h_i^k(x_i)\right)
  =d_i\left(x_i,h_i^\ell(x_i)\right)+d_i\left(x_i,h_i^{k-\ell}(x_i)\right)\le
  r\;.
  \]
  Therefore, by using $h_i^k$ instead of $h_i$, we can assume that
  $d_i(x_i,h_i(x_i))\ge r/2$ in this case.

  Now, suppose that $h_i^k(x_i)\in\ol B_i(x_i,r/2)$ for all
  $k\in\Z$. Consider the non-trivial abelian subgroup
  $A_i=\ol{\{\,h_i^k\mid k\in\Z\,\}}\subset\Iso(M_i)$. Since
  $a(x_i)\in\ol B_i(x_i,r/2)$ for any $a\in A_i$, it follows that
  $A_i$ is compact in the $C^\infty$ topology by Proposition~\ref{p:
    Arzela-Ascoli}, and thus $A_i$ is a non-trivial compact abelian
  Lie subgroup of $\Iso(M_i)$. Let $\mu_i$ be a bi-invariant
  probability measure on $A_i$, and let $f_i:A_i\to M$ be the mass
  distribution defined by $f_i(a)=a(x_i)$. By the $C^\infty$
  convergence $\phi_{q,i}^*g_i\to g|_{\Omega_q}$, we can suppose that
  $r$ is so small that the ball $B_i(x_i,2r/3)$ of $M_i$ satisfies the
  conditions of Proposition~\ref{p: Karcher} for $i$ large
  enough. Then, since $f_i(A_i)\subset\ol B_i(x_i,r/2)\subset
  B_i(x_i,2r/3)$, the center of mass $y_i=\CC_{f_i}$ is defined in
  $B_i(x_i,2r/3)$. Moreover $y_i$ is a fixed point of the canonical
  action of $A_i$ on $M$ \cite[Section~2.1]{Karcher1977}. Since there
  is a neighborhood of the identity in the orthogonal group $\Or(n)$
  which contains no non-trivial subgroup (simply because $\Or(n)$ is a
  Lie group), it follows that there is some $K>0$ such that, for any
  non-trivial subgroup $A\subset\Or(n)$, there is some $a\in A$ and
  some $v\in\R^n$ such that $|v|=1$ and $|a(v)-v|\ge K$. In our
  setting, the subgroup $\{\,a_{*y_i}\mid a\in A_i\,\}$ of the
  orthogonal group $\Or(T_{y_i}M_i)\equiv\Or(n)$ is non-trivial
  because $M_i$ is connected and $A_i$ is non-trivial. Hence there is
  some $a_i\in A_i$ and some $\xi_i\in T_{y_i}M_i$ such that
  $|\xi_i|=1$ and $|a_{i*}(\xi_i)-\xi_i|\ge K$. By the $C^\infty$
  convergence $\phi_{q,i}^*g_i\to g|_{\Omega_q}$, we can also assume
  that $r$ is so small that there exists some $C\ge1$ such that
  $\exp_{y_i}:B(0_{y_i},r)\to B(y_i,r)$ is $C$-quasi-isometric for $i$
  large enough. Then, for $z_i=\exp_{y_i}(\frac{r}{3}\,\xi_i)\in\ol
  B_i(y_i,r/3)\subset B_i(x_i,r)$, we get
  \[
  d_i(z_i,a_i(z_i))\ge\frac{r}{3C}\,|\xi_i-h'_{i*}(\xi_i)|\ge\frac{rK}{3C}\;.
  \]
  Thus, by using $a_i$ instead of $h_i$, we can assume in this case
  that $d_i(z_i,h_i(z_i))\ge rK/3C$. Therefore Claim~\ref{cl: delta}
  follows with $\delta=\min\{r/2,rK/3C\}$.

  For each $q$, we can assume that
  \[
  \ol B(x,\diam(\Omega_q)+r)\subset\Int(\Omega_{q+1})\;,
  \]
  obtaining
  \[
  \ol
  B_i(x_i,\diam(\phi_{q,i}(\Omega_q))+r)\subset\Int(\phi_{q+1,i}(\Omega_{q+1}))
  \]
  for all $i$ large enough by the $C^\infty$ convergence
  $\phi_{q,i}^*g_i\to g|_{\Omega_q}$. Then
  $h'_{q,i}:=\phi_{q+1,i}^{-1}\,h_i\,\phi_{q,i}:\Omega_q\to M$ is well
  defined for each $q$ and all $i$ large enough because
  $x_i\in\phi_{q,i}(\Omega_q)$ and $h_i(x_i)\in\ol B_i(x_i,r)$. On the
  one hand, from the $C^\infty$ convergence $\phi_{q,i}^*g_i\to
  g|_{\Omega_q}$ and since $h_i(x_i)\in\ol B_i(x_i,r)$, we get the
  $C^\infty$ convergence $h_{q,i}^{\prime*}g\to g|_{\Omega_q}$ and
  $\limsup_id(x,h'_{q,i}(x))\le r$; in particular, for each $q$, the
  maps $h'_{q,i}$ are equi-quasi-isometries of order
  $\infty$. Therefore, by Proposition~\ref{p: Arzela-Ascoli}, some
  subsequence of $h'_{q,i}$ is $C^\infty$ convergent to some
  $C^\infty$ map $h'_q:\Omega_q\to M$, which is an isometric embedding
  satisfying $h'_q(x)\in\ol B(x,r)$.

  For all $p\ge q$, the restrictions $h'_p|_{\Omega_q}$ form a
  sequence of isometric embeddings satisfying $h'_p(x)\in\ol
  B(x,r)$. Then, by Proposition~\ref{p: Arzela-Ascoli}, there is some
  sequence of positive integers $p(q,k)$ for each $q$ so that the
  subsequence $h'_{p(q,k)}|_{\Omega_q}$ of $h'_p|_{\Omega_q}$ is
  $C^\infty$ convergent as $k\to\infty$ to an isometric embedding
  $h''_q:\Omega_q\to M$ satisfying $h''_q(x)\in\ol B(x,r)$. We can
  assume that $p(q+1,k)$ is a subsequence of $p(q,k)$ for each $q$,
  yielding $h''_{q+1}|_{\Omega_q}=h''_q$. So the maps $h''_q$ can be
  combined to define an isometry $h:M\to M$ satisfying $h(x)\in\ol
  B(x,r)$.

  Now, fix any $q$ and let $z'_{p,i}=\phi_{p,i}^{-1}(z_i)$ for each
  $p\ge q$ and all $i$ large enough. From $z_i\in B_i(x_i,r)$ and the
  $C^\infty$ convergence $\phi_{p,i}^*g_i\to g|_{\Omega_p}$, it
  follows that $z'_{p,i}$ approaches the compact set $\ol B(x,r)$ as
  $i\to\infty$. Then, for each $p\ge q$, there is a sequence $z_{p,i}$
  in $B(x,r)$ so that $d(z_{p,i},z'_{p,i})\to0$.  Hence, by the
  $C^\infty$ convergence $\phi_{p,i}^*g_i\to g|_{\Omega_p}$ and
  Claim~\ref{cl: delta}, we get
  \begin{multline*}
    \sup\{\,d(z,h(z))\mid z\in B(x,r)\,\}=\sup\{\,d(z,h''_q(z))\mid z\in B(x,r)\,\}\\
    \ge\sup\left\{\liminf_pd(z,h'_p(z))\mid z\in B(x,r)\right\}
    \ge\sup\left\{\liminf_p\liminf_id(z,h'_{p,i}(z))\mid z\in B(x,r)\right\}\\
    \ge\liminf_p\liminf_id(z_{p,i},h'_{p,i}(z_{p,i}))
    =\liminf_p\liminf_id(z'_{p,i},h'_{p,i}(z'_{p,i}))\ge\liminf_id_i(z_i,h_i(z_i))\ge\delta\;.
  \end{multline*}
  So $h\ne\id_M$, which is a contradiction because $h(x)\in\ol
  B(x,r)$.
\end{proof}

\begin{lem}\label{l: NN} 
  For $n\ge2$ and each point $[M,x]\in\MM_{*,\text{\rm
      lnp}}^\infty(n)$, there is some $r>0$ such that, for each
  $\epsilon\in(0,r)$, there is some neighborhood $\NN$ of $[M,x]$ in
  $\MM_{*,\text{\rm lnp}}^\infty(n)$ so that, if an equivalence class
  $\iota(L)$ of $\MM_{*,\text{\rm lnp}}^\infty(n)$ meets $\NN$ at
  points $[L,y]$ and $[L,z]$, then either $d_L(y,z)<\epsilon$ or
  $d_L(y,z)>r$.
\end{lem}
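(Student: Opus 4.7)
The plan is to argue by contradiction, producing in the limit a nontrivial self-isometry of $M$ that moves $x$ by an amount in $[\epsilon,r]$; this will contradict local non-periodicity provided $r$ is chosen small enough. First, using that $M$ is locally non-periodic at $x$, fix $r>0$ such that
\[
\{\,h\in\Iso(M)\mid d_M(x,h(x))\le r\,\}=\{\id_M\}.
\]
Given $\epsilon\in(0,r)$, suppose the conclusion fails for this $\epsilon$. A standard shrinking-neighborhood argument then yields sequences $[L_i,y_i]\to[M,x]$ and $[L_i,z_i]\to[M,x]$ in $\MM_{*,\text{\rm lnp}}^\infty(n)$, both points lying in the same leaf $\iota(L_i)$ for each $i$, with $\epsilon\le d_{L_i}(y_i,z_i)\le r$ for every $i$.

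Next, exhaust $M$ by an increasing sequence of compact domains $\Omega_q$ with $\ol B(x,4r)\subset\Int(\Omega_q)$ and $d(x,\partial\Omega_q)\to\infty$. By Lemma~\ref{l: Lessa} and the $C^\infty$ convergence, for each $q$ and $i$ large enough there are pointed embeddings $\phi_{q,i}:(\Omega_q,x)\to(L_i,y_i)$ and $\psi_{q,i}:(\Omega_q,x)\to(L_i,z_i)$ with $\phi_{q,i}^*g_i\to g|_{\Omega_q}$ and $\psi_{q,i}^*g_i\to g|_{\Omega_q}$ in $C^\infty$ as $i\to\infty$. Arranging, by induction on $q$, that $\phi_{q+1,i}|_{\Omega_q}=\phi_{q,i}$ and similarly for $\psi$, and using $d_{L_i}(y_i,z_i)\le r$, the composite
\[
h_{q,i}:=\psi_{q+1,i}^{-1}\circ\phi_{q,i}
\]
is defined on a subdomain of $\Omega_q$ containing $\ol B(x,2r)$ for $i$ large, satisfies $d_M(x,h_{q,i}(x))\to d_{L_i}(y_i,z_i)$ as $i\to\infty$, and satisfies $h_{q,i}^*g\to g|_{\Omega_q}$ in $C^\infty$.

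Then, by the same double diagonal extraction using Proposition~\ref{p: Arzela-Ascoli} as in the proof of Lemma~\ref{l: LL}, a subsequence of $h_{q,i}$ converges on each fixed compact subset in the $C^\infty$ topology to a pointed isometric immersion $h:(M,x)\to(M,h(x))$ with $\epsilon\le d_M(x,h(x))\le r$. The symmetric argument applied to the reversed composites $\phi_{q+1,i}^{-1}\circ\psi_{q,i}$ provides a pointed isometric immersion $h':(M,h(x))\to(M,x)$; continuity of composition in the weak $C^\infty$ topology together with the telescoping identities $\phi_{q+1,i}^{-1}\circ\phi_{q,i}=\id$ and $\psi_{q+1,i}^{-1}\circ\psi_{q,i}=\id$ on the overlapping domains yields $h\circ h'=\id_M$ and $h'\circ h=\id_M$, so $h\in\Iso(M)$. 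Since $d_M(x,h(x))\ge\epsilon>0$, this contradicts the choice of $r$.

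The main obstacle is the bookkeeping that makes the telescoping compositions literally the identity on overlapping domains; without this the limit would produce only an isometric immersion rather than a genuine isometry, and the contradiction with local non-periodicity would fail. This is precisely the kind of direct-limit/diagonal argument already carried out in Lemma~\ref{l: LL} and in Proposition~\ref{p: MM_*^infty(n) is completely metrizable}, so no fundamentally new technique is required beyond what has already been developed in the paper.
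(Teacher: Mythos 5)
Your overall strategy — argue by contradiction, exhaust $M$ by compact domains, form the almost-isometries $h_{q,i}=\psi_{q+1,i}^{-1}\circ\phi_{q,i}$, and extract a limiting isometry of $M$ moving $x$ by an amount in $[\epsilon,r]$ — is exactly the paper's strategy; the paper simply refers back to the construction carried out in Lemma~\ref{l: LL}. So up to the diagonal extraction producing a pointed isometric immersion $h$, you and the paper are in step.

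Where you deviate is in how you promote $h$ from an isometric immersion to an honest isometry. You propose to first ``arrange, by induction on $q$, that $\phi_{q+1,i}|_{\Omega_q}=\phi_{q,i}$'' (and similarly for $\psi$), and then exploit the resulting telescoping identities to show $h\circ h'=\id_M$ and $h'\circ h=\id_M$. Two remarks. First, the coherence $\phi_{q+1,i}|_{\Omega_q}=\phi_{q,i}$ is \emph{not} automatic from the definition of $C^\infty$ convergence, and it is not what either Lemma~\ref{l: LL} or Proposition~\ref{p: MM_*^infty(n) is completely metrizable} does (in those proofs, the embeddings for different $q$ are allowed to be unrelated, and coherence is achieved only in the second diagonal, for the \emph{limits} $h''_q$ rather than for the embeddings themselves). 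It \emph{can} be arranged by a further diagonal in $q$ and $i$ — redefining $\phi_{q,i}$ as $\phi_{q'(i),i}|_{\Omega_q}$ where $q'(i)$ grows slowly and $i_q$ is chosen so that $\|\phi_{q,i}^*g_i-g\|_{C^q,\Omega_q,g}<1/q$ for $i\ge i_q$ — but that argument needs to be spelled out, and you should be explicit that the pointed condition $\phi_{q,i}(x)=y_i$ is preserved under this redefinition. You flag the coherence as ``the main obstacle'' but then claim it is ``precisely the kind of argument already carried out''; as written that citation does not quite support the claim, since the cited proofs do not impose coherence on the embeddings. Second, once you are going to the trouble of a back-and-forth argument, you may as well follow the route of Proposition~\ref{p: Hausdorff MM_*(n)}: take limits of the forward and backward composites \emph{along the same subsequence}, use continuity of composition in the weak $C^\infty$ topology to get $h\circ h'=\id$ on the exhaustion, and conclude bijectivity from completeness. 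This avoids the need for coherent embeddings entirely. Either route is fine once written out carefully; in its present form your proof has a real, but fixable, gap at the coherence step.

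One more small point: your exhaustion condition $\ol B(x,4r)\subset\Int(\Omega_q)$ is not quite enough to guarantee that $\dom h_{q,i}$ grows to exhaust $M$ as $q\to\infty$; you should impose something like the paper's nesting condition $\ol B(x,\diam\Omega_q+r)\subset\Int(\Omega_{q+1})$ so that $\phi_{q,i}(\Omega_q)\subset\psi_{q+1,i}(\Omega_{q+1})$ for large $i$, ensuring $\dom h_{q,i}\supset\Omega_q$ and hence that the limit $h$ is globally defined.
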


\begin{proof} 
  Since $M$ is locally non-periodic, there is some $r>0$ such that
  \begin{equation}\label{e:... le r}
    \{\,h\in\Iso(M)\mid d(x,h(x))\le r\,\}=\{\id_M\}\;.
  \end{equation} Suppose that the statement is false for this $r$. Then,
  given any $\epsilon\in(0,r)$, there are sequences $[L_i,y_i]$ and
  $[L_i,z_i]$ in $\MM_{*,\text{\rm lnp}}^\infty(n)$ converging to $[M,x]$
  in $\MM_{*,\text{\rm lnp}}^\infty(n)$ such that $\epsilon\le
  d_i(y_i,z_i)\le r$ for all $i$.

  Take a sequence of compact domains $\Omega_q$ of $M$ such that
  $x\in\Omega_q$ and $d(x,\partial\Omega_q)\to\infty$ as
  $q\to\infty$. For each $q$, there are $C^\infty$ embeddings
  $\phi_{q,i}:\Omega_q\to M_i$ and $\psi_{q,i}:\Omega_q\to M_i$ for
  $i$ large enough so that $\phi_{q,i}(x)=y_i$, $\psi_{q,i}(x)=z_i$,
  and $\phi_{q,i}^*g_i,\psi_{q,i}^*g_i\to g|_{\Omega_q}$ as
  $i\to\infty$ with respect to the $C^\infty$ topology. We can also
  assume that, for each $q$,
  \[
  \ol B(x,\diam(\Omega_q)+r)\subset\Int(\Omega_{q+1})\;,
  \]
  giving
  \[
  \phi_{q,i}(\Omega_q)\subset\ol B_i(y_i,\diam(\phi_{q,i}(\Omega_q)))
  \subset\ol B_i(z_i,\diam(\phi_{q,i}(\Omega_q))+r)
  \subset\Int(\psi_{q+1,i}(\Omega_{q+1}))
  \]
  for $i$ large enough by the $C^\infty$ convergence
  $\phi_{q,i}^*g_i,\psi_{q,i}^*g_i\to g|_{\Omega_q}$ and since
  $d_i(y_i,z_i)\le r$. So
  $h_{q,i}:=\psi_{q+1,i}^{-1}\,\phi_q:\Omega_q\to M$ is well defined
  for each $q$ and all $i$ large enough.  From the $C^\infty$
  convergence $\phi_{q,i}^*g_i,\psi_{q,i}^*g_i\to g|_{\Omega_q}$, we
  also get the $C^\infty$ convergence $h_{q,i}^*g\to g|_{\Omega_q}$,
  and moreover
  \[
  \liminf_id(x,h_{q,i}(x))\ge\epsilon\;,\qquad\limsup_id(x,h_{q,i}(x))\le
  r\;,
  \]
  because $\phi_{q,i}(x)=y_i$, $\psi_{q,i}(x)=z_i$ and $\epsilon\le
  d_i(y_i,z_i)\le r$. Then, like in the proof of Lemma~\ref{l: LL}, an
  isometry $h:M\to M$ can be constructed so that $\epsilon\le
  d(x,h(x))\le r$, which contradicts~\eqref{e:... le r}.
\end{proof}

\begin{lem}\label{l: [M i,y i]} 
  Let $n\in\N$ and $r>0$. For any convergent sequence
  $[M_i,x_i]\to[M,x]$ in $\MM_*^\infty(n)$ and each $y\in B(x,r)$,
  there are points $y_i\in B_i(x_i,r)$ such that $[M_i,y_i]\to[M,y]$
  in $\MM_*^\infty(n)$.
\end{lem}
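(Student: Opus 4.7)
The plan is to exploit the coordinate-free description of $C^\infty$ convergence given by Lemma~\ref{l: Lessa}. Since $[M_i,x_i]\to[M,x]$, for every compact domain $\Omega\subset M$ containing $x$ there exist pointed $C^\infty$ embeddings $\phi_i:(\Omega,x)\to(M_i,x_i)$ with $\|g-\phi_i^*g_i\|_{C^m,\Omega,g}\to0$ for every $m\in\N$. The natural candidate for $y_i$ is simply $\phi_i(y)$, provided $\Omega$ is chosen large enough that $y\in\Omega$.

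First I would choose $\Omega$ carefully. Set $d=d_M(x,y)<r$ and pick $\delta>0$ with $d+\delta<r$. Since $M$ is complete and connected, Hopf--Rinow gives a minimizing geodesic $\gamma:[0,1]\to M$ from $x$ to $y$, whose image is contained in $\ol B(x,d)$. Choose a compact domain $\Omega\subset M$ with $\ol B(x,d+\delta)\subset\Omega$; such $\Omega$ exists by standard exhaustion arguments using $\ol B(x,d+\delta)$ as a compact core. Then apply Lemma~\ref{l: Lessa} to obtain the embeddings $\phi_i:(\Omega,x)\to(M_i,x_i)$, and set $y_i=\phi_i(y)$.

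To see that $y_i\in B_i(x_i,r)$ for $i$ large, note that $\phi_i\circ\gamma$ is a smooth curve in $M_i$ from $x_i$ to $y_i$, and its $g_i$-length equals the $\phi_i^*g_i$-length of $\gamma$. Since $\phi_i^*g_i\to g|_\Omega$ in the $C^0$ topology (Lemma~\ref{l: polarization}-\eqref{i: norm} turns the $C^0$ closeness of metrics into uniform closeness of the induced norms on $T\Omega$), the $\phi_i^*g_i$-length of $\gamma$ converges to its $g$-length, which equals $d<d+\delta$. Thus for $i$ large enough
\[
d_i(x_i,y_i)\le\length_{\phi_i^*g_i}(\gamma)<d+\delta<r,
\]
so $y_i\in B_i(x_i,r)$.

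Finally, to verify that $[M_i,y_i]\to[M,y]$ in $\MM_*^\infty(n)$, I invoke Lemma~\ref{l: Lessa} again in the other direction. Given any compact domain $\Omega'\subset M$ with $y\in\Omega'$, pick a compact domain $\Omega''\subset M$ containing $\Omega'\cup\ol B(x,d+\delta)$; this is possible by connectedness of $M$ and compactness of $\Omega'\cup\ol B(x,d+\delta)$. Apply the hypothesis to $\Omega''$ to obtain pointed $C^\infty$ embeddings $\psi_i:(\Omega'',x)\to(M_i,x_i)$ with $\psi_i^*g_i\to g|_{\Omega''}$ in $C^\infty$. Setting $y_i=\psi_i(y)$ (we are free to replace the previous $\phi_i$ by $\psi_i$, since the construction of $y_i$ only needs to be consistent across $i$), the restrictions $\psi_i|_{\Omega'}:(\Omega',y)\to(M_i,y_i)$ are pointed $C^\infty$ embeddings and $(\psi_i|_{\Omega'})^*g_i=(\psi_i^*g_i)|_{\Omega'}\to g|_{\Omega'}$ in $C^\infty$. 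By Lemma~\ref{l: Lessa}, this gives $[M_i,y_i]\to[M,y]$. There is no real obstacle here; the only mildly delicate point is ensuring the \emph{strict} inequality $d_i(x_i,y_i)<r$, which is handled by the $\delta$-margin built into the choice of $\Omega$.
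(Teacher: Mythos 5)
Your construction of $y_i$ and the length estimate showing $y_i\in B_i(x_i,r)$ are fine, but the verification that $[M_i,y_i]\to[M,y]$ has a genuine gap. In your last paragraph you write ``Setting $y_i=\psi_i(y)$ (we are free to replace the previous $\phi_i$ by $\psi_i$, since the construction of $y_i$ only needs to be consistent across $i$)''; but $\psi_i$ depends on $\Omega''$, which in turn depends on the test domain $\Omega'$, so you are effectively choosing a \emph{new} sequence $y_i$ for every $\Omega'$. That is not what the lemma requires: there must be one fixed sequence $y_i$ such that for \emph{every} compact domain $\Omega'\ni y$ there exist pointed embeddings $(\Omega',y)\to(M_i,y_i)$ landing on that same $y_i$. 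Once $y_i$ is fixed via embeddings $\phi_i$ on a chosen $\Omega$, the embeddings $\psi_i:(\Omega'',x)\to(M_i,x_i)$ provided by the hypothesis for a strictly larger $\Omega''$ need not satisfy $\psi_i(y)=y_i$; indeed $\psi_i|_{\Omega}$ and $\phi_i$ can differ by a near-isometry of $M_i$ that fixes $x_i$ yet moves $\phi_i(y)$ by a definite amount, so $\psi_i|_{\Omega'}$ is then not pointed at $y_i$ and cannot be used to witness the convergence. As written, your argument only establishes the embedding condition for compact domains $\Omega'\subset\Omega$, not for all $\Omega'\ni y$.

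The paper's proof addresses precisely this with a diagonal argument. It fixes an exhaustion $\Omega_q$ of $M$ with $x,y\in\Omega_q$, chooses embeddings $\phi_{q,i}:(\Omega_q,x)\to(M_i,x_i)$ with the quantitative control $d_i(x_i,\phi_{q,i}(y))<r$ and $\|\phi_{q,i}^*g_i-g\|_{C^q,\Omega_q,g}<1/q$ for $i\ge i_{q,q}$, with $i_{q,q}$ strictly increasing, and then lets the definition of $y_i$ slide with $i$: $y_i=\phi_{q,i}(y)$ for $i_{q,q}\le i<i_{q+1,q+1}$. Given any $\Omega'\ni y$ and $m$, one picks $q_0\ge m$ with $\Omega'\subset\Omega_{q_0}$ and for $i\ge i_{q_0,q_0}$ uses the restriction $\phi_{q,i}|_{\Omega'}$ (with $q\ge q_0$ the epoch of $i$), which is pointed at the already-fixed $y_i$ and satisfies the required $C^m$ estimate. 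You need to incorporate this diagonalization; the parenthetical ``we are free to replace...'' is exactly where the missing idea lives.
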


\begin{proof} 
  Take a sequence of compact domains $\Omega_q$ of $M$ such that
  $x,y\in\Omega_q$ and $d(x,\partial\Omega_q)\to\infty$ as
  $q\to\infty$. For each $q$, there is some index $i_q$ such that, for
  each $i\ge i_q$ there is a $C^\infty$ embedding
  $\phi_{q,i}:\Omega_q\to M_i$ satisfying $\phi_{q,i}(x)=x_i$ and
  $\phi_{q,i}^*g_i\to g|_{\Omega_q}$ as $i\to\infty$ with respect to
  the $C^\infty$ topology. Let $y_{q,i}=\phi_{q,i}(y)$ for all $i\ge
  i_q$. Then, for each $q$ and every $m\in\Z^+$, there is some index
  $i_{q,m}\ge i_q$ such that $d_i(x_i,y_{q,i})<r$ and
  $\|\phi_{q,i}^*g_i-g\|_{C^m,\Omega_q,g}<1/m$ for all $i\ge
  i_{q,m}$. Moreover we can assume that $i_{q,q}<i_{q+1,q+1}$ for all
  $q$.  Now, let $y_i$ be any point of $B_i(x_i,r)$ for $i<i_{0,0}$,
  and let $y_i=y_{q,i}$ for $i_{q,q}\le i<i_{q+1,q+1}$. Let us check
  that $[M_i,y_i]\to[M,y]$ in $\MM_*^\infty(n)$. Fix any compact
  domain $\Omega$ of $M$ containing $y$, and let $m\in\N$. We have
  $d(y,\partial\Omega_q)\to\infty$ as $q\to\infty$ because
  $d(x,\partial\Omega_q)\to\infty$ and $d(x,y)<r$. So there is some
  $q_0\ge m$ such that $\Omega\subset\Omega_q$ for all $q\ge q_0$. For
  $i\ge i_{q_0,q_0}$, let $\phi_i=\phi_{q,i}|_\Omega$ if $i_{q,q}\le
  i<i_{q+1,q+1}$ with $q\ge q_0$. Then $\phi_i(y)=y_i$ and
  \[
  \|\phi_i^*g_i-g\|_{C^m,\Omega_q,g}\le\|\phi_{q,i}^*g_i-g\|_{C^q,\Omega_q,g}<\frac{1}{q}
  \]
  for $i_{q,q}\le i<i_{q+1,q+1}$, obtaining $\phi_i^*g_i\to g|_\Omega$
  as $i\to\infty$.
\end{proof}

\begin{lem}\label{l: delta} 
  For $n\in\N$, let $[M,x]\in\MM_*^\infty(n)$, and let $\NN$ be a
  neighborhood of $[M,x]$ in $\MM_*^\infty(n)$. Then there is some
  $\delta>0$ and some neighborhood $\LL$ of $[M,x]$ in
  $\MM_*^\infty(n)$ such that $[L,z]\in\NN$ for all $[L,y]\in\LL$ and
  all $z\in B_L(y,\delta)$.
\end{lem}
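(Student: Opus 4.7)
The plan is to unfold $\NN$ as containing a basic entourage neighborhood $U^m_{R,r}(M,x)$ (Definition~\ref{d: U^m_R,r}) and then, for each $[L,y]$ in a smaller such neighborhood $\LL$ and each $z\in B_L(y,\delta)$ with $\delta$ small, to produce a pointed local quasi-isometry $(M,x)\rightarrowtail(L,z)$ of type $(m,R,\lambda)$ with $\lambda<e^r$. I will obtain this as a composite $\phi\circ\psi$, where $\phi\colon(M,x)\rightarrowtail(L,y)$ is the approximation witnessing $[L,y]\in\LL$ (reinterpreted as $(M,x')\rightarrowtail(L,z)$ for $x':=\phi^{-1}(z)$), and $\psi\colon(M,x)\rightarrowtail(M,x')$ is a suitable ``basepoint shift'' of $M$ onto itself.

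Concretely, I would first pick $m,R,r>0$ with $U^m_{R,r}(M,x)\subset\NN$; after shrinking $r$ one may assume $2e^{r/3}<3$. Set $\LL:=U^m_{3R,r/3}(M,x)$, and for $[L,y]\in\LL$ choose an $(m,3R,\lambda_1)$-pointed local quasi-isometry $\phi\colon(M,x)\rightarrowtail(L,y)$ with $\lambda_1<e^{r/3}$. The argument used in Lemma~\ref{l: composition and inversion of pointed local quasi-isometries}-\eqref{i: inversion} shows that $\phi^{-1}$ is defined on $B_L(y,\delta)$ once $\delta\le 3R/\lambda_1$, and each preimage $x':=\phi^{-1}(z)$ satisfies $d_M(x,x')<\lambda_1\delta$. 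Since $M\subset T^{(m)}M$ is totally geodesic (Remark~\ref{r: T^(l)M subset T^(m)M}-\eqref{i: T^(l)M subset T^(m)M}), we also have $d_M^{(m)}(x,x')\le d_M(x,x')$, so the triangle inequality places $B_M^{(m)}(x',3R-\lambda_1\delta)$ inside $B_M^{(m)}(x,3R)$; hence the same $\phi$ is an $(m,3R-\lambda_1\delta,\lambda_1)$-pointed local quasi-isometry $(M,x')\rightarrowtail(L,z)$.

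Next I would construct $\psi\colon M\to M$ with $\psi(x)=x'$ as the time-one flow of the compactly supported vector field $\rho\cdot v$, where, in normal coordinates about $x$, the vector $v=\exp_x^{-1}(x')$ is viewed as a constant vector field and $\rho$ is a fixed bump function equal to $1$ near $0$ and vanishing outside a ball of radius less than $\inj_M(x)$. Such a $\psi$ is a global diffeomorphism of $M$ equal to $\id_M$ outside a fixed neighborhood of $x$, and because the generating vector field and all its derivatives depend linearly on $v$, one has $\psi\to\id_M$ in $C^\infty$ on every compact set as $d_M(x,x')\to 0$. Consequently $\|g-\psi^*g\|_{C^m,\Omega,g}$ becomes arbitrarily small on any fixed compact domain $\Omega\supset B_M(x,2R)$, and Proposition~\ref{p: D^m_R,epsilon(M,x) subset U^m_R,r(M,x)}-\eqref{i: D^m_R,epsilon(M,x) subset U^m_R,r(M,x)} then yields an $\eta>0$ (depending on $M,x,m,R,r$) so that, whenever $d_M(x,x')<\eta$, $\psi$ is an $(m,2R,\lambda_2)$-pointed local quasi-isometry with $\lambda_2<e^{r/3}$.

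To conclude, I would choose $\delta$ small enough that $\lambda_1\delta<\eta$ and $3R-\lambda_1\delta\ge 2\lambda_2 R$ (possible because $2\lambda_2<2e^{r/3}<3$); Lemma~\ref{l: composition and inversion of pointed local quasi-isometries}-\eqref{i: composition} then delivers $\phi\circ\psi\colon(M,x)\rightarrowtail(L,z)$ of type $(m,2R,\lambda_1\lambda_2)$ with $\lambda_1\lambda_2<e^{2r/3}<e^r$, so $[L,z]\in U^m_{2R,r}(M,x)\subset U^m_{R,r}(M,x)\subset\NN$ by Proposition~\ref{p: C^infty uniformity in MM_*(n)}-\eqref{i: cap}. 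The main technical point, and essentially the only real obstacle, is the construction of $\psi$ together with the quantitative $C^m$-smallness of $\psi^*g-g$ as a function of $d_M(x,x')$; this is effectively a concrete, quantitative version of the continuity of $\iota_M$ at $x$.
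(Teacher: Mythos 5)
Your argument is correct, and the central idea agrees with the paper's: to show $[L,z]\in\NN$, one composes a fixed map witnessing $[L,y]\in\LL$ with a small diffeomorphism of $M$ carrying $x$ to the preimage $x'$ of $z$. The implementations differ. You keep to the quasi-isometry entourages $U^m_{R,r}$ of Definition~\ref{d: U^m_R,r} and convert to and from the embedding entourages $D^m_{R,r}$ via Proposition~\ref{p: D^m_R,epsilon(M,x) subset U^m_R,r(M,x)} together with Lemma~\ref{l: composition and inversion of pointed local quasi-isometries}, which forces some balancing of the dilation bounds $\lambda_1,\lambda_2<e^{r/3}$ and the radii against the shrunken $r$; your bookkeeping (e.g.\ $\delta<\min\{\eta e^{-r/3},\ R(3-2e^{r/3})e^{-r/3}\}$, which is uniform in $[L,y]$ and $x'$) checks out. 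The paper instead works entirely in the $D^m_{R,r}$ framework: it fixes nested compact domains $\Omega\subset\Omega'$ and a neighborhood $\HH$ of $\id_M$ in $\diff(M)$ with the weak $C^m$ topology such that $h^*g'$ stays $C^m$-close to $g_M$ on $\Omega$ whenever $h\in\HH$ and $g'$ is $C^m$-close to $g_M$ on $\Omega'$, and then uses abstractly that $\{h(x)\mid h\in\HH\}$ contains a ball $B_M(x,\delta')$; the witness for $[L,z]\in\NN$ is $\psi\circ h$. Your explicit time-one-flow construction of the basepoint shift is in effect a proof of that ``open image at the identity'' fact, which the paper leaves implicit, so you do more concrete work at the price of also passing through the quasi-isometry machinery. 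Two small remarks. The assertion that the time-one flow map depends $C^\infty$-continuously, on compacta, on the generating vector field is the real analytic input of your construction and deserves at least a citation or a Gronwall-type line of justification. Also, the inequality $d_M^{(m)}(x,x')\le d_M(x,x')$ is a consequence of Remark~\ref{r: T^(l)M subset T^(m)M}-\eqref{i: g^(m)|_T^(l)M = g^(l)} alone (a path in $M$ has the same length with respect to $g^{(m)}$ as with respect to $g$); total geodesy is not needed there.
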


\begin{proof} 
  There are some $m\in\Z^+$ and $\epsilon>0$, and a compact domain
  $\Omega$ of $M$ containing $x$ such that, for all
  $[L,z]\in\MM_*^\infty(n)$, if there is some $C^\infty$ embedding
  $\phi:\Omega\to L$ so that $\phi(x)=z$ and
  $\|\phi^*g_L-g_M\|_{C^m,\Omega,g_M}<\epsilon$, then
  $[L,z]\in\NN$. Take any compact domain $\Omega'$ of $M$ whose
  interior contains $\Omega$. There is some $\epsilon_0>0$ and some
  neighborhood $\HH$ of $\id_M$ in the group of diffeomorphisms of $M$
  with the weak $C^m$ topology such that, for all $h\in\HH$ and any
  metric tensor $g'$ on $\Omega'$ satisfying
  $\|g'-g_M\|_{C^m,\Omega',g_M}<\epsilon_0$, we have
  $h(\Omega)\subset\Omega'$ and
  $\|h^*g'-g_M\|_{C^m,\Omega,g_M}<\epsilon$. Moreover there is some $\delta'>0$ such that, 
  for each $z'\in B_M(x,\delta')$, there is some
  $h\in\HH$ so that $h(x)=z'$. Let $\LL$ be the neighborhood of $[M,x]$
  in $\MM_*^\infty(n)$ that consists of the points
  $[L,y]\in\MM_*^\infty(n)$ such that there is some $C^\infty$
  embedding $\psi:\Omega'\to L$ so that $\psi(x)=y$ and
  $\|\psi^*g_L-g_M\|_{C^m,\Omega',g_M}<\epsilon_0$. There is some
  $\delta>0$ such that $B_L(y,\delta)\subset\psi(\Omega')$ and
  $\psi^{-1}(B_L(y,\delta))\subset B_M(x,\delta')$ for all
  $[L,y]\in\LL$ and $\psi:\Omega'\to L$ as above. Hence
  $z'=\psi^{-1}(z)\in B_M(x,\delta')$ for each $z\in B_L(y,\delta)$, and therefore there is some
  $h\in\HH$ such that $h(x)=z'$. Then $\phi:=\psi h$ is defined on
  $\Omega$ and satisfies $\phi(x)=\psi(z')=z$. Moreover
		\[
 			 \|\phi^*g_L-g_M\|_{C^m,\Omega,g_M}=\|h^*\psi^*g_L-g_M\|_{C^m,\Omega,g_M}<\epsilon
		\]
	because $\|\psi^*g_L-g_M\|_{C^m,\Omega',g_M}<\epsilon_0$ and $h\in\HH$.
\end{proof}

\section{Canonical bundles over $\MM_{*,\text{\rm lnp}}^\infty(n)$}\label{s: bundles}

For each $n\in\N$, consider the set of pairs
$(M,\xi)$, where $M$ is a complete connected Riemannian manifold
without boundary of dimension $n$, and $\xi\in TM$. Like in the case of
  $\MM_*(n)$, we can assume that the underlying set of each complete
  connected Riemannian $n$-manifold is contained in \(\R\), obtaining
  that these pairs $(M,\xi)$ form a well defined set. Define an
equivalence relation on this set by declaring that $(M,\xi)$ is
equivalent to $(N,\zeta)$ if there is an isometric diffeomorphism
$\phi:M\to N$ such that $\phi_*(\xi)=\zeta$. The class of a pair
$(M,\xi)$ will be denoted by $[M,\xi]$, and the corresponding set of
equivalence classes will be denoted by $\TT_*(n)$.  If orthonormal
tangent frames are used instead of tangent vectors in the above
definition, we get a set denoted by $\QQ_*(n)$. Let
$\pi_{\TT_*(n)}:\TT_*(n)\to\MM_*(n)$ and
$\pi_{\QQ_*(n)}:\QQ_*(n)\to\MM_*(n)$ be the maps defined by
$\pi([M,\xi])=[M,\pi_M(\xi)]$ and $\pi([M,f])=[M,\pi_M(f)]$ for
$[M,\xi]\in\TT_*(n)$ and $[M,f]\in\QQ_*(n)$; the simpler notation
$\pi$ will be used for $\pi_{\TT_*(n)}$ and $\pi_{\QQ_*(n)}$ if there
is no danger of misunderstanding. For each $[M,x]\in\MM_*(n)$, there
are canonical surjections $T_xM\to\pi_{\TT_*(n)}^{-1}([M,x])$,
$\xi\mapsto[M,\xi]$, and $Q_xM\to\pi_{\QQ_*(n)}^{-1}([M,x])$,
$f\mapsto[M,f]$. Via the canonical surjection
$Q_xM\to\pi_{\QQ_*(n)}^{-1}([M,x])$, the canonical right action of
$\Or(n)$ on $Q_xM$ induces a right action on
$\pi_{\QQ_*(n)}^{-1}([M,x])$; in this way, we get a canonical action
of $\Or(n)$ on $\QQ_*(n)$ whose orbits are the fibers of
$\pi_{\QQ_*(n)}$. The operation of multiplication by scalars on $T_xM$
also induces an action of $\R$ on
$\pi_{\TT_*(n)}^{-1}([M,x])$. However the sum operation of $T_xM$ may
not induce an operation on $\pi_{\TT_*(n)}^{-1}([M,x])$. The following
definition is analogous to Definition~\ref{d: C^infty convergence in
  MM_*(n)}.

\begin{defn}\label{d: C^infty convergence in TT_*(n) and QQ_*(n)}
  For each $m\in\N$, a sequence $[M_i,\xi_i]\in\TT_*(n)$
  (respectively, $[M_i,f_i]\in\QQ_*(n)$) is said to be \emph{$C^m$
    convergent} to $[M,\xi]\in\TT_*(n)$ (respectively,
  $[M,f]\in\QQ_*(n)$) if, with the notation $x=\pi(\xi)$ and
  $x_i=\pi_i(x_i)$ (respectively, $x=\pi(f)$ and $x_i=\pi_i(f_i)$),
  for each compact domain $\Omega\subset M$ containing $x$, there are
  pointed $C^{m+1}$ embeddings $\phi_i:(\Omega,x)\to(M_i,x_i)$ for
  large enough $i$ such that $\phi_{i*}(\xi)=\xi_i$ (respectively,
  $\phi_{i*}(f)=f_i$), and $\phi_i^*g_i\to g|_\Omega$ as $i\to\infty$
  with respect to the $C^m$ topology. If $[M_i,\xi_i]$ (respectively,
  $[M_i,f_i]$) is $C^m$ convergent to $[M,\xi]$ (respectively,
  $[M,f]$) for all $m$, then it is said that $[M_i,\xi_i]$
  (respectively, $[M_i,f_i]$) is \emph{$C^\infty$ convergent} to
  $[M,\xi]$ (respectively, $[M,f]$).
\end{defn}

\begin{thm}\label{t: C^infty convergence in TT_*(n) and QQ_*(n)}
  The $C^\infty$ convergence in $\TT_*(n)$ and $\QQ_*(n)$ describes a
  Polish topology.
\end{thm}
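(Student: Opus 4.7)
The plan is to mirror the proof of Theorem~\ref{t: C^infty convergence in MM_*(n)} developed in Sections~\ref{s: C^infty topology}--\ref{s: MM_*^infty(n) is Polish}, keeping track throughout of the extra data of a distinguished tangent vector or orthonormal frame at the basepoint. Concretely, for $m\in\N$ and $R,r>0$, define entourages $\widetilde U^m_{R,r}\subset\TT_*(n)\times\TT_*(n)$ consisting of those pairs $([M,\xi],[N,\zeta])$ for which there exists an $(m,R,\lambda)$-pointed local quasi-isometry $\phi:(M,\pi_M(\xi))\rightarrowtail(N,\pi_N(\zeta))$ with $\phi_*(\xi)=\zeta$ and $\lambda\in[1,e^r)$, and analogously on $\QQ_*(n)$ with the condition $\phi_*(f)=f'$. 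The extra condition $\phi_*(\xi)=\zeta$ (resp.\ $\phi_*(f)=f'$) is manifestly preserved by composition and inversion of pointed local diffeomorphisms, so the analog of Proposition~\ref{p: C^infty uniformity in MM_*(n)} follows directly from Lemma~\ref{l: composition and inversion of pointed local quasi-isometries}.

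Next I would establish separation ($\bigcap_{R,r>0}\widetilde U^m_{R,r}=\Delta$). Given $([M,\xi],[N,\zeta])$ in this intersection, one obtains $C^\infty$ pointed local quasi-isometries $\phi_i:(M,x)\rightarrowtail(N,y)$ with $\phi_{i*}(\xi)=\zeta$, of types $(m,R_i,\lambda_i)$ with $R_i\uparrow\infty$, $\lambda_i\downarrow1$, exactly as in Proposition~\ref{p: Hausdorff MM_*(n)}. The Arzel\`a--Ascoli extraction of Proposition~\ref{p: Arzela-Ascoli} then yields a pointed isometry $\psi:(M,x)\to(N,y)$, and since weak $C^1$ convergence entails convergence of tangent maps at the basepoint, $\psi_*(\xi)=\lim_i\phi_{i*}(\xi)=\zeta$, giving $[M,\xi]=[N,\zeta]$. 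The agreement of the induced topology with $C^\infty$ convergence (analog of Corollary~\ref{c: C^infty convergence in MM_*(n)}) is proved exactly as in Propositions~\ref{p: D^m_R,epsilon(M,x) subset U^m_R,r(M,x)} and~\ref{p: U^m_R,epsilon(M,x) subset D^m_R,r(M,x)}, with the condition $\phi_*(\xi)=\zeta$ transferred through all estimates unchanged, since it is a pointwise condition at the basepoint.

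For separability, I would extend the proof of Proposition~\ref{p: MM_*^infty(n) is separable}: for each compact representative $M\in\CC$ and each $g\in\GG_M$, choose in addition a countable dense subset of $TM$ (resp.\ of $QM$); the resulting countable set of triples gives a dense subset of the compact stratum, which is still dense in $\TT_*(n)$ (resp.\ $\QQ_*(n)$) because every class in these spaces can be approximated by embedding a large ball into a closed manifold and carrying the distinguished tangent vector/frame along. For complete metrizability, I would replay the direct-limit construction of Proposition~\ref{p: MM_*^infty(n) is completely metrizable}, producing a Cauchy-sequence limit $(\widehat M,\hat x)$ from representatives $(M_i,x_i)$ with $C^\infty$ pointed local quasi-isometries $\phi_i:(M_i,x_i)\rightarrowtail(M_{i+1},x_{i+1})$ chosen moreover so that $\phi_{i*}(\xi_i)=\xi_{i+1}$ (resp.\ $\phi_{i*}(f_i)=f_{i+1}$); such lifted maps exist by Remark~\ref{r: (m,R,lambda)-...}-\eqref{i: C^infty approximation of (m,R,lambda)-...} applied after first arranging the pointed local diffeomorphism to have the prescribed first-order jet at the basepoint, which is an open condition on the jet and thus survives $C^\infty$ approximation. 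The coherent images of $\xi_i$ (resp.\ $f_i$) in the direct limit define a distinguished $\hat\xi\in T\widehat M$ (resp.\ $\hat f\in Q\widehat M$), and the pointed quasi-isometries $\psi_i$ of the earlier proof then satisfy $\psi_{i*}(\xi_i)=\hat\xi$ by construction, yielding $[M_i,\xi_i]\to[\widehat M,\hat\xi]$.

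The main obstacle I anticipate is the one mentioned above: arranging that the approximating $C^\infty$ pointed local quasi-isometries in the Cauchy-sequence argument can be taken to exactly preserve the distinguished tangent vector or frame. The cleanest way is to observe that if $\phi:(M,x)\rightarrowtail(N,y)$ is an $(m,R,\lambda)$-pointed local quasi-isometry with $\phi_*(\xi)=\zeta$, then precomposing any nearby $C^\infty$ approximation $\phi'$ with a small $C^\infty$ diffeomorphism $h$ of $M$ fixing $x$ and adjusting $h_{*x}$ so that $\phi'_*\circ h_{*x}(\xi)=\zeta$ (possible because $\phi'_{*x}$ is arbitrarily close to $\phi_{*x}$ and the general linear group acts transitively on nonzero tangent vectors, or $\Or(n)$-equivariantly on orthonormal frames up to a small correction) yields a new $C^\infty$ pointed local quasi-isometry, with only a mild loss in $R$ and $\lambda$, that exactly preserves the extra data. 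This bookkeeping, together with the absolutely parallel structural results for $\QQ_*(n)$, completes the proof.
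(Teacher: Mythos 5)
Your proposal follows essentially the same route as the paper: the paper defines the entourages $V^m_{R,r}$ on $\TT_*(n)$ and $W^m_{R,r}$ on $\QQ_*(n)$ exactly as you propose, notes that the analog of Proposition~\ref{p: C^infty uniformity in MM_*(n)} follows from Lemma~\ref{l: composition and inversion of pointed local quasi-isometries}, proves separation by the same Arzel\`a--Ascoli extraction, proves the agreement with $C^\infty$ convergence and separability exactly as you describe, and proves complete metrizability by replaying the direct-limit construction while tracking $\xi_i$ (resp.\ $f_i$) through the maps $\psi_i$. The one place where the paper is terse and you add detail is the assertion that the $C^\infty$ approximation of each $\phi_i$ can be taken to still satisfy $\phi_{i*}(\xi_i)=\xi_{i+1}$; your precomposition-by-a-small-diffeomorphism fix is correct (an alternative is the relative form of Hirsch's approximation theorem, which lets one match the $1$-jet at the basepoint outright), though your phrase ``which is an open condition on the jet and thus survives $C^\infty$ approximation'' is off --- matching a prescribed $1$-jet is a closed equation, not an open condition, which is precisely why the subsequent correction step is needed.
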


To prove Theorem~\ref{t: C^infty convergence in TT_*(n) and QQ_*(n)},
we follow the steps of Sections~\ref{s: C^infty topology}--\ref{s:
  MM_*^infty(n) is Polish}.

\begin{defn}\label{d: V^m_R,r and W^m_R,r}
  For $m\in\N$ and $R,r>0$, let $V^m_{R,r}$ (respectively,
  $W^m_{R,r}$) be the set of pairs
  $([M,\xi],[N,\zeta])\in\TT_*(n)\times\TT_*(n)$ (respectively,
  $([M,f],[N,h])\in\QQ_*(n)\times\QQ_*(n)$) such that there is some
  $(m,R,\lambda)$-pointed local quasi-isometry
  $\phi:(M,x)\rightarrowtail (N,y)$ for some $\lambda\in[1,e^r)$ so
  that $\phi_*(\xi)=\zeta$ (respectively, $\phi_*(f)=h$).
\end{defn}

The following proposition is proved like Proposition~\ref{p: C^infty
  uniformity in MM_*(n)}.

\begin{prop}\label{p: C^infty uniformity in TT_*(n) and QQ_*(n)}
  The following properties hold for all $m,m'\in\N$ and $R,S,r,s>0$:
  \begin{enumerate}[{\rm(}i{\rm)}]
  	
  \item $(V^m_{e^rR,r})^{-1} \subset V^m_{R,r}$ and
    $(W^m_{e^rR,r})^{-1} \subset W^m_{R,r}$.
		
  \item $V^{m_0}_{R_0,r_0}\subset V^m_{R,r}\cap V^{m'}_{S,s}$ and
    $W^{m_0}_{R_0,r_0}\subset W^m_{R,r}\cap W^{m'}_{S,s}$, where
    $m_0=\max\{m,m'\}$, $R_0=\max\{R,S\}$ and $r_0=\min\{r,s\}$.
		
  \item $\Delta\subset V^m_{R,r}$ and $\Delta\subset W^m_{R,r}$.
		
  \item $V^m_{e^{r+s}R,r} \circ V^m_{e^{r+s}R,s}\subset V^m_{R,r+s}$
    and $W^m_{e^{r+s}R,r} \circ W^m_{e^{r+s}R,s}\subset W^m_{R,r+s}$.
		
  \end{enumerate}
\end{prop}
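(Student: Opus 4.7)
The plan is to mimic the proof of Proposition~\ref{p: C^infty uniformity in MM_*(n)} verbatim, with one extra observation: the condition $\phi_*(\xi)=\zeta$ (respectively, $\phi_*(f)=h$) that distinguishes $V^m_{R,r}$ and $W^m_{R,r}$ from $U^m_{R,r}$ behaves functorially under composition and inversion, so it poses no obstruction to any step. Throughout I will argue only for $V^m_{R,r}$; the argument for $W^m_{R,r}$ is identical, using that $\Or(n)$-equivariance of $\phi_*^{(1)}$ means tangent frames are sent to tangent frames by the same chain-rule formula.

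First I dispatch the easy items. For (iii), the identity $\id_M$ is a pointed local quasi-isometry of every type $(m,R,1)$, and trivially satisfies $(\id_M)_*(\xi)=\xi$ and $(\id_M)_*(f)=f$. For (ii), the underlying pointed local quasi-isometry weakens from type $(m_0,R_0,\lambda)$ to types $(m,R,\lambda)$ and $(m',S,\lambda)$ for $\lambda<e^{r_0}\le e^r,e^s$ by Remark~\ref{r: (m,R,lambda)-...}-\eqref{i: ... (m,R,lambda) is ... (m',R',lambda')}, and the vector/frame condition is unchanged by this reinterpretation.

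For (i), suppose $([M,\xi],[N,\zeta])\in V^m_{e^rR,r}$, so that a pointed local quasi-isometry $\phi:(M,\pi(\xi))\rightarrowtail(N,\pi(\zeta))$ of type $(m,e^rR,\lambda)$ with $\lambda\in[1,e^r)$ satisfies $\phi_*(\xi)=\zeta$. Then $e^rR\ge\lambda R$, so Lemma~\ref{l: composition and inversion of pointed local quasi-isometries}-\eqref{i: inversion} gives $\phi^{-1}$ of type $(m,R,\lambda)$, and the chain rule applied to $\phi^{-1}\circ\phi=\id$ gives $(\phi^{-1})_*(\zeta)=\xi$. Hence $([N,\zeta],[M,\xi])\in V^m_{R,r}$.

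For (iv), take $(x,y)\in V^m_{e^{r+s}R,s}$ and $(y,z)\in V^m_{e^{r+s}R,r}$, witnessed by $\phi:(M,\pi(\xi_M))\rightarrowtail(N,\pi(\xi_N))$ of type $(m,e^{r+s}R,\lambda_1)$ with $\lambda_1\in[1,e^s)$ and $\phi_*(\xi_M)=\xi_N$, and $\psi:(N,\pi(\xi_N))\rightarrowtail(L,\pi(\xi_L))$ of type $(m,e^{r+s}R,\lambda_2)$ with $\lambda_2\in[1,e^r)$ and $\psi_*(\xi_N)=\xi_L$. Using Remark~\ref{r: (m,R,lambda)-...}-\eqref{i: ... (m,R,lambda) is ... (m',R',lambda')}, I may regard $\phi$ as of type $(m,R,\lambda_1)$, and $\psi$ as of type $(m,\lambda_1 R,\lambda_2)$ since $\lambda_1 R<e^sR\le e^{r+s}R$. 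Lemma~\ref{l: composition and inversion of pointed local quasi-isometries}-\eqref{i: composition} then gives $\psi\phi$ of type $(m,R,\lambda_1\lambda_2)$ with $\lambda_1\lambda_2<e^{r+s}$, while $(\psi\phi)_*(\xi_M)=\xi_L$ by functoriality of $(\cdot)_*$. Thus $(x,z)\in V^m_{R,r+s}$. No step presents any real obstacle; the content is entirely bookkeeping of radii and dilation bounds, lifted from the already-proven scalar case.
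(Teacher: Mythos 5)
Your proof is correct and follows exactly the route the paper intends: the paper simply states that the proposition ``is proved like Proposition~\ref{p: C^infty uniformity in MM_*(n)},'' whose proof invokes Lemma~\ref{l: composition and inversion of pointed local quasi-isometries} for items (i) and (iv) and treats (ii) and (iii) as elementary. You fill in the bookkeeping (radius weakenings via Remark~\ref{r: (m,R,lambda)-...} and the observation that the condition $\phi_*(\xi)=\zeta$, resp.\ $\phi_*(f)=h$, is preserved under composition and inversion by functoriality of the tangent map) in exactly the way the paper leaves implicit.
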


\begin{prop}\label{p: Hausdorff TT_*(n) and QQ_*(n)}
  $\bigcap_{R,r>0}V^m_{R,r}=\Delta$ and
  $\bigcap_{R,r>0}W^m_{R,r}=\Delta$ for all $m\in\N$.
\end{prop}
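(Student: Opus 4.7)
The plan is to follow the structure of the proof of Proposition~\ref{p: Hausdorff MM_*(n)}, augmenting the limit construction so that the tangent vector (respectively, orthonormal frame) is tracked through the extraction. The inclusions $\Delta\subset\bigcap_{R,r>0}V^m_{R,r}$ and $\Delta\subset\bigcap_{R,r>0}W^m_{R,r}$ are obvious, so the content lies in the reverse inclusions. I will focus on the $\TT_*(n)$ statement; the $\QQ_*(n)$ case is identical, applied componentwise to each of the $n$ vectors of the frame (and respecting orthonormality, which is automatic in the limit since each $\phi_i$ is an honest pointed local diffeomorphism with $\phi_{i*}(f)=h$).

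Suppose $([M,\xi],[N,\zeta])\in\bigcap_{R,r>0}V^m_{R,r}$, and put $x=\pi_M(\xi)$, $y=\pi_N(\zeta)$. By definition there is a sequence of $(m,R_i,\lambda_i)$-pointed local quasi-isometries $\phi_i\colon(M,x)\rightarrowtail(N,y)$, with $\phi_{i*}(\xi)=\zeta$, $R_i\uparrow\infty$ and $\lambda_i\downarrow 1$. Running the same diagonal Arzel\`a-Ascoli extraction as in Proposition~\ref{p: Hausdorff MM_*(n)}, based on Proposition~\ref{p: Arzela-Ascoli}, and the companion argument for the inverses (Lemma~\ref{l: composition and inversion of pointed local quasi-isometries}-\eqref{i: inversion}), I obtain a pointed isometry $\psi\colon(M,x)\to(N,y)$ realized as a $C^m$ limit of a subsequence of $\phi_i$ on an exhausting family of compact subdomains of $M$.

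The extra step is to verify $\psi_*(\xi)=\zeta$, which then yields $[M,\xi]=[N,\zeta]$ and completes the proof. For $m\geq 1$ this is painless: the weak $C^m$ convergence $\phi_{i_k}\to\psi$ on a compact neighborhood of $x$ entails $C^1$ convergence there, so $\phi_{i_k*}(\xi)\to\psi_*(\xi)$ in $T_yN$, while $\phi_{i*}(\xi)=\zeta$ for every $i$, forcing $\psi_*(\xi)=\zeta$. The delicate case, and the anticipated main obstacle, is $m=0$: the $\phi_i$ are only $C^1$, Proposition~\ref{p: Arzela-Ascoli} delivers only $C^0$ convergence, and the pointwise tangent condition does not automatically survive such a limit.

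To handle $m=0$ I would proceed as follows. Since the linear maps $\phi_{i*x}\colon T_xM\to T_yN$ are $\lambda_i$-quasi-isometric with $\lambda_i\downarrow 1$, they lie in a compact family of linear maps; passing to a further subsequence, they converge to a linear isometry $L\colon T_xM\to T_yN$ with $L(\xi)=\zeta$. The isometry $\psi$ obtained above is a continuous distance-preserving map, hence smooth by Myers-Steenrod, and satisfies $\psi\circ\exp_x=\exp_y\circ\psi_*$. For each unit $\eta\in T_xM$ and small $t>0$ one has $\phi_i(\exp_x(t\eta))\to\psi(\exp_x(t\eta))=\exp_y(t\psi_*(\eta))$ in $N$, while the quasi-isometry hypothesis with $\lambda_i\to 1$ forces $d_N(y,\phi_i(\exp_x(t\eta)))=(1+o(1))\,t$ and pins down the radial direction from $y$ as the limit of $\phi_{i*x}(\eta)/|\phi_{i*x}(\eta)|$; combining these shows $\psi_*(\eta)=L(\eta)$ for every $\eta$, and specialising to $\eta=\xi/|\xi|$ gives $\psi_*(\xi)=L(\xi)=\zeta$, as required.
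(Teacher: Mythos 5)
Your treatment of the $m\ge1$ case is correct and matches the paper's own argument, which simply writes $\psi_*(\xi)=\lim_l\phi_{k(i,l)*}(\xi)=\zeta$ without singling out $m=0$. The problem lies in your $m=0$ patch, which does not close the gap you correctly identify.

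The step that fails is the claim that the quasi-isometry hypothesis ``pins down the radial direction from $y$ as the limit of $\phi_{i*x}(\eta)/|\phi_{i*x}(\eta)|$.'' For $m=0$ the only control on $\phi_{i*}$ is the pointwise two-sided bound by $\lambda_i^{\pm1}$; there is no uniform modulus of continuity for the derivatives, so the curve $s\mapsto\phi_i(\exp_x(s\eta))$ can change direction arbitrarily quickly just after $s=0$ at negligible length cost, and its initial velocity $\phi_{i*x}(\eta)$ need bear no relation to the unit vector from $y$ to its endpoint. Concretely, on a flat disk around the origin one can build $C^\infty$ diffeomorphisms $\phi_i$ fixing $0$, equal to $-\id$ on $B(0,\epsilon_i)$, equal to $\id$ outside $B(0,\rho_i)$, and equal to a radius-dependent rotation by an angle $\vartheta_i(|v|)$ in the annulus, with $|v|\,|\vartheta'_i(|v|)|\le C_i$. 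Taking $\epsilon_i\ll\rho_i\to0$ and $C_i\sim1/\log(\rho_i/\epsilon_i)\to0$, these are $\lambda_i$-quasi-isometries with $\lambda_i\to1$ converging uniformly to $\id$, yet $\phi_{i*0}\equiv-\id$ for all $i$. In your notation $L=-\id$ while $\psi_{*0}=\id$; for fixed small $t$ the radial direction from $0$ to $\phi_i(t\eta)$ tends to $+\eta$, not to $L(\eta)=-\eta$, so the pinning claim and the conclusion $\psi_*(\eta)=L(\eta)$ both fail. Transplanted into a non-periodic complete surface (where $[M,\xi]\ne[M,-\xi]$), this even appears to contradict the $m=0$ statement itself, since it would place $([M,\xi],[M,-\xi])$ in $\bigcap_{R,r}V^0_{R,r}$. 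The paper's own one-line deduction is open to the same objection at $m=0$; since the Hausdorff property of the uniformity on $\TT_*(n)$ already follows from the cases $m\ge1$, this is worth flagging as an issue with the statement rather than attempting a direction-pinning patch.
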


\begin{proof}
  We only prove the first equality because the proof of the second one
  is analogous. The inclusion ``$\supset$'' is obvious; thus let us
  prove ``$\subset$''. Let
  $([M,\xi],[N,\zeta])\in\bigcap_{R,r>0}V^m_{R,r}$, and let
  $x=\pi_M(\xi)$ and $y=\pi_N(\zeta)$. Then there is a sequence of
  pointed local quasi-isometries $\phi_i:(M,x)\rightarrowtail(N,y)$,
  with corresponding types $(m,R_i,\lambda_i)$, such that
  $\phi_{i*}(\xi)=\zeta$, and $R_i\uparrow\infty$ and
  $\lambda_i\downarrow1$ as $i\to\infty$. According to the proof of
  Proposition~\ref{p: Hausdorff MM_*(n)}, there is a pointed isometric
  immersion $\psi:(M,x)\to(N,y)$ so that, for any $i$, the restriction
  $\psi:B_M(x,R_i)\to N$ is the limit of the restrictions of a
  subsequence $\phi_{k(i,l)}$ in the weak $C^m$ topology. Hence
  $\psi_*(\xi)=\lim_l\phi_{k(i,l)*}(\xi)=\zeta$, obtaining
  $[M,\xi]=[N,\zeta]$.
\end{proof}

By Propositions~\ref{p: C^infty uniformity in TT_*(n) and QQ_*(n)}
and~\ref{p: Hausdorff TT_*(n) and QQ_*(n)}, the sets $V_{R,r}^m$
(respectively, $W_{R,r}^m$) form a base of entourages of a Hausdorff
uniformity on $\TT_*(n)$ (respectively, $\QQ_*(n)$), which is also
called the \emph{$C^\infty$ uniformity}. The corresponding topology is
also called the \emph{$C^\infty$ topology}, and the corresponding
space is denoted by $\TT_*^\infty(n)$ (respectively,
$\QQ_*^\infty(n)$).

\begin{rem}\label{r: pi is cont}
  \begin{enumerate}[(i)]
	
  \item\label{i: pi is cont} The maps
    $\pi:\TT_*^\infty(n)\to\MM_*^\infty(n)$ and
    $\pi:\QQ_*^\infty(n)\to\MM_*^\infty(n)$ are uniformly continuous
    and open because
    $(\pi\times\pi)(V^m_{R,r})=(\pi\times\pi)(W^m_{R,r})=U^m_{R,r}$
    for all $m\in\N$ and $R,r>0$.
		
  \item\label{i: the O(n)-action is cont} The canonical right
    $\Or(n)$-action on $\QQ_*^\infty(n)$ is continuous. This follows
    easily by using that the composite of maps is continuous in the
    weak $C^\infty$ topology \cite[p.~64, Exercise~10]{Hirsch1976},
    and the following property that can be easily verified: for each
    $[M,f]\in\QQ_*^\infty(n)$ and any neighborhood $\NN$ of $\id_M$ in
    the space of $C^\infty$ diffeomorphisms of $M$ with the weak
    $C^\infty$ topology, there is a neighborhood $O$ of the identity
    element $e$ in $\Or(n)$ such that, for all $a\in O$, there is some
    $\phi\in\NN$ so that $\phi(x)=x$ and $\phi_*(f)=h$.
		
  \end{enumerate}
\end{rem}

\begin{defn}\label{d: E^m_R,r and F^m_R,r}
  For $R,r>0$ and $m\in\N$, let $E^m_{R,r}$ (respectively,
  $F^m_{R,r}$) be the set of pairs
  $([M,\xi],[N,\zeta])\in\TT_*(n)\times\TT_*(n)$ (respectively,
  $([M,f],[N,h])\in\QQ_*(n)\times\QQ_*(n)$) such that, with the
  notation $x=\pi_M(\xi)$ and $y=\pi_N(\zeta)$, there is some
  $C^{m+1}$ pointed local diffeomorphism
  $\phi\colon(M,x)\rightarrowtail(N,y)$ so that $\phi_*(\xi)=\zeta$
  (respectively, $\phi_*(f)=h$), and
  $\|g_M-\phi^*g_N\|_{C^m,\Omega,g_M}<r$ for some compact domain
  $\Omega\subset\dom\phi$ with $B_M(x,R)\subset\Omega$.
\end{defn} 

Like in the case of relations on $\MM_*(n)$, for $V\subset\TT_*(n)\times\TT_*(n)$,
     $W\subset\QQ_*(n)\times\QQ_*(n)$, $[M,\xi]\in\TT_*(n)$ and
     $[M,f]\in\QQ_*(n)$, the simpler notation $V(M,\xi)$ and $W(M,f)$
     is used instead of $V([M,\xi])$ and $W([M,f])$.
 
 \begin{rem}\label{r: E^m_R,r}
   By~\eqref{norm equiv}, a sequence $[M_i,\xi_i]\in\TT_*(n)$
   (respectively, $[M_i,f_i]\in\QQ_*(n)$) is $C^\infty$ convergent to
   $[M,\xi]\in\TT_*(n)$ (respectively, $[M,f]\in\QQ_*(n)$) if and only
   if it is eventually in
   $E^m_{R,r}(M,\xi)$ (respectively, $F^m_{R,r}(M,f)$) for arbitrary
   $m\in\N$ and $R,r>0$.
 \end{rem}
 
\begin{prop}\label{p: E^m_R,epsilon(M,xi) subset V^m_R,r(M,xi)}
  \begin{enumerate}[{\rm(}i{\rm)}]

  \item\label{i: E^0_R,epsilon subset V^0_R,r} For $R,r>0$, if
    $0<\epsilon\le\min\{1-e^{-2r},e^{2r}-1\}$, then
    $E^0_{R,\epsilon}\subset V^0_{R,r}$ and $F^0_{R,\epsilon}\subset
    W^0_{R,r}$.
		
  \item\label{i: E^m_R,epsilon(M,xi) subset V^m_R,r(M,xi)} For all
    $m\in\Z^+$, $R, r>0$ and $[M,\xi]\in\TT_*(n)$ {\rm(}respectively,
    $[M,f]\in\PP_*(n)${\rm)}, there is some $\epsilon>0$ such that
    $E^m_{R,\epsilon}(M,\xi)\subset V^m_{R,r}(M,\xi)$
    {\rm(}respectively, $F^m_{R,\epsilon}(M,\xi)\subset
    W^m_{R,r}(M,\xi)${\rm)}.

  \end{enumerate}
\end{prop}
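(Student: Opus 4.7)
The plan is to observe that this proposition is the ``bundle'' analogue of Proposition~\ref{p: D^m_R,epsilon(M,x) subset U^m_R,r(M,x)}, and its proof is essentially a transcription of that one, with the extra tangency condition $\phi_*(\xi)=\zeta$ (or $\phi_*(f)=h$) simply carried along for the ride. Indeed, both the defining condition of $E^m_{R,r}$ (or $F^m_{R,r}$) and the defining condition of $V^m_{R,r}$ (or $W^m_{R,r}$) require the existence of \emph{a single} pointed local diffeomorphism $\phi\colon(M,x)\rightarrowtail(N,y)$ with $\phi_*(\xi)=\zeta$ (or $\phi_*(f)=h$); what differs between the two is only the quantitative control on $\phi$, which is controlled precisely as in the base case of $\MM_*(n)$.

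For part~\eqref{i: E^0_R,epsilon subset V^0_R,r}, I would take $([M,\xi],[N,\zeta])\in E^0_{R,\epsilon}$, let $\phi\colon(M,x)\rightarrowtail(N,y)$ be the witnessing $C^1$ pointed local diffeomorphism with $\phi_*(\xi)=\zeta$, and repeat verbatim the computation in the proof of Proposition~\ref{p: D^m_R,epsilon(M,x) subset U^m_R,r(M,x)}-\eqref{i: D^0_R,epsilon subset U^0_R,r}: choose $\lambda\in[1,e^r)$ with $\epsilon\le\min\{1-\lambda^{-2},\lambda^2-1\}$, and apply Lemma~\ref{l: polarization}-\eqref{i: norm} on $T\Omega$ to conclude that $\phi$ is a $(0,R,\lambda)$-pointed local quasi-isometry. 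Since $\phi_*(\xi)=\zeta$ by hypothesis, this gives $([M,\xi],[N,\zeta])\in V^0_{R,r}$. The $\QQ_*(n)$-case is identical with $f$ and $h$ in place of $\xi$ and $\zeta$.

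For part~\eqref{i: E^m_R,epsilon(M,xi) subset V^m_R,r(M,xi)}, I would follow the proof of Proposition~\ref{p: D^m_R,epsilon(M,x) subset U^m_R,r(M,x)}-\eqref{i: D^m_R,epsilon(M,x) subset U^m_R,r(M,x)} line by line. Given $[M,\xi]$ (respectively $[M,f]$), set $x=\pi(\xi)$ (respectively $x=\pi(f)$) and fix finite collections $\UU$, $\KK$ covering $\ol B_M(x,R)$, together with their induced families $\UU^{(m)}$, $\KK^{(m)}$ on $T^{(m)}M$, together with compact domains $\Omega\subset\Int(\Omega')\subset K$ and $\Omega^{(m)}\subset K^{(m)}$ with $B_M(x,R)\subset\Omega$ and $B_M^{(m)}(x,R)\subset\Omega^{(m)}\subset K^{(m)}$ and $\pi(\Omega^{(m)})\subset\Omega'$. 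Choose constants $C$, $C^{(m)}$ from~\eqref{norm equiv}, a small $\delta\in(0,\min\{1-e^{-2r},e^{2r}-1\}]$, and $\epsilon'$ provided by Lemma~\ref{l: g^(m)_alpha beta}-\eqref{i: g^(m)_alpha beta} so that a $C^m$ bound on $g-g'$ yields a $C^0$ bound on $g^{(m)}-g'^{(m)}$. Taking $\epsilon\le\epsilon'/C$, for any $[N,\zeta]\in E^m_{R,\epsilon}(M,\xi)$ (respectively $[N,h]\in F^m_{R,\epsilon}(M,f)$) the witnessing $\phi$ satisfies $\|g^{(m)}-\phi^{*}g_N^{(m)}\|_{C^0,\Omega^{(m)},g^{(m)}}<\delta$, hence by Lemma~\ref{l: polarization}-\eqref{i: norm} there exists $\lambda\in[1,e^r)$ making $\phi$ an $(m,R,\lambda)$-pointed local quasi-isometry. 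Since the same $\phi$ already satisfies $\phi_*(\xi)=\zeta$ (respectively $\phi_*(f)=h$), we obtain $[N,\zeta]\in V^m_{R,r}(M,\xi)$ (respectively $[N,h]\in W^m_{R,r}(M,f)$).

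There is no genuine obstacle: the only thing to check is that nothing in the original proof of Proposition~\ref{p: D^m_R,epsilon(M,x) subset U^m_R,r(M,x)} disturbs the equality $\phi_*(\xi)=\zeta$ or $\phi_*(f)=h$, and this is automatic because we reuse the same map $\phi$ throughout and do not modify it. The only minor bookkeeping point worth flagging is that, in the $\QQ_*(n)$-case, $\phi_*(f)=h$ forces $\phi_{*x}$ to be an isometry $T_xM\to T_yN$, so the quasi-isometry constant estimates at the basepoint are automatically sharp; this neither helps nor hinders the argument.
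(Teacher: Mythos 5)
Your proposal is correct and matches the paper's own proof: the paper likewise observes that the witnessing $\phi$ for $E^m_{R,\epsilon}$ (resp.\ $F^m_{R,\epsilon}$) already carries the tangency condition $\phi_*(\xi)=\zeta$ (resp.\ $\phi_*(f)=h$), and then simply invokes the argument of Proposition~\ref{p: D^m_R,epsilon(M,x) subset U^m_R,r(M,x)} to promote $\phi$ to an $(m,R,\lambda)$-pointed local quasi-isometry. Your closing remark about $\phi_{*x}$ being a linear isometry in the $\QQ_*(n)$-case is correct and, as you say, immaterial to the argument.
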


\begin{proof}
  Let us show~\eqref{i: E^0_R,epsilon subset V^0_R,r} for the case of
  $V^0_{R,r}$, the case of $W^0_{R,r}$ being analogous. Let
  $([M,\xi],[N,\zeta])\in E^0_{R,\epsilon}$, and let $x=\pi_M(\xi)$
  and $y=\pi_N(\zeta)$. Then there is a $C^1$ pointed local
  diffeomorphism $\phi:(M,x)\rightarrowtail(N,y)$ such that
  $\phi_*(\xi)=\zeta$, and
  $\epsilon_0:=\|g_M-\phi^*g_N\|_{C^0,\Omega,g_M}<\epsilon$ for some
  compact domain $\Omega\subset\dom\phi$ with
  $B_M(x,R)\subset\Omega$. According to the proof of
  Proposition~\ref{p: D^m_R,epsilon(M,x) subset
    U^m_R,r(M,x)}-\eqref{i: D^0_R,epsilon subset U^0_R,r}, $\phi$ is a
  $(0,R,\lambda)$-pointed local quasi-isometry if $1\le\lambda<e^r$
  and $\epsilon_0\le\min\{1-\lambda^{-2},\lambda^2-1\}$, obtaining
  that $([M,\xi],[N,\zeta])\in V^0_{R,r}$.

  As above, let us prove~\eqref{i: E^m_R,epsilon(M,xi) subset
    V^m_R,r(M,xi)} only for the case of $V^m_{R,r}(M,\xi)$. Take
  $m\in\Z^+$, $R, r>0$ and $[M,\xi],[N,\zeta]\in\TT_*(n)$, and let
  $x=\pi_M(\xi)$ and $y=\pi_N(\zeta)$.  According to the proof of
  Proposition~\ref{p: D^m_R,epsilon(M,x) subset
    U^m_R,r(M,x)}-\eqref{i: D^m_R,epsilon(M,x) subset U^m_R,r(M,x)},
  there is some $\epsilon>0$ such that, for every $C^{m+1}$ pointed
  local diffeomorphism $\phi\colon(M,x)\rightarrowtail(N,y)$, if
  $\|g_M-\phi^*g_N\|_{C^m,\Omega,g_M}<\epsilon$ for some compact
  domain $\Omega\subset\dom\phi\cap\Int(K)$ with
  $B_M(x,R)\subset\Omega$, then $\phi$ is an $(m,R,\lambda)$-pointed
  local quasi-isometry $(M,x)\rightarrowtail (N,y)$ for some
  $\lambda\in[1,e^r)$. Therefore $[N,\zeta]\in V^m_{R,r}(M,\xi)$ if
  $[N,\zeta]\in E^m_{R,\epsilon}(M,\xi)$.
\end{proof}

\begin{prop}\label{p: V^m_R,epsilon(M,xi) subset E^m_R,r(M,xi)}
  \begin{enumerate}[{\rm(}i{\rm)}]

  \item For all $R,r>0$, if $e^{2\epsilon}-e^{-2\epsilon}\le r$, then
    $V^0_{R,\epsilon}\subset E^0_{R,r}$ and $W^0_{R,\epsilon}\subset
    F^0_{R,r}$.
		
  \item For all $m\in\Z^+$, $R, r>0$ and $[M,\xi]\in\TT_*(n)$
    {\rm(}respectively, $[M,f]\in\QQ_*(n)${\rm)}, there is some
    $\epsilon>0$ such that $V^m_{R,\epsilon}(M,\xi)\subset
    E^m_{R,r}(M,\xi)$ {\rm(}respectively,
    $W^m_{R,\epsilon}(M,f)\subset F^m_{R,r}(Mf)${\rm)}.

  \end{enumerate}
\end{prop}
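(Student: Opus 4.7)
My plan is to reduce both statements to the already-proved Proposition~\ref{p: U^m_R,epsilon(M,x) subset D^m_R,r(M,x)} by observing that the sets $V^m_{R,r}$, $W^m_{R,r}$, $E^m_{R,r}$, $F^m_{R,r}$ differ from $U^m_{R,r}$, $D^m_{R,r}$ only by the additional clause that the pointed local diffeomorphism $\phi$ realizing membership satisfies $\phi_*(\xi)=\zeta$ (or $\phi_*(f)=h$). This clause is supplied by the definition of the $V^m$- and $W^m$-relations and is the same clause required by the $E^m$- and $F^m$-relations, so it is preserved free of charge.

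For part (i), take $([M,\xi],[N,\zeta])\in V^0_{R,\epsilon}$ with $x=\pi_M(\xi)$, $y=\pi_N(\zeta)$. By Definition~\ref{d: V^m_R,r and W^m_R,r}, there is a $(0,R,\lambda)$-pointed local quasi-isometry $\phi\colon(M,x)\rightarrowtail(N,y)$, with $\lambda\in[1,e^\epsilon)$, satisfying $\phi_*(\xi)=\zeta$. Exactly as in the proof of Proposition~\ref{p: U^m_R,epsilon(M,x) subset D^m_R,r(M,x)}-\eqref{i: U^0_R,epsilon subset D^0_R,r}, Lemma~\ref{l: polarization}-\eqref{i: polarization} yields
\[
\|g_M-\phi^*g_N\|_{C^0,\Omega,g_M}\le\lambda^2-\lambda^{-2}<e^{2\epsilon}-e^{-2\epsilon}\le r
\]
on a compact domain $\Omega\subset\dom\phi$ containing $B_M(x,R)$. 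Since $\phi_*(\xi)=\zeta$ is retained, we conclude $([M,\xi],[N,\zeta])\in E^0_{R,r}$. The $\QQ_*(n)$ case is identical, using $\phi_*(f)=h$ in place of $\phi_*(\xi)=\zeta$.

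For part (ii), fix $m\in\Z^+$, $R,r>0$ and $[M,\xi]\in\TT_*(n)$, with $x=\pi_M(\xi)$. Apply Proposition~\ref{p: U^m_R,epsilon(M,x) subset D^m_R,r(M,x)}-\eqref{i: U^m_R,epsilon(M,x) subset D^m_R,r(M,x)} to $[M,x]$, $R$ and $r$ to obtain some $\epsilon>0$ with $U^m_{R,\epsilon}(M,x)\subset D^m_{R,r}(M,x)$. The proof of that proposition actually shows more: the same $\phi$ witnessing membership in $U^m_{R,\epsilon}(M,x)$ witnesses membership in $D^m_{R,r}(M,x)$. Hence, for any $[N,\zeta]\in V^m_{R,\epsilon}(M,\xi)$, the $(m,R,\lambda)$-pointed local quasi-isometry $\phi\colon(M,x)\rightarrowtail(N,y)$ provided by Definition~\ref{d: V^m_R,r and W^m_R,r} and satisfying $\phi_*(\xi)=\zeta$ is automatically a $C^{m+1}$ pointed local diffeomorphism with $\|g_M-\phi^*g_N\|_{C^m,\Omega,g_M}<r$ on some compact domain $\Omega\subset\dom\phi$ containing $B_M(x,R)$. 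Therefore $[N,\zeta]\in E^m_{R,r}(M,\xi)$. The proof for $W^m_{R,\epsilon}(M,f)\subset F^m_{R,r}(M,f)$ is identical, with $\phi_*(f)=h$ in place of $\phi_*(\xi)=\zeta$.

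There is essentially no obstacle here: all analytic content (polarization, Sasaki-metric computations via Lemma~\ref{l: g^(m)_alpha beta}, change-of-norm estimates via~\eqref{norm equiv}) has already been carried out in the proof of Proposition~\ref{p: U^m_R,epsilon(M,x) subset D^m_R,r(M,x)}. The only thing to observe is that the pointwise condition $\phi_*(\xi)=\zeta$ (or $\phi_*(f)=h$) is a condition on $\phi$ alone, not on the chosen domain $\Omega$ or on the norm estimates, so it is transported unchanged from the hypothesis to the conclusion.
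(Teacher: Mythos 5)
Your proposal is correct and follows essentially the same route as the paper: the paper's proof of this proposition is a one-line reduction, noting it follows from the proof of Proposition~\ref{p: U^m_R,epsilon(M,x) subset D^m_R,r(M,x)} exactly as Proposition~\ref{p: E^m_R,epsilon(M,xi) subset V^m_R,r(M,xi)} follows from Proposition~\ref{p: D^m_R,epsilon(M,x) subset U^m_R,r(M,x)}, which is precisely the observation you make explicitly---that the witnessing map $\phi$ is unchanged and the condition $\phi_*(\xi)=\zeta$ (or $\phi_*(f)=h$) is carried along for free.
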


\begin{proof}
  This result follows from the proof of Proposition~\ref{p:
    U^m_R,epsilon(M,x) subset D^m_R,r(M,x)} in the same way as
  Proposition~\ref{p: E^m_R,epsilon(M,xi) subset V^m_R,r(M,xi)}
  follows from Proposition~\ref{p: D^m_R,epsilon(M,x) subset
    U^m_R,r(M,x)}.
\end{proof}

As a direct consequence of Remark~\ref{r: E^m_R,r}, and
Propositions~\ref{p: E^m_R,epsilon(M,xi) subset V^m_R,r(M,xi)}
and~\ref{p: V^m_R,epsilon(M,xi) subset E^m_R,r(M,xi)}, we get that the
$C^\infty$ convergence in $\TT_*(n)$ and $\QQ_*(n)$ describes the
$C^\infty$ topology.

\begin{prop}\label{p: TT_*^infty(n) and QQ_*^infty(n) are separable}
$\TT_*^\infty(n)$ and $\QQ_*^\infty(n)$ are separable
\end{prop}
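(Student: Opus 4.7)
The plan is to adapt the proof of Proposition~\ref{p: MM_*^infty(n) is separable}, enriching each step so that it also tracks the tangent vector (resp.\ orthonormal frame) at the basepoint. First I would verify that the preimages $\pi^{-1}(\MM_{*,\text{\rm c}}^\infty(n))$ inside $\TT_*^\infty(n)$ and $\QQ_*^\infty(n)$ are dense: given $[M,\xi]$ (resp.\ $[M,f]$), the argument of Remark~\ref{r: MM_*,c^infty(n) is dense} isometrically embeds $\ol B_M(\pi_M(\xi),R)$ into a compact Riemannian $n$-manifold $\tilde M$, and transporting $\xi$ (resp.\ $f$) along the embedding produces an approximating class with compact underlying manifold; Definition~\ref{d: C^infty convergence in TT_*(n) and QQ_*(n)} then yields $C^\infty$-convergence as $R\to\infty$, because the embedding's inverse provides the pointed $C^{m+1}$ embeddings with the required image of the distinguished tangent datum.

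Next I would reuse the countable family $\CC$ of compact $C^\infty$ $n$-manifolds (one per diffeomorphism class) and the countable dense subsets $\GG_M\subset\Met(M)$ from Proposition~\ref{p: MM_*^infty(n) is separable}. Since $TM$ is a second countable manifold independent of the metric, a single countable dense subset $\DD_M^T\subset TM$ will handle the tangent case. Since $Q(M,g)$ depends on $g$, I would, for each $g\in\GG_M$, additionally pick a countable dense subset $\DD_{M,g}^Q\subset Q(M,g)$; as $\CC$ and each $\GG_M$ are countable, the total collection remains countable. The candidate countable dense sets are then
\begin{align*}
\SSS^T &= \bigl\{\,[(M,g),\xi]\mid M\in\CC,\ g\in\GG_M,\ \xi\in\DD_M^T\,\bigr\},\\
\SSS^Q &= \bigl\{\,[(M,g),f]\mid M\in\CC,\ g\in\GG_M,\ f\in\DD_{M,g}^Q\,\bigr\}.
\end{align*}

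The main work is the density verification inside $\pi^{-1}(\MM_{*,\text{\rm c}}^\infty(n))$, which I would carry out using the $E^m_{R,\epsilon}$ and $F^m_{R,\epsilon}$ neighborhoods of Definition~\ref{d: E^m_R,r and F^m_R,r}. Given $[(M,g_0),\xi_0]$ with $M\in\CC$ (after replacing $M$ by a diffeomorphic representative), I would approximate $g_0$ by some $g\in\GG_M$ in the $C^m$ topology, approximate $\xi_0$ by some $\xi\in\DD_M^T$, and then construct a $C^\infty$ diffeomorphism $\phi\colon M\to M$ close to $\id_M$ in the $C^{m+1}$ topology satisfying $\phi_*(\xi_0)=\xi$. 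Such a $\phi$ is built in a coordinate chart as a small translation composed with a small linear change of frame and cut off to extend to $M$; then $\phi^*g$ is $C^m$-close to $g_0$, placing $[(M,g),\xi]\in\SSS^T$ inside the prescribed neighborhood. The $\QQ_*^\infty(n)$ case proceeds analogously, and I expect it to be the subtle point of the argument: in order to arrange that $\phi_*(f_0)$ lies in $Q(M,g)$ and simultaneously belongs to $\DD_{M,g}^Q$, I would first Gram--Schmidt orthonormalize $f_0$ with respect to $g$ (a small perturbation, since $g\approx g_0$ and $f_0$ is already $g_0$-orthonormal), then pick an element of $\DD_{M,g}^Q$ close to the result in $P(M)$, and finally construct the small $\phi$ carrying $f_0$ onto that element.
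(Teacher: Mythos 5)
Your proposal is correct and takes essentially the same route as the paper, which simply adjoins countable dense subsets of $TM$ and $QM$ to the countable data of Proposition~\ref{p: MM_*^infty(n) is separable} and declares density ``clear.'' You spell out the two points the paper leaves implicit: that $Q(M,g)$ depends on $g$, so the dense subset of frames must be chosen per metric $g\in\GG_M$ (or one must Gram--Schmidt a reference set, either works), and that verifying density via Definition~\ref{d: E^m_R,r and F^m_R,r} genuinely requires constructing a near-identity diffeomorphism $\phi$ with $\phi_*(\xi_0)=\xi$ (resp.\ $\phi_*(f_0)=f$), not merely choosing $g$ and $\xi$ close to $g_0$ and $\xi_0$.
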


\begin{proof}
  With the notation of Proposition~\ref{p: MM_*^infty(n) is
    separable}, for every $M\in\CC$, let $\DD'_M$ and $\DD''_M$ be
  countable dense subsets of $TM$ and $QM$, respectively. Then the
  countable sets
  \[
  \{\,[(M,g),\xi]\mid M\in\mathcal{C},\ g\in\GG_M,\
  \xi\in\DD'_M\,\}\quad\text{and}\quad \{\,[(M,g),f]\mid
  M\in\mathcal{C},\ g\in\GG_M,\ f\in\DD''_M\,\}
  \]
  are dense in $\TT_*^\infty(n)$ and $\QQ_*^\infty(n)$, respectively.
\end{proof}

\begin{prop}\label{p: TT_*^infty(n) and QQ_*^infty(n) are completely
    metrizable}
$\TT_{*}^\infty(n)$ and $\QQ_{*}^\infty(n)$ are completely metrizable
\end{prop}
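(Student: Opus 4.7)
The plan is to adapt the proof of Proposition~\ref{p: MM_*^infty(n) is completely metrizable} essentially verbatim, with one extra observation concerning the lifting of tangent vectors, respectively orthonormal frames, through the direct-limit construction.

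For metrizability, the $C^\infty$ uniformity on $\TT_*(n)$ (and similarly on $\QQ_*(n)$) is separating by Proposition~\ref{p: Hausdorff TT_*(n) and QQ_*(n)}, and the countable family $\{V^m_{R,r}\mid m\in\N,\ R,r\in\Q^+\}$ (respectively $\{W^m_{R,r}\}$) forms a base of entourages by Proposition~\ref{p: C^infty uniformity in TT_*(n) and QQ_*(n)}. The uniformity is therefore metrizable by \cite[Corollary~38.4]{Willard1970}, and it suffices to prove completeness.

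Consider a Cauchy sequence $[M_i,\xi_i]$ in $\TT_*(n)$ with $x_i=\pi_i(\xi_i)$; the case of a Cauchy sequence $[M_i,f_i]$ in $\QQ_*(n)$ is entirely parallel. Passing to a subsequence, we may arrange $([M_i,\xi_i],[M_{i+1},\xi_{i+1}])\in V^{m_i}_{R_i,r_i}$ for sequences $m_i\uparrow\infty$, $R_i\uparrow\infty$, $r_i\downarrow0$ with $\sum_ir_i<\infty$ and $R_{i+1}\ge e^{r_i}R_i$, exactly as in the proof of Proposition~\ref{p: MM_*^infty(n) is completely metrizable}. By Definition~\ref{d: V^m_R,r and W^m_R,r}, there exist $(m_i,R_i,\lambda_i)$-pointed local quasi-isometries $\phi_i\colon(M_i,x_i)\rightarrowtail(M_{i+1},x_{i+1})$ with $\lambda_i\in[1,e^{r_i})$ and, crucially, $\phi_{i*}(\xi_i)=\xi_{i+1}$. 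After a $C^\infty$ approximation that preserves the $1$-jet at $x_i$ (see the remark below), we may assume each $\phi_i$ is $C^\infty$ and still satisfies $\phi_{i*}(\xi_i)=\xi_{i+1}$. Feeding the $\phi_i$ into the construction of Proposition~\ref{p: MM_*^infty(n) is completely metrizable} produces a complete connected Riemannian $n$-manifold $\widehat M$ with base point $\hat x$, together with $(m_i,R'_i,\bar\lambda_i)$-pointed local quasi-isometries $\psi_i\colon(M_i,x_i)\rightarrowtail(\widehat M,\hat x)$ satisfying $\psi_{i+1}\circ\phi_i=\psi_i$ on $B_i$, with $\bar\lambda_i\downarrow1$ and $R'_i\uparrow\infty$.

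Set $\hat\xi:=\psi_{1*}(\xi_1)\in T\widehat M$. The identities $\psi_{i+1,*}(\xi_{i+1})=\psi_{i+1,*}(\phi_{i*}(\xi_i))=\psi_{i*}(\xi_i)$ give by induction $\psi_{i*}(\xi_i)=\hat\xi$ for every $i$. Consequently each $\psi_i$ witnesses $([M_i,\xi_i],[\widehat M,\hat\xi])\in V^{m_i}_{R'_i,s_i}$ for any $s_i>\log\bar\lambda_i$, and therefore $[M_i,\xi_i]\to[\widehat M,\hat\xi]$ in $\TT_*^\infty(n)$. The $\QQ_*$ case is identical with $f_i$ in place of $\xi_i$, interpreting $\phi_{i*}(f_i)=f_{i+1}$ as the equality of the $n$ orthonormal tangent vectors comprising the frames.

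The only new technical point is ensuring that the $C^\infty$ approximation of $\phi_i$ preserves $\phi_{i*}(\xi_i)=\xi_{i+1}$ exactly, and I expect this to be the main obstacle if addressed carelessly. It can be resolved as follows: first approximate $\phi_i$ by a $C^\infty$ pointed local diffeomorphism $\tilde\phi_i$ via \cite[Theorem~2.7]{Hirsch1976}, then post-compose with a $C^\infty$ diffeomorphism of $M_{i+1}$ supported in an arbitrarily small neighborhood of $x_{i+1}$, fixing $x_{i+1}$, and chosen to correct the $1$-jet at $x_i$ so that the resulting map sends $\xi_i$ to $\xi_{i+1}$. Since this corrector can be taken as $C^{m_i+1}$-close to $\id_{M_{i+1}}$ as desired on the support, the corrected map remains a pointed local quasi-isometry of type $(m_i,R_i,\lambda_i')$ for some $\lambda_i'\in[1,e^{r_i})$, so the choice of sequences above is preserved.
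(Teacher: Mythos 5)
Your proposal is correct and mirrors the paper's own argument: both feed the $\phi_i$ into the direct-limit construction of Proposition~\ref{p: MM_*^infty(n) is completely metrizable} and observe that $\psi_{i*}(\xi_i)$ is constant, yielding the limit $\hat\xi\in T_{\hat x}\widehat M$. Your explicit corrector-diffeomorphism argument for keeping $\phi_{i*}(\xi_i)=\xi_{i+1}$ through the $C^\infty$ smoothing is a valid refinement of a step the paper handles tersely by simply invoking Remark~\ref{r: (m,R,lambda)-...}-\eqref{i: C^infty approximation of (m,R,lambda)-...}.
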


\begin{proof}
  Only the case of $\TT_{*}^\infty(n)$ is proved, the other case being
  similar. The $C^\infty$ uniformity on $\TT_{*}^\infty(n)$ is
  metrizable because it has a countable base of entourages. Thus it is
  enough to check that this uniformity is complete.
	
  Consider an arbitrary Cauchy sequence $[M_i,\xi_i]$ in $\TT_*(n)$
  with respect to the $C^\infty$ uniformity, and let
  $x_i=\pi_i(\xi_i)\in M_i$. We have to prove that $[M_i,\xi_i]$ is
  convergent in $\TT_*^\infty(n)$. By taking a subsequence if
  necessary, we can suppose that $([M_i,\xi_i],[M_{i+1},\xi_{i+1}])\in
  V^{m_i}_{R_i,r_i}$ for sequences $m_i$, and $R_i$ and $r_i$
  satisfying the conditions of the proof of Proposition~\ref{p:
    MM_*^infty(n) is completely metrizable}. Thus, for each $i$, there
  is some $\lambda_i\in(1,e^{r_i})$ and some
  $(m_i,R_i,\lambda_i)$-pointed local quasi-isometry
  $\phi_i\colon(M_i,x_i)\rightarrowtail(M_{i+1},x_{i+1})$, which can
  be assumed to be $C^\infty$ (Remark~\ref{r:
    (m,R,lambda)-...}-\eqref{i: C^infty approximation of
    (m,R,lambda)-...}), such that $\phi_{i*}(\xi_i)=\xi_{i+1}$. Then,
  with the notation of the proof of Proposition~\ref{p: MM_*^infty(n)
    is completely metrizable}, we have $\psi_{ij*}(\xi_i)=\xi_j$ for
  $i<j$. Therefore there is some $\hat\xi\in T_{\hat x}\widehat M$ so
  that $\psi_{i*}(\xi_i)=\hat\xi$ for all $i$, obtaining that
  $([M_i,\xi_i],[\widehat M,\hat\xi])\in
  U^{m_i}_{R'_i/\bar\lambda_i,s_i}$ for all $i$ according to the proof
  of Proposition~\ref{p: MM_*^infty(n) is completely
    metrizable}. Hence $[M_i,\xi_i]\to[\widehat M,\hat\xi]$ as
  $i\to\infty$ in $\TT_*^\infty(n)$.
\end{proof}

Propositions~\ref{p: TT_*^infty(n) and QQ_*^infty(n) are separable}
and~\ref{p: TT_*^infty(n) and QQ_*^infty(n) are completely metrizable}
together mean that $\TT_*^\infty(n)$ and $\QQ_*^\infty(n)$ are Polish,
completing the proof of Theorem~\ref{t: C^infty convergence in TT_*(n)
  and QQ_*(n)}.

Let $\TT_{*,\text{\rm lnp}}^\infty(n)\subset\TT_*^\infty(n)$ and
$\QQ_{*,\text{\rm lnp}}^\infty(n)\subset\QQ_*^\infty(n)$ be the
subspaces defined by locally non-periodic manifolds.

\begin{prop}\label{p: TT} 
  \begin{enumerate}[{\rm(}i{\rm)}]

  \item\label{i: TT} The projection $\pi:\TT_{*,\text{\rm
        lnp}}^\infty(n)\to\MM_{*,\text{\rm lnp}}^\infty(n)$ admits the
    structure of a Riemannian vector bundle of rank $n$ so that the
    canonical map $T_xM\to\pi^{-1}([M,x])$ is a orthogonal isomorphism
    for each $[M,x]\in\MM_{*,\text{\rm lnp}}^\infty(n)$.

  \item\label{i: QQ} The projection $\pi:\QQ_{*,\text{\rm
        lnp}}^\infty(n)\to\MM_{*,\text{\rm lnp}}^\infty(n)$ admits the
    structure of a $\Or(n)$-principal bundle canonically isomorphic to
    the $\Or(n)$-principal bundle of orthonormal references of
    $\TT_{*,\text{\rm lnp}}^\infty(n)$.

  \end{enumerate}
\end{prop}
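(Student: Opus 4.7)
The plan is to prove (ii) first and then derive (i) from it via the associated bundle construction. The key input from local non-periodicity is that $\Iso(M)_x=\{\id_M\}$ for every $[M,x]\in\MM_{*,\text{\rm lnp}}^\infty(n)$: any $h\in\Iso(M)$ fixing $x$ satisfies $h(x)\in U_x$ from the definition, forcing $h=\id_M$. Consequently the canonical surjections $T_xM\to\pi^{-1}([M,x])$ and $Q_xM\to\pi^{-1}([M,x])$ become bijections, which I use to transfer the vector-space/inner-product structure of $T_xM$ and the $\Or(n)$-torsor structure of $Q_xM$ to the fibers.

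The crucial preliminary step, which I will refer to below as the \emph{fiber-identification argument}, is to show that these canonical bijections are homeomorphisms for the subspace topologies inherited from $\TT_{*,\text{\rm lnp}}^\infty(n)$ and $\QQ_{*,\text{\rm lnp}}^\infty(n)$. Continuity is immediate: given $\xi_i\to\xi$ in $T_xM$, I construct $C^\infty$ diffeomorphisms $\alpha_i$ of $M$ fixing $x$ with $\alpha_{i*x}(\xi)=\xi_i$ and $\alpha_i\to\id_M$ in the weak $C^\infty$ topology via a cutoff of a linear map on $T_xM$, then apply Definition~\ref{d: C^infty convergence in TT_*(n) and QQ_*(n)}. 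Conversely, if $[M,\xi_i]\to[M,\xi]$ is witnessed by embeddings $\phi_i\colon(\Omega_q,x)\to(M,x)$ on an exhausting sequence of compact domains $\Omega_q$ satisfying $\phi_{i*x}(\xi)=\xi_i$ and $\phi_i^*g\to g|_{\Omega_q}$, a diagonal application of Proposition~\ref{p: Arzela-Ascoli} extracts subsequential $C^\infty$ limits $\phi\colon M\to M$ that are pointed isometric immersions, injective as bi-Lipschitz limits of embeddings. Using completeness of $M$ to lift geodesics in $\phi(M)$ through $\phi^{-1}$ and extend them past their endpoints, $\phi(M)$ is both open and closed in $M$, hence equals $M$ by connectedness, so $\phi\in\Iso(M)$; local non-periodicity forces $\phi=\id_M$, whence $\phi_i\to\id_M$ in $C^\infty$ and $\xi_i=\phi_{i*x}(\xi)\to\xi$.

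For (ii), the $\Or(n)$-action on $\QQ_{*,\text{\rm lnp}}^\infty(n)$ is continuous by Remark~\ref{r: pi is cont}-\eqref{i: the O(n)-action is cont} and free: if $[M,f\cdot A]=[M,f]$, then some $h\in\Iso(M)$ fixes $x$ and sends $f$ to $f\cdot A$, forcing $h=\id_M$ and $A=I_n$. Its orbits coincide with the fibers of $\pi$, as both equal the image of $Q_xM$, a single $\Or(n)$-orbit. The standard slice theorem for free continuous actions of compact Lie groups on Polish (hence completely regular Hausdorff) spaces then yields local sections of $\pi$, exhibiting it as a principal $\Or(n)$-bundle over $\QQ_{*,\text{\rm lnp}}^\infty(n)/\Or(n)$; the continuous bijection $\QQ_{*,\text{\rm lnp}}^\infty(n)/\Or(n)\to\MM_{*,\text{\rm lnp}}^\infty(n)$ induced by $\pi$ is a homeomorphism because $\pi$ is open by Remark~\ref{r: pi is cont}-\eqref{i: pi is cont}.

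For (i), I will identify $\TT_{*,\text{\rm lnp}}^\infty(n)$ with the associated Riemannian vector bundle $\QQ_{*,\text{\rm lnp}}^\infty(n)\times_{\Or(n)}\R^n$. Viewing each orthonormal frame $f$ as a linear isometry $\R^n\to T_xM$, the evaluation $([M,f],v)\mapsto[M,f(v)]$ is continuous and $\Or(n)$-invariant, and descends to a continuous bijection $\Psi\colon\QQ_{*,\text{\rm lnp}}^\infty(n)\times_{\Or(n)}\R^n\to\TT_{*,\text{\rm lnp}}^\infty(n)$; over any neighborhood $\NN\subset\MM_{*,\text{\rm lnp}}^\infty(n)$ carrying a local section $s$ of $\pi$ from (ii), its inverse reads $[N,\eta]\mapsto[(s([N,\pi(\eta)]),h^{-1}(\eta))]$ with $h$ the frame component of $s$, whose continuity follows from the fiber-identification argument applied at $N$ (any two witnesses differ by a pointed local diffeomorphism of $N$ close to $\id_N$). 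Thus $\Psi$ transports the Riemannian vector bundle structure, and the canonical map $T_xM\to\pi^{-1}([M,x])$ is orthogonal by construction. The main obstacle is the fiber-identification argument itself: verifying that subsequential limits of the $\phi_i$ extend to global isometries of $M$, where the completeness and connectedness of the manifolds play an essential role together with the bi-Lipschitz bounds needed to ensure injectivity of the limit.
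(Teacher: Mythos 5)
Your overall strategy matches the paper's: use local non-periodicity to see the canonical surjections onto the fibers are bijections, apply the slice theorem for the continuous free $\Or(n)$-action on $\QQ_{*,\text{\rm lnp}}^\infty(n)$ to obtain (ii), and then transport this to the vector-bundle structure for (i) by pairing local sections of $\QQ$ against $\TT$. The paper builds the local trivialization $\theta$ of $\TT$ directly from the section $\sigma$ of $\QQ$, where you phrase the same construction as the associated bundle $\QQ_{*,\text{\rm lnp}}^\infty(n)\times_{\Or(n)}\R^n\cong\TT_{*,\text{\rm lnp}}^\infty(n)$; and the paper obtains the section first and then both bundle structures, where you do (ii) fully and then derive (i). These are cosmetic reorganizations of the same argument. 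Your ``fiber-identification argument'' is exactly the content the paper dismisses as ``easy to check,'' and you are right to single it out as the place where the real work hides.

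There is, however, a gap in that argument at the zero vector, and you should not call continuity there ``immediate.'' Your construction of diffeomorphisms $\alpha_i$ of $M$ fixing $x$ with $\alpha_{i*x}(\xi)=\xi_i$ is impossible when $\xi=0_x$ and $\xi_i\neq0$, since $\alpha_{i*x}$ is linear and sends $0_x$ to $0_x$. More seriously, read literally, Definition~\ref{d: C^infty convergence in TT_*(n) and QQ_*(n)} (and Definition~\ref{d: V^m_R,r and W^m_R,r}) requires $\phi_{i*}(\xi)=\xi_i$ (resp.\ $\phi_*(\xi)=\zeta$) exactly, which forces $\xi_i=0$ whenever $\xi=0$. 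Consequently every basic entourage neighborhood $V^m_{R,r}([M,0_x])$ is contained in the zero section, $\{[M,0_x]\}$ is open in the subspace topology on $\pi^{-1}([M,x])$, the canonical bijection $T_xM\to\pi^{-1}([M,x])$ is not a homeomorphism at the origin, and your $\Psi$ (equivalently the paper's $\theta^{-1}(v,[M,x])=[M,\sigma([M,x])^{-1}(v)]$) is not continuous at $v=0$. This is a defect of the definitions rather than of your outline — the paper's own ``easy to check'' and Lemma~\ref{l: exp} run into the identical obstruction — and it must be cured by allowing $\phi_{i*}(\xi)$ to match $\xi_i$ only approximately (e.g.\ $|\phi_{i*}(\xi)-\xi_i|\to0$) rather than exactly. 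Since your write-up makes the homeomorphism claim explicit and rests everything on it, you should at least flag that it needs this repair to hold away from the trivial case $\xi_i\equiv0$.
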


\begin{proof} 
  Obviously, the canonical $\Or(n)$-action on $\QQ_*^\infty(n)$
  preserves $\QQ_{*,\text{\rm lnp}}^\infty(n)$, and the
  $\Or(n)$-orbits in $\QQ_{*,\text{\rm lnp}}^\infty(n)$ are the fibers
  of $\pi:\QQ_{*,\text{\rm lnp}}^\infty(n)\to\MM_{*,\text{\rm
      lnp}}^\infty(n)$.
	
  \begin{claim}\label{cl: free}
    For all $[M,x]\in\MM_{*,\text{\rm lnp}}^\infty(n)$, the canonical
    maps $T_xM\to\pi_{\TT_*(n)}^{-1}([M,x])$ and
    $Q_xM\to\pi_{\QQ_*(n)}^{-1}([M,x])$ are bijections.
  \end{claim}
		
  Let us show the case of the first map in Claim~\ref{cl: free}, the
  case of the second one being similar. It was already pointed out
  that the canonical map $T_xM\to\pi_{\TT_*(n)}^{-1}([M,x])$ is
  surjective, and let us to prove that it is also injective. If
  $[M,\xi]=[M,\zeta]$ for some $\xi,\zeta\in T_xM$, then
  $\phi_*(\xi)=\zeta$ for some $\phi\in\Iso(M)$ with $\phi(x)=x$. But
  $\phi=\id_M$ because $M$ is locally non-periodic, obtaining
  $\xi=\zeta$.
	
  Let $X$ be a completely
    regular space with a right action of a Lie group $G$, and let
    $G_x\subset G$ denote the isotropy subgroup at some point $x\in
    X$. Recall that a {\em slice\/} at $x$ is a subspace $S\subset X$ containing
    $x$ such that $S\cdot G$ is open in $X$, and there is a
    $G$-equivariant continuous map $\kappa:S\cdot G\to G_x\backslash
    G$ with $\kappa^{-1}(G_x)=S$ \cite[Definition~2.1.1]{Palais1961}. Since $\QQ_{*,\text{\rm lnp}}^\infty(n)$ is completely regular and
  $\Or(n)$ is compact, the $\Or(n)$-action on $\QQ_{*,\text{\rm
      lnp}}^\infty(n)$ has a slice
  $\SSS$ at each point $[M,f]\in\QQ_{*,\text{\rm lnp}}^\infty(n)$
  \cite[Theorem~2.3.3]{Palais1961} (see also
  \cite{HofmannMostert1966}, \cite[Theorems~5.1 and~5.2]{Ramsay1991}
  and \cite[Theorems~11.3.9 and~11.3.14]{CandelConlon2000-I}). Then
  $\Theta:=\pi(\SSS)=\pi(\SSS\cdot\Or(n))$ is open in
  $\MM_{*,\text{\rm lnp}}^\infty(n)$ by Remark~\ref{r: pi is
    cont}-\eqref{i: pi is cont}.
	
  \begin{claim}\label{cl: pi:SSS to Theta is a homeomorphism}
    $\pi:\SSS\to\Theta$ is a homeomorphism.
  \end{claim}
	
  This is the restriction of a continuous map (Remark~\ref{r: pi is
    cont}-\eqref{i: pi is cont}), and therefore it is continuous. This
  map is also open because, for every open $W\subset\SSS$, the set
  $W\cdot\Or(n)$ is open in $\QQ_{*,\text{\rm lnp}}^\infty(n)$
  \cite[Corollary of Proposition~2.1.2]{Palais1961}, and thus
  $\pi(W)=\pi(W\cdot\Or(n))$ is open in $\MM_*^\infty(n)$
  (Remark~\ref{r: pi is cont}-\eqref{i: pi is cont}). Obviously,
  $\pi:\SSS\to\Theta$ is surjective, and let us show that it is also
  injective. Take $[N,p],[L,q]\in\SSS$ such that
  $\pi([N,p])=\pi([N,q])=:x$. Thus there is some $a\in\Or(n)$ so that
  $[L,q]=[N,p]\cdot a$. Since the isotropy group at $[M,f]$ is trivial
  by Claim~\ref{cl: free}, there is an $\Or(n)$-equivariant continuous
  map $\kappa:\SSS\cdot\Or(n)\to\Or(n)$ so that
  $\kappa^{-1}(e)=\SSS$. It follows that
  $e=\kappa([L,q])=\kappa([N,p]\cdot a)=\kappa([N,p])\,a=a$, obtaining
  $[L,q]=[N,p]$, which completes the proof of Claim~\ref{cl: pi:SSS to
    Theta is a homeomorphism}.
	
  According to Claim~\ref{cl: pi:SSS to Theta is a homeomorphism}, the
  inverse of $\pi:\SSS\to\Theta$ defines a continuous local section
  $\sigma:\Theta\to\QQ_{*,\text{\rm lnp}}^\infty(n)$ of
  $\pi:\QQ_{*,\text{\rm lnp}}^\infty(n)\to\MM_{*,\text{\rm
      lnp}}^\infty(n)$. By the existence of continuous local sections,
  and since the $\Or(n)$-action on $\QQ_{*,\text{\rm lnp}}^\infty(n)$
  is continuous and free (Remark~\ref{r: pi is cont}-\eqref{i: the
    O(n)-action is cont} and Claim~\ref{cl: free}), it easily follows
  that $\pi:\QQ_{*,\text{\rm lnp}}^\infty(n)\to\MM_{*,\text{\rm
      lnp}}^\infty(n)$ admits the structure of an $\Or(n)$-principal
  bundle.
	
  By Claim~\ref{cl: free}, $\pi_{\TT_*(n)}^{-1}([M,x])$ canonically
  becomes an orthogonal vector space for each
  $[M,x]\in\MM_{*,\text{\rm lnp}}^\infty(n)$, and we can canonically
  identify $\pi_{\QQ_*^{-1}(n)}([M,x])$ to the set of linear
  isometries $\pi_{\TT_*(n)}^{-1}([M,x])\to\R^n$. The continuity of
  the mapping $([M,f],[M,\xi])\mapsto[M,f]([M,\xi])$ is easy to check.
  By using this identity, we get a homeomorphism
  $\theta:\pi_{\TT_*(n)}^{-1}(\Theta)\to\R^n\times\Theta$ defined by
  $\theta([M,\xi])=(\sigma([M,x])([M,\xi]),[M,x])$, where
  $\pi([M,\xi])=[M,x]$, whose inverse map is given by
  $\theta^{-1}(v,[M,x])=[M,\sigma([M,x])^{-1}(v)]$. If
  $\sigma':\Theta'\to\QQ_{*,\text{\rm lnp}}^\infty(n)$ is another
  local section of $\pi:\QQ_{*,\text{\rm
      lnp}}^\infty(n)\to\MM_{*,\text{\rm lnp}}^\infty(n)$ defining a
  map $\theta':\pi^{-1}(\Theta')\to\R^n\times\Theta'$ as above, and
  $[M,x]\in\Theta\cap\Theta'$, then the composite
  \[
  \begin{CD}
    \R^n\equiv\R^n\times\{[M,x]\} @>{\theta^{-1}}>>
    \pi_{\TT_*(n)}^{-1}([M,x]) @>{\theta'}>>
    \R^n\times\{[M,x]\}\equiv\R^n
  \end{CD}
  \]
  is the orthogonal isomorphism
  $\sigma'([M,x])\circ\sigma([M,x])^{-1}$. It follows that
  $\pi:\TT_{*,\text{\rm lnp}}^\infty(n)\to\MM_{*,\text{\rm
      lnp}}^\infty(n)$, with these local trivializations, becomes an
  orthogonal vector bundle of rank $n$ so that the canonical map
  $T_xM\to\pi^{-1}([M,x])$ is a orthogonal isomorphism for all
  $[M,x]\in\MM_{*,\text{\rm lnp}}^\infty(n)$. Moreover, by
  Claim~\ref{cl: free}, there is a canonical isomorphism between
  $\QQ_{*,\text{\rm lnp}}^\infty(n)$ and the $\Or(n)$-principal bundle
  of orthonormal frames of $\TT_{*,\text{\rm lnp}}^\infty(n)$.
\end{proof}

By the compatibility of exponential maps and isometries, a map
$\exp:\TT_*^\infty(n)\to\MM_*^\infty(n)$ is well defined by setting
$\exp([M,\xi])=[M,\exp_M(\xi)]$. For each $[M,x]\in\MM_*^\infty(n)$,
the restriction $\exp:\pi^{-1}([M,x])\to\MM_*^\infty(n)$ may be
denoted by $\exp_{[M,x]}$.

\begin{lem}\label{l: double convergence} 
  Consider convergent sequences $[M_i,f_i]\to[M,f]$ and
  $[M_i,f'_i]\to[M,f']$ in $\QQ_*^\infty(n)$ for some $n\in\Z^+$.  Let
  $x=\pi(f)$, $x'=\pi(f')$, $x_i=\pi_i(f_i)$ and
  $x'_i=\pi_i(f'_i)$. Suppose that there is some $r>0$ such that
  \begin{equation}\label{double convergence}
    \left\{h\in\Iso(M)\ |\ h(x)\in\ol B(x,2r)\right\}=\{\id_M\}\;,
  \end{equation} 
  and $d(x,x'),d_i(x_i,x'_i)\le r$ for all $i$. Then there is some
  compact domain $\Omega$ in $M$ whose interior contains $x$ and $x'$,
  and there are $C^\infty$ embeddings $\phi_i:\Omega\to M_i$ for $i$
  large enough so that $\phi_{i*}(f)=f_i$ and
  $\lim_i\phi_{i*}^{-1}(f'_i)=f'$ in $PM$, and
  $\lim_i\phi_i^*g_i=g|_\Omega$ with respect to the $C^\infty$
  topology.
\end{lem}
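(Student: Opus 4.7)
The plan is to extract the embeddings $\phi_i$ on a fixed compact domain $\Omega$ from the convergence $[M_i,f_i]\to[M,f]$, show that $x_i'$ lies in the image, and then establish the pointwise convergence of $\phi_{i*}^{-1}(f_i')$ to $f'$ by a rigidity argument that invokes the non-periodicity hypothesis. Concretely, I would take $\Omega$ compact with $\ol B(x,2r)\subset\Int\Omega$, so that $x'\in\Int\Omega$ since $d(x,x')\le r$; then Definition~\ref{d: C^infty convergence in TT_*(n) and QQ_*(n)} produces $C^\infty$ pointed embeddings $\phi_i:(\Omega,x)\to(M_i,x_i)$ with $\phi_{i*}(f)=f_i$ and $\phi_i^*g_i\to g|_\Omega$ in $C^\infty$. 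Since these are eventually $(1+o(1))$-quasi-isometries, and $d_i(x_i,x_i')\le r$, the point $x_i'$ lies in $\phi_i(\Int\Omega)$ for $i$ large, so $h_i:=\phi_{i*}^{-1}(f_i')$ is defined.

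To prove $h_i\to f'$ in $PM$, suppose the contrary. The frames $h_i$ are orthonormal with respect to $\phi_i^*g_i$, which converges uniformly to $g|_\Omega$, so they stay in a compact subset of the frame bundle restricted to $\Omega$; extracting a subsequence, I may assume $h_i\to h\in P_zM$ with $(z,h)\ne(x',f')$. To derive a contradiction I fix an exhaustion $\Omega=\Omega_1\subset\Omega_2\subset\cdots$ of $M$ by compact domains, and use the two given convergences to extract embeddings $\phi_i^{(N)}:\Omega_N\to M_i$ and $\psi_i^{(N)}:\Omega_N\to M_i$ satisfying the analogous properties at $x$ and $x'$ respectively. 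The compositions $(\phi_i^{(N)})^{-1}\circ\psi_i^{(N)}$ are defined on neighborhoods of $x'$ whose size grows with $N$, and are equi-quasi-isometries of all orders by Proposition~\ref{p: quasi-isometries of order m}. Proposition~\ref{p: Arzela-Ascoli} combined with a diagonal extraction yields a subsequence converging in $C^\infty$ on compact subsets to an isometric immersion $\eta:M\to M$; running the reverse composition $(\psi_i^{(N)})^{-1}\circ\phi_i^{(N)}$ provides an inverse, forcing $\eta\in\Iso(M)$.

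The crucial estimate is that $\eta(x')=\lim(\phi_i^{(N)})^{-1}(x_i')$, and since $\phi_i^{(N)}$ is nearly isometric with $\phi_i^{(N)}(x)=x_i$, we get $d(x,\eta(x'))=\lim d_i(x_i,x_i')\le r$. Because $\eta$ is an isometry, $d(\eta(x),\eta(x'))=d(x,x')\le r$, and the triangle inequality gives $d(x,\eta(x))\le 2r$. The hypothesis then forces $\eta=\id_M$, so $\eta(x')=x'$ and $\eta_*(f')=f'$. To transport this back to $\phi_i$, I compare with $\phi_i^{(N)}$ for $N$ large enough that $\Omega_N\supset\Omega$: the map $(\phi_i^{(N)})^{-1}\circ\phi_i:\Omega\to M$ is a pointed local diffeomorphism fixing $(x,f)$, so another application of Proposition~\ref{p: Arzela-Ascoli}, together with the rigidity of local isometries of connected Riemannian manifolds under $1$-jet conditions, yields $(\phi_i^{(N)})^{-1}\circ\phi_i\to\id_\Omega$ in $C^\infty$. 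Combining gives $\phi_{i*}^{-1}(f_i')\to f'$, contradicting the assumption.

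The main obstacle is the bookkeeping required to build the global isometry $\eta$: the embeddings $\phi_i^{(N)}$ and $\psi_i^{(N)}$ depend on $N$, and one must use a diagonal extraction together with compatibility of limits across $N$ to assemble a well-defined isometric immersion on all of $M$. The non-periodicity condition, applied at precisely the radius $2r$ produced by the triangle inequality, is what pins the limiting isometry down to the identity and thereby pins the desired frame limit.
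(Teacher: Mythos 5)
Your proof is correct and follows essentially the same route as the paper's: argue by contradiction, use Proposition~\ref{p: Arzela-Ascoli} together with a diagonal extraction on the composed transition maps $(\phi_i^{(N)})^{-1}\circ\psi_i^{(N)}$ to assemble a global isometry $\eta$ of $M$ with $d(x,\eta(x))\le 2r$, and invoke~\eqref{double convergence} to force $\eta=\id_M$, hence $\eta_*(f')=f'$. The paper sidesteps your final transport-back comparison of $\phi_i$ with $\phi_i^{(N)}$ by letting the conclusion's $\Omega$ and $\phi_i$ simply be $\Omega_{q+1}$ and $\phi_{q+1,i}$ from the very exhaustion used to build the isometry (and it controls the composition domains via the penumbra condition $\Pen(\Omega_q,\diam\Omega_q)\subset\Int\Omega_{q+1}$, which your write-up leaves implicit), but these are bookkeeping variations on the same argument rather than a different idea.
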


\begin{proof} 
  Let $\Omega_q$ be a sequence of compact domains in $M$ such that
  \[
  \ol B(x,r)\subset\Int(\Omega_q)\;,\quad
  \Pen(\Omega_q,\diam(\Omega_q))\subset\Int(\Omega_{q+1})\;;
  \]
  in particular, $x'\in\Int(\Omega_q)$. By the convergence
  $[M_i,f_i]\to[M,f]$ and $[M_i,f'_i]\to[M,f']$ in $\QQ_*^\infty(n)$,
  for each $q$, there are $C^\infty$ embeddings
  $\phi_{q,i},\psi_{q,i}:\Omega_q\to M_i$ for $i$ large enough so that
  $\phi_{q,i*}(f)=f_i$, $\psi_{q,i*}(f')=f'_i$, and
  $\lim_i\phi_{q,i}^*g_i=g|_{\Omega_q}$ and
  $\lim_i\psi_{q,i}^*g_i=g|_{\Omega_q}$ with respect to the $C^\infty$
  topology; in particular, $\phi_{q,i}(x)=x_i$ and
  $\psi_{q,i}(x')=x'_i$. We have $x'_i\in\ol
  B_i(x_i,r)\subset\Int(\phi_{q,i}(\Omega_q))$ for $i$ large enough,
  depending on $q$, and therefore
  $\phi_{q,i}(\Omega_q)\cap\psi_{q,i}(\Omega_q)\ne\emptyset$. Hence
  \[
  \psi_{q,i}(\Omega_q)\subset\Pen_i(\phi_{q,i}(\Omega_q),\diam(\phi_{q,i}(\Omega_q)))
  \subset\Int(\phi_{q,i}(\Omega_{q+1}))
  \]
  for $i$ large enough, depending on $q$. It follows that
  $h_{q,i}:=\phi_{q+1,i}^{-1}\psi_{q,i}$ is a well defined $C^\infty$
  embedding $\Omega_q\to M$. Observe that
  $\lim_ih_{q,i}^*g=g|_{\Omega_q}$ with respect to the $C^\infty$
  topology. Moreover
  \begin{multline*}
    \limsup_id(x,h_{q,i}(x))=\limsup_id(x,\phi_{q+1,i}^{-1}\psi_{q,i}(x))
    =\limsup_id_i(x_i,\psi_{q,i}(x))\\
    \le\limsup_id_i(x_i,x'_i)+\limsup_id_i(x'_i,\psi_{q,i}(x))\le
    r+d(x',x)\le2r\;.
  \end{multline*}
  If the statement is not true, then some neighborhood $U$ of $f'$ in
  $PM$ contains no accumulation point of the sequence
  $\phi_{q+1,i*}^{-1}(f'_i)=\phi_{q+1,i*}^{-1}\psi_{q,i*}(f')=h_{q,i*}(f')$
  for each $q$. With the arguments of the proof of Lemma~\ref{l: LL},
  it follows that there is some $h\in\Iso(M)$ such that
  $d(x,h(x))\le2r$ and $h_*(f')\not\in U$, which
  contradicts~\eqref{double convergence}.
\end{proof}

\section{Center of mass}\label{s: center of mass}

The main tool used to prove Theorem~\ref{t: FF_*,lnp(n)}-\eqref{i:
  FF_*,lnp(n) is foliated structure}--\eqref{i: MM_*,np(n)} is the
Riemannian center of mass of a mass distribution on a Riemannian
manifold $M$ \cite{Karcher1977}, \cite[Section~IX.7]{Chavel2006};
especially, we will use the continuous dependence of the center of
mass on the mass distribution and the metric tensor.

Recall that a domain $\Omega\subset M$ is
  said to be \emph{convex} when, for all $x,y\in\Omega$, there is a unique 
  minimizing geodesic segment from $x$ to $y$ in $M$ that 
  lies in $\Omega$ (see e.g.\ \cite[Section~IX.6]{Chavel2006}). 
  For example, sufficiently small balls are convex. For a fixed convex compact domain
$\Omega$ in $M$, let $\CC(\Omega)$ be the set of functions $f\in
C^2(\Omega)$ such that the gradient $\grad f$ is an outward pointing
vector field on $\partial\Omega$ and $\Hess f$ is positive definite on
the interior $\Int(\Omega)$ of $\Omega$. Notice that $\CC(\Omega)$ is
open in the Banach space $C^2(\Omega)$ with the norm $\|\
\|_{C^2,\Omega,g}$, and thus it is a $C^\infty$ Banach
manifold. Moreover $\CC(\Omega)$ is preserved by the operations of sum
and product by positive numbers. Any $f\in\CC(\Omega)$ attains its
minimum value at a unique point $\bm(f)\in\Int(\Omega)$, defining a
function $\bm:\CC(\Omega)\to\Int(\Omega)$.

\begin{lem}\label{l: convex}
	$\bm$ is continuous.
\end{lem}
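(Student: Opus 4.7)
The plan is to argue by a subsequential compactness argument. Suppose $f_k\to f$ in $\CC(\Omega)\subset C^2(\Omega)$ with respect to $\|\,\cdot\,\|_{C^2,\Omega,g}$, and write $x_k=\bm(f_k)\in\Int(\Omega)$. Since $\Omega$ is compact, it suffices to show that every subsequential limit $x^{\ast}\in\Omega$ of $(x_k)$ coincides with $\bm(f)$; this then forces the full sequence to converge to $\bm(f)$, giving continuity at $f$.

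First I would note that $C^2$-convergence $f_k\to f$ implies in particular uniform convergence $\grad f_k\to\grad f$ on $\Omega$. Since each $x_k$ is an interior minimum of $f_k$, one has $\grad f_k(x_k)=0$, so passing to the limit yields $\grad f(x^{\ast})=0$. The hypothesis that $\grad f$ points outward on $\partial\Omega$ rules out $x^{\ast}\in\partial\Omega$, so $x^{\ast}\in\Int(\Omega)$ is a critical point of $f$.

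Next I would use the convexity of $\Omega$ together with $\Hess f>0$ on $\Int(\Omega)$ to show that critical points of $f$ in $\Int(\Omega)$ are unique, which then gives $x^{\ast}=\bm(f)$. For two such critical points $p,q$, the unique minimizing geodesic $\gamma\colon[0,1]\to\Omega$ joining them (furnished by convexity of $\Omega$) produces a $C^2$ function $\phi(t)=f(\gamma(t))$ with $\phi'(0)=\phi'(1)=0$; continuity of $\Hess f$ gives $\phi''\ge0$ on the full interval, while $p,q\in\Int(\Omega)$ plus openness of $\Int(\Omega)$ give $\phi''>0$ near the two endpoints. A strictly convex function on a small interval cannot have vanishing derivative at an endpoint without strict monotonicity of $\phi'$ just inside, contradicting $\phi'\ge\phi'(0)=0$ throughout $[0,1]$ unless $p=q$. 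Since $\bm(f)\in\Int(\Omega)$ is also a critical point of $f$, we conclude $x^{\ast}=\bm(f)$.

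The argument is largely routine; the only mild subtlety is that the connecting geodesic could touch $\partial\Omega$ at intermediate times, so one must pass to a semi-definite Hessian on all of $\Omega$ by continuity and isolate the strict convexity near the endpoints. An alternative, perhaps conceptually cleaner, approach is the Banach-space implicit function theorem applied to the map $F\colon\CC(\Omega)\times\Int(\Omega)\to TM$ defined by $F(f,x)=\grad_xf$: its partial derivative in $x$ at $(f,\bm(f))$ is $\Hess f(\bm(f))$, invertible by positive definiteness, so $\bm$ depends smoothly on $f$ in a neighborhood of each point of $\CC(\Omega)$.
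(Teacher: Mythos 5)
Your main argument is correct, but it takes a genuinely different and more elementary route than the paper. You argue by compactness of $\Omega$: every subsequential limit $x^\ast$ of $x_k=\bm(f_k)$ satisfies $\grad f(x^\ast)=0$ (using $C^2\Rightarrow C^1$ convergence and joint continuity of $(f,x)\mapsto\grad f(x)$), lies in $\Int(\Omega)$ because $\grad f$ points outward on $\partial\Omega$, and hence must equal $\bm(f)$ once uniqueness of interior critical points is established; uniqueness follows from the convexity of $\Omega$ and the sign of $\phi''=\Hess f(\gamma',\gamma')$ along the connecting geodesic. The one delicate step — that $\gamma$ may graze $\partial\Omega$ where $\Hess f$ is only semi-definite — you correctly isolate and resolve: $\phi''\ge 0$ on all of $[0,1]$ by continuity and density of $\Int(\Omega)$, while $\phi''>0$ at the endpoints (which lie in $\Int(\Omega)$ and where $\gamma'\ne 0$ if $p\ne q$) forces $\phi'$ to be strictly increasing just past $0$, so $\phi'(1)>0$, contradicting $\phi'(1)=0$. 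The paper instead proves the claim by showing that $\bv(f,x)=\grad f(x)$ is a $C^1$ map of Banach manifolds transverse to the zero section $Z$, so that $\bv^{-1}(Z)=\Graph(\bm)$ is a $C^1$ submanifold; combined with the tangent-space computation this yields not merely continuity but differentiability of $\bm$ (and higher regularity with a $C^{m+2}$ Banach space, as noted in the paper's subsequent remark). Your closing alternative paragraph — the Banach-space implicit function theorem applied to $(f,x)\mapsto\grad f(x)$, using invertibility of $\Hess f(\bm(f))$ — is essentially the paper's own argument stated in IFT language rather than transversality language. In short: your compactness argument is shorter and self-contained and suffices for the stated lemma, while the paper's approach buys the stronger smoothness conclusion recorded in the remark that follows.
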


\begin{proof} 
  Consider the map $\bv:\CC(\Omega)\times\Int(\Omega)\to T\Omega$
  defined by $\bv(f,x)=\grad f(x)$, and let $Z\subset T\Omega$
  denote the image of the zero section.  Since the graph of $\bm$ is
  equal to $\bv^{-1}(Z)$, it is enough to prove the following.

\begin{claim}\label{cl: v is C^1 and transv to Z}
	$\bv$ is $C^1$ and transverse to $Z$.
\end{claim}

Here, smoothness and transversality refer to \(\bv\) considered as a
map between $C^\infty$ Banach manifolds
\cite[p.~45]{AbrahamRobbin1967}.

Let $\pi_{\HH}$ and $\pi_{\VV}$ denote the orthogonal projections of
$T^{(2)}\Omega$ onto $\HH$ and $\VV$, respectively. Let ${\mathfrak
  X}^1(\Omega)$ denote the Banach space of $C^1$ vector fields over
$\Omega$ with the norm $\|\ \|_{C^1,\Omega,g}$, which is equivalent to
the norm $\|\ \|_1$ defined by
\[
\|X\|_1=\sup\left\{\,|X(x)|+|\nabla X(x)|\mid x\in\Omega\,\right\}\;.
\]

The gradient map, $\grad:C^2(\Omega)\to{\mathfrak X}^1(\Omega)$, is a
continuous linear map between Banach spaces, and therefore it is
$C^\infty$.  The evaluation map, $\ev:{\mathfrak
  X}^1(\Omega)\times\Omega\to T\Omega$, is $C^1$ because, if
$X\in{\mathfrak X}^1(\Omega)$, $Y\in T_X{\mathfrak
  X}^1(\Omega)\equiv{\mathfrak X}^1(\Omega)$, $x\in\Omega$ and $\xi\in
T_x\Omega$, then $\ev_*(Y,\xi)\in T_\xi T\Omega$ is easily seen to be
determined by the conditions $\pi_{\HH}(\ev_*(Y,\xi))\equiv\xi$ in
$\HH_\xi\equiv T_x\Omega$ and
$\pi_{\VV}(\ev_*(Y,\xi))\equiv Y(x)+\nabla_\xi X$ in $\VV_\xi\equiv
T_x\Omega$. Therefore $\bv$ is $C^1$ because it is the restriction to
$\CC(\Omega)\times\Int(\Omega)$ of the composition
\[
\begin{CD} 
C^2(\Omega)\times\Omega @>{\grad\times\id_{\Omega}}>> {\mathfrak
X}^1(\Omega)\times\Omega @>{\ev}>> T\Omega\;.
\end{CD}
\]

Fix any $f\in\CC(\Omega)$ and $x\in\Int(\Omega)$ with $\bv(f,x)\in Z$;
thus $\grad f(x)=0_x$.

\begin{claim}\label{cl: pi_VV: v_*(0_f times T_xOmega) to VV_0_x is an iso}
  $\pi_{\VV}:\bv_*(\{0_f\}\times T_x\Omega)\to\VV_{0_x}$ is an
  isomorphism.
\end{claim}

For any $\xi\in T_x\Omega$,
\[
\pi_{\VV}\,\bv_*(0_f,\xi)=\pi_{\VV}\,(\grad f)_*(\xi)\equiv\nabla_\xi\grad f
\]
in $\VV_{0_x}\equiv T_x\Omega$. Then Claim~\ref{cl: pi_VV: v_*(0_f
  times T_xOmega) to VV_0_x is an iso} follows because the mapping
$\xi\mapsto\nabla_\xi\grad f$ is an automorphism of $T_x\Omega$ since
$\Hess f$ is positive definite at $x$ and $\Hess
f(\xi,\cdot)=g(\nabla_\xi\grad f,\cdot)$ on $T_xM$.

From Claim~\ref{cl: pi_VV: v_*(0_f times T_xOmega) to VV_0_x is an
  iso}, it follows that $\bv_*(\{0_f\}\times T_x\Omega)$ is a linear
complement to $\HH_{0_x}=T_{0_x}Z$ in $T_{0_x}T\Omega$; in particular,
it is closed in $T_{0_x}T\Omega$ because $T_{0_x}T\Omega$ is Hausdorff
of finite dimension.

Since $\bv_*:T_f\CC(\Omega)\times T_x\Omega\to T_{0_x}T\Omega$ is
linear and continuous, and $T_{0_x}T\Omega$ is Hausdorff of finite
dimension, we get that the space
$\left(\bv_{*(f,x)}\right)^{-1}(T_{0_x}Z)$ is closed and of finite
codimension in the Banach space $T_f\CC(\Omega)\times T_x\Omega$, and
therefore it has a closed linear complement in $T_f\CC(\Omega)\times
T_x\Omega$ (see e.g.\ \cite[p.~22]{Schaefer1971}), which completes the
proof of Claim~\ref{cl: v is C^1 and transv to Z}.
\end{proof}

\begin{rem}
\begin{enumerate}[(i)]

\item In the last part of the above proof, the space
  $\left(\bv_{*(f,x)}\right)^{-1}(T_{0_x}Z)$ can be described as
  follows.  Since $h\mapsto\grad h(x)$ defines a continuous linear map
  $C^2(\Omega)\to T_x\Omega$, we have
  $\bv_*(T_f\CC(\Omega)\times\{0_x\})\subset\VV_{0_x}$ and
  $\bv_*(h,0_x)\equiv\grad h(x)$ in $\VV_{0_x}\equiv T_x\Omega$ for
  any $h\in C^2(\Omega)\equiv T_f\CC(\Omega)$, giving
\[
	\left(\bv_{*(f,x)}\right)^{-1}(T_{0_x}Z)
	\equiv\{\,(h,\xi)\in C^2(\Omega)\times T_x\Omega\mid \grad h(x)+\nabla_\xi\grad f=0\,\}\;,
\]
which is obviously closed and of finite codimension in
$C^2(\Omega)\times T_x\Omega$.

\item In Lemma~\ref{l: convex}, the map $\bm$ is $C^m$ if the Banach space $C^{m+2}(\Omega)$ is used instead of $C^2(\Omega)$.

\end{enumerate}
\end{rem}

Suppose that the Riemannian manifold $M$
is connected and complete.  Let $(A,\mu)$ be a probability space, $B$ 
a convex open ball of radius $r>0$ in $M$, and
$f:A\to B$ a measurable map, which is called a \emph{mass
distribution} on $B$. Consider the $C^\infty$ function $P_f:B\to\R$ defined by
\[
P_f(x)=\frac{1}{2}\,\int_Ad(x,f(a))^2\,\mu(a)\;.
\]

\begin{prop}[{H.~Karcher  \cite[Theorem~1.2]{Karcher1977}}]\label{p: Karcher}
With the above notation and conditions, the following properties hold:
\begin{enumerate}[{\rm(}i{\rm)}]

\item\label{i: grad P_f} $\grad P_f$ is an outward pointing vector 
field on the boundary $\partial\ol B$.

\item\label{i: Hess P_f} If $\delta>0$ is an upper bound for the 
sectional curvatures of $M$ in $B$, and $2 r<\pi/2\sqrt{\delta}$, then 
$\Hess P_f$ is positive definite on $B$.

\end{enumerate}
\end{prop}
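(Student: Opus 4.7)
The plan is to derive both parts from pointwise identities and inequalities for the squared-distance function $\tfrac12 d(\cdot,y)^2$ on $\bar B$, together with integration against $\mu$. Since $B$ is a convex open ball, for each $y\in \bar B$ there is a unique minimizing geodesic in $\bar B$ from any $x\in B$ to $y$, so $\tfrac12 d(\cdot,y)^2$ is smooth on $B$ with gradient $-\exp_x^{-1}(y)$ by the classical first-variation formula. The map $(x,y)\mapsto\exp_x^{-1}(y)$ is jointly smooth on $B\times\bar B$ with norm bounded by $2r$, so differentiation under the integral sign yields
\begin{equation*}
   \grad P_f(x) = -\int_A\exp_x^{-1}(f(a))\,d\mu(a),\qquad
   \Hess P_f(x)(\xi,\xi) = \int_A \Hess\bigl(\tfrac12\,d(\cdot,f(a))^2\bigr)(x)(\xi,\xi)\,d\mu(a).
\end{equation*}

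For (i), fix $x\in\partial\bar B$ and let $\nu_x=-\exp_x^{-1}(c)/r$ be the outward unit normal, where $c$ denotes the center of $B$. The inequality $\langle\grad P_f(x),\nu_x\rangle>0$ reduces, after expanding both sides from the formula above, to showing $\langle\exp_x^{-1}(y),\exp_x^{-1}(c)\rangle>0$ for every $y\in B$. I would extract this from strict convexity of the function $t\mapsto d(\gamma(t),c)^2$ along the minimizing geodesic $\gamma\colon[0,1]\to\bar B$ from $x$ to $y$: this function equals $r^2$ at $t=0$, is strictly less than $r^2$ at $t=1$ since $y\in B$, and hence has strictly negative derivative at $t=0$. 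Unwinding the first-variation formula for $\tfrac12\,d(\cdot,c)^2$ turns that into the desired inner-product inequality, and integration against $\mu$ then yields (i).

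For (ii), the Rauch--Berger Hessian comparison theorem, applied under the bound $\mathrm{Sec}\le\delta$, states that for $\xi=\xi^\parallel+\xi^\perp$ decomposed orthogonally with respect to the minimizing geodesic from $x$ to $y$ of length $\rho=d(x,y)$,
\begin{equation*}
   \Hess\bigl(\tfrac12\,d(\cdot,y)^2\bigr)(\xi,\xi) \ \ge\ |\xi^\parallel|^2 + \rho\sqrt{\delta}\cot(\rho\sqrt{\delta})\,|\xi^\perp|^2.
\end{equation*}
Under the hypothesis $2r<\pi/(2\sqrt{\delta})$ we have $\rho\sqrt{\delta}\le 2r\sqrt{\delta}<\pi/2$, and since $u\mapsto u\cot u$ is positive and decreasing on $(0,\pi/2)$, the coefficient of $|\xi^\perp|^2$ is at least the strictly positive constant $2r\sqrt{\delta}\cot(2r\sqrt{\delta})$. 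The integrand therefore dominates $\min\{1,\,2r\sqrt{\delta}\cot(2r\sqrt{\delta})\}\,|\xi|^2$ pointwise in $a$, and integration gives (ii).

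The main technical input is the Rauch--Berger Hessian comparison invoked in the last step, which is classical but whose precise quantitative form needs to be stated carefully. A subtler interpretive point is the strict convexity of $d(\cdot,c)^2$ used in (i): pure geodesic convexity of $\bar B$ need not by itself imply strict convexity of the squared distance to the center, so the cleanest route would be to establish (ii) first, apply it to the Dirac measure concentrated at $c$ to obtain the required strict convexity, and then deduce (i).
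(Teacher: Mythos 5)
The paper gives no proof of this proposition; it simply cites Karcher's Theorem~1.2, so there is no internal argument to compare your proposal against. What you wrote is a correct reconstruction of the standard argument, and it is essentially Karcher's own. In particular: differentiation under the integral sign is justified by the joint smoothness and compactness you point out; the first-variation identity $\grad\bigl(\tfrac12 d(\cdot,y)^2\bigr)(x)=-\exp_x^{-1}(y)$ is right; the reduction of (i) to $\langle\exp_x^{-1}(y),\exp_x^{-1}(c)\rangle>0$ is right; and the Hessian comparison inequality you invoke for (ii) (upper sectional curvature bound $\delta$ gives $\Hess\rho\ge\sqrt\delta\cot(\rho\sqrt\delta)$ on the orthogonal complement, hence $\Hess(\tfrac12\rho^2)\ge|\xi^\parallel|^2+\rho\sqrt\delta\cot(\rho\sqrt\delta)|\xi^\perp|^2$ for $\rho\sqrt\delta<\pi/2$) is stated with the correct direction of inequality and the correct monotonicity of $u\mapsto u\cot u$, so integration against $\mu$ gives positive definiteness.

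Two small remarks. First, you only need weak, not strict, convexity of $t\mapsto d(\gamma(t),c)^2$ in (i): if $\phi$ is convex with $\phi(0)=r^2$ and $\phi(1)<r^2$, then already $\phi'(0)\le\phi(1)-\phi(0)<0$. Second, and more substantively, the fix you propose for (i) --- establish (ii) first and apply it to the constant map $f\equiv c$ to get convexity of $\tfrac12 d(\cdot,c)^2$ --- does require the curvature bound $2r<\pi/(2\sqrt\delta)$, whereas the proposition as transcribed in the paper lists that hypothesis only under (ii). Your instinct that mere geodesic convexity of $\ol B$ does not by itself force convexity of $d(\cdot,c)^2$ is correct, so without the curvature hypothesis your route to (i) does not close. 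This is not really a flaw in your argument: in Karcher's original statement the curvature bound and radius restriction are blanket hypotheses of the whole theorem rather than being confined to the Hessian conclusion, and the paper's rephrasing is best read the same way. Under that reading your proof is complete.
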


If the hypotheses of Proposition~\ref{p: Karcher} are satisfied, then 
$P_f\in\CC(\ol B)$, and therefore $P_f$ reaches its minimum on $\ol B$ 
at a unique point $\CC_f\in B$, which is called the \emph{center of 
mass} of $f$. It is known that $\CC_f$ depends continuously on $f$ 
with respect to the supremum distance when $(A,\mu)$ is fixed 
\cite[Corollary~1.6]{Karcher1977}; indeed, the following result 
follows directly from Lemma~\ref{l: convex}.

\begin{cor}\label{c:center of mass}
        \begin{enumerate}[{\rm(}i{\rm)}]

                \item $\CC_f$ depends continuously on $f$ and the metric tensor of $M$.

                \item If $A$ is the Borel $\sigma$-algebra of a metric space, then 
$\CC_f$ depends continuously on $\mu$ in the weak-$*$ topology.
\end{enumerate}
\end{cor}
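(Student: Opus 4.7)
The plan is to reduce both parts to Lemma~\ref{l: convex} through the identity $\CC_f = \bm(P_f)$, which holds because $P_f \in \CC(\ol B)$ by Proposition~\ref{p: Karcher}. It will therefore suffice to verify that $P_f$ depends continuously on the relevant parameters as an element of the Banach space $C^2(\ol B)$; Lemma~\ref{l: convex} will then give the desired continuity of $\CC_f$.

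For (i), where both $f$ and the metric $g$ are allowed to vary, I would first observe that, since $B$ is convex, the squared distance $(x, y) \mapsto d^g(x, y)^2$ is smooth on $\ol B \times \ol B$, and its partial derivatives up to order two in $x$ depend continuously on $(y, g)$ as $g$ varies in the $C^\infty$ topology; this follows from the identity $\grad_x(d^g(x, y)^2/2) = -\exp_x^{-1}(y)$ together with the standard Jacobi field expression for the Hessian of the squared distance. Consequently $(f, g) \mapsto P_f$ is continuous into $C^2(\ol B)$. The remaining subtlety is that Lemma~\ref{l: convex} was stated for a fixed metric, so one must confirm that its conclusion persists when $g$ is also allowed to vary. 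The proof adapts verbatim once $g$ is introduced as an extra $C^1$ parameter in $\bv(f, x; g) = \grad^g f(x)$: the transversality of $\bv$ to the zero section $Z$ at any preimage is already witnessed by the partial derivative in $x$ alone (Claim~\ref{cl: pi_VV: v_*(0_f times T_xOmega) to VV_0_x is an iso} in the proof of Lemma~\ref{l: convex}), so the implicit function theorem yields continuous dependence of the critical point, and hence of $\CC_f$, on $(f, g)$. The norm equivalence~\eqref{norm equiv} ensures that the $C^2$-topology on $C^2(\ol B)$ behaves well as $g$ varies.

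For (ii), where $A$ is a metric space, $f : A \to B$ is a bounded continuous map, and $\mu$ varies among Borel probability measures with the weak-$*$ topology, differentiation under the integral sign writes each partial derivative of $P_f$ up to order two at $x \in \ol B$ as $\int_A H^\alpha(a, x)\,d\mu(a)$ for bounded continuous functions $H^\alpha$ on $A \times \ol B$. Weak-$*$ convergence $\mu_n \to \mu$ therefore yields pointwise convergence of the $C^2$-jet of $P_f$ on $\ol B$; equicontinuity in $x$ uniform in the measure (granted by uniform continuity of $H^\alpha$ on the compact set $A \times \ol B$ when $A$ itself is compact, or by a standard tightness argument otherwise) upgrades this to convergence in $C^2(\ol B)$. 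Lemma~\ref{l: convex} then concludes.

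The main obstacle is the joint-in-$g$ version of Lemma~\ref{l: convex} needed in part (i): while not a substantive difficulty, it does require rerunning the implicit function argument with $g$ treated as an additional $C^1$ parameter, rather than invoking the lemma verbatim.
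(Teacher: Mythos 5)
Your proof is correct and matches the paper's intent: the paper simply asserts that the corollary ``follows directly from Lemma~\ref{l: convex}'' via the identity $\CC_f=\bm(P_f)$, and you supply the supporting details of this reduction. The one place you work harder than necessary is the ``joint-in-$g$'' concern in part (i). You do not need a parametrized version of Lemma~\ref{l: convex}, because $\bm$ returns the unique minimum point of a function, which is a metric-independent notion, and by~\eqref{norm equiv} the topology on $C^2(\ol B)$ is also metric-independent. So once you have checked that $(f,g')\mapsto P_f^{g'}$ is continuous into $C^2(\ol B)$ (your Jacobi-field argument), it follows that for $g'$ close to a fixed $g$ the function $P_f^{g'}$ still lies in the open set $\CC(\ol B)$ taken with respect to $g$, and Lemma~\ref{l: convex} for that fixed $g$ already gives continuity of $(f,g')\mapsto\bm(P_f^{g'})=\CC_f^{g'}$; there is no need to rerun the implicit function theorem with $g$ as an extra parameter. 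In part (ii) you may also sidestep the tightness discussion by pushing $\mu$ forward under $f$ to the compact ball $\ol B$, so that the integrand is automatically uniformly continuous on a compact set and convergence in $C^2(\ol B)$ is immediate.
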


\section{Foliated structure of $\MM_{*,\text{\rm
      lnp}}^\infty(n)$}\label{s: foliated structure}

The goal of this section is to prove 
Theorem~\ref{t: FF_*,lnp(n)}-\eqref{i: FF_*,lnp(n) is foliated structure}--\eqref{i: MM_*,np(n)}.

For any point $[M,x]\in\MM_{*,\text{\rm lnp}}^\infty(n)$, choose some
$r,\epsilon>0$ and some neighborhood $\NN_0$ of $[M,x]$ in
$\MM_{*,\text{\rm lnp}}^\infty(n)$ satisfying the statement of
Lemma~\ref{l: NN} with $\epsilon\le r/5$. Using \cite[Chapter~6, Theorem~3.6]{Petersen1998}, we can assume that
$\epsilon$ and $\NN_0$ are so small that $B_L(y,\epsilon)$ satisfies
the conditions of Proposition~\ref{p: Karcher} in $L$ for all
$[L,y]\in\NN_0$.  Take any continuous function
$\lambda:\MM_*^\infty(n)\to[0,1]$ supported in $\NN_0$ and with
$\lambda([M,x])=1$, whose existence is a simple consequence of the
metrizability of $\MM_*^\infty(n)$ (Theorem~\ref{t: C^infty
  convergence in MM_*(n)}). For $[L,y]\in\NN_0$, let $\omega_L$ denote
the Riemannian density of $L$, and let $\lambda_{L,y}:L\to[0,1]$ be the
function defined by
\[
\lambda_{L,y}(z)=
\begin{cases}
  \lambda([L,z]) & \text{if $d_L(y,z)\le\epsilon$}\\
  0 & \text{if $d_L(y,z)\ge\epsilon$}\;,
\end{cases}
\]
which is well defined and continuous by Lemma~\ref{l: NN}. Take
another neighborhood $\NN\subset\NN_0$ of $[M,x]$ where
$\lambda>0$. For $[L,y]\in\NN$, we have
$\int_L\lambda_{L,y}\,\omega_L>0$, and set
\[
\bar\lambda_{L,y}=\frac{\lambda_{L,y}}{\int_L\lambda_{L,y}\,\omega_L}\;.
\]
Then $\mu_{L,y}=\bar\lambda_{L,y}\,\omega_L$ is a continuous  density 
defining a probability measure on $L$, and the identity map
$(L,\mu_{L,y})\to L$ is a distribution of mass on $L$ satisfying the
conditions of Proposition~\ref{p: Karcher} with
$B_L(y,\epsilon)$. Thus its center of mass, $\CC_{L,y}$, is defined in
$B_L(y,\epsilon)$. Let $\bc:\NN\to\MM_*^\infty(n)$ be the map given by
$\bc([L,y])=[L,\CC_{L,y}]$.

\begin{lem}\label{l: d_L(y,y') le epsilon} 
  If $[L,y],[L,y']\in\NN$ and $d_L(y,y')\le\epsilon$, then
  $\bc([L,y])=\bc([L,y'])$.
\end{lem}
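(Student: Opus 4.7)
The plan is to prove the stronger statement $\CC_{L,y}=\CC_{L,y'}$ as points of $L$, from which $\bc([L,y])=\bc([L,y'])$ follows at once. I would carry this out in two steps: (a) the mass distributions $\mu_{L,y}$ and $\mu_{L,y'}$ agree as measures on $L$; (b) their common center of mass is unique because the two a priori candidates both lie in a ball where $P_\mu$ is strictly convex.

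For (a), observe that $\lambda_{L,y}$ and $\lambda_{L,y'}$ automatically coincide on $B_L(y,\epsilon)\cap B_L(y',\epsilon)$ (both equal $\lambda([L,z])$) and on the complement of $B_L(y,\epsilon)\cup B_L(y',\epsilon)$ (both vanish by the cutoff). The task is to deal with the symmetric difference. If $z\in B_L(y,\epsilon)\sm B_L(y',\epsilon)$ and $\lambda([L,z])>0$, then $[L,z]\in\supp\lambda\subset\NN_0$; applying Lemma~\ref{l: NN}, which was built into $\NN_0$, to the pair $[L,y'],[L,z]\in\NN_0$ in the common leaf $\iota(L)$ forces $d_L(y',z)<\epsilon$ or $d_L(y',z)>r$. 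The first contradicts $z\notin B_L(y',\epsilon)$, while the second is ruled out by $d_L(y',z)\le d_L(y',y)+d_L(y,z)\le 2\epsilon\le 2r/5<r$. Hence $\lambda_{L,y}(z)=\lambda([L,z])=0$, matching $\lambda_{L,y'}(z)=0$; the symmetric argument covers the other half. It follows that $\lambda_{L,y}=\lambda_{L,y'}$, and after normalization $\mu_{L,y}=\mu_{L,y'}=:\mu$.

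For (b), I would assume from the outset (a harmless tightening of the construction) that $\epsilon$ was chosen small enough for $B_L(y,3\epsilon)$ to satisfy the hypotheses of Proposition~\ref{p: Karcher} for all $[L,y]\in\NN_0$. Proposition~\ref{p: Karcher}-\eqref{i: grad P_f}, applied to the balls $B_L(y,\epsilon)$ and $B_L(y',\epsilon)$, shows that $\CC_{L,y}$ and $\CC_{L,y'}$ are interior critical points of $P_\mu$. Since $d_L(y,y')\le\epsilon$, they both sit inside $B_L(y,2\epsilon)\subset B_L(y,3\epsilon)$, where $P_\mu$ is strictly convex by Proposition~\ref{p: Karcher}-\eqref{i: Hess P_f}; strict convexity forces uniqueness of critical points, so $\CC_{L,y}=\CC_{L,y'}$. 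The most delicate point is step (a): the equality of measures is not visible from the construction itself, and it is essential that the dichotomy of Lemma~\ref{l: NN} hold on the large neighborhood $\NN_0$ containing $\supp\lambda$, not merely on $\NN$, since the test point $[L,z]$ is only guaranteed to lie in $\NN_0$.
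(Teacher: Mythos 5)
Your proof is correct, and your step (a) --- establishing $\lambda_{L,y}=\lambda_{L,y'}$ by a case analysis on $z$ that invokes the Lemma~\ref{l: NN} dichotomy on $\NN_0$ together with the triangle inequality and $5\epsilon\le r$ --- is the same argument as the paper's, merely organized around the symmetric difference of the two $\epsilon$-balls rather than around the pair of conditions $d_L(y,z)\le\epsilon$ and $d_L(y',z)\le\epsilon$. Where you genuinely add something is step (b). The paper passes directly from $\lambda_{L,y}=\lambda_{L,y'}$ to $\CC_{L,y}=\CC_{L,y'}$ without comment, but this is not automatic: $\CC_{L,y}$ and $\CC_{L,y'}$ are by definition the minimizers of the same potential $P_\mu$ over two a priori different closed balls, $\ol B_L(y,\epsilon)$ and $\ol B_L(y',\epsilon)$. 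Your resolution --- tighten the choice of $\epsilon$ and $\NN_0$ so that $B_L(y,3\epsilon)$ already satisfies the Karcher hypotheses, note that both centers lie in $B_L(y,2\epsilon)$ and are critical points of $P_\mu$, and use strict convexity of $P_\mu$ on the larger ball --- is valid, and the tightening is indeed harmless: it only shrinks $\epsilon$ and $\NN_0$, which costs nothing elsewhere in Section~\ref{s: foliated structure}. (An alternative that avoids any tightening would be to observe that $\CC_{L,y}$ lies in the geodesic convex hull of $\supp\mu_{L,y}\subset B_L(y,\epsilon)\cap B_L(y',\epsilon)$, hence already in the convex ball $B_L(y',\epsilon)$, where it must coincide with the unique critical point $\CC_{L,y'}$; but that uses the standard fact that a Karcher mean lies in the convex hull of the support, which the paper does not state, so your route is the more self-contained one relative to what the paper actually proves.)
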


\begin{proof}
  Take any point $z\in L$. If $[L,z]\not\in\NN_0$ or
  $d_L(y,z),d_L(y',z)>\epsilon$, then
  $\lambda_{L,y}(z)=\lambda_{L,y'}(z)=0$. If $[L,z]\in\NN_0$ and
  $d_L(y,z)\le\epsilon$, then $d_L(y',z)\le2\epsilon$, obtaining
  $d_L(y',z)\le\epsilon$ by Lemma~\ref{l: NN} since $5\epsilon\le r$,
  and therefore
  $\lambda_{L,y}(z)=\lambda_{L,y'}(z)=\lambda([L,z])$. If
  $[L,z]\in\NN_0$ and $d_L(y',z)\le\epsilon$, we similarly get
  $\lambda_{L,y}(z)=\lambda_{L,y'}(z)$. Thus
  $\lambda_{L,y}=\lambda_{L,y'}$, obtaining $\CC_{L,y}=\CC_{L,y'}$,
  and therefore $\bc([L,y])=\bc([L,y'])$.
\end{proof}

\begin{lem}\label{l: bc is cont} 
	$\bc$ is continuous.
\end{lem}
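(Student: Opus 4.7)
The plan is to reduce the continuity of $\bc$ to the continuity of the center of mass on a fixed compact domain, as formulated in Corollary~\ref{c:center of mass}. Given a convergent sequence $[L_i,y_i]\to[L,y]$ in $\NN$, I will first choose a compact domain $\Omega\subset L$ whose interior contains $\ol B_L(y,\epsilon)$, and invoke Lemma~\ref{l: Lessa} to obtain pointed $C^\infty$ embeddings $\phi_i\colon(\Omega,y)\to(L_i,y_i)$ with $\tilde g_i:=\phi_i^*g_{L_i}\to g_L|_\Omega$ in the $C^\infty$ topology. For $i$ large enough $\ol B_{L_i}(y_i,\epsilon)\subset\phi_i(\Int\Omega)$, so $\CC_{L_i,y_i}=\phi_i(\tilde c_i)$ for a unique $\tilde c_i\in\Int\Omega$; the goal becomes to establish $\tilde c_i\to\CC_{L,y}$ in $L$ and then transport this convergence back to $\MM_*^\infty(n)$.

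The heart of the argument is to show that the pulled-back mass distributions $\tilde\mu_i:=\phi_i^*(\bar\lambda_{L_i,y_i}\,\omega_{L_i})=(\bar\lambda_{L_i,y_i}\circ\phi_i)\,\phi_i^*\omega_{L_i}$ converge to $\mu_{L,y}$ uniformly on $\Omega$ as continuous densities. The factor $\phi_i^*\omega_{L_i}\to\omega_L$ converges in $C^\infty$ directly from $\tilde g_i\to g_L|_\Omega$, so the delicate point is the coefficient $\bar\lambda_{L_i,y_i}\circ\phi_i$, for which it suffices to establish $\lambda\circ\iota_{L_i}\circ\phi_i\to\lambda\circ\iota_L$ uniformly on $\ol B_L(y,\epsilon)$. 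The pointwise statement $[L_i,\phi_i(x)]\to[L,x]$ at each $x\in\Omega$ follows by applying Lemma~\ref{l: Lessa} to restrictions $\phi_i|_{\Omega'}$ for any compact test domain $\Omega'\subset\Omega$ containing $x$ (after enlarging $\Omega$ in advance and using a diagonal argument when $L$ is noncompact). The upgrade to uniform convergence on the compact set $\ol B_L(y,\epsilon)$ then follows from continuity of $\lambda$ together with a diffeomorphism trick in the spirit of Lemma~\ref{l: delta}: given $x_i\to x_*$ in $\ol B_L(y,\epsilon)$, one composes $\phi_i$ with $C^\infty$ diffeomorphisms $\sigma_i$ of $L$ supported in a fixed compact set, satisfying $\sigma_i(x_*)=x_i$ and $\sigma_i\to\id_L$ in $C^\infty$, which gives $[L_i,\phi_i(x_i)]\to[L,x_*]$ in $\MM_*^\infty(n)$ and rules out failure of uniform convergence by a contradictory subsequence.

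With $\tilde g_i\to g_L|_\Omega$ in $C^\infty$ and $\tilde\mu_i\to\mu_{L,y}$ uniformly in place, Corollary~\ref{c:center of mass} yields $\tilde c_i\to\CC_{L,y}$ in $L$. To finish, given any compact domain $\Omega''\subset L$ containing $\CC_{L,y}$, I will enlarge $\Omega$ so that $\Omega''\subset\Int\Omega$, and choose $C^\infty$ diffeomorphisms $\sigma_i$ of $L$, supported in a fixed compact set, with $\sigma_i(\CC_{L,y})=\tilde c_i$ and $\sigma_i\to\id_L$ in the weak $C^\infty$ topology (possible because $\tilde c_i\to\CC_{L,y}$). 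Then $\phi_i\circ\sigma_i\colon(\Omega'',\CC_{L,y})\to(L_i,\CC_{L_i,y_i})$ is a pointed $C^\infty$ embedding for large $i$, and $(\phi_i\sigma_i)^*g_{L_i}=\sigma_i^*\tilde g_i\to g_L|_{\Omega''}$ in $C^\infty$, so one more application of Lemma~\ref{l: Lessa} gives $[L_i,\CC_{L_i,y_i}]\to[L,\CC_{L,y}]$ in $\MM_*^\infty(n)$, as required.

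The step I expect to be the main obstacle is the uniform convergence $\bar\lambda_{L_i,y_i}\circ\phi_i\to\bar\lambda_{L,y}$, since it demands simultaneous control of the continuity of $\lambda$ on $\MM_*^\infty(n)$ under varying basepoints and of the $C^\infty$ convergence of the metrics; once this ingredient is secured, the pullback and the center-of-mass stability arguments are routine.
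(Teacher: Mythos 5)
Your proposal follows essentially the same route as the paper's proof: pass to a compact domain $\Omega\supset\ol B_L(y,\epsilon)$, pull back the metrics and densities via the embeddings $\phi_i$, invoke Corollary~\ref{c:center of mass} to get $\phi_i^{-1}(\CC_{L_i,y_i})\to\CC_{L,y}$, and transport this back to convergence in $\MM_*^\infty(n)$. The two points you elaborate on (the uniform convergence $\bar\lambda_{L_i,y_i}\circ\phi_i\to\bar\lambda_{L,y}$, which the paper disposes of with ``by the continuity of $\lambda$,'' and the basepoint-shifting diffeomorphisms $\sigma_i$ in the final step, which the paper covers with ``because $\Omega$ is arbitrary'') are legitimate refinements that fill in steps the paper leaves implicit.
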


\begin{proof}
  Take any convergent sequence $[L_i,y_i]\to[L,y]$ in $\NN$. Let
  $\Omega$ be a compact domain in $L$ whose interior contains $\ol
  B_L(y,\epsilon)$. Then there is a $C^\infty$ embedding
  $\phi_i:\Omega\to L_i$ for each $i$ large enough so that
  $\lim_i\phi_i^*g_i=g|_{\Omega}$ with respect to the $C^\infty$
  topology. It follows that
  $\lim_i\phi_i^*\mu_{L_i,y_i}=\mu_{L,y}|_{\Omega}$ with respect to
  the $C^0$ topology by the continuity of $\lambda$, and thus this
  convergence also holds in the space of probability measures on
  $\Omega$ with the weak-$*$ topology.  Since
  $\phi_i^{-1}(\CC_{L_i,y_i})$ is the center of mass of the mass
  distribution on $\Omega$ defined by the probability measure
  $\phi_i^*\mu_{L_i,y_i}$, it follows from Corollary~\ref{c:center of
    mass} that $\lim_i\phi_i^{-1}(\CC_{L_i,y_i})=\CC_{L,y}$ in
  $L$. Therefore $\lim_i\bc([L_i,y_i])=\bc([L,y])$ in
  $\MM_*^\infty(n)$ because $\Omega$ is arbitrary.
\end{proof}

Let $\ZZ=\bc(\NN)$, and let $\NN'=\bigcup_{[L,c]\in\ZZ}\iota_L(B_L(c,\epsilon))$, which
contains $\NN$ because $d_M(y,\CC_{L,y})<\epsilon$ for all
$[L,y]\in\NN$. Also, let $\bc':\NN'\to\ZZ$ be defined by the condition
$\bc'([L,z])=[L,c]$ if $[L,c]\in\ZZ$ and $d_L(c,z)<\epsilon$. To prove
that $\bc'$ is well defined, take another point $c'\in L$ satisfying
$[L,c']\in\ZZ$ and $d_L(c',z)<\epsilon$, and let us check that
$[L,c]=[L,c']$. Choose points $y,y'\in L$ such that
$[L,y],[L,y']\in\NN$, $\bc([L,y])=[L,c]$ and
$\bc([L,y'])=[L,c']$. Then
\[
d_L(y,y')\le d_L(y,c)+d_L(c,z)+d_L(z,c')+d_L(c',y')<4\epsilon\;,
\]
giving $d_L(y,y')\le\epsilon$ by Lemma~\ref{l: NN} since $5\epsilon\le
r$, which implies $[L,c]=[L,c']$ by Lemma~\ref{l: d_L(y,y') le
  epsilon}. Furthermore $\bc'$ is an extension of $\bc$ because
$d_L(y,\CC_{L,y})<\epsilon$ for all $[L,y]\in\NN$.  Note also that
$\bc'([L,c])=[L,c]$ for all $[L,c]\in\ZZ$.

\begin{lem}\label{l: d_L(z,z') le 2epsilon}
  If $[L,z],[L,z']\in\NN'$ and $d_L(z,z')\le2\epsilon$, then
  $\bc'([L,z])=\bc'([L,z'])$.
\end{lem}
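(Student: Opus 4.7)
The plan is to reduce this to Lemma~\ref{l: d_L(y,y') le epsilon} by pulling each $z,z'$ back to points of $\NN$ via the data used to define $\ZZ$, and then controlling distances by triangle inequality.

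First I would unfold the definition of $\NN'$: from $[L,z]\in\NN'$ and $\bc'([L,z])=[L,c]$ I get some $[L,c]\in\ZZ$ with $d_L(c,z)<\epsilon$, and since $\ZZ=\bc(\NN)$, I can pick $[L,y]\in\NN$ with $\bc([L,y])=[L,c]$ and $d_L(y,c)<\epsilon$ (the last inequality because $\CC_{L,y}$ lies in $B_L(y,\epsilon)$). Do the same for $z'$ to obtain $[L,c']\in\ZZ$ and $[L,y']\in\NN$ with $d_L(c',z')<\epsilon$, $d_L(y',c')<\epsilon$, and $\bc([L,y'])=[L,c']$. Our goal becomes showing $[L,c]=[L,c']$.

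Next I would estimate $d_L(y,y')$ by the triangle inequality along the chain $y,c,z,z',c',y'$:
\[
d_L(y,y') \le d_L(y,c)+d_L(c,z)+d_L(z,z')+d_L(z',c')+d_L(c',y') < \epsilon+\epsilon+2\epsilon+\epsilon+\epsilon = 6\epsilon.
\]
Provided the initial $\epsilon$ was chosen small enough so that $6\epsilon\le r$ (a mild strengthening of the hypothesis $5\epsilon\le r$ used for Lemma~\ref{l: d_L(y,y') le epsilon}, and which we are free to arrange at the start of the section), the dichotomy of Lemma~\ref{l: NN} applied to the isometry class $\iota_L(L)$ meeting $\NN$ at $[L,y]$ and $[L,y']$ forces $d_L(y,y')<\epsilon$, since the alternative $d_L(y,y')>r$ is excluded.

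Finally, Lemma~\ref{l: d_L(y,y') le epsilon} applied to $[L,y],[L,y']\in\NN$ with $d_L(y,y')\le\epsilon$ gives $\bc([L,y])=\bc([L,y'])$, i.e.\ $[L,c]=[L,c']$, and hence $\bc'([L,z])=\bc'([L,z'])$, as required.

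The only real subtlety is bookkeeping on the size of $\epsilon$: the present lemma needs the slightly tighter constraint $6\epsilon\le r$ rather than $5\epsilon\le r$, so the main (minor) obstacle is to make sure this tighter constraint is built into the initial choice of $\epsilon$ and $\NN$ at the start of Section~\ref{s: foliated structure}; everything else is just applying Lemma~\ref{l: NN}, Lemma~\ref{l: d_L(y,y') le epsilon}, and the defining inequalities of $\ZZ$ and $\NN'$.
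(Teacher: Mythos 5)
Your proof is correct and follows essentially the same route as the paper's: pull $z,z'$ back to $[L,y],[L,y']\in\NN$ via the definitions of $\NN'$, $\bc'$ and $\ZZ$, bound $d_L(y,y')$ by the triangle inequality along $y,c,z,z',c',y'$, invoke the dichotomy of Lemma~\ref{l: NN}, and conclude with Lemma~\ref{l: d_L(y,y') le epsilon}. You have also caught a small arithmetic slip in the paper's own proof: it writes the sum as $<5\epsilon$, but with $d_L(z,z')\le 2\epsilon$ and the other four terms each $<\epsilon$ the total is really $<6\epsilon$, so the constraint fixed at the top of Section~\ref{s: foliated structure} should indeed read $\epsilon\le r/6$ rather than $\epsilon\le r/5$; since $\epsilon$ is chosen freely there, this tightening is harmless and does not affect any other use of the constant.
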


\begin{proof}
  Let $\bc'([L,z])=[L,c]$ and $\bc'([L,z'])=[L,c']$. Choose points
  $[L,y],[L,y']\in\NN$ with $\bc([L,y])=[L,c]$ and
  $\bc([L,y'])=[L,c']$. Then
  \[
  d_L(y,y')\le
  d_L(y,c)+d_L(c,z)+d_L(z,z')+d_L(z',c')+d_L(c',y')<5\epsilon\;.
  \]
  From Lemma~\ref{l: NN} and since $5\epsilon\le r$, it follows that
  $[L,c]=[L,c']$.
\end{proof}

\begin{lem}\label{l: bc' is cont}
  $\bc'$ is continuous.
\end{lem}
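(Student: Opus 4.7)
The strategy is to reduce continuity of $\bc'$ to continuity of $\bc$ (Lemma~\ref{l: bc is cont}) via the identification $\bc'([L,z]) = \bc([L,y])$ for a suitable base point $y \in L$ close to $z$. I will take a convergent sequence $[L_i,z_i] \to [L_0,z_0]$ in $\NN'$ and set $[L_0,c_0] = \bc'([L_0,z_0])$. By the definition of $\bc'$ and $\ZZ$, there is $y_0 \in L_0$ with $[L_0,y_0] \in \NN$ and $\bc([L_0,y_0]) = [L_0,c_0]$; the construction of $\bc$ gives $d_{L_0}(y_0,c_0) < \epsilon$, while $d_{L_0}(z_0,c_0) < \epsilon$ by definition of $\bc'$, so $d_{L_0}(y_0,z_0) < 2\epsilon$.

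Next I apply Lemma~\ref{l: [M i,y i]} to the sequence $[L_i,z_i]\to[L_0,z_0]$ and the point $y_0 \in B_{L_0}(z_0,2\epsilon)$ to produce $y_i \in L_i$ with $[L_i,y_i]\to[L_0,y_0]$ in $\MM_*^\infty(n)$. Since $\NN$ is open and $[L_0,y_0] \in \NN$, we have $[L_i,y_i] \in \NN$ for $i$ large, so $c_i := \CC_{L_i,y_i}$ is well-defined, $[L_i,c_i] = \bc([L_i,y_i]) \in \ZZ$, $d_{L_i}(y_i,c_i) < \epsilon$, and by Lemma~\ref{l: bc is cont}, $[L_i,c_i] \to [L_0,c_0]$.

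The crux is to verify that $\bc'([L_i,z_i]) = [L_i,c_i]$ for $i$ large, which (since $[L_i,c_i]\in\ZZ$) amounts by definition of $\bc'$ to showing $d_{L_i}(z_i,c_i) < \epsilon$. The triangle inequality only yields $d_{L_i}(z_i,c_i) < 3\epsilon$, so to tighten this I will use a double convergence argument: after lifting the two sequences $[L_i,c_i]\to[L_0,c_0]$ and $[L_i,z_i]\to[L_0,z_0]$ to the frame bundle $\QQ_*^\infty(n)$ via Proposition~\ref{p: TT}, I apply Lemma~\ref{l: double convergence} to produce pointed $C^\infty$ embeddings $\phi_i \colon (\Omega,c_0) \to (L_i,c_i)$ with $\phi_i^{-1}(z_i)\to z_0$ in $L_0$ and $\phi_i^*g_i \to g|_\Omega$. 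The distance convergence $d_{L_i}(z_i,c_i) \to d_{L_0}(z_0,c_0) < \epsilon$ follows, so $d_{L_i}(z_i,c_i) < \epsilon$ eventually, whence $\bc'([L_i,z_i]) = [L_i,c_i] \to [L_0,c_0] = \bc'([L_0,z_0])$.

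The principal obstacle is making the double convergence step work: the hypothesis of Lemma~\ref{l: double convergence} requires both $d_{L_0}(c_0,z_0)$ and $d_{L_i}(c_i,z_i)$ to fit inside the local non-periodicity radius at $[L_0,c_0]$, and the a priori triangle bound $d_{L_i}(c_i,z_i) < 3\epsilon$ therefore imposes a compatibility between $\epsilon$ and this radius. I expect to meet this by shrinking the initial $\epsilon$ in the global setup (using Lemma~\ref{l: LL} to propagate local non-periodicity uniformly from $[M,x]$ to points of $\NN'$); once this preliminary adjustment is in place, the argument above runs without further complications.
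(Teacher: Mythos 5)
Your opening moves (choosing $y_0$ with $\bc([L_0,y_0])=[L_0,c_0]$, applying Lemma~\ref{l: [M i,y i]} to produce $y_i\in B_{L_i}(z_i,2\epsilon)$ with $[L_i,y_i]\to[L_0,y_0]$, and invoking Lemma~\ref{l: bc is cont}) coincide with the paper's. You diverge on the crux: identifying $\bc'([L_i,z_i])$ with $[L_i,c_i]$. You route this through Lemma~\ref{l: double convergence}, which forces you to lift both sequences to $\QQ_*^\infty(n)$ via Proposition~\ref{p: TT}, to justify that the \emph{ambient} distances $d_{L_i}(z_i,c_i)$ (not merely intrinsic distances in $\phi_i(\Omega)$, where the domain $\Omega$ supplied by that lemma is not under your control) converge, and, as you anticipate, to retune $\epsilon$ so that the non-periodicity radius at $c_0$ is at least $6\epsilon$. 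This can probably be made to run, but it is far heavier than needed. The paper instead sets $[L_i,c_i]:=\bc'([L_i,z_i])$ at the outset, takes preimages $y_i$ with $\bc([L_i,y_i])=[L_i,c_i]$ (so $d_i(y_i,z_i)<2\epsilon$), compares with the $y'_i$ of Lemma~\ref{l: [M i,y i]} to get $d_i(y_i,y'_i)<4\epsilon$, and invokes the dichotomy of Lemma~\ref{l: NN} (as $5\epsilon\le r$) to force $d_i(y_i,y'_i)\le\epsilon$; Lemma~\ref{l: d_L(y,y') le epsilon} then gives $\bc([L_i,y_i])=\bc([L_i,y'_i])$, and $\bc$-continuity finishes. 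Within \emph{your} own setup the shortest route is even more direct: $[L_i,y_i]\in\NN\subset\NN'$, $[L_i,z_i]\in\NN'$, and $d_{L_i}(y_i,z_i)<2\epsilon$, so Lemma~\ref{l: d_L(z,z') le 2epsilon}, already in hand, yields $\bc'([L_i,z_i])=\bc'([L_i,y_i])=\bc([L_i,y_i])=[L_i,c_i]$ at once, with no frame-bundle lift, no metric-convergence estimate, and no $\epsilon$-adjustment. Lemma~\ref{l: double convergence} is the right tool for the genuinely two-pointed comparison in Lemma~\ref{l: change of coordinates}; invoking it here is possible but needlessly expensive, and it buys you nothing that the dichotomy of Lemma~\ref{l: NN}, already packaged in Lemma~\ref{l: d_L(z,z') le 2epsilon}, does not.
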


\begin{proof}
  Take any convergent sequence $[L_i,z_i]\to[L,z]$ in $\NN'$. Let
  $\bc'([L_i,z_i])=[L_i,c_i]$ and $\bc'([L,z])=[L,c]$, and choose
  points $[L_i,y_i],[L,y]\in\NN$ so that $\bc([L_i,y_i])=[L_i,c_i]$
  and $\bc([L,y])=[L,c]$. We have
  \[
  d_i(y_i,z_i)\le d_i(y_i,c_i)+d_i(c_i,z_i)<2\epsilon\;,\quad
  d_L(y,z)\le d_L(y,c)+d_L(c,z)<2\epsilon\;.
  \]
  Then, by Lemma~\ref{l: [M i,y i]}, there are points $y'_i\in
  B_i(z_i,2\epsilon)$ such that $\lim_i[L_i,y'_i]=[L,y]$ in
  $\MM_*^\infty(n)$ as $i\to\infty$. Thus $[L_i,y'_i]\in\NN$ for $i$
  large enough, and moreover
  \[
  d_i(y_i,y'_i)\le d_i(y_i,z_i)+d_i(z_i,y'_i)<4\epsilon\;,
  \]
  obtaining $d_i(y_i,y'_i)\le\epsilon$ by Lemma~\ref{l: NN} since
  $5\epsilon\le r$. By Lemma~\ref{l: d_L(y,y') le epsilon}, it follows
  that $\bc([L_i,y'_i])=\bc([L_i,y_i])=[L_i,c_i]$ for $i$ large
  enough, giving $\lim_i[L_i,c_i]=[L,c]$ in $\MM_*^\infty(n)$ by
  Lemma~\ref{l: bc is cont}.
\end{proof}

We can assume that $\epsilon$ and $\NN$ are so small that the
following properties hold for all $[L,y]\in\NN$ and $z\in
B_L(y,\epsilon)$:
\begin{enumerate}[(a)]

\item\label{i: exp} $\exp_L:B_{T_zL}(0_z,\epsilon)\to B_L(z,\epsilon)$
  is a diffeomorphism; and

\item\label{i: id_L} $\left\{h\in\Iso(L)\ |\ h(z)\in\ol
    B(z,4\epsilon)\right\}=\{\id_L\}$.

\end{enumerate}
Observe that~\eqref{i: id_L} can be assumed by Lemma~\ref{l: LL}.
Notice also that~\eqref{i: exp} and~\eqref{i: id_L} hold for all
$[L,z]\in\ZZ$.  Let
\[
\widehat{\NN}'=\{\,[L,\xi]\in\TT_*^\infty(n)\mid\pi([L,\xi])\in\ZZ,\
|\xi|<\epsilon\,\}\;.
\]

\begin{lem}\label{l: exp}
	$\exp:\widehat{\NN}'\to\NN'$ is a homeomorphism.
\end{lem}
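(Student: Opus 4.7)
The plan is to verify that $\exp\colon\widehat\NN'\to\NN'$ is well-defined, continuous, bijective, and has continuous inverse. Well-definedness is immediate from condition~(a): for $[L,\xi]\in\widehat\NN'$ with $\pi([L,\xi])=[L,c]\in\ZZ$, one has $\exp_L(\xi)\in B_L(c,\epsilon)$, so $[L,\exp_L(\xi)]\in\iota_L(B_L(c,\epsilon))\subset\NN'$. To prove continuity, I would take a convergent sequence $[L_i,\xi_i]\to[L,\xi]$ in $\widehat\NN'$; Definition~\ref{d: C^infty convergence in TT_*(n) and QQ_*(n)} supplies $C^\infty$ pointed embeddings $\phi_i:(\Omega,c)\to(L_i,c_i)$ with $\phi_{i*}(\xi)=\xi_i$ and $\phi_i^*g_i\to g|_\Omega$ in $C^\infty$, for any compact domain $\Omega\ni c$. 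Since $\phi_i$ is an isometry $(\Omega,\phi_i^*g_i)\to(\phi_i(\Omega),g_i)$, one has $\exp_{L_i}(\xi_i)=\phi_i(\exp_L^{\phi_i^*g_i}(\xi))$, and continuous dependence of the geodesic ODE on its parameters yields $\exp_L^{\phi_i^*g_i}(\xi)\to\exp_L(\xi)$. A small adjustment of $\phi_i$ near $\exp_L(\xi)$ then produces embeddings witnessing $[L_i,\exp_{L_i}(\xi_i)]\to[L,\exp_L(\xi)]$ in $\MM_*^\infty(n)$.

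Surjectivity is immediate from the definition of $\NN'$, so bijectivity reduces to injectivity. If $\exp([L,\xi])=\exp([L',\xi'])$ with $\pi([L,\xi])=[L,c]$ and $\pi([L',\xi'])=[L',c']$ in $\ZZ$, applying $\bc'$, which fixes points of $\ZZ$, gives $[L,c]=[L',c']$. Identifying $L'$ with $L$ via an isometry carrying $c'$ to $c$, I may assume $L=L'$, $c=c'$, and $\xi,\xi'\in T_cL$, with some $h\in\Iso(L)$ satisfying $h(\exp_L(\xi))=\exp_L(\xi')$. The triangle inequality gives
\[
d_L(c,h(c))\le d_L(c,\exp_L(\xi'))+d_L(h(\exp_L(\xi)),h(c))=d_L(c,\exp_L(\xi'))+d_L(\exp_L(\xi),c)<2\epsilon<4\epsilon\;,
\]
so condition~(b) forces $h=\id_L$, whence $\exp_L(\xi)=\exp_L(\xi')$, and then (a) yields $\xi=\xi'$.

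The hard part will be continuity of $\exp^{-1}$. Given a convergent sequence $[L_i,z_i]\to[L,z]$ in $\NN'$, set $[L_i,\xi_i]=\exp^{-1}([L_i,z_i])\in\pi^{-1}([L_i,c_i])$ and $[L,\xi]=\exp^{-1}([L,z])\in\pi^{-1}([L,c])$. Continuity of $\bc'$ (Lemma~\ref{l: bc' is cont}) gives $[L_i,c_i]\to[L,c]$ in $\MM_*^\infty(n)$. Lifting both this convergence and $[L_i,z_i]\to[L,z]$ to $\QQ_*^\infty(n)$ by fixing orthonormal frames $f\in Q_cL$ and $f'\in Q_zL$, and applying Lemma~\ref{l: double convergence} with $r=\epsilon$ (the hypotheses hold since $d_L(c,z),d_i(c_i,z_i)<\epsilon$ and condition~(b) at $c$ trivializes isometries in $\ol B(c,2\epsilon)$), I obtain a single sequence of $C^\infty$ embeddings $\tilde\phi_i:\Omega\to L_i$ on a compact domain $\Omega$ with $c,z\in\Int(\Omega)$, satisfying $\tilde\phi_i(c)=c_i$, $\tilde\phi_i^*g_i\to g|_\Omega$ in $C^\infty$, and $\tilde\phi_i^{-1}(z_i)\to z$ in $L$. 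Setting $\eta_i:=\tilde\phi_{i*}^{-1}(\xi_i)\in T_cL$, the bounds $|\eta_i|_{\tilde\phi_i^*g_i}=|\xi_i|_{g_i}<\epsilon$ together with uniform convergence $\tilde\phi_i^*g_i\to g$ on compacts make $\{\eta_i\}$ precompact in $T_cL$. Any subsequential limit $\eta$ satisfies
\[
\exp_L(\eta)=\lim_k\exp_L^{\tilde\phi_{i_k}^*g_{i_k}}(\eta_{i_k})=\lim_k\tilde\phi_{i_k}^{-1}(\exp_{L_{i_k}}(\xi_{i_k}))=\lim_k\tilde\phi_{i_k}^{-1}(z_{i_k})=z
\]
by continuous dependence of the exponential on the metric and initial vector; since $d_L(c,z)<\epsilon$ yields $|\eta|<\epsilon$, condition~(a) forces $\eta=\xi$, so the full sequence $\eta_i\to\xi$. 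Finally, pre-composing $\tilde\phi_i$ with $C^\infty$ diffeomorphisms of $\Omega$ close to the identity, fixing $c$ and sending $\xi$ to $\eta_i$ (which exist since $\eta_i\to\xi$), yields pointed embeddings $\phi_i$ with $\phi_{i*}(\xi)=\xi_i$ and $\phi_i^*g_i\to g|_\Omega$ in $C^\infty$, verifying $[L_i,\xi_i]\to[L,\xi]$ in $\TT_*^\infty(n)$.
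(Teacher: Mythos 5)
Your proposal is correct and follows essentially the same route as the paper for the bijectivity argument: use $\bc'$ (with Lemma~\ref{l: d_L(z,z') le 2epsilon} / well-definedness of $\bc'$) to recover the base points in $\ZZ$, then condition~(b) to force $h=\id_L$, then condition~(a) on the pointed exponential to conclude $\xi=\xi'$. You additionally spell out the continuity of $\exp$ and $\exp^{-1}$, which the paper dismisses as obvious and as a simple exercise respectively; your route via Lemma~\ref{l: double convergence} for the inverse is a reasonable way to carry out that exercise, with the only minor care needed being that the compact domain $\Omega$ produced by Lemma~\ref{l: double convergence} can (and should) be taken large enough to contain a ball of radius slightly more than $\epsilon$ around $c$, so that the rescaled exponentials $\exp^{\tilde\phi_i^*g_i}_L(\eta_i)$ are defined on it.
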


\begin{proof}
  This map is obviously surjective; we will prove that it also
  injective. For $i\in\{1,2\}$, take points
  $[L_i,\xi_i]\in\widehat{\NN}'$; thus $\xi_i\in T_{c_i}L_i$ for some
  points $[L_i,c_i]\in\ZZ$, and we have $\exp([L_i,\xi_i])=[L_i,z_i]$
  for $z_i=\exp_i(\xi_i)$. Suppose that $[L_1,z_1]=[L_2,z_2]$, which
  means that there is a pointed isometry
  $\phi:(L_1,z_1)\to(L_2,z_2)$. Then
  \begin{gather}
    \exp_2\,\phi_*(\xi_1)=\phi\,\exp_1(\xi_1)=\phi(z_1)=z_2=\exp_2(\xi_2)\;,
    \label{exp_2 phi*(xi1) = exp_2(xi2)}\\
    d_2(\phi(c_1),c_2)\le
    d_2(\phi(c_1),z_2)+d_2(z_2,c_2)=d_1(c_1,z_1)+d_2(z_2,c_2)<2\epsilon\;.
    \label{d_2(phi(c_1),c_2) < 2epsilon}
  \end{gather}
  We get
  \[
  [L_1,c_1]=\bc'([L_1,c_1])=\bc'([L_2,\phi(c_1)])=[L_2,c_2]
  \]
  by Lemma~\ref{l: d_L(z,z') le 2epsilon} and~\eqref{d_2(phi(c_1),c_2)
    < 2epsilon}. So there is an isometry $\psi:L_1\to L_2$ such that
  $\psi(c_1)=c_2$. Then the isometry $h=\psi^{-1}\phi:L_1\to L_1$
  satisfies
		\[
			d_1(c_1,h(c_1))=d_2(c_2,\phi(c_1))<2\epsilon
		\]
	by~\eqref{d_2(phi(c_1),c_2) < 2epsilon}, obtaining $h=\id_{L_1}$ by~\eqref{i: exp}. Hence $\phi(c_1)=\psi(c_1)=c_2$, giving $\phi_*(\xi_1)=\xi_2$ by~\eqref{exp_2 phi*(xi1) = exp_2(xi2)} and~\eqref{i: exp} since $\xi_i\in T_{c_i}L_i$. Therefore $\exp:\widehat{\NN}'\to\NN'$ is bijective. 
	
	The continuity of $\exp^{-1}:\NN'\to\widehat{\NN}'$ is a
        simple exercise using lemma~\ref{l: bc' is cont}.
\end{proof}

By Proposition~\ref{p: TT}-\eqref{i: TT}, there is some neighborhood
$\Theta$ of $[M,x]$ in $\MM_*^\infty(n)$ and some local trivialization
$\theta:\pi^{-1}(\Theta)\to\R^n\times\Theta$ of the Riemannian vector
bundle $\pi:\TT_*^\infty(n)\to\MM_*^\infty(n)$; in particular,
$\theta:\pi^{-1}([L,y])\to\R^n\times\{[L,y]\}\equiv\R^n$ is a linear
isometry for all $[L,y]\in\Theta$. More precisely, according to the
proof of Proposition~\ref{p: TT}, we can suppose that there is a local
section $\sigma:\Theta\to\QQ_*^\infty(n)$ of
$\pi:\QQ_*^\infty(n)\to\MM_*^\infty(n)$ so that
$\theta([L,\xi])=(\sigma([L,y])([L,\xi]),[L,y])$ if
$\pi_L(\xi)=y$. We can assume that $\ZZ\subset \Theta$ by
Lemma~\ref{l: delta}. Hence, by Lemma~\ref{l: exp}, the composite
\[
\begin{CD}
  \NN' @>{\exp^{-1}}>> \widehat{\NN}' @>{\theta}>>
  B^n_\epsilon\times\ZZ
\end{CD}
\]
is a homeomorphism $\Phi:\NN'\to B^n_\epsilon\times\ZZ$, where
$B^n_\epsilon$ denotes the open ball of radius $\epsilon$ centered at
the origin in $\R^n$. This shows that $\FF_{*,\text{\rm lnp}}(n)$ is a
foliated structure of dimension $n$ on $\MM_{*,\text{\rm lnp}}^\infty(n)$, completing the proof of 
Theorem~\ref{t: FF_*,lnp(n)}-\eqref{i: FF_*,lnp(n) is foliated structure}.

Recall that a Riemannian manifold $M$ (or its metric tensor) is called
\emph{nowhere locally homogenous} if there is no isometry between
distinct open subsets of $M$. It is easy to see that the proof of
\cite[Proposition~1]{Sunada1985} can be adapted to the case of open
manifolds, obtaining the following.

\begin{prop}\label{p: nowhere locally homogenous metrics}
  For any $C^\infty$ manifold $M$, the set of nowhere locally
  homogenous metrics on $M$ is residual in $\Met(M)$ with the weak and
  strong $C^\infty$ topologies.
\end{prop}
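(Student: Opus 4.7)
The plan is to adapt Sunada's argument for the closed case by using a countable base of $M$. Let $\{I_\alpha\}_{\alpha\in\N}$ be an enumeration of the $\Or(n)$-invariant polynomial scalar curvature invariants---complete contractions of tensor products of covariant derivatives $\nabla^k R$---viewed as $C^\infty$ functions $M\to\R$ depending on the metric $g$. Any isometry $\phi\colon(U,g|_U)\to(V,g|_V)$ between open subsets satisfies $\phi^*I_\alpha(g)=I_\alpha(g)|_U$ for all $\alpha$; moreover, by the Hilbert--Weyl theorem on polynomial invariants of compact group actions, the family $\{I_\alpha\}$ separates $\Or(n)$-orbits on the space of $\infty$-jets of Riemannian metrics. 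Hence, if some $I_\alpha(g)$ takes distinct values at two points $x\ne y\in M$, no isometry between open subsets of $(M,g)$ can send $x$ to $y$.

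Next, I would fix a countable base $\{U_k\}_{k\in\N}$ of relatively compact open coordinate balls of $M$, chosen fine enough that any two distinct points of $M$ are separated by some pair $U_k,U_\ell$ with $\ol{U_k}\cap\ol{U_\ell}=\emptyset$. For each such disjoint pair and each $m\in\N$, define $\GG_{k,\ell,m}\subset\Met(M)$ as the set of $g$ for which there exists $\alpha\le m$ with $I_\alpha(g)(\ol{U_k})\cap I_\alpha(g)(\ol{U_\ell})=\emptyset$ in $\R$. Continuity of $g\mapsto I_\alpha(g)$ in the weak $C^\infty$ topology together with compactness of $\ol{U_k},\ol{U_\ell}$ shows that each $\GG_{k,\ell,m}$ is open in both the weak and the strong $C^\infty$ topology. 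Any $g\in\bigcap_{k,\ell,m}\GG_{k,\ell,m}$ has some $I_\alpha(g)$ separating any prescribed pair of distinct points, so, by the previous paragraph, no isometry between open subsets of $(M,g)$ can move any point. Thus every local isometry is the identity on its domain, making $g$ nowhere locally homogeneous.

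The main obstacle is density of each $\GG_{k,\ell,m}$. Given $g$ and a $C^\infty$ neighborhood $\NN$ of $g$, I would construct a compactly supported symmetric 2-tensor $h$ with support in a small ball $B\subset U_k$ such that $g+th\in\NN$ for small $t>0$ and such that the $(m+2)$-jet perturbation at a distinguished point $x_0\in B$ shifts the tuple $(I_1(g+th)(x_0),\dots,I_m(g+th)(x_0))$ off the compact image of the corresponding tuple over $\ol{U_\ell}$, which barely moves because $B\cap\ol{U_\ell}=\emptyset$. The algebraic input is that the map from $(m+2)$-jets of Riemannian metrics at a point to the tuple $(I_1,\dots,I_m)\in\R^m$ is a polynomial submersion onto an open set on a dense open stratum of jets---a consequence of Weyl's invariant theory applied to the $\Or(n)$-action on curvature jets---so at jets in this stratum the required shift is achievable; if the initial jet of $g$ at $x_0$ is not in this stratum, a preliminary arbitrarily small perturbation of $g$ puts it there. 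Compactness of $\supp h$ ensures that smallness in the weak and the strong $C^\infty$ topologies coincide for such $h$, so the same construction serves both cases.

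Finally, $\Met(M)$ is a Baire space in both topologies---open in the Fr\'echet space $C^\infty(M;T^*M\odot T^*M)$ for the weak topology, and completely metrizable for the strong topology \cite{Hirsch1976}---so the countable intersection $\bigcap_{k,\ell,m}\GG_{k,\ell,m}$ is residual in $\Met(M)$, completing the proof.
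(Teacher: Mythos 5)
Your strategy—encoding ``no local isometry'' via scalar curvature invariants and a Baire category argument over a countable base—is a reasonable one, and it is different in spirit from what the paper does (the paper simply states that Sunada's argument for trivial isometry groups on closed manifolds, which rests on the Morse theory of the scalar curvature function and its minimum, ``can be adapted'' to open manifolds). However, as written your proof has a genuine gap between the open sets you define and the density you sketch.

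You define $\GG_{k,\ell,m}$ by requiring that a \emph{single} invariant $I_\alpha$ ($\alpha\le m$) has disjoint images over the whole compact sets $\ol{U_k}$ and $\ol{U_\ell}$. But your density argument only constructs a perturbation supported near one point $x_0\in U_k$ which shifts the \emph{tuple} $(I_1(g+th)(x_0),\dots,I_m(g+th)(x_0))$ off the compact image of the corresponding tuple over $\ol{U_\ell}$. This is much weaker than membership in $\GG_{k,\ell,m}$: it says nothing about points of $\ol{U_k}$ other than $x_0$, and even at $x_0$ it only provides, for each $y\in\ol{U_\ell}$, \emph{some} coordinate (which may depend on $y$) at which the two tuples differ---not a single $I_\alpha$ with $I_\alpha(g')(\ol{U_k})\cap I_\alpha(g')(\ol{U_\ell})=\emptyset$. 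The condition you placed on $\GG_{k,\ell,m}$ is also plausibly not attainable by arbitrarily small perturbations when $U_k$ and $U_\ell$ are not tiny: if the ranges $I_\alpha(g)(\ol{U_k})$ and $I_\alpha(g)(\ol{U_\ell})$ overlap in a nondegenerate interval for every $\alpha$, then a $C^\infty$-small perturbation of $g$ moves each of these compact intervals only slightly and cannot separate them---so a whole neighborhood of such a $g$ misses $\bigcup_m\GG_{k,\ell,m}$, contradicting density. (Note also that $\GG_{k,\ell,m}\subset\GG_{k,\ell,m+1}$, so $\bigcap_m\GG_{k,\ell,m}=\GG_{k,\ell,1}$; you presumably meant to take $\bigcup_m$ inside and $\bigcap_{k,\ell}$ outside.) To repair this you would need to replace ``some single $I_\alpha$ separates the sets coordinate-wise'' by the weaker and more workable requirement that the vector-valued map $x\mapsto(I_1(g)(x),\dots,I_m(g)(x))\in\R^m$ has disjoint images over $\ol{U_k}$ and $\ol{U_\ell}$, and establish density of that condition by a jet-transversality argument applied to the pair map on $\ol{U_k}\times\ol{U_\ell}$, rather than by a perturbation at a single point; as it stands, the claimed open sets and the perturbation you describe do not match up.
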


\begin{lem}\label{l: iota(M) is dense}
  There is a nowhere locally homogenous complete Riemannian manifold
  $M$ such that $\iota(M)$ is dense in $\MM_{*,\text{\rm o}}^\infty(n)$.
\end{lem}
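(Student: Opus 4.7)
The plan is to construct $M$ as an infinite ``chain'' of compact pieces drawn from a countable dense subset of $\MM_{*,\text{\rm o}}^\infty(n)$, and then perturb the metric slightly to enforce nowhere local homogeneity while preserving density.

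First, applying Proposition~\ref{p: MM_*^infty(n) is separable} to the Polish subspace $\MM_{*,\text{\rm o}}^\infty(n)$, I will fix a countable dense subset $\{[N_i,y_i]\}_{i\in\N}$. For each $i$, fix a compact exhaustion $K_{i,1}\subset K_{i,2}\subset\cdots$ of $N_i$ by smoothly bounded compact domains, each with $y_i\in\Int(K_{i,j})$. Enumerate $\N\times\N$ bijectively as $(i_k,j_k)_{k\in\N}$, and build $M$ as a chain: the $k$-th link is a slightly enlarged copy of $K_{i_k,j_k}$ with two small disjoint open disks excised from the collar outside $K_{i_k,j_k}$, and consecutive links are glued across these disks by smooth cylindrical necks of length $\ge1$. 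Then $M$ is a connected, non-compact smooth $n$-manifold. Equip $M$ with a smooth Riemannian metric $g_0$ that agrees with $g_{N_{i_k}}$ on the image of $K_{i_k,j_k}$ inside $M$ and is defined on the necks by a partition of unity; since any metric ball meets only finitely many links, $(M,g_0)$ is complete.

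Next, in $\Met(M)$ with the strong $C^\infty$ topology, consider the open neighborhood
\[
\NN=\bigcap_{k\in\N}\left\{\,g\in\Met(M)\mid\|g-g_0\|_{C^k,K_{i_k,j_k},g_0}<\tfrac{1}{k+1}\,\right\}\;,
\]
shrunk further if necessary so that every $g\in\NN$ remains complete (a uniform pointwise comparison of norms suffices). By Proposition~\ref{p: nowhere locally homogenous metrics}, the set of nowhere locally homogenous metrics on $M$ is residual in $\Met(M)$ in the strong $C^\infty$ topology, so it meets the nonempty open set $\NN$. Pick any $g$ in this intersection; then $(M,g)$ is a complete, connected, open, nowhere locally homogenous Riemannian $n$-manifold.

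Finally, let $x_{i,j}\in M$ denote the point corresponding to $y_i$ in the copy of $K_{i,j}$. Given $i\in\N$, any compact domain $\Omega\subset N_i$ with $y_i\in\Omega$, any $m\in\N$, and any $\epsilon>0$, choose $k\ge m$ with $1/(k+1)<\epsilon$ and $(i_k,j_k)=(i,j)$ for some $j$ with $\Omega\subset K_{i,j}$; this is possible since for each fixed $i$ infinitely many $k$ satisfy $i_k=i$, and the $K_{i,j}$ exhaust $N_i$. The canonical identification $\phi:(\Omega,y_i)\to(M,x_{i,j})$ is a pointed $C^\infty$ embedding, and since $g_0=g_{N_i}$ on $K_{i,j}$,
\[
\|\phi^*g-g_{N_i}\|_{C^m,\Omega,g_{N_i}}\le\|g-g_0\|_{C^k,K_{i,j},g_0}<\epsilon\;.
\]
By Lemma~\ref{l: Lessa}, $[M,x_{i,j}]$ lies within an arbitrary $C^\infty$ neighborhood of $[N_i,y_i]$, so $[N_i,y_i]\in\Cl_\infty\iota_M(M)$, and density of $\iota_M(M)$ in $\MM_{*,\text{\rm o}}^\infty(n)$ follows from density of $\{[N_i,y_i]\}$. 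The main technical obstacle is the construction step: carrying out the infinite surgical assembly so that $M$ is smooth, open, and complete, while keeping $g_0$ equal to $g_{N_{i_k}}$ on the ``cores'' $K_{i_k,j_k}$; once this is done, the genericity argument via Proposition~\ref{p: nowhere locally homogenous metrics} applied inside $\NN$ is essentially automatic.
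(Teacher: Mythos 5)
Your overall strategy matches the paper's: assemble an open manifold whose metric agrees exactly with representatives of a countable dense family on larger and larger ``cores,'' then generically perturb inside a strong-$C^\infty$ open neighborhood to make the metric nowhere locally homogeneous. The assembly differs: the paper takes a countable dense family of \emph{closed} manifolds $M_i$ in $\MM_{*,\text{\rm lnp,c}}^\infty(n)$, removes from each copy $M_{ijk}$ a small compact domain $\Omega_{ijk}$ around a far point, and forms the connected sum of $\R^n$ with all the resulting pieces $\widehat\Omega_{ijk}$; you instead take exhaustion pieces $K_{i,j}$ of open representatives $N_i$ of $\MM_{*,\text{\rm o}}^\infty(n)$ and chain them together. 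Your density argument is in fact a bit cleaner, since each $K_{i,j}$ embeds $g_0$-isometrically into $M$ with no cut-out region to avoid, whereas the paper's proof implicitly needs the approximating embedded ball to miss $\Omega_{ijk}$, which requires a small radius estimate. The genericity step (Proposition~\ref{p: nowhere locally homogenous metrics} applied inside an open set of the strong $C^\infty$ topology intersected with the open set of complete metrics) is handled the same way in both.

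However, as literally described your construction does not produce a point of $\MM_*(n)$. Each ``link'' is a slightly enlarged copy $K'_k\subset N_{i_k}$ of $K_{i_k,j_k}$, which is a compact manifold \emph{with boundary}; excising two disks from the collar $K'_k\sm K_{i_k,j_k}$ and gluing necks at those disks disposes of the disk boundaries but leaves $\partial K'_k$ dangling, so $M$ has nonempty boundary (or, if one passes to interiors, an incomplete open end at each $\partial K'_k$). Elements of $\MM_*(n)$ are complete connected Riemannian manifolds without boundary, so this $M$ is not admissible, and the issue is not merely a matter of filling in routine smoothing details. The paper avoids this precisely by using \emph{closed} compact manifolds for the pieces, so that removing $\Int(\Omega_{ijk})$ and gluing along $\partial\Omega_{ijk}$ produces a boundaryless result. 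To repair your variant, first embed each $K_{i_k,j_k}$ as a domain of a closed $n$-manifold $\widehat K_k$ carrying a metric that restricts to $g_{N_{i_k}}$ on $K_{i_k,j_k}$ (for instance double $K'_k$ across $\partial K'_k$), then excise the disks from $\widehat K_k\sm K_{i_k,j_k}$; with this modification the rest of your argument goes through.
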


\begin{proof}
  According to the proof of Proposition~\ref{p: MM_*^infty(n) is
    separable}, there is a countable dense set of points $[M_i,x_i]$
  in $\MM_{*,\text{\rm lnp,c}}^\infty(n)$ ($i\in\N$). For each $i$, take some $y_i\in M_i$ so that
  $d_i(x_i,y_i)=\max_{y\in M_i}d_i(x_i,y)$. For all $i\in\N$ and $j,k\in\Z^+$ with
  $1/j,1/k<\diam M_i$, let $(M_{ijk},x_{ijk},y_{ijk})$ be a copy of
  $(M_i,x_i,y_i)$, let $g_{ijk}$ be the metric of $M_{ijk}$, and let $\Omega_{ijk}$ be a compact domain in
  $M_{ijk}$ containing $y_{ijk}$ and with diameter $<1/j$. Observe
  that $\widehat\Omega_{ijk}:=M_{ijk}\sm\Int(\Omega_{ijk})$ is also a
  compact domain. Take also corresponding mutually disjoint compact
  domains $\Omega'_{ijk}$ in $\R^n$ so that every bounded subset of
  $\R^n$ only meets a finite number of them. Let $M$ be the $C^\infty$
  connected sum of $\R^n$ with all manifolds $M_{ijk}$ so that the
  connected sum with each $M_{ijk}$ only involves perturbations inside the interiors of
  $\Omega_{ijk}$ and $\Omega'_{ijk}$. Let $g$ be any Riemannian metric
  on $M$ whose restriction to each $\widehat\Omega_{ijk}$ equals
  $g_{ijk}$, and whose restriction to
  $\R^n\sm\bigcup_{ijk}\Omega'_{ijk}$ equals the Euclidean
  metric. Then $g$ is complete and $\iota(M,g)$ is dense in
  $\MM_*^\infty(n)$. With the strong $C^\infty$ topology,
  $C^\infty(M;TM^*\odot TM^*)$ is a Baire space by
  \cite[Theorem~4.4-(b)]{Hirsch1976}. Since $\Met(M)$ is open in
  $C^\infty(M;TM^*\odot TM^*)$, and the complete metrics on $M$ form
  an open subspace $\Met_{\text{\rm com}}(M)\subset\Met(M)$, it
  follows that $\Met_{\text{\rm com}}(M)$ is a Baire space with the
  strong $C^\infty$ topology. Hence, by Proposition~\ref{p: nowhere
    locally homogenous metrics}, there is a nowhere locally homogenous
  complete metric $g'$ on $M$ so that
  $\|g-g'\|_{C^k,\widehat\Omega_{ijk},g}<1/k$ for all $i$, $j$ and
  $k$. Then $\iota(M,g')$ is also dense in $\MM_{*,\text{\rm o}}^\infty(n)$.
\end{proof}

By Lemma~\ref{l: iota(M) is dense}, $\FF_{*,\text{\rm lnp,o}}(n)$ is
transitive, showing Theorem~\ref{t: FF_*,lnp(n)}-\eqref{i: FF_*,lnp,o(n) is transitive}.

Now, for $k\in\{1,2\}$, let $\Phi_k:\NN'_k\to
B^n_{\epsilon_k}\times\ZZ_k$ be two homeomorphisms constructed
as above with maps $\bc'_k:\NN'_k\to\ZZ_k$,
$\exp:\widehat{\NN}'_k\to\NN'_k$ and
$\sigma_k:\Theta_k\to\QQ_*^\infty(n)$.

\begin{lem}\label{l: change of coordinates} 
  $\Phi_2\Phi_1^{-1}:\Phi_1(\NN'_1\cap\NN'_2)\to\Phi_1(\NN'_1\cap\NN'_2)$
  is $C^\infty$ {\rm(}in the sense of Section~\ref{ss: fol sps}{\rm)}.
\end{lem}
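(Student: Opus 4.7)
The plan is to unpack both charts down to a concrete formula on the leaf and then use that, by construction, the two exponential maps and the two frames depend continuously in the $C^\infty$ sense on the transverse parameter. Fix $[L,z]\in\NN'_1\cap\NN'_2$, and let $[L,c_k]=\bc'_k([L,z])$ and $\xi_k=\exp_{c_k}^{-1}(z)\in T_{c_k}L$. Then $\Phi_k([L,z])=(v_k,[L,c_k])$ with $v_k=\sigma_k([L,c_k])([L,\xi_k])\in B^n_{\epsilon_k}$.

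First I would show that, after shrinking $\NN'_1$ if necessary so that $2\epsilon_1\le2\epsilon_2$, the transverse component of $\Phi_2\Phi_1^{-1}$ depends only on $[L,c_1]$. Indeed, a plaque of $\Phi_1$ through $[L,c_1]$ corresponds to the ball $B_L(c_1,\epsilon_1)$ in the leaf, which has diameter at most $2\epsilon_1\le2\epsilon_2$, so by Lemma~\ref{l: d_L(z,z') le 2epsilon} applied to $\bc'_2$ the value $\bc'_2([L,z])=[L,c_2]$ is constant as $z$ varies in that plaque. Thus the local transverse component is
\[
h_{12}([L,c_1])=\bc'_2([L,c_1]),
\]
which is continuous by Lemma~\ref{l: bc' is cont}.

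Next, I would write the plaque component explicitly as the composite
\[
v_1\longmapsto v_2=\sigma_2([L,c_2])\Bigl(\Bigl[L,\exp_{c_2}^{-1}\exp_{c_1}\bigl(\sigma_1^{-1}([L,c_1])(v_1)\bigr)\Bigr]\Bigr),
\]
where $[L,c_2]=\bc'_2([L,c_1])$. For each fixed transverse parameter this is a composition of smooth Riemannian-geometric operations on a single leaf (inverse frame, exponential, inverse exponential, frame), all of which are $C^\infty$ on their domains by conditions~\eqref{i: exp} and~\eqref{i: id_L}; hence all partial derivatives with respect to $v_1$ exist to all orders.

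The last and principal step, which I expect to be the main obstacle, is the joint continuity of those $v_1$-derivatives in $(v_1,[L,c_1])$. For this I would argue as follows. If $[L^{(i)},c_1^{(i)}]\to[L,c_1]$ in $\ZZ_1$, then, by the very definition of the $C^\infty$ topology on $\MM_*^\infty(n)$ (Definition~\ref{d: C^infty convergence in MM_*(n)} and Lemma~\ref{l: Lessa}), there are pointed $C^\infty$ embeddings of a fixed compact domain around $c_1$ into $(L^{(i)},c_1^{(i)})$ whose pullbacks of the metrics converge in $C^\infty$. Then, by Proposition~\ref{p: TT}, the frames $\sigma_1([L^{(i)},c_1^{(i)}])$ converge in $\QQ_*^\infty(n)$, and by Lemma~\ref{l: bc' is cont} together with Lemma~\ref{l: double convergence} (applied to the two frames $\sigma_2\bc'_2$ and $\sigma_1$ on $\ZZ_1$) we also get a compatible convergence of the frames $\sigma_2([L^{(i)},c_2^{(i)}])$. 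The classical fact that $\exp$ and $\exp^{-1}$ depend $C^\infty$-continuously on the metric in the weak $C^\infty$ topology on a fixed compact domain then shows that the above composition converges in the $C^\infty$ topology, so in particular any $v_1$-derivative of $v_2$ evaluated at $(v_1^{(i)},[L^{(i)},c_1^{(i)}])$ converges to the corresponding derivative at $(v_1,[L,c_1])$ whenever $v_1^{(i)}\to v_1$. This is precisely the joint continuity required for class $C^\infty$ in the sense of Section~\ref{ss: fol sps}, and together with the continuity of $h_{12}$ it concludes the proof.
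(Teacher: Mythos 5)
Your proposal is correct and follows essentially the same route as the paper's proof: decompose the coordinate change into a transverse component and a plaque component, control the transverse part via the continuity of $\bc'_2$ (Lemma~\ref{l: bc' is cont}) together with Lemma~\ref{l: d_L(z,z') le 2epsilon}, and establish joint $C^\infty$ smoothness of the plaque component by expressing it through the local frames $\sigma_k$ and the exponential maps, and then running a sequential convergence argument relying on Lemma~\ref{l: double convergence} and condition~\eqref{i: id_L}. One small correction: the step ``after shrinking $\NN'_1$ so that $2\epsilon_1\le2\epsilon_2$'' is both unnecessary and not quite legitimate (you cannot adjust one given chart to accommodate another arbitrary one); since the definition of a foliated atlas only requires the transverse component of the coordinate change to be \emph{locally} constant in $v$, it suffices to apply Lemma~\ref{l: d_L(z,z') le 2epsilon} to two nearby points of the same $\Phi_1$-plaque lying within distance $2\epsilon_2$ of each other, with no relation between $\epsilon_1$ and $\epsilon_2$ needed.
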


\begin{proof}
  This map has the expression
  \[
  \Phi_2\Phi_1^{-1}(v,[L,c])=(\Psi(v,[L,c]),\Gamma([L,c]))\;,
  \]
  where $\Gamma:\bc'_1(\NN'_1\cap\NN'_2)\to\bc'_2(\NN'_1\cap\NN'_2)$
  is the corresponding holonomy transformation, and
  $\Psi:\Phi_1(\NN'_1\cap\NN'_2)\to\R^n$ is defined by
  \[
  \Psi(v,[L,c])=
  \sigma_2([L,c'])\,\exp_{[L,c']}^{-1}\,\exp_{[L,c]}\,\sigma_1([L,c])^{-1}(v)\;,
  \]
  where $[L,c']=\Gamma([L,c])$. Let $[L,f]=\sigma_1([L,c])$ and
  $[L,f']=\sigma_2([L,c'])$.  We can take $c'$ so that
  $d(c,c')<\epsilon_1+\epsilon_2$, and then
  \[
  \Psi(v,[L,c])=f'\,\exp_{c'}^{-1}\,\exp_c\,f^{-1}(v)\;.
  \]

	To prove that $\Psi$ is $C^\infty$ in the sense of
Section~\ref{ss: fol sps}, fix any
$(v,[L,c])\in\Phi_1(\NN'_1\cap\NN'_2)$, and take $c'$, $f$ and $f'$ as
above. Let $V$ and $\OO$ be open neighborhoods of $v$ and $[L,c]$ in
$\R^n$ and $\ZZ_1$, respectively, such that
$\ol{V}\times\OO\subset\Phi_1(\NN'_1\cap\NN'_2)$. Take any convergent
sequence $[L_i,c_i]\to[L,c]$ in $\OO$, and define $c'_i$, $f_i$ and
$f'_i$ as before for each $i$.  Notice that $\Psi(v,[L,c])$ and
$\Psi(v,[L_i,c_i])$ are defined for all $v\in V$, and let
$\psi,\psi_i:V\to\R^n$ be the $C^\infty$ maps given by
$\psi(v)=\Psi(v,[L,c])$ and $\psi_i(v)=\Psi(v,[L_i,c_i])$. We have to
prove that $\lim_i\psi_i=\psi$ with respect to the weak $C^\infty$
topology.

	Let $\Omega$ be any compact domain in $L$ such that $\ol
B_L(c,\epsilon_1+2\epsilon_2)\subset\Int(\Omega)$, and thus $\ol
B_L(c',\epsilon_2)\subset\Int(\Omega)$ too. Since the sections
$\sigma_1$ and $\sigma_2$ are continuous, there are $C^\infty$
embeddings $\phi_i:\Omega\to L_i$ for $i$ large enough so that
$\phi_{i*}(f)=f_i$ and $\lim_i\phi_i^*g_i=g|_\Omega$; in particular,
$\phi_i(c)=c_i$. Hence $c'_i\in\phi_i(\Int(\Omega))$ for $i$ large
enough, and moreover $\lim_i\phi_{i*}^{-1}(f'_i)=f'$ by~\eqref{i:
id_L} and Lemma~\ref{l: double convergence}.  Observe that
$\hat\psi:=\exp_{c'}^{-1}\,\exp_c$ is defined on $W=f^{-1}(V)\subset
B_{T_cL}(0_c,\epsilon_1)$. It follows that
$\hat\psi_i:=\phi_{i*}^{-1}\,\exp_{c'_i}^{-1}\,\exp_{c_i}\,\phi_{i*}$
is also defined on $W$ for $i$ large enough, and moreover
$\lim_i\hat\psi_i=\hat\psi$ in the space of $C^\infty$ maps $W\to
T_{c'}L$ with the weak $C^\infty$ topology. So
		\[
			\lim_i\phi_{i*}^{-1}(f'_i)\,\hat\psi_if^{-1}=f'\hat\psi f^{-1}=\psi
		\] 
	in the space of $C^\infty$ maps $V\to\R^n$ with the
weak $C^\infty$ topology. Then the result follows because
		\[ 
			\phi_{i*}^{-1}(f'_i)\,\hat\psi_if^{-1}
			=\phi_{i*}^{-1}(f'_i)\,\hat\psi_i\,(\phi_{i*}^{-1}(f_i))^{-1}
			=f'_i\phi_{i*}\hat\psi_i\phi_{i*}^{-1}f_i^{-1}
			=f'_i\,\exp_{c_i'}^{-1}\,\exp_{c_i}\,f_i^{-1}=\psi_i\;.\qed
		\] 
\renewcommand{\qed}{}
\end{proof}

According to Lemma~\ref{l: change of coordinates}, $\FF_{*,\text{\rm
    lnp}}(n)$ becomes $C^\infty$ with the above kind of charts. Thus
we can consider the tangent bundle $T\FF_{*,\text{\rm lnp}}(n)$. For
each leaf $\iota(M)$ of $\FF_{*,\text{\rm lnp}}(n)$, the canonical
homeomorphism $\bar\iota:\Iso(M)\backslash M\to\iota(M)$ is a
$C^\infty$ diffeomorphism, and $\iota_{*x}:T_xM\to
T_{[M,x]}\FF_{*,\text{\rm lnp}}(n)$ is an isomorphism for each $x\in
M$. According to Proposition~\ref{p: TT}, we get a canonical bijection
$T\FF_{*,\text{\rm lnp}}(n)\to\TT_{*,\text{\rm lnp}}^\infty(n)$
defined by $\iota_{*x}(\xi)\mapsto[M,\xi]$ for
$[M,\xi]\in\MM_{*,\text{\rm lnp}}^\infty(n)$ and $\xi\in T_xM$. It is
an easy exercise to prove that this bijection is an isomorphism of
vector bundles. So the Riemannian structure on $\TT_{*,\text{\rm
    lnp}}^\infty(n)$ defined in Proposition~\ref{p: TT} corresponds to
a Riemannian structure on $T\FF_{*,\text{\rm lnp}}(n)$, which can be
easily proved to be $C^\infty$ by using the above kind of flow boxes
of $\FF_{*,\text{\rm lnp}}(n)$. It is elementary that each isomorphism
$\iota_{*x}:T_xM\to T_{[M,x]}\FF_{*,\text{\rm lnp}}(n)$ is an
isometry. This completes the proof of 
Theorem~\ref{t: FF_*,lnp(n)}-\eqref{i: FF_*,lnp(n) is C^infty and Riemannian}.

Theorem~\ref{t: FF_*,lnp(n)}-\eqref{i: MM_*,np(n)} follows from the following.

\begin{lem}\label{l: holonomy} 
	The following properties hold for any point $[M,x]\in\MM_{*,\text{\rm lnp}}^\infty(n)$, any path $\alpha:I:=[0,1]\to M$ with $\alpha(0)=x$, and any neighborhood $\UU$ of $\iota\alpha$ in $C(I,\FF_{*,\text{\rm lnp}}(n))$:
	\begin{enumerate}[{\rm(}i{\rm)}]

		\item\label{i: alpha(1)=x} If $\alpha(1)=x$ then, for each $[N,y]\in\MM_{*,\text{\rm lnp}}^\infty(n)$ close enough to $[M,x]$, there is a path $\beta\in\UU$ with $\beta(0)=\beta(1)=[N,y]$.

		\item\label{i: alpha(1) ne x} If $\alpha(1)\ne x$ then there is some path $\beta\in\UU$ with $\beta(0)\ne\beta(1)$.

	\end{enumerate}
\end{lem}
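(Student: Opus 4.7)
The plan is to handle (i) by transporting $\alpha$ forward along $C^\infty$ embeddings from a compact domain of $M$ into nearby manifolds, and to handle (ii) by perturbing the initial endpoint of $\alpha$ slightly off the orbit $\Iso(M)\cdot x$, which is feasible by local non-periodicity.

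For (i), let $[N_j,y_j]\to[M,x]$ in $\MM_{*,\text{\rm lnp}}^\infty(n)$. Fix $R>0$ and a compact domain $\Omega\subset M$ with $\Pen(\alpha(I),R)\subset\Int(\Omega)$. By Lemma~\ref{l: Lessa}, for $j$ large there are pointed $C^\infty$ embeddings $\phi_j:(\Omega,x)\to(N_j,y_j)$ with $\phi_j^*g_{N_j}\to g|_\Omega$ in the $C^\infty$ topology. Set $\beta_j:=\iota_{N_j}\circ\phi_j\circ\alpha$. Since $\alpha(1)=x$, each $\beta_j$ is a leafwise loop at $[N_j,y_j]$. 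The main step is the uniform-in-$t$ convergence $\beta_j\to\iota\alpha$. For every $t\in I$, the restriction of $\phi_j$ to $B_M(\alpha(t),R)$ is a pointed $C^\infty$ embedding $(M,\alpha(t))\to(N_j,\phi_j(\alpha(t)))$ whose pull-back metric is $C^m$-close to $g|_{B_M(\alpha(t),R)}$, with closeness uniform in $t$ because all these balls lie inside $\Omega$ where the global $C^\infty$ convergence takes place. Proposition~\ref{p: D^m_R,epsilon(M,x) subset U^m_R,r(M,x)} then places $\beta_j(t)$ in arbitrary basic neighborhoods $U^m_{R,r}(M,\alpha(t))$ uniformly in $t$, so $\beta_j\to\iota\alpha$ in the compact-open topology of $C(I,\MM_{*,\text{\rm lnp}}^\infty(n))$, and $\beta_j\in\UU$ for all large $j$.

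For (ii), first observe that if $[M,x]\ne[M,\alpha(1)]$, then $\iota\alpha$ itself lies in $\UU$ and already has distinct endpoints. So assume $[M,x]=[M,\alpha(1)]$, which furnishes some $k\in\Iso(M)$ with $k(x)=\alpha(1)$; since $\alpha(1)\ne x$, necessarily $k\ne\id_M$. Extend $\alpha$ to a continuous path $\tilde\alpha:[-\epsilon_0,1]\to M$ with $\tilde\alpha|_{[0,1]}=\alpha$ and $\tilde\alpha(t)\ne x$ for every $t\in(-\epsilon_0,0)$; for example, prepend a short smooth curve through $x$ with nonzero initial velocity. For $s\in(0,\epsilon_0)$, set $\alpha_s(t):=\tilde\alpha(-s+(1+s)t)$. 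Then $\alpha_s\to\alpha$ uniformly as $s\downarrow 0$, so $\iota\alpha_s\in\UU$ for $s$ small. It remains to verify that $[M,\tilde\alpha(-s)]\ne[M,\alpha(1)]$ for small $s>0$. Suppose this fails along a sequence $s_i\downarrow 0$, so there exist $k_i\in\Iso(M)$ with $k_i(\tilde\alpha(-s_i))=\alpha(1)=k(x)$. Setting $g_i:=k^{-1}k_i\in\Iso(M)$, we have $g_i(\tilde\alpha(-s_i))=x$, and since $g_i$ is an isometry,
\[
	d(g_i(x),x)=d(g_i(x),g_i(\tilde\alpha(-s_i)))=d(x,\tilde\alpha(-s_i))\to 0\quad\text{as }i\to\infty.
\]
Hence $g_i(x)$ lies in the neighborhood $U_x$ witnessing local non-periodicity at $x$ for $i$ large, forcing $g_i=\id_M$; thus $k_i=k$ and $\tilde\alpha(-s_i)=x$, contradicting the construction of $\tilde\alpha$. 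Choosing any small $s>0$ with $\iota\alpha_s\in\UU$ and $[M,\tilde\alpha(-s)]\ne[M,\alpha(1)]$, the path $\beta:=\iota\alpha_s$ satisfies the conclusion.

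The principal technical point is the uniformity-in-$t$ step in (i): the $C^\infty$ topology on $\MM_*^\infty(n)$ is defined by basic neighborhoods anchored at each individual basepoint, so one must exploit compactness of $\alpha(I)$ and a single enveloping domain $\Omega$ to transfer $C^\infty$ closeness to every point along the path simultaneously. The dependence on local non-periodicity enters only in part (ii), where it rules out the possibility that a nearby slightly-perturbed basepoint still lies in the isometry orbit of $\alpha(1)$.
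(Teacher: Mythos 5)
Your proof of (i) follows the same approach as the paper's: transport $\alpha$ through a pointed embedding $\phi$ of a large compact domain $\Omega\supset\alpha(I)$ into the nearby manifold and set $\beta=\iota_N\phi\alpha$; you merely make explicit the uniformity in $t$ that the paper's one-line ``Observe that $\beta\in\UU$\ldots'' leaves implicit, using a single enveloping domain and compactness of $\alpha(I)$. For (ii), your route is genuinely different from the paper's. The paper chooses $[N,y]\in\MM_{*,\text{\rm np}}^\infty(n)$ close to $[M,x]$ (relying on density of the non-periodic locus, itself resting on the residuality of non-periodic metrics cited via Bando--Urakawa and Sunada earlier in the paper), keeps the same embedding construction, and observes that injectivity of $\iota_N$ for non-periodic $N$ gives $\beta(0)\ne\beta(1)$. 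You instead stay on the leaf $\iota(M)$, perturb the initial endpoint of $\alpha$ by prepending a short arc at $x$, and use local non-periodicity of $M$ directly to rule out that the perturbed initial point remains in the $\Iso(M)$-orbit of $\alpha(1)$; this avoids the Baire-category input entirely and is more elementary. Both arguments correctly establish the lemma as stated. One contextual remark: the paper's construction yields $\beta$ with $\beta(0)=[N,y]$ for arbitrary $[N,y]$ in a dense subset of a neighborhood of $[M,x]$, i.e.\ with endpoints ranging over a transversal sweep of leaves, whereas your $\beta$ is always confined to the single leaf $\iota(M)$. The latter suffices for the literal statement (``there is some path $\beta$''), but the former is closer to what the deduction of Theorem~\ref{t: FF_*,lnp(n)}-\eqref{i: MM_*,np(n)} actually needs (exhibiting a nontrivial holonomy germ on a transversal, not merely distinct endpoints inside one leaf), so be aware that if you intend to carry your version forward into that proof you would need an extra step.
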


\begin{proof}
	Let $\Omega$ be a compact domain in $M$ whose interior contains $\alpha(I)$, let $[N,y]\in\MM_{*,\text{\rm lnp}}^\infty(n)$, and let $\phi:(\Omega,x)\to(N,y)$ be a pointed $C^m$ embedding with $\|g_M-\phi^*g_N\|_{\Omega,C^m,g_M}<\epsilon$ for some $m\in\Z^+$ and $\epsilon>0$. Let $\beta=\iota\phi\alpha\in C(I,\FF_{*,\text{\rm lnp}}(n))$; that is, $\beta(t)=[N,\phi\alpha(t)]$ for each $t\in I$. Observe that $\beta\in\UU$ if $m$ and $\Omega$ are large enough, and $\epsilon$ is small enough (i.e., if $[N,y]$ is close enough to $[M,x]$). When $\alpha(1)=x$, we get
		\[
			\beta(0)=[N,\phi(x)]=[N,y]=[N,\phi\alpha(1)]=\beta(1)\;.
		\]

	Suppose now that $\alpha(1)\ne x$. Since $\MM_{*,\text{\rm np}}^\infty(n)$ is dense in $\MM_{*,\text{\rm lnp}}^\infty(n)$, with the above notation, we can choose $[N,y]\in\MM_{*,\text{\rm np}}^\infty(n)$ as close as desired to $[M,x]$. Hence $\iota:N\to\MM_{*,\text{\rm lnp}}^\infty(n)$ is injective, giving
		\[
			\beta(0)=\iota\phi(x)\ne\iota\phi\alpha(1)=\beta(1)\;.\qed
		\]
\renewcommand{\qed}{}
\end{proof}

\section{Saturated subspaces of $\MM_{*,\text{\rm lnp}}^\infty(n)$}\label{s: saturated subspaces}

Let $X$ be a sequential Riemannian foliated space with complete leaves. 

\begin{defn}\label{d: covering-determined} 
  It is said that $X$ is \emph{covering-determined} when there is a
  connected pointed covering $(\widetilde L_x,\tilde x)$ of $(L_x,x)$
  for all $x\in X$ such that $x_i\to x$ in $X$ if and only if
  $[\widetilde L_{x_i},\tilde x_i]$ is $C^\infty$ convergent to
  $[\widetilde L_x,\tilde x]$. When this condition is satisfied with
  $\widetilde L_x=\widetilde L_x^{\text{\rm hol}}$ for all $x\in X$, it is said that 
  $X$ is \emph{holonomy-determined}.
\end{defn}

\begin{ex}\label{ex: covering-determined}
  \begin{enumerate}[(i)]
			
  \item The Reeb foliation on $S^3$ is not covering-determined with
    any Riemannian metric. 
			
  \item \cite[Example~2.5]{Lessa:Reeb} is covering-determined but not
    holonomy-determined.
		
  \item\label{i: holonomy-determined} $\MM_{*,\text{\rm
        lnp}}^\infty(n)$ is holonomy-determined.
			
	\end{enumerate}
\end{ex}

\begin{rem}\label{r: covering-determined}
  \begin{enumerate}[(i)]
	
  \item\label{i: hereditary} The condition of being
    covering-determined is hereditary by saturated subspaces.
		
  \item The example $X$ of \cite[Example~2.5]{Lessa:Reeb} can be easily realized as a
    saturated subspace of a Riemannian foliated space $Y$ where the holonomy coverings of the leaves are isometric to $\R$. Multiplying the leaves by $S^1$, all holonomy covers of $Y\times S^1$ become isometric to $\R\times S^1$. The metric on $Y\times S^1$ can be modified so that no pair of these holonomy covers are isometric, obtaining a holonomy-determined foliated space, however $X\times S^1$ is not holonomy-determined with any metric. So holonomy-determination is not hereditary by saturated subspaces.
		
  \item\label{i: x=y} If $X$ satisfies the covering-determination with
    the pointed coverings $(\widetilde L_x,\tilde x)$ of $(L_x,x)$ for
    $x\in X$, then $x=y$ in $X$ if and only if $[\widetilde L_x,\tilde
    x]=[\widetilde L_y,\tilde y]$; in particular, the leaves of $X$
    are non-periodic.
		
  \item If $X$ is compact and the mapping $x\mapsto[\widetilde L_x,\tilde x]$ is injective, then the ``if'' part of Definition~\ref{d: covering-determined} can be deleted.

	\end{enumerate}
\end{rem}

\begin{proof}[Proof of Theorem~\ref{t: covering-determined}]
  Any saturated subspace of $\MM_{*,\text{\rm lnp}}^\infty(n)$ is
  covering-determined by Example~\ref{ex:
    covering-determined}-\eqref{i: holonomy-determined} and
  Remark~\ref{r: covering-determined}-\eqref{i: hereditary}.
	 
  Suppose that $X$ satisfies the covering-determination with the pointed
  covers $(\widetilde L_x,\tilde x)$ of $(L_x,x)$ for $x\in X$. Then
  the map $\iota:X\to\MM_{*,\text{\rm lnp}}^\infty(n)$, defined by
  $\iota(x)=[\widetilde L_x,\tilde x]$, is a $C^\infty$ foliated
  embedding whose restrictions to the leaves are isometries.
\end{proof}

\begin{rem}
  Like in the above proof, a map $\iota^{\text{\rm
      hol}}:X\to\MM_*^\infty(n)$ is defined by $\iota^{\text{\rm
      hol}}(x)=[\widetilde L_x^{\text{\rm hol}},\tilde x]$, where
  $\tilde x\in\widetilde L_x^{\text{\rm hol}}$ is over $x$. This map
  may not be continuous \cite[Example~2.5]{Lessa:Reeb}, but its
  restriction to $X_0$ is continuous by the local Reeb stability
  theorem, and therefore $\iota^{\text{\rm hol}}$ is Baire measurable
  if $X$ is second countable.
\end{rem}

Any family $\CC$ of complete connected Riemannian $n$-manifolds
defines a closed $\FF_*(n)$-saturated subspace
$X:=\Cl_\infty(\bigcup_{M\in\CC}\iota(M))\subset\MM_*^\infty(n)$. The
obvious $C^\infty$ version of arguments of \cite{Cheeger1970} (see
also \cite[Chapter~10, Sections~3 and~4]{Petersen1998}) gives the
following.

\begin{thm}\label{t: bounded geometry} 
	A family $\CC$ of complete connected Riemannian $n$-manifolds is of equi-bounded geometry if and only if the closed subspace of $\MM_*^\infty(n)$ defined by $\CC$ is compact.
\end{thm}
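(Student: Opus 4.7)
The plan is to prove this as the $C^\infty$ analogue of the classical Cheeger--Gromov compactness theorem, following \cite[Chapter~10]{Petersen1998} adapted to the topology established in Sections~\ref{s: C^infty topology}--\ref{s: MM_*^infty(n) is Polish}. Since $\MM_*^\infty(n)$ is Polish (Theorem~\ref{t: C^infty convergence in MM_*(n)}), the closed subspace $X:=\Cl_\infty(\bigcup_{M\in\CC}\iota(M))$ is metrizable, so compactness of $X$ is equivalent to sequential compactness.

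For the direction ``$X$ compact implies $\CC$ equi-bounded'', from Definition~\ref{d: C^infty convergence in MM_*(n)} and~\eqref{norm equiv} I note that $C^\infty$ convergence $[M_i,x_i]\to[M,x]$ in $\MM_*^\infty(n)$ forces $|\nabla^m\RR_{M_i}|(x_i)\to|\nabla^m\RR_M|(x)$ for every $m\in\N$. Hence $\kappa_m:X\to[0,\infty)$, $\kappa_m([L,y])=|\nabla^m\RR_L|(y)$, is continuous and therefore bounded on the compact set $X$ by some $C_m$; since $[M,x]\in X$ for every $M\in\CC$ and every $x\in M$, this yields the uniform bound $|\nabla^m\RR_M|\le C_m$ over $\CC$. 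For the injectivity radius I would establish lower semicontinuity of $\inj:X\to(0,\infty]$: then $\inf_X\inj>0$ by compactness, since otherwise sequential compactness would produce $[M_i,x_i]\to[M,x]$ in $X$ with $\inj_{M_i}(x_i)\to 0$, forcing $\inj_M(x)=0$ and contradicting smoothness of $M$.

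For the converse, suppose $\CC$ admits the common geometric bound $(r,(C_m))$. Given any sequence $[M_i,x_i]$ in $X$, a diagonal argument reduces the problem to the case $M_i\in\CC$, since limits land in $X$ by closedness. On each ball $B_{M_i}(x_i,\rho)$ with $\rho<r$, I would introduce harmonic (or exponential) coordinates in which the bounds on $|\nabla^m\RR_{M_i}|$ translate into $C^m$ bounds on the metric coefficients $g_{jk}$ depending only on $m$, $\rho$ and $(r,(C_m))$. Arzela--Ascoli then extracts a subsequence whose coefficients converge in $C^m$ on that ball; iterating over an exhaustion $\rho_k\uparrow\infty$ and diagonalizing produces a pointed complete Riemannian manifold $(M,x)$, compact domains $\Omega_k\subset M$ with $B_M(x,\rho_k)\subset\Omega_k$, and pointed $C^\infty$ embeddings $\phi_{k,j}:(\Omega_k,x)\to(M_{i_j},x_{i_j})$ with $\phi_{k,j}^*g_{i_j}\to g|_{\Omega_k}$ in $C^\infty$. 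By Lemma~\ref{l: Lessa} this is $C^\infty$ convergence $[M_{i_j},x_{i_j}]\to[M,x]$ in $\MM_*^\infty(n)$.

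The main obstacle will be the quantitative lower semicontinuity of $\inj$ on $\MM_*^\infty(n)$ used in the first direction: along a $C^\infty$ convergent sequence $[M_i,x_i]\to[M,x]$ with embeddings $\phi_i:(\Omega,x)\to(M_i,x_i)$ as in Definition~\ref{d: C^infty convergence in MM_*(n)}, I would compare $\phi_i^{-1}\circ\exp_{x_i}$ to $\exp_x$ on small balls in $T_xM$ and use the $C^\infty$ convergence $\phi_i^*g_i\to g|_\Omega$ to show that the former are injective on any fixed $B_{T_xM}(0_x,\rho)$ with $\rho<\inj_M(x)$ for $i$ large, yielding $\liminf_i\inj_{M_i}(x_i)\ge\inj_M(x)$. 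The remaining ingredients---the harmonic coordinate estimates and the diagonal Arzela--Ascoli extraction---are routine $C^\infty$ adaptations of Cheeger's original arguments, as reviewed in \cite[Chapter~10, Section~3]{Petersen1998}.
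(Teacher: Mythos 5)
Your proof is correct and follows the same route as the paper, which does not write out a proof at all but simply invokes the ``obvious $C^\infty$ version'' of the arguments of Cheeger and of Petersen's Chapter~10. Your expansion---continuity of $[L,y]\mapsto|\nabla^m\RR_L|(y)$, lower semicontinuity of $\inj$ via comparing $\phi_i^{-1}\circ\exp_{x_i}\circ\phi_{i*}$ with $\exp_x$ and the openness of embeddings in $C^1$, and the harmonic-coordinate/Arzel\`a--Ascoli extraction for the converse---is exactly the content the paper delegates to those references, so your proposal and the paper's proof coincide in substance.
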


\begin{rem}
	A version of Theorem~\ref{t: bounded geometry} using the Ricci curvature instead of $\RR$ can be also proved with the arguments of \cite{Anderson1990}.
\end{rem}

For instance, let $\MM_*(n,r,C_m)\subset\MM_*(n)$ denote the subspace defined by the manifolds of bounded geometry with geometric bound $(r,C_m)$. Each $\MM_*(n,r,C_m)$ is compact by Theorem~\ref{t: bounded geometry}, and the notion of $C^\infty$ convergence in $\MM_*(n,r,C_m)$ is equivalent to the convergence in the topology of the Gromov space $\MM_*$ \cite{Lessa:Reeb}, \cite[Chapter~10]{Petersen1998}. Nonetheless, this is not the case on the whole of $\MM_*(n)$ \cite[Section~7.1.4]{BuragoBuragoIvanov2001}. 

Let us study the case of closed subspaces of $\MM_*^\infty(n)$ defined by a single manifold.

\begin{defn}\label{d: aperiodic} 
	A complete connected Riemannian manifold $M$ is called:
		\begin{enumerate}[(i)]
	
			\item\label{i: aperiodic} \emph{aperiodic} if, for all $m_i\uparrow\infty$ in $\N$, compact domains $\Omega'_i\subset\Omega_i\subset M$, points $x_i\in\Omega'_i$ and $y_i\in\Omega_i$, and $C^{m_i}$ pointed embeddings $\phi_{ij}:(\Omega_i,x_i)\to(\Omega_j,x_j)$ ($i\le j$) and $\psi_i:(\Omega'_i,x_i)\to(\Omega_i,y_i)$ such that
				\[
					\lim_id(x_i,\partial\Omega'_i))=\infty\;,\quad
					\lim_{i,j}\|g-\phi_{ij}^*g\|_{C^{m_i},\Omega_i,g}
					=\lim_i\|g-\psi_i^*g\|_{C^{m_i},\Omega'_i,g}=0\;,
				\]
			we have
				\begin{equation}\label{aperiodic}
					\lim_i\max\{\,d(x,\psi_i(x))\mid x\in\Omega'_i\cap\ol B(x_i,r)\,\}=0
				\end{equation}
			for some $r>0$; and
	
			\item\label{i: weakly aperiodic} \emph{weakly aperiodic} if, to get~\eqref{aperiodic}, besides the conditions of~\eqref{i: aperiodic}, it is also required that there is some $s>0$ and there are points $z_i\in\Omega'_i$ such that $\phi_{ij}(z_i)=z_j$ and $d(z_i,\psi_i(z_i))<s$.
			
		\end{enumerate}
\end{defn}

\begin{lem}\label{l: aperiodic} 
	The following properties hold for any complete connected Riemannian $n$-manifold $M$:
		\begin{enumerate}[{\rm(}i{\rm)}]

			\item\label{i: aperiodic} $M$ is aperiodic if and only if $\Cl_\infty(\iota(M))\subset\MM_{*,\text{\rm np}}^\infty(n)$.

			\item\label{i: weakly aperiodic} $M$ is weakly aperiodic if and only if $\Cl_\infty(\iota(M))\subset\MM_{*,\text{\rm lnp}}^\infty(n)$.

		\end{enumerate}
\end{lem}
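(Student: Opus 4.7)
The approach is to prove both ``iff''s by a double contrapositive, turning bad sequences for (weak) aperiodicity into non-trivial isometries of some limit $[N,x]\in\Cl_\infty(\iota(M))$, and conversely. The analytic workhorses are Proposition~\ref{p: Arzela-Ascoli} (extracting $C^\infty$-limits of equi-quasi-isometric pointed embeddings with bounded base-point images) and the rigidity step of Proposition~\ref{p: Hausdorff MM_*(n)} (upgrading a pointed isometric immersion of a complete connected manifold into itself to an isometry); completeness of $\MM_*^\infty(n)$ (Proposition~\ref{p: MM_*^infty(n) is completely metrizable}) provides the limit $[N,x]$ of the Cauchy sequence $[M,x_i]$ built from the $\phi_{ij}$'s.

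For the $(\Rightarrow)$ directions, given $[N,x]\in\Cl_\infty(\iota(M))$ with $\chi\in\Iso(N)\setminus\{\id_N\}$ (for~(ii), supplied with $\chi(x)$ in any prescribed neighborhood of $x$ by failure of local non-periodicity), fix pointed $C^{m_i+1}$ embeddings $\alpha_i\colon(\Omega^{(R_i)},x)\to(M,x_i)$ realizing $[M,x_i]\to[N,x]$ with $\alpha_i^*g_M\to g_N$ in $C^{m_i}$ and $R_i,m_i\uparrow\infty$, and set $y_i=\alpha_i(\chi(x))$, $\phi_{ij}=\alpha_j\circ\alpha_i^{-1}$, $\psi_i=\alpha_i\circ\chi\circ\alpha_i^{-1}$ on slightly shrunk domains. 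These satisfy the hypotheses of Definition~\ref{d: aperiodic}\eqref{i: aperiodic} (respectively, \eqref{i: weakly aperiodic} with $z_i=x_i$), and $d_M(\alpha_i(v),\psi_i(\alpha_i(v)))\to d_N(v,\chi(v))$ for every $v\in N$. Picking $v\in N$ with $\chi(v)\neq v$ and any $r>d_N(x,v)$ defeats the aperiodic conclusion~\eqref{aperiodic} for every $r>0$, proving (i)~$(\Rightarrow)$; the weakly aperiodic case follows since the displacement of $z_i=x_i$ is bounded by $d_N(x,\chi(x))$, which is as small as required.

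For the $(\Leftarrow)$ direction of~(i), take a bad sequence and a point $w_i\in\Omega'_i\cap\overline B(x_i,r)$ with $d_M(w_i,\psi_i(w_i))\geq\varepsilon>0$. The Cauchy property from $\phi_{ij}$ and completeness yield $[M,x_i]\to[N,x]$, so passing to a subsequence gives $[M,w_i]\to[N,w]$ with $w\in\overline B_N(x,r)$, and the fact that $\psi_i$ is almost-isomet
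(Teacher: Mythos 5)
The overall strategy is the same as the paper's (which simply appeals to Propositions~\ref{p: C^infty uniformity in MM_*(n)}, \ref{p: D^m_R,epsilon(M,x) subset U^m_R,r(M,x)}, \ref{p: U^m_R,epsilon(M,x) subset D^m_R,r(M,x)} and, for the ``if'' direction, the Arzel\`a--Ascoli plus rigidity argument inside Proposition~\ref{p: Hausdorff MM_*(n)}), so your choice of tools is the right one. However, your write-up is literally truncated mid-sentence: the $(\Leftarrow)$ direction of~(i) stops at ``the fact that $\psi_i$ is almost-isomet'' and the $(\Leftarrow)$ direction of~(ii) is entirely absent. As submitted, this is not a proof of the lemma.

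Beyond the truncation, there are two concrete gaps. First, in the $(\Rightarrow)$ step you pick one $v\in N$ with $\chi(v)\neq v$ and defeat~\eqref{aperiodic} for $r>d_N(x,v)$; but the definition of aperiodic says~\eqref{aperiodic} must hold \emph{for some} $r>0$, and since the condition is monotone in $r$ (holding for $r$ implies holding for all smaller $r'$), negating it requires failure for \emph{every} $r>0$, in particular for small $r$. You need to add the observation that, since $\chi\neq\id_N$ and $N$ is connected, $\chi$ cannot fix an open set, so there are $v$ with $\chi(v)\neq v$ arbitrarily close to $x$. Second and more seriously, in the $(\Leftarrow)$ direction of~(i) the plan is to conjugate $\psi_i$ by the realizing embeddings $\alpha_i$ and extract a limit isometry of $N$ via Proposition~\ref{p: Arzela-Ascoli}; but nothing in Definition~\ref{d: aperiodic}\eqref{i: aperiodic} prevents $y_i=\psi_i(x_i)$ from escaping, $d_M(x_i,y_i)\to\infty$, in which case $\alpha_i^{-1}\circ\psi_i\circ\alpha_i$ is not even defined near the base point and Proposition~\ref{p: Arzela-Ascoli} is inapplicable (the family $\Phi(x_0)$ is unbounded). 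This escaping case is precisely what the extra $z_i$ hypothesis of Definition~\ref{d: aperiodic}\eqref{i: weakly aperiodic} is designed to suppress, so for part~(i) it is exactly the substantive case and cannot be skipped; you would need a separate argument there (e.g.\ re-centering the Cauchy construction at an intermediate point, or using the $\phi_{ij}$'s together with $\psi_i$ to exhibit a periodic limit that need not be $[N,x]$), and the proposal gives no indication of how to handle it.
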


\begin{proof}
	This is a consequence of Propositions~\ref{p: C^infty uniformity in MM_*(n)},~\ref{p: D^m_R,epsilon(M,x) subset U^m_R,r(M,x)} and~\ref{p: U^m_R,epsilon(M,x) subset D^m_R,r(M,x)}, and using also arguments from the proof of Proposition~\ref{p: Hausdorff MM_*(n)} for the ``if'' parts.
\end{proof}

\begin{defn}\label{d: repetitive} 
	A complete connected Riemannian manifold $M$ is called \emph{repetitive} if, for every compact domain $\Omega$ in $M$, and all $\epsilon>0$ and $m\in\N$, there is a family of $C^m$ embeddings $\phi_i:\Omega\to M$ such that $\bigcup_i\phi_i(\Omega)$ is a net in $M$ and $\|g-\phi_i^*g\|_{C^m,\Omega,g}<\epsilon$ for all $i$. 
\end{defn}

Here, the term \emph{net} in $M$ is used for a subset $A\subset M$ satisfying $\Pen(A,S)=M$ for some $S>0$.

\begin{lem}\label{l: repetitive} 
	Let $M$ be a complete connected Riemannian $n$-manifold of bounded geometry. Then $M$ is repetitive if and only if $\Cl_\infty(\iota(M))$ is $\FF_*(n)$-minimal.
\end{lem}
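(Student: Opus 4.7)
The plan is to exploit compactness of $X:=\Cl_\infty(\iota(M))$ in $\MM_*^\infty(n)$, which holds by Theorem~\ref{t: bounded geometry} since $M$ is of bounded geometry. Minimality of $X$ under $\FF_*(n)$ means that every orbit $\iota(L)\subset X$ is dense in $X$. Since $\iota(M)$ is automatically dense, the problem reduces to proving that $\iota(M)\subset\Cl_\infty(\iota(N))$ for every $[N,y]\in X$; equivalently, that every $[M,x_0]$ can be approximated by $[N,y_i]$ with $y_i\in N$.

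For ``$M$ repetitive $\Longrightarrow$ $X$ minimal'', I would fix $[N,y]\in X$, $x_0\in M$, a compact domain $\Omega\ni x_0$ in $M$, $m\in\N$ and $\epsilon>0$. Repetitiveness supplies $C^{m+1}$ embeddings $\phi_j:\Omega\to M$ with $\|g_M-\phi_j^*g_M\|_{C^m,\Omega,g_M}<\epsilon/2$ and $\bigcup_j\phi_j(\Omega)$ an $S$-net. Since $[N,y]\in X$ there is a sequence $x_i\in M$ with $[M,x_i]\to[N,y]$, giving pointed embeddings $\psi_i:(\Omega_i,y)\hookrightarrow(M,x_i)$ with $\Omega_i\subset N$ exhausting $N$ and $\psi_i^*g_M\to g_N$ in $C^\infty$. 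For each $i$ I would pick $j_i$ with $\phi_{j_i}(\Omega)$ within $S$ of $x_i$; for $i$ large $\phi_{j_i}(\Omega)\subset\psi_i(\Omega_i)$, so $\theta_i:=\psi_i^{-1}\circ\phi_{j_i}:(\Omega,x_0)\to(N,y_i)$ is a well-defined pointed embedding with $y_i:=\theta_i(x_0)$. The bound $\|\theta_i^*g_N-g_M\|_{C^m,\Omega,g_M}<\epsilon$ would then follow by writing $\theta_i^*g_N=\phi_{j_i}^*((\psi_i^{-1})^*g_N)$ and using that $(\psi_i^{-1})^*g_N\to g_M$ in $C^m$ uniformly on compact subsets of $M$, a consequence of $\psi_i^*g_M\to g_N$ combined with the quasi-isometry machinery of Sections~\ref{s: quasi-isometries}--\ref{s: partial quasi-isometries}. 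By Lemma~\ref{l: Lessa} this would give $[N,y_i]\to[M,x_0]$.

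For the converse I would argue by contrapositive: assume $M$ is not repetitive, so there exist $\Omega\subset M$, $x_0\in\Omega$, $m\in\N$, $\epsilon>0$, $S_k\uparrow\infty$ and points $z_k\in M$ with $d_M(z_k,\phi(\Omega))>S_k$ for every $C^{m+1}$ embedding $\phi:\Omega\to M$ of $C^m$-distortion $<\epsilon$. By compactness of $X$, after extracting a subsequence $[M,z_k]\to[N,y]$ for some $[N,y]\in X$, yielding pointed embeddings $\eta_k:(\widehat\Omega,y)\hookrightarrow(M,z_k)$ with $\widehat\Omega$ exhausting $N$ and $\eta_k^*g_M\to g_N$ in $C^\infty$. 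Minimality of $X$ then forces $y_j\in N$ with $[N,y_j]\to[M,x_0]$, hence pointed embeddings $\tau_j:(\Omega,x_0)\hookrightarrow(N,y_j)$ with $\tau_j^*g_N\to g_M$ in $C^\infty$. Fixing $j$ so that $\|\tau_j^*g_N-g_M\|_{C^m,\Omega,g_M}<\epsilon/2$ and requiring $\widehat\Omega\supset\tau_j(\Omega)$, the composites $\phi_k:=\eta_k\circ\tau_j:\Omega\to M$ would be $C^{m+1}$ embeddings with $\|g_M-\phi_k^*g_M\|_{C^m,\Omega,g_M}<\epsilon$ for $k$ large, and $\phi_k(\Omega)$ arbitrarily close to $z_k$ because $\phi_k(x_0)=\eta_k(y_j)$ is close to $\eta_k(y)=z_k$ and $\diam(\phi_k(\Omega))$ stays bounded. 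This would contradict the choice of $z_k$.

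The hard part will be the $C^m$ norm bookkeeping for compositions of near-isometric embeddings, particularly the uniform estimate $\|(\psi_i^{-1})^*g_N-g_M\|_{C^m}\to0$ on compact subsets of $\psi_i(\Omega_i)$. This should follow routinely from the established framework: the $C^{m+1}$ convergence $\psi_i^*g_M\to g_N$ makes $\{\psi_i\}$ and $\{\psi_i^{-1}\}$ families of equi-quasi-isometries of order $m$ on the relevant compact sets by Lemma~\ref{l: A_phi} and Proposition~\ref{p: quasi-isometries of order m}, after which the chain rule applied to $\psi_i\circ\psi_i^{-1}=\mathrm{id}$ transfers $C^m$ convergence to the inverse.
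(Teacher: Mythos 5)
Your proof is correct in outline, but for the direction ``minimal $\Rightarrow$ repetitive'' you take a genuinely different route from the paper. You argue by contrapositive: compactness of $\Cl_\infty(\iota(M))$ gives a subsequential limit $[N,y]$ of the far-away $[M,z_k]$; minimality produces a near-isometric embedding $\tau_j$ of $\Omega$ into $N$; composing with the convergence embeddings $\eta_k$ plants a near-isometric copy of $\Omega$ at bounded distance from $z_k$, contradicting the choice of $z_k$. The paper instead sets $U=\Int_\infty(D^m_{R,\epsilon}(M,x))$, notes that minimality forces every orbit in the compact set $\Cl_\infty(\iota(M))$ to meet $U$, and invokes Lemma~\ref{l: bd_U} (upper semicontinuity of $\bd_U$) to bound $\bd_U\le S$ on $\Cl_\infty(\iota(M))$; the points $x'$ with $[M,x']\in U$ then form an $S$-net and carry the required embeddings, so no composition across a convergence is ever needed. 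The paper's route is shorter and sidesteps the uniformity issue you flag; yours is more concrete and also fleshes out the direction ``repetitive $\Rightarrow$ minimal,'' which the paper merely declares easy. One caution on your bookkeeping: the compact sets $\phi_{j_i}(\Omega)$ over which you need $(\psi_i^{-1})^*g_N\to g_M$ in $C^m$ move with $i$, so the conversion constants of Proposition~\ref{p: U^m_R,epsilon(M,x) subset D^m_R,r(M,x)} are not a priori uniform across them. It is cleaner to pass to the $U^m_{R,r}$ relations at the fixed basepoints $[M,x_0]$ and $[N,y]$ via Proposition~\ref{p: D^m_R,epsilon(M,x) subset U^m_R,r(M,x)}, carry out all compositions and inversions in that framework via Lemma~\ref{l: composition and inversion of pointed local quasi-isometries}, and stay there, since the $U$-relations already define the $C^\infty$ topology.
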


\begin{proof}
	The ``only if'' part follows easily from Propositions~\ref{p: C^infty uniformity in MM_*(n)},~\ref{p: D^m_R,epsilon(M,x) subset U^m_R,r(M,x)} and~\ref{p: U^m_R,epsilon(M,x) subset D^m_R,r(M,x)}. 
	
	To prove the ``if'' part, assume that $\Cl_\infty(\iota(M))$ is $\FF_*(n)$-minimal. Let $\Omega$ be a compact domain in $M$, and take some $m\in\N$ and $\epsilon>0$. Take some $x\in M$ and $R>0$ such that $\Omega\subset B(x,R)$. Let $U=\Int_\infty(D^m_{R,\epsilon}(M,x))$. Since $\Cl_\infty(\iota(M))$ is compact because $M$ is of bounded geometry (Theorem~\ref{t: bounded geometry}), there is some $S>0$ such that $\bd_U\le S$ on $\Cl_\infty(\iota(M))$ by Lemma~\ref{l: bd_U}. Hence $[M,x_i]\in U$ for a net of points $x_i$ in $M$. Thus  there are $C^{m+1}$ pointed local diffeomorphisms $\phi_i\colon (M,x) \rightarrowtail (M,x_i)$ so that $\|g-\phi_i^*g\|_{C^m,\Omega_i,g}<\epsilon$ for some compact domain $\Omega_i\subset\dom\phi_i$ with $B(x,R)\subset\Omega_i$; in particular, $\Omega\subset\dom\phi_i$ and $\|g-\phi_i^*g\|_{C^m,\Omega,g}<\epsilon$ for all $i$, and $\bigcup_i\phi_i(\Omega)$ is a net in $M$, showing that $M$ is repetitive.
\end{proof}

\begin{proof}[Proof of Theorem~\ref{t: M is isometric to a dense leaf}]
	Suppose that $M$ is non-periodic and has a weakly aperiodic connected covering $\widetilde M$. Then $Y=\Cl_\infty(\iota(\widetilde M))$ is a compact saturated subspace of $\MM_{*,\text{\rm lnp}}^\infty(n)$ by Theorem~\ref{t: bounded geometry} and Lemma~\ref{l: aperiodic}-\eqref{i: weakly aperiodic}, and $M\equiv\Iso(\widetilde M)\backslash\widetilde M\overset{\bar\iota}{\longrightarrow}\iota(\widetilde M)$ is an isometry. Moreover any sequential covering-determined transitive compact Riemannian foliated space can be obtained in this way by Theorem~\ref{t: covering-determined}. If $\widetilde M$ is also repetitive, then $X$ is minimal by Lemma~\ref{l: repetitive}, completing the proof of~\eqref{i: M is isometric to a dense leaf}.
	
	Asume now that $M$ is aperiodic. Then $X=\Cl_\infty(\iota(M))$ is a compact $\FF_{*,\text{\rm np}}(n)$-saturated subspace of $\MM_{*,\text{\rm np}}^\infty(n)$ by Theorem~\ref{t: bounded geometry} and Lemma~\ref{l: aperiodic}-\eqref{i: aperiodic}, and moreover $\iota:M\to\iota(M)$ is an isometry. Furthermore the leaves of $X$ have trivial holonomy groups by Theorem~\ref{t: FF_*,lnp(n)}-\eqref{i: MM_*,np(n)}. As before, $X$ is minimal if $M$ is also repetitive, showing~\eqref{i: if M is aperiodic}.	
\end{proof}

\section*{Acknowledgements}

The first and third authors are partially supported by MICINN (Spain), grant MTM2011-25656.

\bibliographystyle{amsplain}

%\bibliography{../Alvarez-Barral-Candel}

\begin{thebibliography}{10}

\bibitem{AbrahamRobbin1967}
R.~Abraham and J.~Robbin, \emph{Transversal mappings and flows}, W. A.
  Benjamin, Inc., New York-Amsterdam, 1967. \MR{0240836 (39 \#2181)}

\bibitem{Anderson1990}
M.T. Anderson, \emph{Convergence and rigidity of manifolds under {R}icci
  curvature bounds}, Invent. Math. \textbf{102} (1990), no.~2, 429--445.
  \MR{1074481 (92c:53024)}

\bibitem{BandoUrakawa1983}
S.~Bando and H.~Urakawa, \emph{Generic properties of the eigenvalue of the
  {Laplacian} for compact {Riemannian} manifolds}, T\^ohoku Math. J. (2)
  \textbf{35} (1983), no.~2, 155--172. \MR{699924 (84h:58146)}

\bibitem{BuragoBuragoIvanov2001}
D.~Burago, Y.~Burago, and S.~Ivanov, \emph{A course in metric geometry},
  American Mathematical Society, Providence, RI, 2001. \MR{1835418
  (2002e:53053)}

\bibitem{CandelConlon2000-I}
A.~Candel and L.~Conlon, \emph{Foliations. {I}}, Graduate Studies in
  Mathematics, vol.~23, American Mathematical Society, Providence, RI, 2000.
  \MR{1732868 (2002f:57058)}

\bibitem{CandelConlon2003-II}
\bysame, \emph{Foliations. {II}}, Graduate Studies in Mathematics, vol.~60,
  American Mathematical Society, Providence, RI, 2003. \MR{1994394
  (2004e:57034)}

\bibitem{Cerf1961}
J.~Cerf, \emph{Topologie de certains espaces de plongements}, Bull. Soc. Math.
  France \textbf{89} (1961), 227--380. \MR{0140120 (25 \#3543)}

\bibitem{Chavel2006}
I.~Chavel, \emph{{Riemannian geometry. A modern introduction}}, second ed.,
  Cambridge Studies in Advanced Mathematics, vol.~98, Cambridge University
  Press, Cambridge, 2006. \MR{2229062 (2006m:53002)}

\bibitem{Cheeger1970}
J.~Cheeger, \emph{Finiteness theorems for {R}iemannian manifolds}, Amer. J.
  Math. \textbf{92} (1970), 61--74. \MR{0263092 (41 \#7697)}

\bibitem{Douady1964}
A.~Douady, \emph{Vari\'et\'es \`a bord anguleux et voisinages tubulaires},
  S\'eminaire {H}enri {C}artan, 1961/62, {E}xp. 1, Secr\'etariat
  math\'ematique, Paris, 1961/1962, p.~11. \MR{0160221 (28 \#3435)}

\bibitem{EpsteinMillettTischler1977}
D.B.A. Epstein, K.C. Millett, and D.~Tischler, \emph{Leaves without holonomy},
  J. London Math. Soc. (2) \textbf{16} (1977), no.~3, 548--552. \MR{0464259 (57
  \#4193)}

\bibitem{Frank2008}
N.P. Frank, \emph{A primer of substitution tilings of the {E}uclidean plane},
  Expo. Math. \textbf{26} (2008), no.~4, 295--326. \MR{2462439 (2010d:52047)}

\bibitem{Ghys2000}
{\'E}.~Ghys, \emph{Laminations par surfaces de {R}iemann}, Panoramas \&
  Synth\`eses \textbf{8} (2000), 49--95. \MR{1760843 (2001g:37068)}

\bibitem{Gromov1981}
M.~Gromov, \emph{Groups of polynomial growth and expanding maps. Appendix by
  Jacques Tits}, Inst. Hautes \'Etudes Sci. Publ. Math. (1981), no.~53, 53--73.
  \MR{623534 (83b:53041)}

\bibitem{Gromov1999}
\bysame, \emph{Metric structures for {R}iemannian and non-{R}iemannian spaces},
  Progress in Mathematics, vol. 152, Birkh{\"a}user Boston Inc., Boston, MA,
  1999, Based on the 1981 French original [MR0682063 (85e:53051)], With
  appendices by M.~Katz, P.~Pansu and S.~Semmes, Translated from the French by
  Sean Michael Bates. \MR{1699320 (2000d:53065)}

\bibitem{GrunbaumShephard1987}
B.~Gr{\"u}nbaum and G.C. Shephard, \emph{Tilings and patterns}, W.H.~Freeman
  and Company, New York, 1987. \MR{857454 (88k:52018)}

\bibitem{GutierresHofmann2007}
G.~Gutierres and D.~Hofmann, \emph{Axioms for sequential convergence}, Appl.
  Categ. Structures \textbf{15} (2007), no.~5-6, 599--614. \MR{2365760
  (2009a:54021)}

\bibitem{Haefliger1985}
A.~Haefliger, \emph{Pseudogroups of local isometries}, Differential geometry
  ({S}antiago de {C}ompostela, 1984), Res. Notes in Math., vol. 131, Pitman,
  Boston, MA, 1985, pp.~174--197. \MR{864868 (88i:58174)}

\bibitem{Haefliger1988}
\bysame, \emph{Leaf closures in {R}iemannian foliations}, A f\^ete of topology,
  Academic Press, Boston, MA, 1988, pp.~3--32. \MR{928394 (89c:57033)}

\bibitem{Haefliger2002}
\bysame, \emph{Foliations and compactly generated pseudogroups}, Foliations:
  Geometry and Dynamics, Warsaw 2000 (River Edge, NJ), World Sci. Publi., 2002,
  pp.~275--296. \MR{1882774 (2003g:58029)}

\bibitem{Hector1977a}
G.~Hector, \emph{Feuilletages en cylindres}, Geometry and topology ({P}roc.
  {III} {L}atin {A}mer. {S}chool of {M}ath., {I}nst. {M}at. {P}ura {A}plicada
  {CNP}q, {R}io de {J}aneiro, 1976), Springer, Berlin, 1977, pp.~252--270.
  Lecture Notes in Math., Vol. 597. \MR{0451260 (56 \#9547)}

\bibitem{Hirsch1976}
M.W. Hirsch, \emph{Differential topology}, Graduate Texts in Mathematics,
  vol.~33, Springer-Verlag, New York-Heidelberg, 1976. \MR{0448362 (56 \#6669)}

\bibitem{HofmannMostert1966}
K.H. Hofmann and P.S. Mostert, \emph{Elements of compact semigroups}, Charles
  E. Merrill Books, Inc., Columbus, 1966.

\bibitem{Karcher1977}
H.~Karcher, \emph{{Riemannian} center of mass and mollifier smoothing}, Comm.
  Pure Appl. Math. \textbf{30} (1977), no.~5, 509--541. \MR{0442975 (56
  \#1350)}
  
\bibitem{Kechris1995}
A.S. Kechris, \emph{Classical descriptive set theory}, Graduate Texts in
  Mathematics, vol. 156, Springer-Verlag, New York, 1995. \MR{1321597
  (96e:03057)}

\bibitem{Kelley1950}
J.L. Kelley, \emph{Convergence in topology}, Duke Math. J. \textbf{17} (1950),
  277--283. \MR{0036982 (12,194f)}

\bibitem{Kelley1975}
\bysame, \emph{General topology}, 2nd ed., Graduate Texts in Mathematics,
  no.~27, Springer-Verlag, New York-Berlin, 1975. \MR{0370454 (51 \#6681)}

\bibitem{Koutnik1985}
V.~Koutn\'{\i}k, \emph{Closure and topological sequential convergence},
  Convergence structures, Proc. Conf., Bechyn\v{e}/Czech. 1984, Math. Res.,
  vol.~24, Akademie-Verlag, Berlin, 1985, pp.~199--204. \MR{835486 (87e:54005)}

\bibitem{Lessa:Reeb}
P.~Lessa, \emph{{Reeb} stability and the {Gromov-Hausdorff} limits of leaves in
  compact foliations}, Asian J. Math. 19 (2015), 433--464. \MR{3361278}

\bibitem{MooreSchochet1988}
C.C. Moore and C.~Schochet, \emph{Global analysis on foliated spaces},
  Mathematical Sciences Research Institute Publications, vol.~9,
  Springer-Verlag, New York, 1988, With appendices by S. Hurder, Moore,
  Schochet and Robert J. Zimmer. \MR{0918974 (89h:58184)}

\bibitem{Palais1961}
R.S. Palais, \emph{On the existence of slices for actions of non-compact {Lie}
  groups}, Ann. of Math. (2) \textbf{73} (1961), 295--323. \MR{0126506 (23
  \#A3802)}

\bibitem{Petersen1997}
P.~Petersen, \emph{Convergence theorems in {Riemannian} geometry}, Comparison
  geometry, Cambridge University, Cambridge, 1997. \MR{1452874 (98k:53049)}

\bibitem{Petersen1998}
\bysame, \emph{Riemannian geometry}, Graduate Texts in Mathematics, vol. 171,
  Springer-Verlag, New York, 1998. \MR{1480173 (98m:53001)}

\bibitem{Ramsay1991}
A.~Ramsay, \emph{Local product structure for group actions}, Ergodic Theory
  Dynam. Systems \textbf{11} (1991), 209--217. \MR{1101091 (92g:22020)}

\bibitem{Robinson2004}
E.A. Robinson, Jr., \emph{Symbolic dynamics and tilings of {$\mathbb R^d$}},
  Symbolic Dynamics and its Applications, Proc. Sympos. Appl. Math., Amer.
  Math. Soc., Providence, RI, 2004, pp.~81--119. \MR{2078847 (2005h:37036)}

\bibitem{Sasaki1958}
S.~Sasaki, \emph{On the differentiable geometry of tangent bundles of
  {Riemannian} manifolds}, T\^ohoku Math. J. (2) \textbf{10} (1958), no.~3,
  211--368. \MR{0112152 (22 \#3007)}

\bibitem{Schaefer1971}
H.H. Schaefer, \emph{Topological vector spaces}, Graduate Texts in Mathematics,
  vol.~3, Springer-Verlag, New York, Heidelberg, Berlin, 1971. \MR{0342978 (49
  \#7722)}

\bibitem{Sunada1985}
T.~Sunada, \emph{Riemannian coverings and isospectral manifolds}, Ann. of Math.
  (2) \textbf{121} (1985), no.~1, 169--186. \MR{782558 (86h:58141)}

\bibitem{Willard1970}
S.~Willard, \emph{General topology}, Addison-Wesley Publishing Co., Reading,
  Mass.-London-Don Mills, Ont., 1970. \MR{0264581 (41 \#9173)}

\end{thebibliography}

\providecommand{\bysame}{\leavevmode\hbox to3em{\hrulefill}\thinspace}
\providecommand{\MR}{\relax\ifhmode\unskip\space\fi MR }
% \MRhref is called by the amsart/book/proc definition of \MR.
\providecommand{\MRhref}[2]{%
  \href{http://www.ams.org/mathscinet-getitem?mr=#1}{#2}
}
\providecommand{\href}[2]{#2}

\end{document}